\documentclass[11pt]{amsart}

\usepackage{mathrsfs}
\usepackage{amsfonts}
\usepackage{amssymb}
\usepackage{amsxtra}
\usepackage{dsfont}
\usepackage{color}
\usepackage[compress, sort]{cite}
\usepackage{enumitem}
\usepackage{graphicx}

\usepackage[type1]{newtxtext}
\usepackage{newtxmath}

%\usepackage{substitutefont}

%\usepackage[bbgreekl]{mathbbol}
%\DeclareSymbolFontAlphabet{\mathbbm}{bbold}
%\DeclareSymbolFontAlphabet{\mathbb}{AMSb}%

\usepackage[english,polish]{babel}
\usepackage[T1]{fontenc}

\usepackage[margin=2.5cm, centering]{geometry}
\usepackage[colorlinks,citecolor=blue,urlcolor=blue,bookmarks=true]{hyperref}
\hypersetup{
pdfpagemode=UseNone,
pdfstartview=FitH,
pdfdisplaydoctitle=true,
pdfborder={0 0 0}, % No link borders
pdftitle={Stationary states for stable processes with resetting},
pdfauthor={Tomasz Grzywny and Zbigniew Palmowski and Karol Szczypkowski and Bartosz Trojan},
pdflang=en-US
}

\newcommand{\eqdistr}{\stackrel{D}{=}}
\newcommand{\CC}{\mathbb{C}}
\newcommand{\ZZ}{\mathbb{Z}}

\newcommand{\NN}{\mathbb{N}}
\newcommand{\RR}{\mathbb{R}}
\newcommand{\PP}{\mathbb{P}}
\newcommand{\EE}{\mathbb{E}}

\newcommand{\calC}{\mathcal{C}}

\newcommand{\calF}{\mathcal{F}}

\newcommand{\calO}{\mathcal{O}}

\newcommand{\calB}{\mathcal{B}}

\newcommand{\calD}{\mathcal{D}}

\newcommand{\pl}[1]{\foreignlanguage{polish}{#1}}

\newcommand{\norm}[1]{\lvert {#1} \rvert}
\newcommand{\abs}[1]{\lvert {#1} \rvert}
\newcommand{\sprod}[2]{\langle {#1}, {#2} \rangle}

\newcommand{\GL}{\operatorname{GL}}

\newcommand{\ind}[1]{{\mathds{1}_{{#1}}}}
\newcommand{\vphi}{\vartheta}

\renewcommand{\atop}[2]{\genfrac{}{}{0pt}2{#1}{#2}}

\newcommand{\qbinom}[3]{\genfrac{[}{]}{0pt}{}{{#1}}{{#2}}_{{#3}}}

% theorems numbering in the introduction
\newcounter{thm}

\newtheorem{main_theorem}[thm]{Theorem}

\newtheorem{theorem}{Theorem}[section]
\newtheorem{proposition}[theorem]{Proposition}
\newtheorem{lemma}[theorem]{Lemma}
\newtheorem{corollary}[theorem]{Corollary}

\numberwithin{equation}{section}
\theoremstyle{definition}
\newtheorem{example}[theorem]{Example}
\newtheorem{remark}[theorem]{Remark}

\title{
Stationary states for stable processes with partial resetting}
\date{\today}

\author{Tomasz Grzywny}
\address{
        \pl{
        Tomasz Grzywny\\
        Wydzia\l{} Matematyki,
        Politechnika Wroc\l{}awska\\
        Wyb. Wyspia\'{n}skiego 27\\
        50-370 Wroc\l{}aw\\
        Poland}
}
\email{tomasz.grzywny@pwr.edu.pl}

\author{Zbigniew Palmowski}
\address{
        \pl{
		Zbigniew Palmowski\\
        Wydzia{\l{}} Matematyki,
        Politechnika Wroc\l{}awska\\
        Wyb. Wyspia\'{n}skiego 27\\
        50-370 Wroc\l{}aw\\
        Poland}
}
\email{zbigniew.palmowski@pwr.edu.pl}

\author{Karol Szczypkowski}
\address{
        \pl{
        Karol Szczypkowski\\
        Wydzia\l{} Matematyki,
        Politechnika Wroc\l{}awska\\
        Wyb. Wyspia\'{n}skiego 27\\
        50-370 Wroc\l{}aw\\
        Poland}
}
\email{karol.szczypkowski@pwr.edu.pl}

\author{Bartosz Trojan}
\address{
	\pl{
		Bartosz Trojan\\
		Wydzia\l{} Matematyki,
        Politechnika Wroc\l{}awska\\
        Wyb. Wyspia\'{n}skiego 27\\
        50-370 Wroc\l{}aw\\
        Poland}
}
\email{bartosz.trojan@gmail.com}

\subjclass[2020]{60G10, 60J35, 60K40, 82C05, 82C31, 35K08,60J65, 60G51, 60G52}
\keywords{asymptotic behavior, Brownian motion, ergodic measure, Fokker--Planck equation, heat kernel, non-equilibrium stationary state, transition density}
\begin{document}
\selectlanguage{english}

\begin{abstract}
We study a $d$-dimensional stochastic process $\mathbf{X}$ which arises from a L\'evy process
$\mathbf{Y}$ by partial resetting, that is the position of the process $\mathbf{X}$ at a Poisson moment equals
$c$ times its position right before the moment, and it develops as $\mathbf{Y}$ between these two consecutive moments,
$c \in (0, 1)$.

We focus on $\mathbf{Y}$ being a strictly $\alpha$-stable process with $\alpha\in (0,2]$ having a transition density: We
analyze properties of the transition density $p$ of the process $\mathbf{X}$. We establish a series representation of
$p$. We prove its convergence as time goes to infinity (ergodicity), and we show that the limit $\rho_{\mathbf{Y}}$
(density of the ergodic measure) can be expressed by means of the transition density of the process $\mathbf{Y}$
starting from zero, which results in closed concise formulae for its moments. We show that the process $\mathbf{X}$
reaches a non-equilibrium stationary state. Furthermore, we check that $p$ satisfies the Fokker--Planck equation,
and we confirm the harmonicity of $\rho_{\mathbf{Y}}$ with respect to the adjoint generator.

In detail, we discuss the following cases: Brownian motion, isotropic and $d$-cylindrical $\alpha$-stable
processes for $\alpha \in (0,2)$, and $\alpha$-stable subordinator for $\alpha\in (0,1)$. We find the
asymptotic behavior of $p(t;x,y)$ as $t\to +\infty$ while $(t,y)$ stays in a certain space-time region.
For Brownian motion, we discover a phase transition, that is a change of the asymptotic behavior
of $p(t;0,y)$ with respect to $\rho_{\mathbf{Y}}(y)$.
\end{abstract}

\maketitle

\section{Introduction}
\label{sec:Intro}

 We consider a semigroup density $p(t;x,y)$ corresponding to a $d$-dimensional L\'evy process with partial resetting, that is, a L\'evy process with additional
proportional jumps realized at independent Poisson epochs.
The process solves the following stochastic differential equation
\[{\mathrm d} X_t=(c-1)X_{t-}{\mathrm d} N_t +{\mathrm d} Y_t\]
where $\mathbf{Y}=(Y_t : t \geq 0)$ is a L\'evy process, $\mathbf{N}=(N_t : t \geq 0)$ is an independent Poisson process and $c\in (0,1)$ is a 
constant.
 Focusing $\mathbf{Y}$ being  a strictly $\alpha$-stable process with $\alpha\in (0,2]$, %between proportional jumps, 
 we give a representation of $p$ in terms of splines satisfying certain recursion. With the help of
this representation we prove the convergence of $p(t;x,y)$ as $t\to +\infty$
to a density $\rho_{\mathbf{Y}}$.
We describe $\rho_{\mathbf{Y}}$, in particular, we provide formulas for its moments. 
Later, we show that the process under considerations
has non-equilibrium stationary state, that is, we prove that
the infinitesimal generator related to $p$ on $L^2(\RR^d, \rho_{\mathbf{Y}}(y) {\rm d} y)$ is not self-adjoint.
Let us recall that the classical ergodic theory concerns the convergence of $p(t;x,y)$ as $t\to +\infty$ for fixed $x,y\in \mathbb{R}^d$. 
Moreover, one of our main results gives the space-time regions
where the uniform asymptototic behavior of $p(t;0,y)$ as $t\to +\infty$ is precisely described.
In particular, we find the regions where
$p(t;0,y)$ is weakly equivalent to $\rho_{\mathbf{Y}}$. Additionally, in the case of Brownian motion we show
that there is a phase transition in behavior along the curve $|y|=2t$.

Let us motivate the study of the process with partial resetting. In the past decade, due to various applications, models that accommodate the resetting mechanism have
been extensively studied. One of them appears in simulating results of procedures dealing
with missing packets in the transmission control protocol (TCP), see \cite{MR1895332, MR2023017}. In the
ideal TCP congestion avoidance algorithm, when a congestion signal is received, e.g. missing packets are
detected, the window transferring size is proportionally decreased and the retransmission starts. Otherwise,
it grows at constant speed. In \cite{Kemperman} it was shown that the evolution of the window size may
be approximated by a continuous time process: a linear drift with partial resetting. More precisely, the
process grows linearly in time and at Poisson epochs experiences downward jumps proportional to its position
right before the epoch. This one-dimensional process is also known as the additive-increase and
multiplicative-decrease process (called AIMD), or the growth-collapse process. For these processes, the main questions
addressed in the literature concerned: stability conditions, the form of the steady-state laws, and identification
of first-passage times, see \cite{MR4546112, MR2840300, MR2576022}. Due to possible perturbations during
data transmission, instead of the constant drift process, it is reasonable to consider models based on  $\alpha$-stable
subordinators which, among other things, motivates our studies.

Another important application where resetting occurs is related to searching for a static target by a method
based on two mechanisms: slow local movements and a relocation procedure. This strategy is widely used
in nature, for example, by foraging animals, biomolecules searching for proteins on DNA, or people looking for an object in
a crowd. The corresponding model consists of a stochastic process representing the first phase, and partial resetting that
mimics the relocation, see \cite{19} and \cite{Bel, Ben, Evans, White} for an extensive list of references.
This motivates us to study multi-dimensional L\'evy processes that are subject to resetting.

Let us explain the resetting procedure in detail. Given a $d$-dimensional L\'evy process $\mathbf{Y}$ a stochastic process
$\mathbf{X}$ is obtained from $\mathbf{Y}$ by partial resetting if at each Poisson moment the position of the process
$\mathbf{X}$ equals a point obtained by multiplying the position of the process right before that moment by a factor
$c\in(0,1)$, and that it develops according to the process $\mathbf{Y}$ between these two consecutive moments.
To be more precise, let $\mathbf{N}$ be a Poisson process with intensity $1$ independent of $\mathbf{Y}$.
Let us denote by $(T_j : j \in \NN)$ the Poisson arrival moments (Poisson epochs) of $\mathbf{N}$. We define $\mathbf{X}$
as
\begin{equation}
	\label{eq:18}
	X_t =
	\begin{cases}
		Y_t, & \text{if } t<T_1 , \\
		c X_{T_n^-} + Y_t - Y_{T_n}, & \text{for } t \in [T_n, T_{n+1}),\, n\in\NN.
	\end{cases}
\end{equation}
We say that $\mathbf{X}$ is obtained from $\mathbf{Y}$ by partial resetting with factor $c \in (0, 1)$. Throughout the paper
we use the following notation
\begin{equation}
	\label{def:m}
	m = c^\alpha.
\end{equation}

It has already been observed by a group of physicists that introducing the resetting to a one-dimensional diffusive movement
of a single particle turns it into a process with a stationary measure, see \cite{MR4525953, Gupta}. The existence of such a measure
is a desired feature, for example, in the context of thermodynamics of certain physical systems, in optimizing the efficiency
of stochastic heat engines, or in modeling search processes.

Before we state our first result, let us recall the $q$-Pochhammer symbol,
\begin{align*}
	(a; q)_0 = 1,\qquad
	(a; q)_n = \prod_{j = 0}^{n-1} (1-aq^j),\qquad
	(a; q)_\infty = \prod_{j = 0}^\infty (1 - a q^j),
\end{align*}
and $q$-Gamma function,
\[
	\Gamma_q(x)=(1-q)^{1-x}\frac{(q;q)_{\infty}}{(q^x;q)_{\infty}}\,, \qquad \qquad x\notin -\mathbb{N}.
\]
The following theorem concerns the ergodicity of the process $\mathbf{X}$.
\begin{main_theorem}
	\label{thm:B}
	Suppose that $\mathbf{Y}$ is a strictly $\alpha$-stable process in $\RR^d$, $\alpha\in(0,2]$, with a transition density $p_0$. Assume that $\mathbf{X}$ is obtained from $\mathbf{Y}$ by partial resetting with factor $c\in(0,1)$. Then
the process $\mathbf{X}$ has a transition density denoted by $p$, such that for each $x, y \in \RR^d$,
	\begin{equation}
		\label{eq:4}
		\rho_{\mathbf{Y}}(y)=\lim_{t\to+\infty} p(t;x,y)
	\end{equation}
	where
	\[
		\rho_{\mathbf{Y}}(y)=
		\frac{1}{(m; m)_\infty}\sum_{k=0}^\infty (-1)^k \frac{m^{\frac{1}{2}k(k-1)}}{(m; m)_k} \,
		\int_0^\infty e^{-m^{-k} u} p_0(u;0,y) {\: \rm d}u.
	\]
 Furthermore, $\rho_{\mathbf{Y}} \in \calC_0^\infty(\RR^d)$, and for every $\gamma \in \RR$,
	\begin{equation}
		\label{eq:3}
		\int_{\RR^d} |y|^{\gamma} \rho_{\mathbf{Y}}(y) {\: \rm d}y
		= \frac{\Gamma(\gamma/\alpha+1)}{\Gamma_m(\gamma/\alpha+1)} (1-m)^{-\gamma/\alpha}\, \mathbb{E}|Y_1|^\gamma.
	\end{equation}
\end{main_theorem}
For a proper interpretation of the quotient $\Gamma(\gamma+1)/\Gamma_m(\gamma+1)$ for $\gamma \in -\NN$, see \eqref{eq:G/G_m}.
The limit \eqref{eq:4} is a consequence of Theorem~\ref{thm:lim_p_t_infty}. The smoothness of $\rho_{\mathbf{Y}}$ as well
as its moments are studied in Proposition \ref{prop:6}. We also check that $p$ solves the \emph{Fokker--Planck equation}, and
$\rho_{\mathbf{Y}}$ is \emph{harmonic} with respect to the operator $L^2(\RR^d, {\rm d}y)$-adjoint to
the generator of the process $\mathbf{X}$, see Theorem~\ref{thm:H+F-P}.

To the best of our knowledge in this context the only rigorously studied process is a linear drift with partial resetting
\cite{14}. Since this process has values in the half-line, a natural tool to study its distribution is the Laplace
transform. For a one-dimensional Brownian motion with partial resetting in \cite{jaifizycy} some results are obtained using the Fourier
transform under the assumption that $\rho_{\mathbf{Y}}$ exists. In both cases the resulting formulas are obtained with the help
of inversion theorems. We tried to apply the same reasoning in the multidimensional case, but it led to expressions that are highly
nontrivial to analyze. In this paper, we develop another approach:
The derivation of Theorem \ref{thm:B} begins with establishing a series representation of $p$ valid for general L\'evy processes
having densities. To be more precise, if $p_0$ is the density of a L\'evy process $\mathbf{Y}$, then
\[
	p(t; x, y)
	=e^{-t} p_0(t; x, y)
	+
	\int_0^t \int_{\RR^d}
	e^{-s} p_0(s; x, z) p(t-s; cz, y) {\: \rm d} z {\: \rm d} s,
\]
and therefore
\[
	p(t; x, y) = e^{-t} \sum_{j = 0}^\infty p_j(t; x, y), \quad \text{for all } x,y \in \RR^d, t > 0
\]
where $(p_n : n \in \NN)$ satisfies the recursion
\[
	p_{n+1}(t; x, y) = \int_0^t \int_{\RR^d} p_0(s; x, z) p_n(t-s; cz, y) {\: \rm d}z {\: \rm d} s,
	\quad\text{for all }x, y \in \RR^d, t >0, n \in \NN_0.
\]
Assuming additionally that $\mathbf{Y}$ is a strictly stable process, we are able to simplify the representation and
we express it by means of an auxiliary family of one-dimensional splines $(P_j : j \in \NN)$. Namely, we get
\begin{equation}
	\label{eq:36}
	p(t; x, y)=e^{-t}p_0(t; 0, y-x)+e^{-t}\sum_{j=1}^\infty t^j \int_0^1 p_0(tu;0,y-c^jx)  P_j(u) {\: \rm d} u
\end{equation}
where $(P_j)$ are given by recursive formulas \eqref{eq:P1u} and \eqref{Pnu}. To simplify the exposition we restrict our
attention to $x=0$. In this case \eqref{eq:36} takes the form
\begin{equation}
	\label{eq:40}
	p(t;0,y)= \int_0^\infty p_0(u;0,y) \: \mu_t({\rm d} u), \quad\text{for all } y \in \RR^d, t > 0
\end{equation}
where $\mu_t$ is a probability measure constructed from splines $(P_j)$ as in \eqref{def:mu_t}.
Clearly,
\[
	p(t;0,0)=p_0(1;0,0)\int_0^\infty u^{-d/\alpha} \: \mu_t( {\rm d} u)
\]
which motivates the analysis of the moments of $\mu_t$. To do so, we first compute $\gamma$ moments for $P_j$ which satisfy a
two-parameter recursive equation, see \eqref{eq:19}. Namely, $\gamma$ moment of $P_j$ is expressed as a linear combination
of $\gamma$ moment of $P_{j+1}$ and $(\gamma-1)$ moment of $P_{j+1}$. Solving the equation for non-natural $\gamma$ is nontrivial
because it connects $\gamma+\ZZ$ moments, but there is no a priori known value in this collection. To solve this problem we
introduce scaled moments and we show that they do have a limit as $\gamma$ tends to minus infinity. It is not hard to compute
zero moments. Then to find negative integer moments with large absolute value we express them, with the help of the recurrence
relation, as a combination of moments of larger orders. However, the recurrence breaks down for $\gamma=0$ which makes it
impossible to use any initial condition. To overcome this
difficulty we use an epsilon trick to reach $\epsilon$ moment. Rough estimates on the moments together with continuity in
$\epsilon$ allow us to conclude. Having the negative integer moments computed we use them to evaluate the limit as $\gamma$
tends to minus infinity.
Next, we deal with non-integer moments. The previous steps permit us to iterate the scaled recursion infinitely many times which
reduces the problem to computing the value of a certain series. For this purpose we use the
$q$-binomial theorem. The missing integer moments are obtained by continuity. Having all moments of $P_j$'s we find
the corresponding moments of the measures $\mu_t$. This gives the tightness of the family $(\mu_t : t > 0)$
while the convergence of natural moments to explicit quantities allows us to deduce the weak convergence of $(\mu_t : t > 0)$
to certain absolutely continuous probability measure $\mu$. In fact, all the moments of $(\mu_t : t > 0)$ converge to
the corresponding moments of $\mu$ and are given explicitly, see Corollary \ref{cor:m-2} and Theorem \ref{thm:weak_conv}.
The weak convergence together with the convergence of moments and the absolute continuity lead to \eqref{eq:4} for $x=0$, that is,
\begin{equation}
	\label{eq:42}
	\rho_{\mathbf{Y}}(y) = \int_0^{\infty} p_0(u;0,y) \: \mu({\rm d} u).
\end{equation}
The general case requires additional work because we have to deal with \eqref{eq:36} in place of \eqref{eq:40}.
To prove the regularity of $\rho_{\mathbf{Y}}$ we use \eqref{eq:42} together with the finiteness of all moments of $\mu$
and the properties of the density $p_0$ of the stable process $\mathbf{Y}$.

Since $\mathbf{X}$ has the stationary measure, one may check its equilibrium. Let us recall that a stochastic process
reaches equilibrium stationary state if a time-reversed process has the same distribution as $\mathbf{X}$, see e.g.
\cite{e21090884, Floreani, Derrida}. Otherwise we say that it reaches the non-equilibrium stationary state (abbreviated as NESS).
One of commonly used tests to determine whether the process reaches NESS is to check if its generator is \emph{not}
self-adjoint in $L^2(\RR^d, \rho_{\mathbf{Y}}(x) {\rm d} x)$. In Theorem \ref{thm:NESS}, by this method
we prove that $\mathbf{X}$ reaches NESS.

The convergence \eqref{eq:4}, can also be written in the following form
\begin{equation}
	\label{eq:5}
	\lim_{t\to+\infty}\frac{p(t;x,y)}{\rho_{\mathbf{Y}}(y)}=1,
\end{equation}
for each $x,y \in \RR^d$, such that $\rho_{\mathbf{Y}}(y)>0$. To better understand the behavior of the transition density $p$
we seek for possibly largest space-time region $\calD \subset \RR_+ \times \RR^d$ such that \eqref{eq:5} holds true uniformly
with respect to $(t, y) \in \calD$ while $t$ tends to infinity (\footnote{$\RR_+ = (0, \infty)$}).
\begin{main_theorem}
	\label{thm:C}
	Suppose that $\mathbf{Y}$ is an isotropic $\alpha$-stable process in $\RR^d$, $\alpha\in(0,2)$.
	Assume that $\mathbf{X}$ is obtained from $\mathbf{Y}$ by partial resetting with factor $c\in(0,1)$. Then for each
	$\kappa \in (0, 1)$, the transition density of $\mathbf{X}$ satisfies
	\begin{equation}
		\label{eq:12}
		\lim_{\atop{t \to \infty}{\norm{y} \to \infty}}
		\sup_{\norm{x} \leq \kappa \norm{y}} \bigg| \frac{p(t; x, y)}{\rho_{\mathbf{Y}}(y)} - 1 \bigg| = 0.
	\end{equation}
\end{main_theorem}
Theorem \ref{thm:C} is a direct consequence of Theorem \ref{thm:ius} and Corollary \ref{cor:ius}. In fact, in Theorem
\ref{thm:ius} we also investigate uniform limits with respect to $c \in (0, 1)$. Similar theorems are obtained for
$\alpha$-stable subordinators $\alpha \in (0, 1)$, see Theorem \ref{thm:s-s}, and $d$-cylindrical $\alpha$-stable processes
$\alpha \in (0, 2)$, see Theorem \ref{thm:cylindrical}. To the best of our knowledge, the limit of the form as in
Theorem \ref{thm:C} has never been studied before in this context. The proof of \eqref{eq:12} proceeds as follows:
We first consider the quotient $(1-m)p(t;x,y)/\nu(y)$ where $\nu$ is the density of the L\'{e}vy measure of the isotropic
$\alpha$-stable process. For simplicity of the exposition, let us consider $x=0$ only. By \eqref{eq:40}, to
prove Theorem \ref{thm:C} we study the asymptotic behavior of the integral
\[
	\int_0^\infty \frac{p_0(u;0,y)}{\nu(y)} \: \mu_t({\rm d} u).
\]
To do so we use the well-known asymptotic behavior of $p_0(u;0,y)/(u \nu(y))$ as $u |y|^{-\alpha}$ tends to $0$, and
the splitting of the integral into two parts: the one that carries most of the mass, this is where the asymptotic is used,
and the remaining one which is negligible as $t$ goes to infinity. The explicit forms of the first and the second moments of
the measure $\mu_t$ are essential, especially to obtain results uniform in the parameter $c$.

Let us observe that Theorem \ref{thm:C} does not cover the Brownian motion case. In fact, the analysis for $\alpha = 2$
is more delicate. However, there is a large space-time region where uniform convergence occurs. We get the following result.
\begin{main_theorem}
	\label{thm:D}
	Suppose that $\mathbf{Y}$ is Brownian motion in $\RR^d$. Assume that $\mathbf{X}$ is obtained from $\mathbf{Y}$ by partial
	resetting with factor $c\in(0,1)$. For each $\delta > 0$, the transition density of $\mathbf{X}$ satisfies
	\begin{equation}
		\label{eq:16}
		p(t; 0, y)
		=
		\rho_{\mathbf{Y}}(y) \big(1 + \calO\big(t^{-1}\big)\big)
	\end{equation}
	as $t$ tends to infinity, uniformly in the region
	\begin{equation}
		\label{eq:14}
		\Big\{(t, y) \in \RR_+ \times \RR^d :
		m^2 +\delta \leq \frac{\norm{y}^2}{4t^2} \leq 1 - \delta \Big\}.
	\end{equation}
\end{main_theorem}
Theorem \ref{thm:D} is implied by Theorem \ref{thm:6} combined with Lemma \ref{lem:densities}. Currently, we do not know
how to get the asymptotic behavior of $p(t; 0, y)$ in the whole space-time region below $m^2 + \delta$, but we expect that
\eqref{eq:16} is uniform in the region
\[
	\Big\{(t, y) \in \RR_+ \times \RR^d : \frac{\norm{y}^2}{4t^2} \leq 1 - \delta \Big\}.
\]
We plan to return to this problem in the future. The following theorem shows that if $\norm{y}$ stays above $2t$, the asymptotic
behavior of $p(t; 0, y)$ is totally different.

\begin{main_theorem}
	\label{thm:F}
	Suppose that $\mathbf{Y}$ is a Brownian motion in $\RR^d$. Assume that $\mathbf{X}$ is obtained from $\mathbf{Y}$ by
	partial resetting with factor $c\in(0,1)$. For each $\delta > 0$, the transition density of $\mathbf{X}$ satisfies
	\[
		p(t; 0, y)
		= e^{-t}
		(4\pi t)^{-\frac{d}{2}} e^{-\frac{|y|^2}{4t}} \bigg\{1 + \bigg(\frac{4t^2}{\norm{y}^2}\bigg)
		\vphi\bigg(\frac{4t^2}{\norm{y}^2}\bigg)+
		\calO\bigg(\frac{t}{\norm{y}^2}\bigg)
		\bigg\}
	\]
	as $t$ tends to infinity, uniformly in the region
	\begin{equation}
		\label{eq:83}
		\Big\{(t, y) \in \RR_+ \times \RR^d : \frac{|y|^2}{4t^2} \geq 1 +\delta \Big\}
	\end{equation}
	where
	\[
		\vphi(x) = \sum_{j = 0}^\infty \frac{1}{(m; m)_{j+1}} x^j, \qquad \norm{x} < 1.
	\]
\end{main_theorem}
Theorem \ref{thm:F} is proved in Theorem \ref{thm:5}. Most of the existing papers focus on analyzing one-dimensional Brownian
motion subject to \emph{total resetting}, that is the process is put to zero at the Poisson moments. In this case one can explore the regenerative structure of Brownian motion with total resetting which is not available when $c \in (0, 1)$.
Let us also emphasize that for total resetting the transition density $p$ can be written explicitly which makes the asymptotic
analysis straightforward, for example by using the large deviation theory. In particular, in \cite{MR3476293} the authors
showed the asymptotic behavior of $p(t; 0, y)$ as $t$ goes to infinity while $|y|/t$ stays constant. Based on certain simulations
in dimensions $1$ and $2$, the change in the asymptotic behavior has been predicted by physicists, see e.g.
\cite{MR4093464, Tal}. An attempt to understand the case of multi-dimensional Brownian motion was done in \cite{MR3225982}
for total resetting.

To prove Theorems \ref{thm:D} and \ref{thm:F} we use the representation \eqref{eq:rep-p-0} of $p$, and the properties of
the splines $P_j$ to show that for $\norm{y} > 2 t m$,
\[
	p(t; 0, y) = e^{-t} (4\pi t)^{-\frac{d}{2}} \Big(
	e^{-\frac{|y|^2}{4t}} + I(t, y) + \text{negligible term}\Big)
\]
where
\[	
	I(t, y) = t \int_m^1 e^{\psi(t, y; u)} {\: \rm d} u
\]
for certain concave function $\psi(t, y; \cdot)$. If $(t, y)$ belongs to the region \eqref{eq:14}, the function
$\psi(t, y; \cdot)$ has the unique critical point in $[m, 1)$. To get the asymptotic behavior of $I(t, y)$ in
the uniform manner we use a variant of the steepest descent method keeping track of the interplay between $t$ and $\norm{y}$.
If $(t, y)$ belongs to the region \eqref{eq:83}, the function $\psi(t, y; \cdot)$ may have the critical point arbitrarily close
to or above $1$. In this case a careful study of the integral leads to a complete description of the asymptotic behavior of
$p(t; 0, y)$ in \eqref{eq:83}.

Our paper is organized as follows: In Section \ref{sec:2} we introduce the splines $(P_j : j \in \NN)$
and measures $(\mu_t : t > 0)$. We then computed their moments in Section \ref{sec:2.1} and Section \ref{sec:2.2}, respectively.
We show that the measures weakly converge to the probability measure $\mu$, see Section \ref{sec:mu_t}. Finally,
in Section \ref{sec:2.4} we define and study basic properties of the function $\rho_{\mathbf{Y}}$.
In Section \ref{sec:stationary} we provide a rigorous definition of the resetting. Then, with help of the splines $(P_j)$, we
construct the representation \eqref{eq:rep-p-0.1} for processes obtained by partial resetting from strictly $\alpha$-stable
processes with densities. Next, we prove that the function $\rho_{\mathbf{Y}}$ is the density of the ergodic measure for
the process $\mathbf{X}$. In the following Section
\ref{sec:3.3} we study the density of $\mathbf{X}$. In Section \ref{sec:3.4} we prove that the process $\mathbf{X}$ reaches NESS.
Section \ref{sec:4} is devoted to the study of the asymptotic behavior of the transition density of $\mathbf{X}$. Finally,
in Appendix \ref{appendix:A} we collect basic properties of strictly $\alpha$-stable processes. In Appendix \ref{appendix:B}
we put further comments about the resetting and connections with the existing literature.

\subsection*{Notation}
We denote by $\NN$ positive integers and $\NN_0 = \NN \cup \{0\}$. We write $f \approx g$ on $U$ or $f(x) \approx g(x)$
for $x \in U$, if there is a constant $C > 0$ such that $C^{-1} g \leq f \leq C g$ for all $x \in U$. As usual
$a \land b= \min\{a,b\}$, $a \vee b=\max\{a,b\}$. By $\lceil x\rceil$ and $\lfloor x \rfloor$ we denote the ceiling and
the floor of a real number $x$. An open ball of radius $r > 0$ centered at $x$ is denoted by $B_r(x)$, and abbreviated to $B_r$
if $x=0$.

\section{Splines $P_j$ and measures $\mu_t$}
\label{sec:2}
In this section we introduce a sequence of splines on $[0, 1]$ which is the building block for the representation of
the transition density of stable processes after resetting.

Given $c \in (0, 1)$ and $\alpha \in (0, 2]$, let us consider a sequence $(W_n : n \in \NN)$ of functions on $\RR_+ \times \RR$
defined as
\begin{align*}
	W_1(t, u) &= \frac{1}{1-m} \ind{(mt, t]}(u), \\
	W_{n+1}(t, u)
	&=
	\ind{(m^{n+1} t, t]}(u)
	\int^{\frac{t-u}{1- m^{n+1}}}_{\frac{m^{n+1} t - u}{m^n - m^{n+1}} \vee 0}
	W_n(t - s, u - m^{n+1} s) {\: \rm d} s, \quad \text{for } n \in \NN
\end{align*}
where $m = c^\alpha$. Observe that $W_n$ is a homogeneous function of degree $n-1$.
\begin{proposition}
	\label{prop:3}
	For every $n \in \NN$ and $\lambda \geq 0$,
	\[
		W_n(\lambda t, \lambda u) = \lambda^{n-1} W_n(t, u), \quad\text{for all } s, u \geq 0.
	\]
\end{proposition}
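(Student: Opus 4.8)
The plan is to prove the scaling identity $W_n(\lambda t, \lambda u) = \lambda^{n-1} W_n(t, u)$ by induction on $n$, reading off the homogeneity from the recursive definition. First I would dispose of the base case $n=1$: since $W_1(t,u) = \tfrac{1}{1-m}\ind{(mt,t]}(u)$ and the condition $u \in (mt, t]$ is equivalent to $\lambda u \in (m\lambda t, \lambda t]$ for $\lambda > 0$ (and both sides vanish when $\lambda = 0$), we get $W_1(\lambda t, \lambda u) = W_1(t,u) = \lambda^{0} W_1(t,u)$, which is the claim for $n=1$.

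For the inductive step, assume the homogeneity of degree $n-1$ for $W_n$ and compute $W_{n+1}(\lambda t, \lambda u)$ directly from the defining formula. The indicator $\ind{(m^{n+1}\lambda t, \lambda t]}(\lambda u)$ equals $\ind{(m^{n+1} t, t]}(u)$ by the same scaling of the interval endpoints. In the integral
\[
\int_{\frac{m^{n+1}\lambda t - \lambda u}{m^n - m^{n+1}} \vee 0}^{\frac{\lambda t - \lambda u}{1 - m^{n+1}}}
W_n\big(\lambda t - s, \lambda u - m^{n+1} s\big) {\: \rm d} s,
\]
I would substitute $s = \lambda \sigma$, so ${\rm d}s = \lambda \, {\rm d}\sigma$ and the limits of integration become $\big(\tfrac{m^{n+1} t - u}{m^n - m^{n+1}} \vee 0,\ \tfrac{t - u}{1 - m^{n+1}}\big)$ in the variable $\sigma$ — exactly the limits appearing in the definition of $W_{n+1}(t, u)$. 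The integrand becomes $W_n\big(\lambda(t - \sigma), \lambda(u - m^{n+1}\sigma)\big)$, to which the induction hypothesis applies (with the roles of $t, u$ played by $t - \sigma$ and $u - m^{n+1}\sigma$, which are nonnegative on the range of integration), giving $\lambda^{n-1} W_n(t - \sigma, u - m^{n+1}\sigma)$. Collecting the factor $\lambda$ from ${\rm d}s$ and the factor $\lambda^{n-1}$ from the integrand yields $\lambda^n W_{n+1}(t, u)$, which is the desired degree-$n$ homogeneity. The case $\lambda = 0$ is immediate since all the indicators vanish.

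There is essentially no obstacle here; the only point requiring a line of care is checking that the substitution $s = \lambda\sigma$ maps the (possibly truncated) lower limit $\tfrac{m^{n+1}\lambda t - \lambda u}{m^n - m^{n+1}} \vee 0$ to $\big(\tfrac{m^{n+1} t - u}{m^n - m^{n+1}}\big) \vee 0$ — this holds because $\max\{\lambda a, 0\} = \lambda \max\{a, 0\}$ for $\lambda \geq 0$ — and that the argument $\lambda u - m^{n+1} s = \lambda(u - m^{n+1}\sigma)$ stays in the region where the induction hypothesis is valid, which it does on the stated range of $\sigma$. The statement of the proposition as quoted writes "for all $s, u \geq 0$", which appears to be a typo for "for all $t, u \geq 0$"; the proof as outlined establishes the identity for every $t, u \geq 0$ and every $\lambda \geq 0$.
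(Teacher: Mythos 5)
Your proposal is correct and follows essentially the same route as the paper: induction on $n$, with the base case immediate from the scale-invariance of the indicator, and the inductive step carried out via the substitution $s=\lambda\sigma$, which rescales the limits of integration and the measure ${\rm d}s$ to produce the factor $\lambda\cdot\lambda^{n-1}=\lambda^{n}$. The extra remarks on the truncated lower limit, the $\lambda=0$ case, and the typo $s\mapsto t$ in the statement are accurate but not points of divergence from the paper's argument.
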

\begin{proof}
	We argue by induction. There is nothing to prove for $n = 1$. Next, by the change of variables, we obtain
	\begin{align*}
		W_{n+1}(\lambda t, \lambda u)
		&=
		\ind{[m^{n+1}\lambda t, \lambda t)}(\lambda u)
		\int^{\frac{\lambda t - \lambda u}{1-m^{n+1}}}_{\frac{m^n \lambda t - \lambda u}{m^n-m^{n+1}} \vee 0}
		W_n(\lambda t - s, \lambda u - m^{n+1} s) {\: \rm d} s \\
		&=
		\lambda
		\ind{[m^{n+1} t, t)}(u)
		\int^{\frac{t - u}{1-m^{n+1}}}_{\frac{m^n t - u}{m^n-m^{n+1}} \vee 0}
		W_n(\lambda t - \lambda s, \lambda u - m^{n+1} \lambda s) {\: \rm d} s.
	\end{align*}
	Now, by the inductive assumption
	\[
		W_{n+1}(\lambda t, \lambda u)
		=
		\lambda
		\ind{[m^{n+1} t, t)}(u)
		\int^{\frac{t - u}{1-m^{n+1}}}_{\frac{m^n t - u}{m^n-m^{n+1}} \vee 0}
		\lambda^{n-1} W_n(t - s, u - m^{n+1} s) {\: \rm d} s = \lambda^n W_{n+1}(t, u),
	\]
	and the proposition follows.
\end{proof}

For each $n \in \NN$, we set
\begin{equation}
	\label{eq:21}
	P_n(u) = W_n(1, u), \quad u \geq 0.
\end{equation}
\begin{proposition}
	\label{prop:1}
	The sequence $(P_n : n \in \NN)$ satisfies
	\begin{align}
		P_1(u) &= \frac{1}{1-m} \ind{(m, 1]}(u), \label{eq:P1u}\\
		P_{n+1}(u) &= \big(u-m^{n+1}\big)_+^n \int_u^1 \frac{P_n(v)}{(v-m^{n+1})^{n+1}} {\: \rm d}v,
		\quad \text{for } n \in \NN. \label{Pnu}
	\end{align}
	In particular, $P_n$ is supported on $[m^n, 1]$.
\end{proposition}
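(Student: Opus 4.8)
The plan is to prove both assertions simultaneously by induction on $n$. The base case is immediate from the definition of $W_1$: one has $P_1(u)=W_1(1,u)=\tfrac{1}{1-m}\ind{(m,1]}(u)$, which is \eqref{eq:P1u}, and this is supported on $[m,1]$.

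For the inductive step, assume $\supp P_n\subseteq[m^n,1]$. Fix $u>0$; the cases $u\le m^{n+1}$ and $u\ge 1$ are trivial, since both sides of \eqref{Pnu} then vanish (for $u\le m^{n+1}$ because of the prefactor $(u-m^{n+1})_+^n$, and for $u\ge 1$ because of the integral, using $\supp P_n\subseteq[m^n,1]$), so we may assume $m^{n+1}<u<1$. Setting $t=1$ in the definition of $W_{n+1}$, I would first observe that the integrand $W_n(1-s,u-m^{n+1}s)$ vanishes unless $m^n(1-s)\le u-m^{n+1}s\le 1-s$, so that $P_{n+1}(u)=W_{n+1}(1,u)$ equals the integral of this integrand over the effective range $J(u)=\{s\ge 0 : m^n(1-s)\le u-m^{n+1}s\le 1-s\}$, on which $1-s>0$. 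There the homogeneity from Proposition~\ref{prop:3} gives $W_n(1-s,u-m^{n+1}s)=(1-s)^{n-1}P_n\big(\tfrac{u-m^{n+1}s}{1-s}\big)$.

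Next I would substitute $v=\tfrac{u-m^{n+1}s}{1-s}$, equivalently $s=\tfrac{v-u}{v-m^{n+1}}$: as $u>m^{n+1}$ this is an increasing bijection with $1-s=\tfrac{u-m^{n+1}}{v-m^{n+1}}$ and $ds=\tfrac{u-m^{n+1}}{(v-m^{n+1})^2}\,dv$, hence $(1-s)^{n-1}\,ds=\tfrac{(u-m^{n+1})^n}{(v-m^{n+1})^{n+1}}\,dv$. The three inequalities cutting out $J(u)$ become, respectively, $v\ge u$, $v\ge m^n$ and $v\le 1$, so $v$ runs over $[u\vee m^n,1]$, on which $v-m^{n+1}\ge m^n-m^{n+1}>0$, so there is no singularity. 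This yields
\[
P_{n+1}(u)=(u-m^{n+1})^n\int_{u\vee m^n}^{1}\frac{P_n(v)}{(v-m^{n+1})^{n+1}}\,dv.
\]
Since the inductive hypothesis makes $P_n$ vanish on $[u,m^n)$ whenever $u<m^n$, the lower limit $u\vee m^n$ may be lowered to $u$, which is exactly \eqref{Pnu}; the same hypothesis shows the right-hand side vanishes outside $[m^{n+1},1]$, closing the induction.

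The homogeneity reduction and the change-of-variables computation are routine. The step requiring care is the treatment of the endpoints: the lower endpoint of $J(u)$ changes form with the sign of $m^n-u$ (it equals $0$ when $u\ge m^n$ and a positive quantity otherwise), and one must check that after the substitution both regimes merge into the single clean lower limit $u\vee m^n$, which is then absorbed into $u$ using $\supp P_n\subseteq[m^n,1]$.
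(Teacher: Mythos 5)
Your proof is correct and follows essentially the same route as the paper's: set $t=1$ in the recursive definition of $W_{n+1}$, use the homogeneity of Proposition~\ref{prop:3} to write the integrand as $(1-s)^{n-1}P_n\big(\tfrac{u-m^{n+1}s}{1-s}\big)$, and change variables $v=\tfrac{u-m^{n+1}s}{1-s}$ to land on the limits $u\vee m^n$ and $1$, absorbing $u\vee m^n$ into $u$ via the support of $P_n$. Your explicit handling of the effective integration range and of the endpoint cases is slightly more careful than the paper's, but the argument is the same.
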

\begin{proof}
	For $u \in (m^{n+1} 1]$, we have
	\begin{align*}
		P_{n+1}(u)
		=
		W_{n+1}(1, u)
		&=
		\int_{\frac{m^n-u}{m^n-m^{n+1}} \vee 0}^{\frac{1-u}{1-m^{n+1}}}
		W_n(1-s, u - m^{n+1} s) {\: \rm d} s \\
		&= \int_{\frac{m^n-u}{m^n-m^{n+1}} \vee 0}^{\frac{1-u}{1-m^{n+1}}}
		(1-s)^{n-1} P_n\bigg(\frac{u-m^{n+1}s }{1 - s} \bigg) {\: \rm d} s.
	\end{align*}
	Setting
	\[
		w = \frac{u-m^{n+1} s }{1-s} = \frac{u-m^{n+1}}{1-s} + m^{n+1},
	\]
	we obtain
	\begin{align*}
		P_{n+1}(u)
		&=
		\int_{u \vee m^n}^1 \bigg(\frac{u-m^{n+1}}{w - m^{n+1}} \bigg)^{n-1} P_n(w) \frac{u-m^{n+1}}{(w-m^{n+1})^2}
		{\: \rm d} w,
	\end{align*}
	as claimed.
\end{proof}
Later we will need the following fact.
\begin{proposition}
	\label{prop:2}
	For each $n \in \NN$, $P_n$ is a spline supported on $[m^n, 1]$, such that
	\begin{equation}
		\label{eq:8}
		P_n(u) = \frac{1}{(n-1)!} \frac{1}{(m; m)_n} (1-u)^{n-1}, \quad \text{for all } u \in [m, 1],
	\end{equation}
	and
	\begin{equation}
		\label{eq:9}
		P_n(u) \leq \frac{1}{(n-1)!} \frac{1}{(m; m)_n} (1-u)^{n-1}, \quad \text{for all } u \in [0, 1].
	\end{equation}
\end{proposition}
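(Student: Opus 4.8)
The plan is to prove the three claims—that $P_n$ is a spline on $[m^n,1]$, the explicit formula \eqref{eq:8} on $[m,1]$, and the upper bound \eqref{eq:9} on all of $[0,1]$—simultaneously by induction on $n$, using the recursion \eqref{Pnu} from Proposition~\ref{prop:1}. The base case $n=1$ is immediate from \eqref{eq:P1u}: $P_1 = \frac{1}{1-m}\ind{(m,1]}$, which is a (degree-zero) spline supported on $[m,1]=[m^1,1]$, equals $\frac{1}{1-m}(1-u)^0$ there (note $(m;m)_1 = 1-m$ and $0!=1$), and the bound \eqref{eq:9} holds trivially since $P_1$ vanishes on $[0,m]$.

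For the inductive step I would assume the three properties for $P_n$ and analyze $P_{n+1}(u) = (u-m^{n+1})_+^n \int_u^1 \frac{P_n(v)}{(v-m^{n+1})^{n+1}}\,{\rm d}v$. First, the spline property: on each interval where $P_n$ is a polynomial, the integrand $P_n(v)/(v-m^{n+1})^{n+1}$ is a rational function whose antiderivative, after multiplication by the polynomial factor $(u-m^{n+1})^n$, is again piecewise polynomial (the poles at $m^{n+1}$ are cancelled by the prefactor); the breakpoints of $P_n$, namely $m^n, m^{n-1}, \dots, m$, together with the new breakpoint at $m^{n+1}$ coming from the support of the prefactor, are exactly the breakpoints of $P_{n+1}$, and one checks the support is $[m^{n+1},1]$ since for $u < m^{n+1}$ the prefactor vanishes. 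Second, the explicit formula on $[m,1]$: for $u \in [m,1]$ the integration range $[u,1]$ lies entirely in $[m,1]$, where by the inductive hypothesis $P_n(v) = \frac{1}{(n-1)!}\frac{1}{(m;m)_n}(1-v)^{n-1}$, so
\[
	P_{n+1}(u) = \frac{1}{(n-1)!}\frac{1}{(m;m)_n}(u-m^{n+1})^n \int_u^1 \frac{(1-v)^{n-1}}{(v-m^{n+1})^{n+1}}\,{\rm d}v.
\]
Here the substitution $w = \frac{1-v}{v-m^{n+1}}$ (so that $v = m^{n+1}$ corresponds to $w=\infty$ and $v=1$ to $w=0$) should turn the integral into $\int_0^{(1-u)/(u-m^{n+1})} \frac{w^{n-1}}{1-m^{n+1}}\,{\rm d}w = \frac{1}{n(1-m^{n+1})}\big(\frac{1-u}{u-m^{n+1}}\big)^n$, and after multiplying by $(u-m^{n+1})^n$ the pole factor cancels cleanly, leaving $P_{n+1}(u) = \frac{1}{n!}\frac{1}{(m;m)_n(1-m^{n+1})}(1-u)^n = \frac{1}{n!}\frac{1}{(m;m)_{n+1}}(1-u)^n$, using $(m;m)_{n+1} = (m;m)_n(1-m^{n+1})$.

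For the bound \eqref{eq:9}, for $u \ge m$ it follows from \eqref{eq:8} with equality; for $u \in [0,m^{n+1}]$ both sides are handled by the support statement (left side zero) so there is nothing to prove beyond observing the right side is nonnegative; the genuine work is the range $u \in (m^{n+1}, m)$. There I would bound the integral $\int_u^1 \frac{P_n(v)}{(v-m^{n+1})^{n+1}}\,{\rm d}v$ by splitting at $m$: on $[m,1]$ use \eqref{eq:8} exactly, and on $[u,m]$ use the inductive bound $P_n(v) \le \frac{1}{(n-1)!}\frac{1}{(m;m)_n}(1-v)^{n-1}$; then in both pieces the same substitution $w=\frac{1-v}{v-m^{n+1}}$ applies and, crucially, $v \mapsto w$ is decreasing, so extending the integration region only increases the integral, and one arrives at $P_{n+1}(u) \le \frac{1}{n!}\frac{1}{(m;m)_{n+1}}(1-u)^n$ by the identical computation as above but now as an inequality. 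The main obstacle I anticipate is bookkeeping the breakpoint structure for the spline claim (making sure no spurious singularity survives at $v=m^{n+1}$ and that the piecewise-polynomial pieces glue with the right regularity), together with being careful that the inductive bound \eqref{eq:9}, rather than the exact formula \eqref{eq:8}, is what is available on the sub-interval $[u,m]$ — the monotonicity of the substitution is exactly what lets the bound propagate without loss.
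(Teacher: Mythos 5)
Your proposal is correct and follows essentially the same route as the paper: induction via the recursion \eqref{Pnu}, with the key step being the exact evaluation of $(u-m^{n+1})^n\int_u^1 (1-v)^{n-1}(v-m^{n+1})^{-n-1}\,{\rm d}v$ (the paper quotes an antiderivative where you use the equivalent substitution $w=(1-v)/(v-m^{n+1})$). For \eqref{eq:9}, which the paper dismisses with ``similarly,'' your argument is sound, though the splitting at $m$ and the ``extending the integration region'' remark are unnecessary --- one simply bounds $P_n(v)$ pointwise by the inductive hypothesis \eqref{eq:9} on all of $[u,1]$ and evaluates the resulting majorizing integral exactly, since the antiderivative identity is valid for any $u>m^{n+1}$.
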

\begin{proof}
	Let us recall that for $a<b$, $n\in \NN$ and $v>a$ we have
	\[
		\int \frac{(v-b)^{n-1}}{(v-a)^{n+1}}{\: \rm d} v = \frac1{n}\frac1{b-a} (v-b)^n(v-a)^{-n}.
	\]
	Hence, taking $a=m^{n+1}$ and $b=1$, for all $n \geq 1$ and $u \in [m, 1]$ we get
	\begin{align}
		\label{eq:integral_m}
		(u - m^{n+1})^n \int_u^1 \frac{(1-v)^{n-1}}{(v-m^{n+1})^{n+1}} {\: \rm d} v
		= \frac{1}{n} \frac{1}{1-m^{n+1}} (1-u)^n.
	\end{align}
	The proof of \eqref{eq:8} is by induction with respect to $n \in \NN$. For $n = 1$ the formula trivially holds true.
	Next, using the inductive hypothesis and Proposition \ref{prop:1} we can write
	\begin{align*}
		P_{n+1}(u)
		&=
		(u - m^{n+1})^n \int_u^1 \frac{P_n(v)}{(v-m^{n+1})^{n+1}} {\: \rm d} v \\
		&=
		\frac{1}{(n-1)!} \frac{1}{(m; m)_{n}}
		(u - m^{n+1})^n \int_u^1 \frac{(1-v)^{n-1}}{(v-m^{n+1})^{n+1}} {\: \rm d} v \\
		&=
		\frac{1}{n!} \frac{1}{(m; m)_{n+1}} (1-u)^n
	\end{align*}
	where the last equality is a consequence of \eqref{eq:integral_m}. Similarly, one can prove the estimates \eqref{eq:9}.
\end{proof}

In Section \ref{sec:repr}, we prove that the transition density of the process $\mathbf{X}$ obtained from strictly
$\alpha$-stable process in $\RR^d$, $\alpha \in (0, 2]$, by resetting with factor $c \in (0, 1)$, can be written in a closed form
with help of measures $(\mu_t : t > 0)$ where
\begin{align}
	\label{def:mu_t}
	\mu_t({\rm d} u)
	=e^{-t}\delta_{t}({\rm d} u)
	+
	e^{-t} \sum_{j=1}^\infty t^j P_j(u/t) \frac{{\rm d} u}{t}.
\end{align}
Note that $\mu_t$ is a probability measure supported on $[0, t]$. Our aim is to compute the moments of $\mu_t$.
To do so we start by computing the moments of $P_j$'s.

\subsection{Moments of $P_j$'s}
\label{sec:2.1}
In this section we compute moments of splines $P_j$'s. The main result of this section is Theorem \ref{thm:all-moments}.
For $\gamma \in \RR$ and $j \in \NN$, we set
\begin{equation}
	\label{eq:28b}
	\mathbb{A}(\gamma, j) = \int_0^1 u^{\gamma} P_j(u) {\: \rm d} u.
\end{equation}
We start by proving several auxiliary lemmas.
\begin{lemma}
	\label{lem:2}
	For all $\gamma \in \RR$ and $j \in \NN$,
	\begin{equation}
		\label{eq:19}
		(j+1+\gamma) \mathbb{A}(\gamma, j+1) =
		\mathbb{A}(\gamma, j) + \gamma m^{j+1} \mathbb{A}(\gamma-1, j+1).		
	\end{equation}
\end{lemma}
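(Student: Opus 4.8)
The plan is to reduce the two-parameter recursion \eqref{eq:19} to a one-variable integration by parts, after extracting a first-order differential identity satisfied by the splines. First I would differentiate the defining relation \eqref{Pnu}: on the interval $(m^{j+1}, 1)$ write $P_{j+1}(u) = (u - m^{j+1})^j Q_j(u)$ with $Q_j(u) = \int_u^1 (v - m^{j+1})^{-j-1} P_j(v) \, dv$, so that $Q_j'(u) = -(u - m^{j+1})^{-j-1} P_j(u)$ for almost every $u$. The product rule then gives
\[
	(u - m^{j+1}) P_{j+1}'(u) = j \, P_{j+1}(u) - P_j(u), \qquad u \in (m^{j+1}, 1),
\]
almost everywhere, the splines $P_j$ and $P_{j+1}$ being continuous and piecewise polynomial, hence locally absolutely continuous. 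Equivalently, $P_j(u) = j P_{j+1}(u) - (u - m^{j+1}) P_{j+1}'(u)$.

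Next I would multiply this identity by $u^\gamma$ and integrate over $[0, 1]$, obtaining
\[
	\mathbb{A}(\gamma, j) = j \, \mathbb{A}(\gamma, j+1)
	- \int_0^1 \big(u^{\gamma+1} - m^{j+1} u^\gamma\big) P_{j+1}'(u) \, du.
\]
By Proposition \ref{prop:1} the spline $P_{j+1}$ is supported on $[m^{j+1}, 1]$, and it vanishes at $u = 1$ (the integral in \eqref{Pnu} is over $[1,1]$) and at $u = m^{j+1}$ (since $j \geq 1$); moreover $m^{j+1} > 0$, so $u \mapsto u^{\gamma+1} - m^{j+1} u^\gamma$ is $C^1$ on $[m^{j+1}, 1]$ for every $\gamma \in \RR$, which also makes $\mathbb{A}(\gamma-1, j+1)$ finite. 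Hence integration by parts annihilates the boundary terms and yields
\[
	\int_0^1 \big(u^{\gamma+1} - m^{j+1} u^\gamma\big) P_{j+1}'(u) \, du
	= -(\gamma+1) \, \mathbb{A}(\gamma, j+1) + \gamma \, m^{j+1} \, \mathbb{A}(\gamma-1, j+1).
\]
Substituting this back and collecting the two terms proportional to $\mathbb{A}(\gamma, j+1)$ gives $(j + 1 + \gamma) \mathbb{A}(\gamma, j+1) = \mathbb{A}(\gamma, j) + \gamma m^{j+1} \mathbb{A}(\gamma-1, j+1)$, which is \eqref{eq:19}.

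I do not anticipate a serious obstacle: the only delicate points are the validity of the differential identity at the finitely many knots of the splines, which is covered by absolute continuity, and the vanishing of the boundary contributions in the integration by parts, which follows from $P_{j+1}(m^{j+1}) = P_{j+1}(1) = 0$. As an alternative one can bypass the differential identity and establish \eqref{eq:19} by inserting \eqref{Pnu} directly into \eqref{eq:28b}, applying Fubini twice to interchange the order of integration and using the antiderivative $\tfrac{1}{j+1}(u - m^{j+1})^{j+1}$ of $(u - m^{j+1})^j$ in the inner integral; this route produces the same recursion but with heavier bookkeeping.
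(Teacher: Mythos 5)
Your argument is correct, and your main route differs from the paper's. The paper proves \eqref{eq:19} by exactly the method you relegate to your closing remark: it substitutes \eqref{Pnu} into \eqref{eq:28b}, applies Fubini to arrive at $\int_{m^{j+1}}^1 (v-m^{j+1})^{-j-1}P_j(v)\int_{m^{j+1}}^v u^\gamma(u-m^{j+1})^j\,{\rm d}u\,{\rm d}v$, integrates the inner integral by parts using the antiderivative $\tfrac{1}{j+1}(u-m^{j+1})^{j+1}$, and expands $(u-m^{j+1})^{j+1}=u(u-m^{j+1})^j-m^{j+1}(u-m^{j+1})^j$ to close the recursion. Your primary route instead isolates the first-order identity $(u-m^{j+1})P_{j+1}'(u)=jP_{j+1}(u)-P_j(u)$ a.e.\ on $(m^{j+1},1)$ and pairs it with $u^\gamma$; this is a genuinely different organization of the same underlying integration by parts, and it has the merit of recording a structural ODE for the splines that the paper never states and that makes the origin of the three-term relation transparent. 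The delicate points you flag are handled correctly: $P_{j+1}$ is Lipschitz on $[m^{j+1},1]$ (the integrand of $Q_j$ is bounded because $P_j$ is supported on $[m^j,1]$ with $m^j>m^{j+1}$), the identity extends trivially by $0=0$ on $(0,m^{j+1})$, the boundary terms vanish since $P_{j+1}(1)=0$ and $P_{j+1}(m^{j+1})=0$ for $j\geq 1$, and all moments involved are finite because the splines are bounded with support bounded away from $0$. One small caveat worth keeping explicit: for $j=1$ the function $P_1$ is an indicator, so $Q_1'$ and hence $P_2'$ exist only almost everywhere; your ``for almost every $u$'' together with local absolute continuity is exactly what is needed there.
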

\begin{proof}
	For the proof, we write
	\begin{align*}
		\mathbb{A}(\gamma, j+1)
		&= \int_{m^{j+1}} ^1 u^{\gamma} \big(u - m^{j+1}\big)^j
		\int_u^1 \frac{P_j(v)}{(v-m^{j+1})^{j+1}} {\: \rm d} v {\: \rm d}u \\
		&=
		\int_{m^{j+1}}^1 \frac{P_j(v)}{(v-m^{j+1})^{j+1}}
		\int_{m^{j+1}}^v u^{\gamma} \big(u - m^{j+1}\big)^j {\: \rm d} u {\: \rm d} v.
	\end{align*}
	Next, by the integration by parts, we obtain the following
	\begin{align*}
		\int_{m^{j+1}}^v u^{\gamma} \big(u-m^{j+1}\big)^j {\: \rm d} u
		&=
		\frac{1}{j+1} v^{\gamma} \big(v-m^{j+1}\big)^{j+1}
		-
		\frac{\gamma}{j+1} \int_{m^{j+1}}^v u^{\gamma-1} \big(u-m^{j+1}\big)^{j+1} {\: \rm d} u \\
		&=
		\frac{1}{j+1} v^{\gamma} \big(v-m^{j+1}\big)^{j+1}
		-
		\frac{\gamma}{j+1} \int_{m^{j+1}}^v u^{\gamma} \big(u-m^{j+1}\big)^j {\: \rm d} u \\
		&\phantom{=\frac{1}{j+1} v^{-\gamma} \big(v-m^{j+1}\big)^{j+1}}
		+
		m^{j+1} \frac{\gamma}{j+1} \int_{m^{j+1}}^v u^{-\gamma-1} \big(u-m^{j+1}\big)^j {\: \rm d} u
	\end{align*}
	which leads to
	\begin{align*}
		(j+1 + \gamma)
		\int_{m^{j+1}}^v u^{\gamma} \big(u-m^{j+1}\big)^j {\: \rm d} u
		=
		v^{\gamma} \big(v-m^{j+1}\big)^{j+1}
		+
		\gamma
		m^{j+1} \int_{m^{j+1}}^v u^{\gamma-1} \big(u-m^{j+1}\big)^j {\: \rm d} u
	\end{align*}	
	and the proposition follows.
\end{proof}

\begin{corollary}
	\label{cor:A0}
	For each $n\in\NN$,
	\[
		\mathbb{A}(0, n)=\frac1{n!}.
	\]
\end{corollary}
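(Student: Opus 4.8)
The plan is to derive the result directly from the recursion in Lemma~\ref{lem:2} by specializing to $\gamma = 0$ and running an induction on $n$. Setting $\gamma = 0$ in \eqref{eq:19}, the term $\gamma m^{j+1} \mathbb{A}(\gamma-1, j+1)$ vanishes, leaving the one-parameter recurrence
\[
	(j+1) \mathbb{A}(0, j+1) = \mathbb{A}(0, j), \qquad j \in \NN,
\]
so that $\mathbb{A}(0, j+1) = \mathbb{A}(0, j)/(j+1)$.

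For the base case I would compute $\mathbb{A}(0, 1)$ explicitly from \eqref{eq:P1u}: since $P_1(u) = \tfrac{1}{1-m} \ind{(m,1]}(u)$, we get
\[
	\mathbb{A}(0, 1) = \int_0^1 P_1(u) {\: \rm d} u = \frac{1}{1-m} \int_m^1 {\: \rm d} u = 1 = \frac{1}{1!}.
\]
Then a straightforward induction gives $\mathbb{A}(0, n) = 1/n!$ for all $n \in \NN$: assuming $\mathbb{A}(0, n) = 1/n!$, the recurrence yields $\mathbb{A}(0, n+1) = \tfrac{1}{(n+1)} \cdot \tfrac{1}{n!} = \tfrac{1}{(n+1)!}$.

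There is essentially no obstacle here; the only thing to be careful about is confirming that the case $\gamma = 0$ is legitimately covered by Lemma~\ref{lem:2} (it is, since the lemma is stated for all $\gamma \in \RR$ and $j \in \NN$), and that the integral defining $\mathbb{A}(0, 1)$ is evaluated over the correct support $[m, 1]$ of $P_1$. Alternatively, one could bypass the recurrence entirely and use the closed form from Proposition~\ref{prop:2}, but since $P_n$ equals $\tfrac{1}{(n-1)!(m;m)_n}(1-u)^{n-1}$ only on $[m,1]$ and not on all of $[0,1]$, this route would require integrating the spline over its full support $[m^n, 1]$ and is less direct than exploiting the vanishing of the second term in \eqref{eq:19}.
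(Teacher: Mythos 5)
Your proof is correct and is exactly the argument the paper intends: the result is stated as an immediate corollary of Lemma~\ref{lem:2}, obtained by setting $\gamma = 0$ so that the term $\gamma m^{j+1}\mathbb{A}(\gamma-1,j+1)$ drops out, and then inducting from the directly computed base case $\mathbb{A}(0,1)=1$. Nothing is missing.
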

We next introduce scaled moments. For $\gamma \in \RR$ and $n \in \NN$, we set
\begin{align}
	\label{defG}
	\mathbb{B}(\gamma, n)=
	\bigg(\prod_{k=1}^n \frac{k+\gamma}{1-m^{k+\gamma}}\bigg)
	\int_0^1 u^{\gamma} P_n(u)\: {\rm d}u.
\end{align}
If $\gamma$ is a negative integer the value of the product is understood in the limiting sense. Namely, if $\gamma \in -\NN$
and $n \geq \abs{\gamma}$, then
\begin{equation}
	\label{eq:43}
	\begin{aligned}
	\prod_{k = 1}^n \frac{k+\gamma}{1-m^{k+\gamma}}
	&=
	\lim_{\epsilon \to 0^+}
	\prod_{k = 1}^n \frac{k+\gamma+\epsilon}{1-m^{k+\gamma+\epsilon}} \\
	&=\frac{1}{-\log m} \prod_{\stackrel{k = 1}{k \neq \abs{\gamma}}}^n  \frac{k+\gamma}{1-m^{k+\gamma}}.
	\end{aligned}
\end{equation}
Clearly, for every $n\in\NN$ the function $\RR \ni \gamma \mapsto \mathbb{B}(\gamma, n)$ is continuous.
\begin{lemma}
	\label{lem:C_lim_-infty}
	For every $n\in\NN$,
	\[
		\lim_{\gamma \to -\infty} \mathbb{B}(\gamma,n+1)= m^{-\frac{n(n-1)}{2}} \frac{n!}{(1-m)^n} P_{n+1}(m^n).
	\]
\end{lemma}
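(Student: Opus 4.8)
The plan is to extract the behavior of $P_{n+1}$ near its left endpoint $m^{n+1}$, feed it into a Watson--Laplace type asymptotic analysis of $\mathbb{A}(\gamma, n+1)$ as $\gamma \to -\infty$, and pair the result with the elementary asymptotics of the product appearing in the definition \eqref{defG} of $\mathbb{B}$.

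First I would record an \emph{exact} local formula for $P_{n+1}$. Since $P_n$ is supported on $[m^n, 1]$ and $m^n > m^{n+1}$, the integrand in \eqref{Pnu} vanishes on $[u, m^n]$, so \eqref{Pnu} gives
\[
	P_{n+1}(u) = C_n\, (u - m^{n+1})^n \quad\text{for } u \in [m^{n+1}, m^n],
	\qquad C_n := \int_{m^n}^1 \frac{P_n(v)}{(v - m^{n+1})^{n+1}}\, {\rm d}v .
\]
Here $0 < C_n < \infty$: finiteness because $v \mapsto (v - m^{n+1})^{-(n+1)}$ is bounded on $[m^n, 1]$ and $P_n$ is bounded by \eqref{eq:9}, and positivity because $P_n \ge 0$ (clear from \eqref{Pnu}) with $\int_0^1 P_n = \mathbb{A}(0, n) = 1/n! > 0$ by Corollary~\ref{cor:A0}. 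Evaluating the local formula at $u = m^n$ yields the bridge to the claimed answer, namely $P_{n+1}(m^n) = C_n\, m^{n^2}(1-m)^n$, i.e. $C_n = m^{-n^2}(1-m)^{-n}\, P_{n+1}(m^n)$.

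Next I would analyze $\mathbb{A}(\gamma, n+1) = \int_{m^{n+1}}^1 u^\gamma P_{n+1}(u)\,{\rm d}u$ for $\gamma \to -\infty$ by splitting at $m^n$. On $[m^n, 1]$, monotonicity of $u \mapsto u^\gamma$ ($\gamma < 0$) and Corollary~\ref{cor:A0} give $\int_{m^n}^1 u^\gamma P_{n+1}(u)\,{\rm d}u \le m^{n\gamma}\int_{m^n}^1 P_{n+1} \le m^{n\gamma}/(n+1)!$. On $[m^{n+1}, m^n]$, substituting $u = m^{n+1}(1+s)$ and using the exact local formula gives
\[
	\int_{m^{n+1}}^{m^n} u^\gamma P_{n+1}(u)\,{\rm d}u
	= C_n\, m^{(n+1)(\gamma+n+1)} \int_0^{(1-m)/m} (1+s)^\gamma s^n\,{\rm d}s .
\]
For $\gamma < -n-1$ the full integral $\int_0^\infty (1+s)^\gamma s^n\,{\rm d}s$ converges to the Beta value $n!\big/\prod_{j=1}^{n+1}(-\gamma - j) = n!\,(-\gamma)^{-(n+1)}(1+o(1))$, while the tail $\int_{(1-m)/m}^\infty$ is bounded (splitting $(1+s)^\gamma = (1+s)^{\gamma/2}(1+s)^{\gamma/2}$ and using $1+s \ge 1/m$) by $m^{-\gamma/2}$ times a convergent integral, hence exponentially small. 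Thus
\[
	\mathbb{A}(\gamma, n+1) = C_n\, n!\, m^{(n+1)(\gamma+n+1)}\, (-\gamma)^{-(n+1)}(1+o(1)) + O(m^{n\gamma}),
	\qquad \gamma \to -\infty .
\]

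Finally I would treat the prefactor in \eqref{defG}. Since $0 < m < 1$, for each fixed $k$ one has $m^{k+\gamma} \to \infty$ as $\gamma \to -\infty$, so $1 - m^{k+\gamma} = -m^{k+\gamma}(1+o(1))$ and $\tfrac{k+\gamma}{1 - m^{k+\gamma}} = -(\gamma + k)\, m^{-(\gamma+k)}(1+o(1))$; multiplying over $k = 1, \dots, n+1$,
\[
	\prod_{k=1}^{n+1} \frac{k+\gamma}{1 - m^{k+\gamma}}
	= (-1)^{n+1}\Big(\prod_{k=1}^{n+1}(\gamma+k)\Big)\, m^{-(n+1)\gamma - \frac{(n+1)(n+2)}{2}}(1+o(1))
	= (-1)^{n+1}\gamma^{n+1}\, m^{-(n+1)\gamma - \frac{(n+1)(n+2)}{2}}(1+o(1))
\]
(for $\gamma$ sufficiently negative no $\gamma+k$ with $1 \le k \le n+1$ is near $0$, so the limiting convention \eqref{eq:43} never enters; alternatively one uses continuity of $\gamma \mapsto \mathbb{B}(\gamma, n+1)$ and lets $\gamma \to -\infty$ through non-integers). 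Multiplying this by the asymptotics of $\mathbb{A}(\gamma, n+1)$, the factors $m^{\pm(n+1)\gamma}$ cancel, the remainder $O(m^{n\gamma})$ contributes $O(\gamma^{n+1} m^{-\gamma}) \to 0$, and $(-1)^{n+1}\gamma^{n+1}/(-\gamma)^{n+1} = 1$, so
\[
	\lim_{\gamma \to -\infty} \mathbb{B}(\gamma, n+1)
	= m^{(n+1)^2 - \frac{(n+1)(n+2)}{2}}\, C_n\, n!
	= m^{\frac{n(n+1)}{2}}\, C_n\, n! ;
\]
inserting $C_n = m^{-n^2}(1-m)^{-n} P_{n+1}(m^n)$ and using $\tfrac{n(n+1)}{2} - n^2 = -\tfrac{n(n-1)}{2}$ gives exactly $m^{-n(n-1)/2}\,\tfrac{n!}{(1-m)^n}\,P_{n+1}(m^n)$. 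The only nonroutine point is the bookkeeping at the interface between the small error terms and the blowing-up prefactor: each remainder collected along the way (the $[m^n,1]$ piece, of size $O(m^{n\gamma})$; the tail of the $s$-integral, of size $O(m^{-\gamma/2})$; the $1+o(1)$ factors in each $\tfrac{k+\gamma}{1-m^{k+\gamma}}$) must survive multiplication by a factor that grows like $m^{-(n+1)\gamma}\gamma^{n+1}$, and checking the exact exponents — in particular that $m^{n\gamma}\cdot m^{-(n+1)\gamma} = m^{-\gamma}\to 0$ beats the polynomial $\gamma^{n+1}$ — is what makes the argument close.
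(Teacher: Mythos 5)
Your proof is correct and follows essentially the same route as the paper's: split $\mathbb{A}(\gamma,n+1)$ at $m^n$, use the exact polynomial form of $P_{n+1}$ on $[m^{n+1},m^n]$ (equivalently, the constant $C_n=P_{n+1}(m^n)/(m^n-m^{n+1})^n$) to reduce the main piece to a Beta-type integral with asymptotics $n!\,(-\gamma)^{-(n+1)}$, and cancel the $m^{(n+1)\gamma}$ factors against the product prefactor \eqref{eq:prod_beh}. If anything, your bookkeeping of the $[m^n,1]$ remainder is more careful than the paper's, which asserts that $\int_{m^n}^1 u^\gamma P_{n+1}(u)\,{\rm d}u\to 0$ (in fact it grows like $m^{n\gamma}$), whereas the point — which you verify explicitly — is that this $O(m^{n\gamma})$ term is annihilated after multiplication by the prefactor, since $m^{n\gamma}\cdot(-\gamma)^{n+1}m^{-(n+1)\gamma}=(-\gamma)^{n+1}m^{-\gamma}\to 0$.
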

\begin{proof}
	Given two real functions $f$, $g$ defined on $(-\infty, a)$, $a \in \RR$, we write $f \sim g$ as $x \to -\infty$, if
	\[
		\lim_{x \to -\infty} \frac{f(x)}{g(x)} = 1.
	\]
	Let us observe that
	\begin{equation}
		\label{eq:prod_beh}
		\prod_{k=1}^{n+1} \frac{k+\gamma}{1-m^{k+\gamma}}
		\sim
		(-\gamma)^{n+1} m^{-\gamma (n+1) -\frac{(n+2)(n+1)}{2}}
		\quad\text{as } \gamma \to -\infty.
	\end{equation}
	Since for $\gamma<0$,
	\[
		\int_{m^n}^1 u^{\gamma} P_{n+1}(u)\: {\rm d}u
		\leq (m^n)^{\gamma} \int_0^1 P_{n+1}(u)\: {\rm d}u=\frac{(m^n)^{\gamma}}{(n+1)!},
	\]
	we get
	\[
		\lim_{\gamma \to -\infty} \int_{m^n}^1 u^{\gamma} P_{n+1}(u)\: {\rm d}u = 0.
	\]
	Using now Proposition~\ref{prop:1} we obtain
	\begin{align}
		\label{eq:main_part}
		\int_{m^{n+1}}^{m^n}u^\gamma P_{n+1}(u) \: {\rm d}u
		&=
		\int_{m^{n+1}}^{m^n} u^\gamma \big(u-m^{n+1}\big)^n {\: \rm d}u
		\frac{P_{n+1}(m^n)}{(m^n-m^{n+1})^n}.
	\end{align}
	For $\gamma < -n -1$, we can write
	\begin{align*}
		\int_{m^{n+1}}^{m^n} u^\gamma \big(u-m^{n+1}\big)^n \: {\rm d}u
		&= (m^{n+1})^{\gamma+n+1} \int_m^1 u^{-\gamma-n-2}(1-u)^n  \: {\rm d}u\\
		&= (m^{n+1})^{\gamma+n+1}
		\bigg(\frac{\Gamma(-\gamma-n-1)\Gamma(n+1)}{\Gamma(-\gamma)} + \int_0^m u^{-\gamma-n-2}(1-u)^n  \: {\rm d}u \bigg)
	\end{align*}
	where in the last equality we expressed the beta function in terms of the gamma function. Since for $\gamma < -n -2$,
	\[
		\int_0^m u^{-\gamma-n-2}(1-u)^n  {\: \rm d}u \leq m^{-\gamma -n-1},
	\]
	and
	\[
		\frac{\Gamma(-\gamma-n-1)}{\Gamma(-\gamma)}
		=(-1)^{n+1}\bigg(\prod_{k=1}^{n+1} (k+\gamma)\bigg)^{-1}
		\sim
		(-\gamma)^{-n-1}
		\quad\text{as } \gamma \to -\infty,
	\]
	we conclude that
	\[
		\int_{m^{n+1}}^{m^n} u^\gamma \big(u-m^{n+1}\big)^n \: {\rm d}u
		\sim
		(m^{n+1})^{\gamma+n+1}
		(-\gamma)^{-n-1}
		\Gamma(n+1),
		\quad\text{as } \gamma \to -\infty,
	\]
	which together with \eqref{eq:prod_beh} and \eqref{eq:main_part} leads to
	\begin{align*}
		\mathbb{B}(\gamma, n+1)
		&\sim m^{-\gamma (n+1) -\frac{(n+2)(n+1)}{2}}
		(m^{n+1})^{\gamma+n+1} \Gamma(n+1) \frac{P_{n+1}(m^n)}{(m^n-m^{n+1})^n}
		\quad\text{as } \gamma \to -\infty.
	\end{align*}
	This completes the proof.
\end{proof}

Let us recall that for $q > 0$, the $q$-bracket  of $x \in \RR$ is defined as
\[
	[x]_q = \frac{1-q^x}{1-q}.
\]
For $1 \leq k \leq n$, the $q$-binomial coefficient is
\[
	\qbinom{n}{k}{q} = \frac{[n]_q!}{[k]_q! [n-k]_q!}
\]
where
\begin{align*}
	[n]_q! &= [1]_q [2]_q \ldots [n]_q, \quad n \in \NN,\\
	[0]_q! &= 1.
\end{align*}

\begin{lemma}
	\label{lem:C_neg_int_gamma}
	For all $n\in\NN$ and $\gamma\in-\NN$ satisfying $\gamma\leq -(n+1)$,
	\begin{equation}
		\label{eq:22}
		\mathbb{B}(\gamma,n)=\frac1{(m; m)_n}.
	\end{equation}
\end{lemma}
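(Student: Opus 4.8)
The plan is to prove \eqref{eq:22} by downward induction on $\gamma$, starting from the largest admissible value $\gamma = -(n+1)$ and using the recursion from Lemma~\ref{lem:2} rewritten in terms of the scaled moments $\mathbb{B}$. First I would translate Lemma~\ref{lem:2} into a recursion for $\mathbb{B}$: dividing \eqref{eq:19} by $\prod_{k=1}^{j+1}\frac{k+\gamma}{1-m^{k+\gamma}}$ and comparing with the defining product \eqref{defG} for index $j$ versus $j+1$, one gets a relation of the shape
\[
	\mathbb{B}(\gamma, j+1) = \mathbb{B}(\gamma, j) + (\text{explicit factor involving } m^{j+1})\, \mathbb{B}(\gamma-1, j+1),
\]
valid for all $\gamma\in\RR$ for which the products make sense (with the limiting convention \eqref{eq:43} when a factor $k+\gamma$ vanishes). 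This lets me express $\mathbb{B}(\gamma,n)$ in terms of $\mathbb{B}(\gamma, n-1)$ and $\mathbb{B}(\gamma-1,n)$, which is the engine of the induction.

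For the base case $\gamma = -(n+1)$: here $n < |\gamma|$, so none of the factors $k+\gamma$ with $1\le k\le n$ vanishes, and the product in \eqref{defG} is an ordinary finite product. I would observe that $u^{\gamma}P_n(u)$ with $\gamma\le -(n+1)$ can be integrated against the explicit spline behavior near the lower endpoint $m^n$ — in fact, just as in the proof of Lemma~\ref{lem:C_lim_-infty}, the integral $\int_{m^n}^1 u^\gamma P_n(u)\,{\rm d}u$ and $\int_{m^{n+1}}^{m^n}u^\gamma P_n(u)\,{\rm d}u$ can be computed or estimated. Actually a cleaner route: by Corollary~\ref{cor:A0} we know $\mathbb{A}(0,n) = 1/n!$, and iterating the $\mathbb{B}$-recursion downward from $\gamma=0$ might not reach negative $\gamma$ directly because the recursion connects $\gamma$ and $\gamma-1$ only at the same or adjacent spline index. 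So instead I would proceed by induction on $n$: for $n=1$, $\mathbb{B}(\gamma,1) = \frac{1+\gamma}{1-m^{1+\gamma}}\int_0^1 u^\gamma P_1(u)\,{\rm d}u = \frac{1+\gamma}{1-m^{1+\gamma}}\cdot\frac{1}{1-m}\cdot\frac{1-m^{1+\gamma}}{1+\gamma} = \frac{1}{1-m} = \frac{1}{(m;m)_1}$ for $\gamma\le -2$, which is the claim. For the inductive step, fix $n$ and assume the statement for $n-1$; I would use the $\mathbb{B}$-recursion in the form $\mathbb{B}(\gamma,n) = \mathbb{B}(\gamma,n-1) + (\cdots)\mathbb{B}(\gamma-1,n)$: the first term is $\frac{1}{(m;m)_{n-1}}$ by the inductive hypothesis (valid since $\gamma\le -(n+1)\le -n$), and then a secondary downward induction on $\gamma$ (with $\gamma\le -(n+1)$) pins down $\mathbb{B}(\cdot,n)$ — I need a base value for this secondary induction, which comes from the $\gamma\to-\infty$ limit in Lemma~\ref{lem:C_lim_-infty} together with the telescoping/geometric structure of the correction factors, or alternatively from a direct endpoint computation at $\gamma=-(n+1)$.

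The main obstacle I anticipate is precisely the bookkeeping of the correction factor in the $\mathbb{B}$-recursion and making the downward-in-$\gamma$ induction close: the recursion relates $\mathbb{B}(\gamma,n)$ to $\mathbb{B}(\gamma-1,n)$, so to get a fixed value for all $\gamma\le -(n+1)$ I must either (a) solve the recursion explicitly and recognize the constant $\frac{1}{(m;m)_n}$ as the unique bounded/consistent solution, using Lemma~\ref{lem:C_lim_-infty} to fix the asymptotic constant and the $q$-Pochhammer identity $\frac{1}{(m;m)_n} = \frac{1}{(m;m)_{n-1}}\cdot\frac{1}{1-m^n}$ to verify consistency, or (b) compute $\mathbb{B}(-(n+1),n)$ directly from the spline formula \eqref{eq:8}–\eqref{eq:9} and then check the recursion propagates the value unchanged. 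I expect route (a) to be cleanest: rearranging, the recursion should read $\mathbb{B}(\gamma-1,n) = \frac{1}{(\text{factor})}\big(\mathbb{B}(\gamma,n) - \frac{1}{(m;m)_{n-1}}\big)$, and if the constant solution $\mathbb{B}\equiv\frac{1}{(m;m)_n}$ satisfies $\frac{1}{(m;m)_n} - \frac{1}{(m;m)_{n-1}} = (\text{factor})\cdot\frac{1}{(m;m)_n}$, i.e. the factor equals $1 - \frac{(m;m)_n}{(m;m)_{n-1}} = 1-(1-m^n) = m^n$, then the constant is a solution; uniqueness then follows because any other solution differs by a solution of the homogeneous recursion $h(\gamma-1,n) = m^{-n}h(\gamma,n)$ which blows up as $\gamma\to-\infty$ unless $h\equiv 0$, contradicting the finite limit from Lemma~\ref{lem:C_lim_-infty}. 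Finally I would double-check the edge of validity: the hypothesis $\gamma\le-(n+1)$ is exactly what guarantees that both $\mathbb{B}(\gamma,n-1)$ falls in the range where the inductive hypothesis applies and that no vanishing-factor subtlety from \eqref{eq:43} intrudes, so the statement is sharp as phrased.
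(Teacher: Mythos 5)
Your recursion is the right engine (it is \eqref{eq:C_rec}, equivalently \eqref{eq:24}), your base case $n=1$ is correct, and your observation that the constant $1/(m;m)_n$ is consistent with the recursion is sound, modulo a bookkeeping slip: the correct relation is $\mathbb{B}(\gamma,n)=\frac{\mathbb{B}(\gamma,n-1)}{1-m^{n+\gamma}}+\frac{m^{n}(1-m^{\gamma})}{1-m^{n+\gamma}}\,\mathbb{B}(\gamma-1,n)$, and the consistency check is the identity $1-m^{n+\gamma}=(1-m^{n})+m^{n}(1-m^{\gamma})$ rather than ``factor $=m^{n}$''. The genuine gap is the anchor for your downward induction in $\gamma$. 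Route (a) fails because the homogeneous multiplier is $h(\gamma-1)=\frac{1-m^{n+\gamma}}{m^{n}(1-m^{\gamma})}\,h(\gamma)$, and this factor tends to $1$ (not $m^{-n}$) as $\gamma\to-\infty$, since $m^{\gamma}\to+\infty$ for $m\in(0,1)$; the corresponding infinite product converges to a finite nonzero limit, so nonzero \emph{bounded} homogeneous solutions exist and the constant solution is not singled out by boundedness or by the existence of a limit at $-\infty$. Nor can Lemma \ref{lem:C_lim_-infty} fix the constant: its limit is expressed through $P_{n+1}(m^{n})$, which is unknown at this stage (its value is only deduced afterwards by \emph{combining} Lemma \ref{lem:C_lim_-infty} with the present lemma). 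Route (b) fails as well: \eqref{eq:8} describes $P_n$ only on $[m,1]$, whereas the moment $\int_0^1 u^{-(n+1)}P_n(u)\,{\rm d}u$ is dominated by the behavior of $P_n$ near the lower endpoint $m^{n}$, for which no closed form is available.

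The paper resolves this by anchoring at $\gamma=0$ rather than at $\gamma=-\infty$: one iterates \eqref{eq:C_rec} upward exactly $n+2$ steps from $\gamma=-(n+2)+\epsilon$ to $\gamma=\epsilon$, where Corollary \ref{cor:A0} supplies a computable value, and then lets $\epsilon\to0^{+}$; the factors $[\epsilon]_m\to0$ annihilate the intermediate terms ($I_2=0$) and the two boundary terms cancel via $-\mathbb{B}(0,n)+(1-m^{n+1})\mathbb{B}(0,n+1)=0$. Once $\mathbb{B}(-(n+2),n+1)$ is pinned down this way, the downward propagation in $\gamma$ that you describe does work and is exactly the paper's final step. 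Without some such anchor tied to a computable moment, your argument does not close.
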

\begin{proof}
	Let $\gamma \in \RR \setminus \{-1\}$. By Lemma \ref{lem:2}, for all $n \in \NN$, we have
	\[
		(1-m^{n+1+\gamma+1})\mathbb{B}(\gamma+1,n+1)=\mathbb{B}(\gamma+1,n)+(1-m^{\gamma+1}) m^{n+1} \, \mathbb{B}(\gamma,n+1),
	\]
	or equivalently,
	\begin{align}
		\label{eq:C_rec}
		\mathbb{B}(\gamma,n+1)
		=- \frac{\mathbb{B}(\gamma+1,n)}{(1-m^{\gamma+1}) m^{n+1}}
		+ \frac{[n+1+\gamma+1]_m}{[\gamma+1]_m } \frac1{m^{n+1}} \mathbb{B}(\gamma+1,n+1).
	\end{align}
	Therefore, if $\gamma \in \RR \setminus \{-1, -2\}$,
	\begin{align*}
		\mathbb{B}(\gamma,n+1)
		&=
		- \frac{\mathbb{B}(\gamma+1,n)}{(1-m^{\gamma+1}) m^{n+1}}
		+ \frac{[n+1+\gamma+1]_m}{[\gamma+1]_m } \frac1{m^{n+1}} \mathbb{B}(\gamma+1,n+1) \\
		&=
		-\frac{\mathbb{B}(\gamma+1,n)}{(1-m^{\gamma+1})m^{n+1}}
		-\frac{[n+1+\gamma+1]_m}{[\gamma+1]_m } \frac1{m^{n+1}}
		\frac{\mathbb{B}(\gamma+2,n)}{(1-m^{\gamma+1})m^{n+1}}  \\
		&\phantom{=}
		+ \frac{[n+1+\gamma+1]_m}{[\gamma+1]_m }
		\frac{[n+1+\gamma+2]_m}{[\gamma+2]_m }
		\Big(\frac1{m^{n+1}}\Big)^2
		\mathbb{B}(\gamma+2,n+1).
	\end{align*}
	Hence, if $\gamma \in \RR \setminus \{-1, -2, \ldots, -r\}$, for $r \in \NN$, we can iterate \eqref{eq:C_rec}
	to get
	\begin{equation}
		\label{eq:23}
		\begin{aligned}
		\mathbb{B}(\gamma, n+1)
		&=- \sum_{k=0}^{r-1} \bigg\{\prod_{\ell=1}^k \frac{[n+1+\gamma+\ell]_m}{[\gamma+\ell]_m} \bigg\}
		\Big(\frac1{m^{n+1}}\Big)^k  \frac{\mathbb{B}(\gamma+k+1,n)}{(1-m^{\gamma+k+1})m^{n+1}}\\
		&\phantom{=}
		+ \bigg\{ \prod_{\ell=1}^r \frac{[n+1+\gamma+\ell]_m}{[\gamma+\ell]_m}\bigg\}\Big(\frac1{m^{n+1}}\Big)^r
		\mathbb{B}(\gamma+r,n+1).
		\end{aligned}
	\end{equation}
	Now, to prove \eqref{eq:22} we proceed by induction with respect to $n \in \NN$. Let $n = 1$ and $\gamma \leq -2$.
	By \eqref{eq:P1u}, we get
	\begin{align*}
		\mathbb{B}(\gamma, 1)
		&= \frac{1 + \gamma}{1 - m^{\gamma+1}} \int_0^1 u^\gamma P_1(u) {\: \rm d} u \\
		&= \frac{1 + \gamma}{1 - m^{\gamma+1}} \frac{1}{1-m} \int_m^1 u^{\gamma} {\: \rm d} u = \frac{1}{1-m}.
	\end{align*}
	Suppose that \eqref{eq:22} holds true for $n \in \NN$. Setting $\gamma_\epsilon = -(n+2) + \epsilon$ for
	$\epsilon \in (0,1)$, by continuity we have
	\[
		\mathbb{B}(-(n+2),n+1)
		=
		\lim_{\epsilon\to 0^+}
		\mathbb{B}(-(n+2)+\epsilon, n+1).
	\]
	Using \eqref{eq:23} with $r=n+2$ we can write
	\[
		 \mathbb{B}(-(n+2), n+1) = I_1+I_2+I_3+I_4
	\]
	where
	\begin{align*}
		I_1&=
		-\lim_{\epsilon\to 0^+} \frac{\mathbb{B}(-n-1+\epsilon,n)}{(1-m^{-n-1+\epsilon})m^{n+1}},\\
		I_2&=
		-\lim_{\epsilon\to 0^+} \sum_{k=1}^n
		\bigg\{\prod_{\ell=1}^k \frac{[-1+\epsilon+\ell]_m}{[\gamma_\epsilon+\ell]_m} \bigg\}
		\Big(\frac1{m^{n+1}}\Big)^k  \frac{\mathbb{B}(-n-1+\epsilon+k,n)}{(1-m^{-n-1+\epsilon+k})m^{n+1}},\\
		I_3
		&= -\lim_{\epsilon\to 0^+}
		\bigg\{\prod_{\ell=1}^{n+1} \frac{[n+1+\gamma_\epsilon+\ell]_m}{[\gamma_\epsilon+\ell]_m}\bigg\}
		\Big(\frac1{m^{n+1}}\Big)^{n+1}  \frac{\mathbb{B}(\epsilon,n)}{(1-m^{\epsilon})m^{n+1}},\\
		\intertext{and}
		I_4
		&=\lim_{\epsilon\to 0^+}
		\bigg\{ \prod_{\ell=1}^{n+2} \frac{[n+1+\gamma_\epsilon+\ell]_m}{[\gamma_\epsilon+\ell]_m}\bigg\}
		\Big(\frac1{m^{n+1}}\Big)^{n+2} \mathbb{B}(\epsilon,n+1).
	\end{align*}
	Thanks to the inductive hypothesis, we get
	\[
		I_1= - \frac{\mathbb{B}(-n-1,n)}{(1-m^{-n-1})m^{n+1}}=\frac1{(m;m)_{n+1}}.
	\]
	Since $\lim_{\epsilon \to 0^+} [\epsilon]_m = 0$, we also have $I_2 = 0$. Furthermore,
	\[
		I_3=- \bigg\{\prod_{\ell=2}^{n+1} \frac{[n+1+\gamma_0+\ell]_m}{[\gamma_0+\ell]_m}\bigg\}
		\Big(\frac1{m^{n+1}}\Big)^{n+2}  \frac{\mathbb{B}(0,n)}{1-m^{-n-1}},
	\]
	and
	\[
		I_4=
		\bigg\{ \prod_{\ell=2}^{n+1} \frac{[n+1+\gamma_0+\ell]_m}{[\gamma_0+\ell]_m}\bigg\}
		\Big(\frac1{m^{n+1}}\Big)^{n+2}\frac{1-m^{n+1}}{1-m^{-n-1}} \mathbb{B}(0,n+1).
	\]
	In view of Corollary~\ref{cor:A0} we have $-\mathbb{B}(0,n) + (1-m^{n+1}) \mathbb{B}(0,n+1) = 0$, thus $I_3 + I_4 = 0$.
	Summarizing, we obtain
	\[
		\mathbb{B}(-(n+2),n+1)=\frac1{(m;m)_{n+1}}.
	\]
	Next, we claim that for all $k \in \NN$,
	\[
		\mathbb{B}(-(n+1+k),n+1)=\frac1{(m;m)_{n+1}}.
	\]
	Indeed, if the formula holds true for $k \in \NN$, then by \eqref{eq:C_rec} we can write
	\begin{align*}
		\mathbb{B}(-(n+1+k+1),n+1)
		&=-\frac{\mathbb{B}(-(n+1+k),n)}{m^{n+1}-m^{-k}}+\frac{1-m^{-k}}{m^{n+1}-m^{-k}}\mathbb{B}(-(n+1+k),n+1)\\
		&=\frac1{m^{n+1}-m^{-k}}
		\bigg(\frac{-1}{(m;m)_n}+\frac{1-m^{-k}}{(m;m)_{n+1}} \Big) = \frac1{(m;m)_{n+1}},
	\end{align*}
	as claimed. This completes the proof of the lemma.
\end{proof}

Combining Lemmas~\ref{lem:C_lim_-infty} and \ref{lem:C_neg_int_gamma} one can compute the value of
$P_{n+1}(m^n)$ explicitly.
\begin{corollary}
	For $n\in\NN$,
	\[
		P_{n+1}(m^n)= m^{\frac{n(n-1)}{2}}  \frac1{n!} \frac{(1-m)^n}{(m;m)_{n+1}}.
	\]
\end{corollary}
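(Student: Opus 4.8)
The plan is to combine the asymptotic evaluation of $\mathbb{B}(\gamma, n+1)$ as $\gamma \to -\infty$ from Lemma~\ref{lem:C_lim_-infty} with the exact value of $\mathbb{B}(\gamma, n+1)$ for negative integers $\gamma \leq -(n+2)$ from Lemma~\ref{lem:C_neg_int_gamma}. Indeed, Lemma~\ref{lem:C_neg_int_gamma} tells us that $\mathbb{B}(\gamma, n+1)$ is \emph{constant}, equal to $1/(m;m)_{n+1}$, along the sequence $\gamma = -(n+2), -(n+3), \ldots$, which marches off to $-\infty$. Hence the limit in Lemma~\ref{lem:C_lim_-infty} must equal this same constant:
\[
	m^{-\frac{n(n-1)}{2}} \frac{n!}{(1-m)^n} P_{n+1}(m^n) = \frac{1}{(m;m)_{n+1}}.
\]

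Solving this identity for $P_{n+1}(m^n)$ immediately yields
\[
	P_{n+1}(m^n) = m^{\frac{n(n-1)}{2}} \frac{1}{n!} \frac{(1-m)^n}{(m;m)_{n+1}},
\]
which is exactly the claimed formula. There is essentially no obstacle here: the argument is a one-line matching of two expressions for the same quantity, with the only subtlety being the observation that the integer sequence $-(n+2), -(n+3), \ldots$ does tend to $-\infty$ so that evaluating the limit along it is legitimate (the limit in Lemma~\ref{lem:C_lim_-infty} exists as a genuine limit, so any subsequence gives the same value). One should perhaps note explicitly that $P_{n+1}$ is continuous at $m^n$, so the value $P_{n+1}(m^n)$ appearing in both lemmas is unambiguous; this follows from Proposition~\ref{prop:2}, which identifies $P_{n+1}$ with a polynomial on $[m, 1] \ni m^n$. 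I would write the proof in two or three sentences invoking the two lemmas and reading off the result.
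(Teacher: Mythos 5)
Your argument is exactly the paper's: the corollary is stated there with the one-line justification that it follows by combining Lemmas~\ref{lem:C_lim_-infty} and~\ref{lem:C_neg_int_gamma}, i.e.\ the limit of $\mathbb{B}(\gamma,n+1)$ as $\gamma\to-\infty$ must agree with its constant value $1/(m;m)_{n+1}$ along the integer sequence $\gamma=-(n+2),-(n+3),\dots$. One tiny slip in your aside: for $n\ge 2$ the point $m^n$ lies outside $[m,1]$, so the continuity of $P_{n+1}$ at $m^n$ comes from the defining formula \eqref{Pnu} (Proposition~\ref{prop:1}) rather than from the explicit polynomial expression of Proposition~\ref{prop:2}; this does not affect the argument.
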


We are now ready to compute moments of $P_n$.
\begin{theorem}
	\label{thm:all-moments}
	For all $n\in\NN$ and $\gamma\in \RR$,
	\begin{align*}
		\int_0^1 u^{\gamma} P_n(u)\: {\rm d}u
		= \frac1{(m;m)_n} \bigg\{\prod_{k=1}^n \frac{1-m^{k+\gamma}}{k+\gamma}\bigg\}.
	\end{align*}
	If $\gamma \in -\NN$ the value of the product is understood in the limiting sense, see \eqref{eq:43}.
\end{theorem}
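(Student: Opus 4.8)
The plan is to prove the equivalent assertion that $\mathbb{A}(\gamma,n)=(m;m)_n^{-1}\prod_{k=1}^n (1-m^{k+\gamma})/(k+\gamma)$, i.e. that the scaled moment $\mathbb{B}(\gamma,n)$ from \eqref{defG} satisfies $\mathbb{B}(\gamma,n)\equiv(m;m)_n^{-1}$ for every $n\in\NN$ and every $\gamma\in\RR$; since $\gamma\mapsto\mathbb{B}(\gamma,n)$ is continuous, this recovers the stated formula together with the limiting convention \eqref{eq:43}. I would argue by induction on $n$. For $n=1$ this is the elementary computation: by \eqref{eq:P1u}, for $\gamma\neq-1$,
\[
  \mathbb{B}(\gamma,1)=\frac{1+\gamma}{1-m^{1+\gamma}}\cdot\frac{1}{1-m}\int_m^1 u^{\gamma}\,{\rm d}u
  =\frac{1+\gamma}{1-m^{1+\gamma}}\cdot\frac{1}{1-m}\cdot\frac{1-m^{1+\gamma}}{1+\gamma}=\frac{1}{1-m},
\]
and $\gamma=-1$ follows by continuity.

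For the inductive step, assume $\mathbb{B}(\cdot,n)\equiv(m;m)_n^{-1}$ and set $g(\gamma)=(m;m)_{n+1}\,\mathbb{B}(\gamma,n+1)$. Substituting the inductive hypothesis into \eqref{eq:C_rec} and using $(m;m)_{n+1}=(1-m^{n+1})(m;m)_n$ together with $[x]_m=(1-m^x)/(1-m)$, one gets, for every $\gamma\neq-1$,
\[
  g(\gamma)=-\frac{1-m^{n+1}}{(1-m^{\gamma+1})m^{n+1}}+\frac{1-m^{n+\gamma+2}}{(1-m^{\gamma+1})m^{n+1}}\,g(\gamma+1).
\]
A one-line check shows that the constant function $g\equiv1$ solves this, since $-(1-m^{n+1})+(1-m^{n+\gamma+2})=m^{n+1}(1-m^{\gamma+1})$. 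Hence $h:=g-1$ satisfies the homogeneous relation $h(\gamma)=a(\gamma)\,h(\gamma+1)$ for $\gamma\neq-1$, where $a(\gamma)=(1-m^{n+\gamma+2})/\big((1-m^{\gamma+1})m^{n+1}\big)$, and it remains to prove $h\equiv0$.

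Fix $\gamma_0\in\RR\setminus\ZZ$. Since $\gamma_0-j\neq-1$ for all $j\in\NN$, iterating the homogeneous relation downward gives $h(\gamma_0-r)=\big(\prod_{j=1}^{r}a(\gamma_0-j)\big)\,h(\gamma_0)$ for every $r\in\NN$. Rewriting $a(\gamma)=\big(1-m^{-(n+\gamma+2)}\big)/\big(1-m^{-(\gamma+1)}\big)$ shows that $a(\gamma_0-j)-1$ decays geometrically in $j$, so by the standard criterion for infinite products the product $\prod_{j\geq1}a(\gamma_0-j)$ converges to a finite limit which is moreover \emph{nonzero}, no factor vanishing because $\gamma_0\notin\ZZ$. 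On the other hand, combining Lemma~\ref{lem:C_lim_-infty} with the explicit value of $P_{n+1}(m^n)$ furnished by the corollary preceding this theorem gives $\lim_{\gamma\to-\infty}\mathbb{B}(\gamma,n+1)=(m;m)_{n+1}^{-1}$, that is, $h(\gamma_0-r)\to0$ as $r\to\infty$. Therefore $h(\gamma_0)=0$. Since $h$ is continuous and $\RR\setminus\ZZ$ is dense in $\RR$, we conclude $h\equiv0$, completing the induction.

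The main obstacle is this last paragraph: one must simultaneously ensure that the telescoping product $\prod_j a(\gamma_0-j)$ converges to a \emph{nonzero} value and pin down $\lim_{\gamma\to-\infty}\mathbb{B}(\gamma,n+1)$ — and it is precisely here that the negative-integer moment computations enter (through Lemma~\ref{lem:C_neg_int_gamma} and the corollary giving $P_{n+1}(m^n)$, which feeds Lemma~\ref{lem:C_lim_-infty}). An alternative, more computational route is to iterate \eqref{eq:23} indefinitely, reducing the identity to evaluating an explicit series in $k$ by means of the $q$-binomial theorem and then recovering the missing integer values of $\gamma$ by continuity; I expect that route to involve heavier bookkeeping than the uniqueness argument above.
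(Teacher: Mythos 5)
Your proof is correct, and while it shares the paper's scaffolding --- induction on $n$, the base case from \eqref{eq:P1u}, the first-order recursion \eqref{eq:C_rec}, and the boundary value $\lim_{\gamma\to-\infty}\mathbb{B}(\gamma,n+1)=(m;m)_{n+1}^{-1}$ obtained by combining Lemma~\ref{lem:C_lim_-infty} with Lemma~\ref{lem:C_neg_int_gamma} --- the way you close the inductive step is genuinely different. The paper iterates \eqref{eq:24} indefinitely, computes the limit of the remainder term in \eqref{eq:C-remainder} from the value at $-\infty$, and then evaluates the resulting infinite series by the $q$-binomial theorem in \eqref{eq:C-series}, recombining the two pieces with a second application of that theorem. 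You instead note that once the inductive hypothesis is substituted, \eqref{eq:C_rec} becomes an inhomogeneous first-order difference equation in $\gamma$ admitting the constant $1$ as a particular solution, so $h=g-1$ satisfies the homogeneous relation $h(\gamma)=a(\gamma)\,h(\gamma+1)$; since $\abs{a(\gamma_0-j)-1}$ decays geometrically and no factor vanishes for $\gamma_0\notin\ZZ$, the product $\prod_{j\geq 1}a(\gamma_0-j)$ converges to a nonzero limit, and the vanishing of $h$ along $\gamma_0-r$ forces $h(\gamma_0)=0$. This is a uniqueness argument that eliminates the $q$-binomial computation entirely while consuming exactly the same hard inputs (in particular the negative-integer moments of Lemma~\ref{lem:C_neg_int_gamma}, on which the value of $P_{n+1}(m^n)$ and hence the limit at $-\infty$ depend), so there is no circularity. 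Two points worth making explicit in a final write-up: the relation $h(\gamma)=a(\gamma)h(\gamma+1)$ is only available for $\gamma\neq -1$, which your restriction to $\gamma_0\in\RR\setminus\ZZ$ handles since every $\gamma_0-j$ then avoids $-1$; and the extension to $\gamma\in\ZZ$ rests on the continuity of $\gamma\mapsto\mathbb{B}(\gamma,n+1)$ noted after \eqref{eq:43}, exactly as in the paper's final step.
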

\begin{proof}
	In view of \eqref{defG} our aim is to prove that
	\begin{equation}
		\label{eq:25}
		\mathbb{B}(\gamma,n)=\frac1{(m;m)_{n}}
	\end{equation}
	for all $n \in \NN$ and $\gamma \in \RR$. The reasoning is by induction with respect to $n \in \NN$. For $n = 1$,
	thanks to Proposition \ref{prop:1}, the formula holds true. Suppose that it holds for $n \geq 1$.
	By Lemma~\ref{lem:C_lim_-infty} the limit $\lim_{\gamma\to -\infty} \mathbb{B}(\gamma,n+1)$ exists. Furthermore, by
	Lemma~\ref{lem:C_neg_int_gamma} we have the equality
	\begin{align}
		\label{eq:C_lim_-infty_value}
		\lim_{\gamma\to -\infty} \mathbb{B}(\gamma,n+1)=\frac1{(m;m)_{n+1}}.
	\end{align}
	Let us first consider $\gamma \in \RR \setminus \ZZ$. By \eqref{eq:19}, we have
	\begin{equation}
		\label{eq:24}
		\mathbb{B}(\gamma,n+1)
		=\frac{\mathbb{B}(\gamma,n)}{(1-m^{n+1+\gamma})}+\frac{[\gamma]_m}{[n+1+\gamma]_m} m^{n+1}
		\mathbb{B}(\gamma-1,n+1).
	\end{equation}
	Hence, by repeated application of \eqref{eq:24} for $r \in \NN$ we get
	\begin{align*}
		\mathbb{B}(\gamma,n+1)
		&=
		\sum_{k=0}^{r-1} \bigg\{\prod_{\ell=0}^{k-1} \frac{[\gamma-\ell]_m}{[n+1+\gamma-\ell]_m} \bigg\}
		(m^{n+1})^k  \frac{\mathbb{B}(\gamma-k,n)}{(1-m^{n+1+\gamma-k})}\\
		&\phantom{=}
		+ \bigg\{\prod_{\ell=0}^{r-1} \frac{[\gamma-\ell]_m}{[n+1+\gamma-\ell]_m}\bigg\}
		(m^{n+1})^r \mathbb{B}(\gamma-r,n+1).
	\end{align*}
	Notice that
	\begin{align}
		\nonumber
		\prod_{\ell=0}^{r-1} \frac{[\gamma-\ell]_m}{[n+1+\gamma-\ell]_m}
		&=
		\frac{[n+1+\gamma-r]_m \ldots [1+\gamma-r]_m}{[n+1+\gamma]_m\ldots [1+\gamma]_m } \\
		\label{eq:prod_unified}
		&= \frac{(m^{1+\gamma-r};m)_{n+1}}{(m^{1+\gamma};m)_{n+1}}.
	\end{align}
	Therefore, by \eqref{eq:C_lim_-infty_value},
	\begin{align}
		\label{eq:C-remainder}
		\lim_{r\to +\infty}
		\bigg\{ \prod_{\ell=0}^{r-1} \frac{[\gamma-\ell]_m}{[n+1+\gamma-\ell]_m}\bigg\}
		(m^{n+1})^r \mathbb{B}(\gamma-r,n+1)
		=
		\frac{m^{\frac{(n+1)n}{2}} (- m^{1+\gamma})^{n+1}}{(m^{1+\gamma};m)_{n+1}} \frac1{(m;m)_{n+1}}.
	\end{align}
	Similarly, by \eqref{eq:prod_unified}, for $k\in \NN$,
	\[
		\bigg\{\prod_{\ell=0}^{k-1} \frac{[\gamma-\ell]_m}{[n+1+\gamma-\ell]_m} \bigg\}
		\frac1{(1-m^{n+1+\gamma-k})}=\frac{(m^{1+\gamma -k};m)_n}{(m^{1+\gamma};m)_{n+1}}.
	\]
	Hence, using the inductive hypothesis and the $q$-binomial theorem,
	\begin{align}
		\lim_{r\to \infty}
		&\sum_{k=0}^{r-1} \bigg\{\prod_{\ell=0}^{k-1} \frac{[\gamma-\ell]_m}{[n+1+\gamma-\ell]_m} \bigg\}
		(m^{n+1})^k  \frac{\mathbb{B}(\gamma-k,n)}{(1-m^{n+1+\gamma-k})} \nonumber  \\
		&= \frac1{(m^{1+\gamma};m)_{n+1}} \frac1{(m;m)_n}
		\sum_{k=0}^\infty  (m^{1+\gamma -k};m)_n (m^{n+1})^k \nonumber \\
		&= \frac1{(m^{1+\gamma};m)_{n+1}} \frac1{(m;m)_n}
		\sum_{k=0}^\infty \bigg(  \sum_{\ell=0}^n m^{\frac{\ell(\ell-1)}{2}}  \qbinom{n}{\ell}{m} (-m^{1+\gamma-k})^\ell \bigg)
		(m^{n+1})^k \nonumber \\
		&= \frac1{(m^{1+\gamma};m)_{n+1}} \frac1{(m;m)_n}
		\sum_{\ell=0}^n m^{\frac{\ell(\ell-1)}{2}}  \qbinom{n}{\ell}{m} (-m^{1+\gamma})^\ell
		\Big(\sum_{k=0}^\infty (m^{n+1-\ell})^k\Big) \nonumber \\
		&= \frac1{(m^{1+\gamma};m)_{n+1}} \frac1{(m;m)_{n+1}}
		\sum_{\ell=0}^n m^{\frac{\ell(\ell-1)}{2}}  \qbinom{n+1}{\ell}{m} (-m^{1+\gamma})^\ell. \label{eq:C-series}
	\end{align}
	Adding \eqref{eq:C-remainder} and \eqref{eq:C-series}, and using the $q$-binomial theorem we obtain \eqref{eq:25} for
	$\gamma \in \RR \setminus \ZZ$, which by continuity holds true for all $\gamma \in \RR$.
\end{proof}

We are going to derive alternative formulations of Theorem~\ref{thm:all-moments} that will be useful in Section~\ref{sec:2.2}.
For this purpose let us recall the generalized binomial coefficient and its $q$-version, $0<q<1$: For
$x,y\in\RR$ such that $x,x-y,y \notin -\NN$, we set
\[
	\binom{x}{y}=\frac{\Gamma(x+1)}{\Gamma(y+1)\Gamma(x-y+1)}, \qquad \mbox{and} \qquad\quad
	\qbinom{x}{y}{q}=\frac{\Gamma_q(x+1)}{\Gamma_q(y+1)\Gamma_q(x-y+1)}
\]
where
\[
	\Gamma_q(x)=(1-q)^{1-x}\frac{(q;q)_{\infty}}{(q^x;q)_{\infty}}.
\]
Notice that $x\Gamma(x)=\Gamma(x+1)$ and $[x]_q \Gamma_q(x)=\Gamma_q(x+1)$ for $x\notin -\NN$. Therefore,
for each $\gamma \in \RR \setminus (-\NN)$ and $N \in \NN_0$,
\[
	\frac{\Gamma(\gamma+1)}{\Gamma_m(\gamma+1)}
	=
	(1-m)^{-N} \bigg\{\prod_{k = 1}^{N} \frac{1 - m^{\gamma+k}}{\gamma + k} \bigg\}
	\frac{\Gamma(\gamma+N+1)}{\Gamma_m(\gamma+N+1)}.
\]
We can thus continuously extend $\Gamma(\gamma+1)/\Gamma_m(\gamma+1)$ to all $\gamma \in \RR$, by setting
\begin{align}
	\label{eq:G/G_m}
	\frac{\Gamma(\gamma+1)}{\Gamma_m(\gamma+1)} =
	(1-m)^{\gamma}
	\bigg\{\prod_{k=1}^{|\gamma|-1} \frac{1-m^{k+\gamma}}{k+\gamma}\bigg\}
	\log(1/m),
	\quad\text{for } \gamma \in -\NN.
\end{align}
In particular, one can extend the natural domain of
\[
	\frac{\qbinom{n+\gamma}{\gamma}{m}}{\binom{n+\gamma}{\gamma}}
\]
to all $\gamma \in \RR$.
\begin{corollary}
	\label{cor:m-1}
	For all $n\in\NN$ and $\gamma\in \RR$,
	\begin{align*}
		\int_0^1 u^\gamma P_n(u) {\: \rm d} u
		&= \frac{1}{n!} \frac{\qbinom{n+\gamma}{\gamma}{m}}{\binom{n+\gamma}{\gamma}}
		= \frac{\Gamma(\gamma+1)}{\Gamma_m(\gamma+1)}
		\frac{\Gamma_m(n+\gamma+1)}{\Gamma(n+\gamma+1)} \frac{1}{\Gamma_m(n+1)}.
	\end{align*}
	If $\gamma \in -\NN$, the value of the right-hand side is understood in the limiting sense, see \eqref{eq:G/G_m}.
	Furthermore, if $\gamma \in \RR \setminus (-\NN)$, then
	\[
		\int_0^1 u^\gamma P_n(u) {\: \rm d} u
		=\frac{\Gamma(\gamma+1)}{\Gamma_m(\gamma+1)}
		(1-m)^{-\gamma} \frac1{\Gamma(n+\gamma+1)} \frac{(m^{n+1};m)_{\infty}}{(m^{n+\gamma+1};m)_{\infty}}.
	\]
\end{corollary}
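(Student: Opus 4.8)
The corollary is just a restatement of Theorem~\ref{thm:all-moments}, so the plan is to translate the finite product $\prod_{k=1}^n \frac{1-m^{k+\gamma}}{k+\gamma}$ into the language of $q$-Gamma functions, $q$-Pochhammer symbols, and classical Gamma functions. Fix $n\in\NN$ and first suppose $\gamma\in\RR\setminus(-\NN)$. I would split the product into $\prod_{k=1}^n(1-m^{k+\gamma})=(m^{\gamma+1};m)_n$ (immediate from the definition of the $q$-Pochhammer symbol) and $\prod_{k=1}^n(k+\gamma)=\Gamma(n+\gamma+1)/\Gamma(\gamma+1)$; combined with Theorem~\ref{thm:all-moments} this yields
\[
	\int_0^1 u^{\gamma} P_n(u)\ud u = \frac{\Gamma(\gamma+1)}{\Gamma(n+\gamma+1)}\cdot\frac{(m^{\gamma+1};m)_n}{(m;m)_n}.
\]

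Next I would use $(a;m)_\infty=(a;m)_n\,(am^n;m)_\infty$ to write $(m^{\gamma+1};m)_n=(m^{\gamma+1};m)_\infty/(m^{n+\gamma+1};m)_\infty$ and $1/(m;m)_n=(m^{n+1};m)_\infty/(m;m)_\infty$, and then trade the infinite products for $\Gamma_m$ through $\Gamma_m(x)=(1-m)^{1-x}(m;m)_\infty/(m^x;m)_\infty$. Applying this with $x=\gamma+1$ gives the last displayed identity of the corollary. For the middle expression I would instead apply the $\Gamma_m$-formula at $x=\gamma+1$, $x=n+\gamma+1$, and $x=n+1$ (with $\Gamma_m(n+1)=[n]_m!=(m;m)_n/(1-m)^n$); the exponents of $(1-m)$ sum to $\gamma-(n+\gamma)+n=0$, and what is left is precisely $\frac{\Gamma(\gamma+1)}{\Gamma_m(\gamma+1)}\frac{\Gamma_m(n+\gamma+1)}{\Gamma(n+\gamma+1)}\frac{1}{\Gamma_m(n+1)}$. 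Expanding $\qbinom{n+\gamma}{\gamma}{m}=\Gamma_m(n+\gamma+1)/(\Gamma_m(\gamma+1)\Gamma_m(n+1))$ and $\binom{n+\gamma}{\gamma}=\Gamma(n+\gamma+1)/(\Gamma(\gamma+1)\,n!)$ and cancelling $n!=\Gamma(n+1)$ shows that $\frac{1}{n!}\qbinom{n+\gamma}{\gamma}{m}/\binom{n+\gamma}{\gamma}$ equals the same quantity, so all three formulas hold for $\gamma\notin-\NN$.

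Finally I would pass to $\gamma\in-\NN$ for the first assertion — the third identity is stated only off $-\NN$, so nothing more is needed there. This is a continuity argument: since $P_n$ is bounded and supported in $[m^n,1]$, which is bounded away from $0$, the map $\gamma\mapsto\int_0^1 u^{\gamma}P_n(u)\ud u$ is continuous on $\RR$, and the right-hand side is continuous by the very design of the conventions \eqref{eq:43} and \eqref{eq:G/G_m}; an identity valid on the dense set $\RR\setminus(-\NN)$ therefore extends to all $\gamma\in\RR$. I do not anticipate a genuine difficulty; the only step deserving a word is the routine verification that the two conventions are compatible, namely that replacing the unique vanishing factor $k+\gamma$ at $k=\abs{\gamma}$ by its limit reproduces the factor $\log(1/m)$ in \eqref{eq:G/G_m}, which amounts to $\lim_{\epsilon\to0^+}(1-m^{\epsilon})/\epsilon=\log(1/m)$.
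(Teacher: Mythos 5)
Your proposal is correct and matches the paper's intent: the corollary is stated without proof as an immediate algebraic reformulation of Theorem~\ref{thm:all-moments}, and your translation via $(m^{\gamma+1};m)_n$, $\Gamma(n+\gamma+1)/\Gamma(\gamma+1)$, and the identity $\Gamma_m(x)=(1-m)^{1-x}(m;m)_\infty/(m^x;m)_\infty$ (with the $(1-m)$-exponents cancelling) is exactly the computation the authors leave to the reader. The continuity argument for $\gamma\in-\NN$, including the check $\lim_{\epsilon\to0^+}(1-m^{\epsilon})/\epsilon=\log(1/m)$ reconciling \eqref{eq:43} with \eqref{eq:G/G_m}, is also the intended handling of the limiting convention.
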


\subsection{Moments of $\mu_t$}
\label{sec:2.2}
In this section we compute the moments of $\mu_t$. For each $\gamma \in \RR$ and $t > 0$, by \eqref{def:mu_t}
\begin{align}
	\label{eq:moments-mu_t-P_j}
	\int_0^{\infty} u^\gamma \mu_t({\rm d} u)
	= e^{-t} t^\gamma + e^{-t} t^\gamma \sum_{j=1}^\infty t^j \int_0^1 u^\gamma P_j(u){\: \rm d} u.
\end{align}
Hence, by Corollary \ref{cor:m-1}, we immediately get the following statement.
\begin{corollary}
	\label{cor:m-2}
	For all $t>0$ and $\gamma\in \RR$,
	\[
		\int_0^\infty u^\gamma  \mu_t({\rm d} u)=
		e^{-t} t^\gamma \frac{\Gamma(\gamma+1)}{\Gamma_m(\gamma+1)}
		\sum_{j=0}^\infty \frac{t^{j}}{\Gamma_m(j+1)} \frac{\Gamma_m(j+\gamma+1)}{\Gamma(j+\gamma+1)}.
	\]
	If $\gamma \in -\NN$, the value of the right-hand side is understood in the limiting sense, see \eqref{eq:G/G_m}.
\end{corollary}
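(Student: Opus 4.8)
The plan is to start from the explicit expression for the $\gamma$-moments of $\mu_t$ in terms of the moments of the splines $P_j$, namely
\[
	\int_0^{\infty} u^\gamma\,\mu_t(\mathrm d u)
	= e^{-t} t^\gamma + e^{-t} t^\gamma \sum_{j=1}^\infty t^j \int_0^1 u^\gamma P_j(u)\,\mathrm d u,
\]
which is \eqref{eq:moments-mu_t-P_j} and follows directly from the definition \eqref{def:mu_t} of $\mu_t$ by integrating term by term; the interchange of summation and integration is justified because $P_j\geq 0$ and all quantities are nonnegative (Tonelli). I would then substitute the value of $\int_0^1 u^\gamma P_j(u)\,\mathrm d u$ supplied by Corollary~\ref{cor:m-1}, in the form
\[
	\int_0^1 u^\gamma P_j(u)\,\mathrm d u
	= \frac{\Gamma(\gamma+1)}{\Gamma_m(\gamma+1)}
	\,\frac{\Gamma_m(j+\gamma+1)}{\Gamma(j+\gamma+1)}\,\frac{1}{\Gamma_m(j+1)}.
\]

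The $j=0$ term of the claimed series is $1/\big(\Gamma_m(1)\big)\cdot\Gamma_m(\gamma+1)/\Gamma(\gamma+1)$; since $\Gamma_m(1)=1$ this equals $\Gamma_m(\gamma+1)/\Gamma(\gamma+1)$, so that after factoring $e^{-t}t^\gamma\,\Gamma(\gamma+1)/\Gamma_m(\gamma+1)$ out front the $j=0$ term of the bracket contributes exactly the $e^{-t}t^\gamma$ piece (the Dirac mass at $t$). For $j\geq 1$, pulling the same common factor $\Gamma(\gamma+1)/\Gamma_m(\gamma+1)$ and $t^\gamma$ out in front leaves precisely $t^j\,\Gamma_m(j+\gamma+1)/\big(\Gamma(j+\gamma+1)\Gamma_m(j+1)\big)$, which is the $j$-th term of the series in the statement. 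Thus re-indexing so that the Dirac term and the $j\geq 1$ terms are merged into a single sum over $j\in\NN_0$ gives exactly
\[
	\int_0^\infty u^\gamma\,\mu_t(\mathrm d u)
	= e^{-t} t^\gamma \frac{\Gamma(\gamma+1)}{\Gamma_m(\gamma+1)}
	\sum_{j=0}^\infty \frac{t^{j}}{\Gamma_m(j+1)}\,\frac{\Gamma_m(j+\gamma+1)}{\Gamma(j+\gamma+1)}.
\]

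For $\gamma\in-\NN$ the individual factors $\Gamma(\gamma+1)/\Gamma_m(\gamma+1)$ and $\Gamma_m(j+\gamma+1)/\Gamma(j+\gamma+1)$ are to be read via the continuous extension \eqref{eq:G/G_m}; I would note that both sides of the identity are, for fixed $t>0$, continuous functions of $\gamma\in\RR$ (on the left because $\mu_t$ has finite moments of every real order, uniformly locally in $\gamma$, by Corollary~\ref{cor:m-1} and the super-exponential decay of $1/\Gamma_m(j+1)$; on the right because the extension in \eqref{eq:G/G_m} is by construction continuous and the series converges locally uniformly in $\gamma$), so the limiting interpretation is the only consistent one and the identity passes to $\gamma\in-\NN$ by taking limits. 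The only point requiring a word of care — and the mild obstacle — is the convergence of the series on the right: one checks that $\Gamma_m(j+\gamma+1)/\Gamma(j+\gamma+1)$ grows at most like a fixed power of $j$ (indeed it is comparable to $(m;m)_\infty^{-1}(1-m)^{j+\gamma}/\Gamma(j+\gamma+1)\cdot\Gamma(j+\gamma+1)$-type expressions, hence at most polynomially bounded once the $1/\Gamma_m(j+1)$ factor is taken into account), while $1/\Gamma_m(j+1)=(1-m)^{j}(m^{j+1};m)_\infty/(m;m)_\infty$ decays geometrically, so the series converges for every $t>0$; this also retroactively justifies the term-by-term integration used at the outset. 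No genuinely hard step is involved — the corollary is a direct bookkeeping consequence of Corollary~\ref{cor:m-1}.
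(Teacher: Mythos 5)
Your proposal is correct and follows exactly the route the paper takes: the corollary is obtained by substituting the moment formula of Corollary~\ref{cor:m-1} into \eqref{eq:moments-mu_t-P_j} and observing that the $j=0$ term of the resulting series (using $\Gamma_m(1)=1$) accounts for the Dirac mass at $t$. The additional remarks on convergence and on the continuity argument for $\gamma\in-\NN$ are consistent with the paper's conventions and do not change the argument.
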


\begin{corollary}
	\label{cor:m-3}
	For all $t>0$ and $k\in \NN$,
	\begin{align*}
		\int_0^\infty u^k  \mu_t({\rm d} u)
		&=
		e^{-t} \frac{k!}{(m;m)_k} \int_{mt}^t \int_{m u_{k-1}}^{u_{k-1}} \ldots \int_{m u_1}^{u_1} e^{u_0} {\: \rm d}
		u_0 \ldots {\: \rm d} u_{k-1}\\
		&= k! \sum_{j=0}^k \bigg\{\prod_{\stackrel{i=0}{i\neq j}}^k \frac1{m^j-m^i} \bigg\} e^{-(1-m^j) t}.
	\end{align*}
\end{corollary}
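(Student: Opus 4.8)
The plan is to reduce both displayed equalities to statements about the single-variable function
\[
  Q_k(t) = \int_{mt}^t \int_{mu_{k-1}}^{u_{k-1}} \cdots \int_{mu_1}^{u_1} e^{u_0}\,{\rm d}u_0 \cdots {\rm d}u_{k-1},
\]
which, read from the inside out, satisfies $Q_k(t) = \int_{mt}^t Q_{k-1}(u)\,{\rm d}u$ for $k \geq 1$, with the convention $Q_0(t) = e^t$. First I would compute the moment itself. Starting from \eqref{eq:moments-mu_t-P_j} and inserting the formula of Theorem~\ref{thm:all-moments} with $\gamma = k \in \NN$, one rewrites $\int_0^1 u^k P_j(u)\,{\rm d}u = \frac{1}{(m;m)_j}\prod_{\ell=1}^j \frac{1-m^{\ell+k}}{\ell+k}$ using $\prod_{\ell=1}^j(1-m^{\ell+k}) = (m^{k+1};m)_j$, the identity $\prod_{\ell=1}^j (\ell+k) = (j+k)!/k!$, and the symmetry $(m^{k+1};m)_j/(m;m)_j = (m^{j+1};m)_k/(m;m)_k$ of the $m$-binomial coefficient $\binom{j+k}{k}_m$, obtaining $\int_0^1 u^k P_j(u)\,{\rm d}u = \frac{k!}{(j+k)!}\,\frac{(m^{j+1};m)_k}{(m;m)_k}$. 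Hence
\[
  \int_0^\infty u^k \mu_t({\rm d}u) = \frac{k!\,e^{-t}}{(m;m)_k}\sum_{j=0}^\infty (m^{j+1};m)_k\,\frac{t^{j+k}}{(j+k)!};
\]
alternatively this can be read off Corollary~\ref{cor:m-2} with $\gamma = k$, since $\Gamma_m(j+k+1)/\Gamma_m(j+1) = (m^{j+1};m)_k/(1-m)^k$.

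Next I would establish the first equality by identifying this series with $Q_k(t)$, by induction on $k$. For $k = 0$ the series is $\sum_{j\geq 0} t^j/j! = e^t = Q_0(t)$. For the step, if $Q_{k-1}(t) = \sum_{j\geq 0}(m^{j+1};m)_{k-1}\, t^{j+k-1}/(j+k-1)!$, then integrating term by term over $[mt,t]$, which is legitimate since the power series converges locally uniformly, sends $t^n/n!$ to $(1-m^{n+1})\,t^{n+1}/(n+1)!$; so with $n = j+k-1$ and the identity $(1-m^{j+k})(m^{j+1};m)_{k-1} = (m^{j+1};m)_k$ one recovers precisely the claimed series for $Q_k$. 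Together with the moment computation this gives the first displayed equality.

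For the second equality I would run a second induction on $k$, showing that $Q_k(t) = (m;m)_k \sum_{j=0}^k c_{k,j}\, e^{m^j t}$ with $c_{k,j} = \prod_{i=0,\, i\neq j}^k (m^j - m^i)^{-1}$. The case $k = 0$ is immediate; for the step one uses $\int_{mt}^t e^{m^j u}\,{\rm d}u = m^{-j}(e^{m^j t} - e^{m^{j+1} t})$, collects the coefficient of $e^{m^\ell t}$ for $0 \leq \ell \leq k+1$, which equals $(m;m)_k\,(m^{-\ell} c_{k,\ell} - m^{-(\ell-1)} c_{k,\ell-1})$ under the conventions $c_{k,-1} = c_{k,k+1} = 0$, and verifies that this equals $(m;m)_{k+1} c_{k+1,\ell}$, i.e.
\[
  m^{-\ell} c_{k,\ell} - m^{-(\ell-1)} c_{k,\ell-1} = (1 - m^{k+1})\, c_{k+1,\ell}.
\]
Dividing through by $e^t (m;m)_k / k!$ then yields the second displayed equality. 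A shortcut: the partial-fraction identity $\prod_{i=0}^k (z - m^i)^{-1} = \sum_{j=0}^k c_{k,j}(z - m^j)^{-1}$ shows that $k!\sum_{j=0}^k c_{k,j} e^{-(1-m^j)t}$ has Laplace transform $k!\prod_{i=0}^k(s + 1 - m^i)^{-1}$, which one matches against the Laplace transform of $\frac{k!\,e^{-t}}{(m;m)_k}Q_k(t)$ computed from its power series via the $q$-binomial identity $\sum_{j\geq 0}(m^{j+1};m)_k w^j = (m;m)_k \prod_{i=0}^k (1 - m^i w)^{-1}$.

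The only step requiring genuine care is the rational identity $m^{-\ell} c_{k,\ell} - m^{-(\ell-1)} c_{k,\ell-1} = (1-m^{k+1})c_{k+1,\ell}$ for the Lagrange weights at the nodes $1, m, \dots, m^{k+1}$ (equivalently the $q$-series identity in the Laplace-transform variant): it follows by clearing denominators and matching factors, but the bookkeeping of the products over $i \notin \{\ell-1,\ell\}$ is the main fiddly point. Everything else, namely the two inductions and the $q$-Pochhammer manipulations, is routine.
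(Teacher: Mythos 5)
Your argument is correct and follows essentially the same route as the paper: both start from Corollary~\ref{cor:m-2} (equivalently Theorem~\ref{thm:all-moments}), identify the resulting series with the iterated integral of $e^{u_0}$ by matching it term by term against the multi-integral of monomials, and then establish the closed exponential form by induction on $k$. The only difference is that you spell out the Lagrange-weight identity behind the induction that the paper dismisses as ``straightforward but tedious'' (and add an optional Laplace-transform verification), which is a welcome amount of extra detail rather than a new approach.
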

\begin{proof}
	For $k\in\NN$, $\gamma \in \ZZ  \setminus\{-1,\ldots,-k\}$, and $t>0$,
	\begin{align}
		\label{eq:multi-integral}
		\int_{m t}^t \int_{m u_{k-1}}^{u_{k-1}} \ldots \int_{m u_1}^{u_1} u_0^{\gamma} {\: \rm d} u_0 \ldots {\: \rm d} u_{k-1}
		=
		t^{\gamma+k} \prod_{i=1}^k \frac{1-m^{\gamma+i}}{\gamma+i}.
	\end{align}
	Using Corollary~\ref{cor:m-2} and \eqref{eq:multi-integral} we get
	\begin{align*}
		\int_0^\infty u^k  \mu_t({\rm d} u)
		&= e^{-t} \frac{k!}{[k]_m!} \sum_{j = 0}^\infty \frac{t^{j+k}}{(j+k)!} \frac{[j+k]_m!}{[j]_m!}\\
		&= e^{-t} \frac{k!}{(m;m)_k} \sum_{j = 0}^\infty \frac{t^{j+k}}{j!}
		\bigg\{ \prod_{i=1}^k \frac{1-m^{j+i}}{j+i}\bigg\}\\
		&= e^{-t} \frac{k!}{(m;m)_k} \sum_{j = 0}^\infty \frac1{j!}
		\int_{m t}^t \int_{m u_{k-1}}^{u_{k-1}} \ldots \int_{m u_1}^{u_1}  u_0^j {\: \rm d} u_0 \ldots {\: \rm d} u_{k-1}.
	\end{align*}
	Now it suffices to show that
	\begin{align*}
		\int_{mt}^t \int_{m u_{k-1}}^{u_{k-1}} \ldots \int_{m u_1}^{u_1} e^{u_0} {\: \rm d} u_0 \ldots {\: \rm d} u_{k-1}
		=(m;m)_k \sum_{j=0}^k \bigg\{\prod_{\stackrel{i=0}{i \neq j}} ^k \frac1{m^j-m^i} \bigg\} e^{m^j t}
	\end{align*}
	which one can prove by a straightforward but tedious induction with respect to $k \in \NN$.
\end{proof}

Next, we compute the limits of moments.
\begin{proposition}
	\label{prop:m-1b}
	For all $\kappa \in (0, 1)$ and $\gamma\in \RR$,
	\begin{align}
		\label{lim:moments}
		\lim_{t\to +\infty}
		(1-m)^{\gamma}
		\frac{\Gamma_m(\gamma+1)}{\Gamma(\gamma+1)}
		\int_0^{\infty} u^\gamma \mu_t({\rm d} u) =1,
	\end{align}
	uniformly with respect to $m \in (0, \kappa]$ where for $\gamma \in -\NN$, the ratio is understood in the limiting
	sense, see \eqref{eq:G/G_m}. Moreover, for all $t_0 > 0$ and $\gamma \in \RR$,
	\begin{align}
		\label{ineq:sup-finite}
		\sup_{t\geq t_0} \int_0^{\infty} u^\gamma \mu_t({\rm d} u) < \infty.
	\end{align}
\end{proposition}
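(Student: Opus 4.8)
The plan is to reduce both assertions to an explicit series for the moments of $\mu_t$ and then analyse it. Assume first $\gamma\in\RR\setminus(-\NN)$. Substituting the last formula of Corollary~\ref{cor:m-1} into \eqref{eq:moments-mu_t-P_j} and using $\Gamma_m(\gamma+1)=(1-m)^{-\gamma}(m;m)_\infty/(m^{\gamma+1};m)_\infty$ to recognize the $e^{-t}t^\gamma$ (i.e.\ $\delta_t$) term as the $j=0$ term of the sum, one arrives at
\begin{equation*}
	(1-m)^{\gamma}\,\frac{\Gamma_m(\gamma+1)}{\Gamma(\gamma+1)}\int_0^\infty u^\gamma\,\mu_t({\: \rm d}u)
	= e^{-t}\sum_{j=0}^\infty \frac{t^{j+\gamma}}{\Gamma(j+\gamma+1)}\,r_j(m),
	\qquad
	r_j(m):=\frac{(m^{j+1};m)_\infty}{(m^{j+\gamma+1};m)_\infty}.
\end{equation*}
It thus suffices to prove that the right-hand side tends to $1$ as $t\to+\infty$, uniformly in $m\in(0,\kappa]$.

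The first input is the behaviour of the ``unperturbed'' series, where $r_j\equiv 1$: the entire function $t\mapsto\sum_{j\ge0}t^j/\Gamma(j+\gamma+1)$ is asymptotic to $t^{-\gamma}e^t$ as $t\to+\infty$ (the classical asymptotics of the generalized exponential/Mittag--Leffler series; alternatively one reduces to the elementary identity $e^{-t}\sum_{j\ge n}t^j/j!\to 1$ for integers $n$ via a Poisson-concentration argument together with $\Gamma(j+1)/\Gamma(j+\gamma+1)\sim j^{-\gamma}$). Hence $e^{-t}\sum_{j\ge 0}t^{j+\gamma}/\Gamma(j+\gamma+1)\to 1$; in particular it is bounded for large $t$, every finite initial segment $e^{-t}\sum_{j<J}t^{j+\gamma}/\Gamma(j+\gamma+1)$ tends to $0$, and therefore every tail $e^{-t}\sum_{j\ge J}t^{j+\gamma}/\Gamma(j+\gamma+1)$ (a sum of nonnegative terms once $J+\gamma+1\ge 1$) is bounded for large $t$.

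The second input is a uniform-in-$m$ control of the weights $r_j(m)$. Fix $J_\gamma\in\NN$ with $J_\gamma+\gamma+1\ge 1$. For $j\ge J_\gamma$ all the $q$-Pochhammer symbols occurring have exponent $\ge 1$, whence $(m^{j+1};m)_\infty\in(0,1]$ and $(m^{j+\gamma+1};m)_\infty\ge(m;m)_\infty\ge(\kappa;\kappa)_\infty>0$ for $m\le\kappa$; thus $0<r_j(m)\le(\kappa;\kappa)_\infty^{-1}$, and since both symbols converge to $1$ as $j\to\infty$ uniformly for $m\le\kappa$ (their arguments are bounded by $\kappa^{j+1}$ and $\kappa^{j+\gamma+1}$), we obtain $r_j(m)\to1$ as $j\to\infty$, uniformly in $m\le\kappa$. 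For the finitely many indices $j<J_\gamma$ the function $m\mapsto r_j(m)$ is continuous on $(0,\kappa]$ with a nonzero denominator --- this is exactly where $\gamma\notin-\NN$ is used, as then no factor $1-m^{j+\gamma+1+i}$ vanishes --- and it extends continuously to $m=0$, hence is bounded on $(0,\kappa]$, while its prefactor $e^{-t}t^{j+\gamma}\to0$. Given $\epsilon>0$, choose $J^*\ge J_\gamma$ with $\sup_{m\le\kappa}|r_j(m)-1|<\epsilon$ for all $j\ge J^*$, split the series at $J^*$, bound the finite part $e^{-t}\sum_{j<J^*}t^{j+\gamma}r_j(m)/\Gamma(j+\gamma+1)$ (which $\to0$ uniformly by the above) and the exact part $e^{-t}\sum_{j\ge J^*}t^{j+\gamma}/\Gamma(j+\gamma+1)$ (which $\to1$, and is $m$-free by the first input), and estimate the remainder by $e^{-t}\sum_{j\ge J^*}t^{j+\gamma}|r_j(m)-1|/\Gamma(j+\gamma+1)\le\epsilon\, e^{-t}\sum_{j\ge J_\gamma}t^{j+\gamma}/\Gamma(j+\gamma+1)$; this yields $\limsup_{t\to\infty}\sup_{m\le\kappa}|e^{-t}\sum_{j\ge0}t^{j+\gamma}r_j(m)/\Gamma(j+\gamma+1)-1|\le 2\epsilon$, and letting $\epsilon\to0$ proves \eqref{lim:moments} for $\gamma\notin-\NN$. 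The case $\gamma\in-\NN$ follows by the same splitting with the first $|\gamma|$ summands read in the limiting sense \eqref{eq:G/G_m} (they stay bounded in $m$ and are still annihilated by $e^{-t}t^{j+\gamma}\to0$), or by continuity in $\gamma$. Finally, \eqref{ineq:sup-finite} is immediate: by Corollary~\ref{cor:m-2} the map $t\mapsto\int_0^\infty u^\gamma\mu_t({\: \rm d}u)$ equals $e^{-t}t^\gamma$ times a power series in $t$ of infinite radius of convergence, hence is continuous on $(0,\infty)$, and by \eqref{lim:moments} it has a finite limit at $+\infty$; a continuous function on $[t_0,\infty)$ with a finite limit at infinity is bounded.

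I expect the main obstacle to be the uniformity in $m$. Once the genuinely $m$-free leading behaviour $e^{-t}\sum_j t^{j+\gamma}/\Gamma(j+\gamma+1)\to1$ is isolated, all of the $m$-dependence must be confined to terms carrying either a factor $r_j(m)-1$ (small for large $j$, uniformly in $m\le\kappa$) or the ``short-range'' indices $j<J_\gamma$, whose number grows as $\gamma\to-\infty$ and whose uniform boundedness in $m$ rests on the continuity of $r_j$ up to $m=0$; keeping these two mechanisms cleanly separated is the crux.
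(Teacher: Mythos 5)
Your argument is correct and follows essentially the same route as the paper: the same series representation from Corollary~\ref{cor:m-2}, the same split of the sum at a threshold index, the same uniform (in $m\le\kappa$) control of the ratio $(m^{j+1};m)_\infty/(m^{j+\gamma+1};m)_\infty$ for large $j$, and the same leading asymptotics $e^{-t}t^{\gamma}\sum_{j}t^{j}/\Gamma(j+\gamma+1)\to 1$, which the paper obtains by citing the Mittag--Leffler asymptotics $\lim_{t\to+\infty}t^{\beta-1}e^{-t}E_{1,\beta}(t)=1$. Your explicit derivation of \eqref{ineq:sup-finite} (continuity on $[t_0,\infty)$ plus a finite limit at infinity) is a small but welcome addition that the paper leaves implicit.
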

\begin{proof}
	Let $\gamma \in \RR$. If $\gamma>0$ we have
	\[
		1 \geq \frac{(m^{j+1};m)_{\infty}}{(m^{j+\gamma+1};m)_{\infty}}
		\geq (m^{j+1};m)_{\lceil \gamma \rceil} \geq (\kappa^{j+1};\kappa)_{\lceil \gamma \rceil}.
	\]
	Similarly, for $\gamma<0$, and $j \geq  \lfloor -\gamma \rfloor$ we get
	\[
		1 \leq
		\frac{(m^{j+1};m)_{\infty}}{(m^{j+\gamma+1};m)_{\infty}} \leq \frac1{(m^{j+\gamma+1};m)_{\lceil -\gamma \rceil}}
		\leq
		\frac1{(\kappa^{j+\gamma+1};\kappa)_{\lceil -\gamma \rceil}}.
	\]
	Therefore for a fixed $\epsilon \in (0, 1)$, there is $N \geq \lfloor -\gamma \rfloor$ which depends only on $\kappa$ and
	$\gamma$, such that for all $j \geq N$,
	\[
		\bigg| \frac{(m^{j+1};m)_{\infty}}{(m^{j+\gamma+1};m)_{\infty}} - 1 \bigg| \leq \epsilon.
	\]
	Using \eqref{eq:moments-mu_t-P_j}, we write
	\begin{align*}
		\int_0^{\infty} u^\gamma \mu_t({\rm d} u)
		=
		e^{-t} t^\gamma
		+
		e^{-t} t^{\gamma}
		\sum_{j = 1}^{N-1} t^j \int_0^1 u^\gamma P_j(u) {\: \rm d} u + I(t)
	\end{align*}
	where
	\[
		I(t) =
		e^{-t} t^{\gamma}
		\sum_{j = N+1}^{\infty} t^j \int_0^1 u^\gamma P_j(u) {\: \rm d} u.
	\]
	Therefore, by Corollary \ref{cor:m-1}
	\[
		\lim_{t \to +\infty} \frac{\Gamma_m(\gamma+1) (1-m)^{\gamma}}{\Gamma(\gamma+1)}
		\int_0^{\infty} u^\gamma \mu_t({\rm d} u)
		=
		\lim_{t \to +\infty}
		\frac{\Gamma_m(\gamma+1) (1-m)^{\gamma}}{\Gamma(\gamma+1)}
		I(t),
	\]
	uniformly with respect to $m \in (0, \kappa]$. Next, by Corollary \ref{cor:m-2} we have
	\begin{equation}
		\label{eq:88}
		\frac{\Gamma_m(\gamma+1) (1-m)^{\gamma}}{\Gamma(\gamma+1)} I(t)
		=
		e^{-t} t^\gamma\sum_{j=N+1}^\infty \frac{t^j}{\Gamma(j+\gamma+1)}
		\frac{(m^{j+1};m)_{\infty}}{(m^{j+\gamma+1};m)_{\infty}}.
	\end{equation}
	Let us recall the Mittag-Leffler function, that is
	\[
		E_{\alpha, \beta}(t) = \sum_{n = n_0}^\infty \frac{t^n}{\Gamma(\alpha n + \beta)},  \qquad t \in \RR
	\]
	where $n_0 \in \NN_0$ is any nonnegative integer such that $\alpha n_0 + \beta > 0$. Since
	\[
		E_{\alpha, \beta}(t) = \sum_{n = 0}^\infty \frac{t^{n+n_0}}{\Gamma(\alpha n + \beta + n_0\alpha)}
		=t^{n_0} E_{\alpha, \beta + n_0 \alpha}(t),
	\]
	by \cite[Theorem 4.3]{MR4179587}, for $\alpha \in (0, 2)$ we get
	\begin{equation}
		\label{eq:89}
		\lim_{t \to +\infty} t^{\beta-1} e^{-t} E_{\alpha, \beta}(t^\alpha)
		=
		\lim_{t \to +\infty} t^{\beta+ n_0 \alpha - 1} e^{-t} E_{\alpha, \beta+n_0\alpha}(t^{\alpha})
		=\frac1\alpha.
	\end{equation}
	Hence, by \eqref{eq:88},
	\begin{align*}
		\bigg| \frac{\Gamma_m(\gamma+1) (1-m)^{\gamma}}{\Gamma(\gamma+1)} I(t) -
		e^{-t} t^\gamma \mathit{E}_{1, \gamma+1}(t) \bigg|
		&\leq
		e^{-t}t^\gamma \sum_{j=N+1}^\infty \frac{t^j}{\Gamma(j+\gamma+1)}
		\bigg| \frac{(m^{j+1};m)_{\infty}}{(m^{j+\gamma+1};m)_{\infty}} -1\bigg|\\
		&\leq
		\epsilon e^{-t}t^\gamma \mathit{E}_{1,\gamma+1}(t),
	\end{align*}
	which by \eqref{eq:89} leads to \eqref{lim:moments}.
\end{proof}

\subsection{Weak convergence of $\mu_t$}
\label{sec:mu_t}
In this section we show that family of measures $(\mu_t : t > 0)$ converges weakly.
\begin{theorem}
	\label{thm:weak_conv}
	The family of probability measures $(\mu_t : t>0)$ on $[0,\infty)$ converges weakly as $t\to+\infty$ to a probability
	measure $\mu$ which is uniquely characterized by its moments:
	\[
		\int_0^{\infty} u^k \mu({\rm d}u)=\frac{k!}{(m;m)_k},\qquad k\in \NN_0.
	\]
	The measure $\mu$ has finite moments of all orders $\gamma\in \RR$, and
	\begin{equation}
		\label{eq:55}
		\lim_{t \to +\infty}
		\int_0^{\infty} u^\gamma \mu_t({\rm d} u)
		=
		\int_0^{\infty} u^\gamma \mu({\rm d} u)= \frac{\Gamma(\gamma+1)}{\Gamma_m(\gamma+1)} (1-m)^{-\gamma}.
	\end{equation}
	The value of the right-hand side for $\gamma \in -\NN$ is understood in the limiting sense, see \eqref{eq:G/G_m}.
\end{theorem}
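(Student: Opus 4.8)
The plan is to prove Theorem~\ref{thm:weak_conv} via the classical method of moments for measures on $[0,\infty)$. First I would establish that the family $(\mu_t : t>0)$ is tight. This follows immediately from Proposition~\ref{prop:m-1b}: taking $\gamma=1$ (say), the bound \eqref{ineq:sup-finite} gives $\sup_{t \geq 1} \int_0^\infty u \, \mu_t({\rm d}u) < \infty$, and a standard Markov-inequality argument then yields tightness of the family on $[0,\infty)$. Hence by Prokhorov's theorem every sequence $t_n \to \infty$ has a weakly convergent subsequence.

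Next I would identify the limit through its moments. By Corollary~\ref{cor:m-3}, for each fixed $k \in \NN$,
\[
	\int_0^\infty u^k \, \mu_t({\rm d}u)
	= k! \sum_{j=0}^k \bigg\{\prod_{\stackrel{i=0}{i\neq j}}^k \frac1{m^j-m^i} \bigg\} e^{-(1-m^j) t},
\]
and since $1-m^j > 0$ for $j \geq 1$ while the $j=0$ term has exponent $0$, letting $t \to +\infty$ kills all terms except $j=0$, giving $\lim_{t\to\infty}\int_0^\infty u^k \mu_t({\rm d}u) = k! \prod_{i=1}^k (1-m^i)^{-1} = k!/(m;m)_k$. (Alternatively one can read this off from Proposition~\ref{prop:m-1b} with $\gamma = k$, using $\Gamma(k+1)/\Gamma_m(k+1) = k!/[k]_m! = k!(1-m)^k/(m;m)_k$.) These moments grow like $k!$ up to the bounded factor $(m;m)_k^{-1} \to (m;m)_\infty^{-1}$, so they satisfy Carleman's condition $\sum_k \big(k!/(m;m)_k\big)^{-1/2k} = +\infty$; therefore there is a \emph{unique} probability measure $\mu$ on $[0,\infty)$ with these moments. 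Since the moments of any weak subsequential limit of $(\mu_t)$ must equal these values (here I must justify passing moments through the weak limit: for $\gamma=k$ this is legitimate because we have uniform integrability of $u^k$ against $\mu_t$, which comes from the uniform bound on, e.g., $(k+1)$-st moments in \eqref{ineq:sup-finite}), every subsequential limit equals $\mu$, and hence $\mu_t \Rightarrow \mu$.

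It remains to prove the full moment convergence \eqref{eq:55} for arbitrary real $\gamma$, including $\gamma \in -\NN$, and to evaluate $\int_0^\infty u^\gamma \mu({\rm d}u)$. The convergence of the integrals is exactly Proposition~\ref{prop:m-1b}: rearranging \eqref{lim:moments} gives $\lim_{t\to\infty}\int_0^\infty u^\gamma \mu_t({\rm d}u) = \frac{\Gamma(\gamma+1)}{\Gamma_m(\gamma+1)}(1-m)^{-\gamma}$. To see that this common limit is $\int_0^\infty u^\gamma \mu({\rm d}u)$, I would combine the weak convergence $\mu_t \Rightarrow \mu$ with the uniform moment bounds: for $\gamma > 0$, uniform integrability of $u^\gamma$ with respect to $(\mu_t)$ follows from boundedness of, say, $(\lceil\gamma\rceil+1)$-st moments via \eqref{ineq:sup-finite}; for $\gamma < 0$ one instead needs control near $u=0$, which is where the main subtlety lies. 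For negative $\gamma$ the function $u\mapsto u^\gamma$ is unbounded at the origin, so I would show the measures $\mu_t$ put uniformly little mass near $0$ — concretely, by using the bound on a more negative moment $\int_0^\infty u^{\gamma'} \mu_t({\rm d}u) \leq C$ with $\gamma' < \gamma$ from \eqref{ineq:sup-finite}, together with $u^\gamma \leq \varepsilon^{\gamma'-\gamma} u^{\gamma'}$ on $(0,\varepsilon)$, to make $\int_{(0,\varepsilon)} u^\gamma \mu_t({\rm d}u)$ uniformly small; on $[\varepsilon,\infty)$ the integrand is bounded and continuous, and one can cut off the tail using a positive-order moment bound, so ordinary weak convergence applies. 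This uniform-integrability bookkeeping near $0$ for negative moments is the one genuinely technical point; everything else is routine once Proposition~\ref{prop:m-1b} and Corollary~\ref{cor:m-3} are in hand. Finiteness of all real moments of $\mu$ is then immediate from \eqref{eq:55}.
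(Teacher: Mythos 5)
Your proposal is correct and follows essentially the same route as the paper: tightness from the uniform moment bounds of Proposition~\ref{prop:m-1b}, identification of the limit via Carleman's condition applied to $M_k=k!/(m;m)_k$, moment convergence for $\gamma>0$ by uniform integrability from higher moments, and control of the mass near $u=0$ (the paper phrases this as $\mu(\{0\})=0$ plus the continuous mapping $u\mapsto u^{-1}$, while you truncate directly — the two are interchangeable). One small slip: on $(0,\varepsilon)$ with $\gamma'<\gamma<0$ the correct bound is $u^{\gamma}\leq \varepsilon^{\gamma-\gamma'}u^{\gamma'}$ (exponent $\gamma-\gamma'>0$, so the factor vanishes as $\varepsilon\to 0$), not $\varepsilon^{\gamma'-\gamma}$ as written.
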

\begin{proof}
	By Proposition \ref{prop:m-1b}, for each $k \in \NN$,
	\begin{equation}
		\label{eq:85}
		M_k = \lim_{t \to \infty} \int_0^\infty u^k \mu_t({\rm d} u) = \frac{k!}{(m;m)_k}.
	\end{equation}
	By Stirling's formula there is $C > 0$ such that
	\[
		\bigg(\frac{k!}{(m; m)_k} \bigg)^{\frac{1}{2k}} \leq C \sqrt{k},
	\]
	thus the Carleman's condition is satisfied, that is
	\begin{equation}	
		\label{eq:84}
		\sum_{k = 1}^\infty M_k^{-\frac{1}{2k}} = \infty.
	\end{equation}
	Consequently, the Stieltjes moment problem is determinate, i.e. the limit measure is unique if it exists.
	
	Next, by Corollary \ref{cor:m-2}, each measure $\mu_t$ is a probability measure on $[0, \infty)$. By Chebyshev's inequality,
	for all $\epsilon > 0$ and $t > 0$,
	\[
		1- \mu_t\big(\big\{ \abs{u} < \epsilon^{-1} \big\}\big)
		=
		\mu_t\big(\big\{ \abs{u} \geq \epsilon^{-1} \big\}\big)
		\leq
		\epsilon \int_0^\infty \abs{u} \: \mu_t({\rm d} u)
	\]
	which is uniformly bounded thanks to Proposition \ref{prop:m-1b}. Hence, the family $(\mu_t : t > 0)$ is tight.
	Since the moment problem is determinate, tightness implies that there is a measure $\mu$ such that
	$\mu_t$ weakly converge to $\mu$ as $t$ tends to infinity, see e.g. \cite[Theorem 25.10]{MR1324786}. Recall that a
	sequence of random variables which converges in distribution and has uniformly bounded $(p+\delta)$-moments, it has also
	convergent $p$-moments, see e.g. \cite[Theorem 25.12]{MR1324786}. Hence, all non-negative moments of $(\mu_t : t > 1)$
	converge to the moments of $\mu$ as $t$ tends to infinity. Lastly, notice that by the weak convergence, for each
	$\epsilon > 0$,
	\begin{align*}
		\mu(\{0\}) \leq \mu((-\infty,\epsilon))
		&\leq \liminf_{t\to +\infty} \mu_t((-\infty,\epsilon)) \\
		&\leq \sup_{t\geq 1} \int _0^\epsilon (u/\epsilon)^{-1}\mu_t({\rm d} u) \\
		&\leq \epsilon \sup_{t \geq 1} \int_0^\infty u^{-1} \mu_t({\rm d} u),
	\end{align*}
	hence by Proposition \ref{prop:m-1b} we obtain $\mu(\{0\})=0$. Consequently, we can use \cite[Theorem 25.7]{MR1324786}
	with $h(u)=|u|^{-1}$ to conclude that $\mu_t h^{-1}$ converges weakly to $\mu h^{-1}$. Hence, all positive real
	moments of $(\mu_t  h^{-1} : t \geq 1)$ converge to those of $\mu h^{-1}$ as $t \to +\infty$ which corresponds to negative
	real moments of $(\mu_t :  t\geq 1)$ and $\mu$, respectively. The exact values of the moments of $\mu$ follows by
	Proposition \ref{prop:m-1b}.
\end{proof}

We record the following corollary for later use.
\begin{corollary}
	For $t>0$ and $m\in (0,1)$,
	\begin{align}
		\label{eq:1-moment}
		\frac{(1-m)e^t}{e^t -e^{mt}} \int_0^\infty u\: \mu_t({\rm d} u) = 1,
	\end{align}
	and
	\begin{align}
		\label{ineq:2-moment}
		\frac{(1-m)^2 e^t}{e^t-e^{mt}} \int_0^\infty u^2 \: \mu_t({\rm d} u) \leq \frac2{1+m}.
	\end{align}
\end{corollary}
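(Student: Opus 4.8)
The plan is to read off closed-form expressions for the first two integer moments of $\mu_t$ directly from Corollary~\ref{cor:m-3}, and then reduce each of the two claims to an elementary one-variable inequality.

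First I would prove \eqref{eq:1-moment}. Taking $k=1$ in Corollary~\ref{cor:m-3}, the sum over $j\in\{0,1\}$ gives
\[
	\int_0^\infty u\: \mu_t({\rm d} u)
	= \frac{1}{1-m} - \frac{1}{1-m}\, e^{-(1-m)t}
	= \frac{1}{1-m} \cdot \frac{e^t - e^{mt}}{e^t},
\]
using $1 - e^{-(1-m)t} = (e^t - e^{mt})/e^t$. Multiplying both sides by $(1-m) e^t/(e^t - e^{mt})$ yields \eqref{eq:1-moment} at once.

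Next I would prove \eqref{ineq:2-moment}. Taking $k=2$ in Corollary~\ref{cor:m-3}, the three summands correspond to $j=0,1,2$; putting them over the common denominator $m(1-m)^2(1+m)$ and using $1-m^2 = (1-m)(1+m)$, I expect to obtain
\[
	\int_0^\infty u^2\: \mu_t({\rm d} u)
	= \frac{2}{m(1-m)^2(1+m)} \Big( m - (1+m)\, a + b \Big),
	\qquad a = e^{-(1-m)t},\quad b = e^{-(1-m^2)t}.
\]
Since $e^t - e^{mt} = e^t(1 - a)$, it follows that
\[
	\frac{(1-m)^2 e^t}{e^t - e^{mt}} \int_0^\infty u^2\: \mu_t({\rm d} u)
	= \frac{2\big(m - (1+m) a + b\big)}{m(1+m)(1-a)},
\]
so \eqref{ineq:2-moment} is equivalent, after multiplying through by the positive quantity $m(1-a)$, to $m - (1+m) a + b \leq m - m a$, that is, to $b \leq a$. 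The latter holds for every $t>0$ and $m\in(0,1)$, because $1-m^2 \geq 1-m$ forces $b = e^{-(1-m^2)t} \leq e^{-(1-m)t} = a$; equivalently, $b = a^{1+m} \leq a$ since $0 < a < 1$.

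I do not expect a genuine obstacle, as the argument is entirely elementary. The only mildly delicate step is the algebraic bookkeeping in the $k=2$ case: keeping track of the signs of the three coefficients $\prod_{i\neq j}(m^j - m^i)^{-1}$ coming from Corollary~\ref{cor:m-3}, and checking that over the common denominator $m(1-m)^2(1+m)$ the numerator collapses to $m - (1+m)a + b$. Once that simplification is in place, \eqref{eq:1-moment} is immediate and \eqref{ineq:2-moment} reduces to the trivial inequality $a^{1+m}\leq a$.
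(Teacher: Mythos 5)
Your proof is correct. For \eqref{eq:1-moment} you do exactly what the paper does: apply Corollary~\ref{cor:m-3} with $k=1$ and simplify. For \eqref{ineq:2-moment} your route differs slightly from the paper's. The paper uses the \emph{first} (iterated-integral) formula of Corollary~\ref{cor:m-3} and simply bounds
\[
	\int_{mt}^t \int_{m u_1}^{u_1} e^{u_0}\,{\rm d} u_0\,{\rm d} u_1
	= \int_{mt}^t \big(e^{u_1}-e^{m u_1}\big)\,{\rm d} u_1 \leq e^t - e^{mt},
\]
after which the constant $2/(1+m)$ falls out of $(1-m)^2/(m;m)_2$. You instead use the \emph{second} (explicit exponential-sum) formula, compute the second moment in closed form, and reduce the claim to $e^{-(1-m^2)t} \leq e^{-(1-m)t}$, i.e.\ $a^{1+m}\leq a$. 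Your algebra checks out: the three coefficients for $j=0,1,2$ are $\frac{1}{(1-m)(1-m^2)}$, $-\frac{1}{m(1-m)^2}$, and $\frac{1}{m(1-m)(1-m^2)}$, and over the common denominator $m(1-m)^2(1+m)$ the numerator is indeed $m-(1+m)a+b$. The paper's argument is a one-line upper bound; yours is a bit more bookkeeping but yields the exact value of $\int_0^\infty u^2\,\mu_t({\rm d}u)$ as a byproduct, which makes the sharpness of the constant transparent (equality in \eqref{ineq:2-moment} is approached only as $b/a \to 1$). Both are complete and correct.
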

\begin{proof}
	The equality \eqref{eq:1-moment} directly follows from Corollary~\ref{cor:m-3} with $k=1$. To prove \eqref{ineq:2-moment},
	we observe that
	\[
		\int_{mt}^t \int_{m u_1}^{u_1} e^{u_0}  {\: \rm d} u_0 \: {\rm d} u_1 \leq e^t-e^{mt},
	\]
	thus the inequality is a consequence of Corollary~\ref{cor:m-3} with $k=2$.
\end{proof}

\begin{lemma}
	\label{lem:mu_t_erg}
	The family of probability measures $(\mu_t : t > 0)$ converge to $\mu$ in total variation distance, i.e.,
	\[
		\lim_{t \to +\infty}
		\|\mu_t-\mu\|_{TV}=
		\lim_{t \to +\infty} \sup_{B \in \mathcal{B}(\RR)} \left| \mu_t(B)-\mu(B) \right|
	\]
	where $\calB(\RR)$ denotes $\sigma$-field of Borel sets in $\RR$.
\end{lemma}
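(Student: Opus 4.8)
The plan is to transfer the statement into an $L^1$-convergence of densities and then close it with a compactness argument. By \eqref{def:mu_t} we have $\mu_t = e^{-t}\delta_t + \tilde\mu_t$, where $\tilde\mu_t$ is absolutely continuous with density $\tilde f_t(u) = e^{-t}\sum_{j=1}^\infty t^{j-1}P_j(u/t)\ind{(0,t)}(u)$ and total mass $1-e^{-t}$. Since $\|e^{-t}\delta_t\|_{TV}=e^{-t}\to 0$, it suffices to prove that $\tilde f_t \to h$ in $L^1(\RR)$ for some $h$ with $\mu = h\,{\rm d}u$; this will in particular establish that $\mu$ is absolutely continuous, which is why I do not assume it.

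The two ingredients are a uniform $L^1$-domination and a uniform bound on the total variation of $\tilde f_t$. By \eqref{eq:9}, applied with $v=u/t$,
\[
0 \le \tilde f_t(u) \le \frac{e^{-t}}{(m;m)_\infty}\sum_{j=1}^\infty \frac{(t-u)^{j-1}}{(j-1)!} = \frac{e^{-u}}{(m;m)_\infty},
\]
so $(\tilde f_t)_{t>0}$ is dominated by the fixed function $g(u)=(m;m)_\infty^{-1}e^{-u}\ind{[0,\infty)}\in L^1(\RR)$, hence uniformly integrable and uniformly tight. For the variation, differentiating \eqref{Pnu} with $n$ in place of $n+1$ gives $P_n'(v)=\big((n-1)P_n(v)-P_{n-1}(v)\big)/(v-m^n)$ on $(m^n,1)$; writing $N_n=(n-1)P_n-P_{n-1}$, so that $\mathrm{sign}(P_n')=\mathrm{sign}(N_n)$ there, one has $N_n'=(n-1)(v-m^n)^{-1}N_n-P_{n-1}'$. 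An induction on $n$ (with $P_1,P_2$ as base cases) then shows each $P_n$ is unimodal: at a zero of $N_n$ one has $N_n'=-P_{n-1}'$, so $N_n$ can change sign from $+$ to $-$ only left of, and from $-$ to $+$ only right of, the mode of $P_{n-1}$; as $N_n$ is positive near $m^n$ and negative near $1$, it has exactly one sign change. Hence $\mathrm{TV}(P_n)=2\max P_n\le 2\big((n-1)!\,(m;m)_\infty\big)^{-1}$ by \eqref{eq:9}, and therefore $\mathrm{TV}(\tilde f_t)\le e^{-t}\sum_{j\ge1}t^{j-1}\mathrm{TV}(P_j)\le 2(m;m)_\infty^{-1}$, uniformly in $t$.

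By the Fréchet–Kolmogorov criterion — boundedness in $L^1$, uniform tightness, and $\|\tilde f_t(\cdot+h)-\tilde f_t\|_{L^1}\le |h|\,\mathrm{TV}(\tilde f_t)\le 2|h|(m;m)_\infty^{-1}$ — the family $(\tilde f_t)_{t>0}$ is relatively compact in $L^1(\RR)$. Given any sequence $t_n\to\infty$, pass to a subsequence with $\tilde f_{t_{n_k}}\to f^*$ in $L^1$; then $f^*\ge 0$, $\int f^*=1$, and $\tilde\mu_{t_{n_k}}=\tilde f_{t_{n_k}}\,{\rm d}u\to f^*\,{\rm d}u$ in total variation, hence weakly. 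On the other hand $\tilde\mu_{t_{n_k}}=\mu_{t_{n_k}}-e^{-t_{n_k}}\delta_{t_{n_k}}$ converges weakly to $\mu$ by Theorem~\ref{thm:weak_conv}, so $\mu=f^*\,{\rm d}u$. Thus $\mu$ is absolutely continuous and every $L^1$-limit point of $(\tilde f_t)$ equals its density $h$, whence $\tilde f_t\to h$ in $L^1(\RR)$ as $t\to\infty$. Consequently $\sup_{B\in\calB(\RR)}|\mu_t(B)-\mu(B)|\le e^{-t}+\|\tilde f_t-h\|_{L^1}\to 0$.

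The main obstacle is the uniform bound on $\mathrm{TV}(\tilde f_t)$: domination by $g$ alone does not yield $L^1$-compactness (the densities could oscillate), so the translation-equicontinuity — equivalently a bound $\mathrm{TV}(P_n)=O(1/(n-1)!)$ uniform in $n$ — is essential, and establishing the required unimodality of the splines $P_n$ (or at least a uniformly bounded number of sign changes of $(n-1)P_n-P_{n-1}$) is the delicate, though elementary, point; the remaining steps are soft.
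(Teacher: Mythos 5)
Your argument is correct in outline and takes a genuinely different route from the paper. The paper's proof is probabilistic: it identifies $\mu_t$ with the law at time $t$ of the AIMD/TCP process (constant drift with partial resetting), checks that the embedded chain $Z_n = X_{T_n}$ is a positive Harris recurrent autoregressive recursion with absolutely continuous marginals, and then invokes the Borovkov--Foss sc-convergence machinery to upgrade this to total variation convergence of the continuous-time marginals. You instead work directly with the densities: splitting off the atom $e^{-t}\delta_t$, dominating the absolutely continuous part by $(m;m)_\infty^{-1}e^{-u}$ via \eqref{eq:9}, and obtaining $L^1$-precompactness from the Riesz--Kolmogorov criterion once a uniform bound $\mathrm{TV}(\tilde f_t)\le 2(m;m)_\infty^{-1}$ is in hand; the limit is then pinned down by Theorem~\ref{thm:weak_conv}. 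This is precisely the ``purely analytic proof'' that Remark~2.16 alludes to and defers, and your diagnosis of where the real work sits is accurate: domination alone does not give $L^1$-compactness, and the whole weight rests on $\mathrm{TV}(P_n)=O(1/(n-1)!)$, i.e.\ on unimodality of the splines. I checked the mechanics of your induction: the derivative identity $P_n'(v)=\big((n-1)P_n(v)-P_{n-1}(v)\big)/(v-m^n)$ follows from \eqref{Pnu}, $N_n$ is positive on $(m^n,m^{n-1})$ (where $P_{n-1}\equiv 0$) and negative on $[m,1)$ (by \eqref{eq:8}, since $1-u<1-m^n$ there), and the crossing-direction constraint $N_n'=-P_{n-1}'$ at zeros of $N_n$ does exclude a second sign change. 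Two points need to be written out carefully if this is to be a complete proof: the base case, since $P_1$ is flat on $(m,1]$ so $P_1'\equiv 0$ and the localization via the mode degenerates for $n=2$ (you must use the explicit tent shape of $P_2$, as you indicate); and the inductive hypothesis must carry \emph{strict} monotonicity of $P_{n-1}$ off its mode, since otherwise a plateau of $P_{n-1}$ would let both a $-\to+$ and a subsequent $+\to-$ crossing of $N_n$ coexist. With those details supplied, the proof is complete, and it buys something the paper's citation-based argument does not: an explicit, self-contained quantitative mechanism (the uniform variation bound) behind the total variation convergence.
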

\begin{proof}
	Let us observe that by Corollary \ref{cor:m-3}, for each $t > 0$ and $k \in \NN_0$ we have
	\[
		M_k(t) = \int_0^\infty u^k \mu_t({\rm d} u) \leq M_k
	\]
	where $M_k$ is defined in \eqref{eq:85}. Then by \eqref{eq:84}, we obtain
	\[
		\sum_{k = 0}^\infty M_k(t)^{-\frac{1}{2k}} = \infty.
	\]
	Consequently, there is the unique measure on $(0, \infty)$ with moments $(M_k(t) : k \in \NN_0)$.

	Let $Y_t \equiv t$ for $t > 0$, and let $\mathbf{X}^{\rm TCP}$ be a process obtain from $\mathbf{Y}$ by partial resetting
	with factor $c$. The probability distribution of $X_t$ equals $\mu_t$ since they do have the same moments, see
	\cite[Theorem 3]{MR2426601}. Therefore proving the lemma is equivalent to proving ergodicity of the process
	$\mathbf{X}^{\rm TCP}$. The latter is a consequence of \cite[Theorem 1(3)]{MR1157423} together with
	\cite[Theorem 2.1 and Remark B]{MR1956829}. More precisely, the process  $\mathbf{X}^{\rm TCP}$ admits an embedded
	recursive chain $\mathbf{Z}^{TCP} = (Z_n^{\rm TCP} : n \in \NN_0)$ where
	\[
		Z^{\rm TCP}_n=X_{T_n}, \qquad  n \in \NN_0.
	\]
	Since $\mathbf{Z}^{TCP}$ satisfies
	\[
		Z^{\rm TCP}_{n+1}=c Z^{\rm TCP}_n + (T_{n+1}-T_n), \qquad n \in \NN_0,
	\]
	its driver $(T_{n+1}-T_n : n \in \mathbb{N}_0)$ consists of i.i.d. exponential random variables, see
	\cite[Definition 1]{MR1157423}. Hence, by \cite[Theorem 1(3)]{MR1157423}, it is enough to show sc-convergence of
	$\mathbf{Z}^{\rm TCP}$, see \cite[Def. 4, p. 23]{MR1157423} for the definition of sc-convergence.
	In view of \cite[Theorem 8]{MR1157423} it is enough to check conditions I--III given on page 18 of \cite{MR1157423}
	(notice that the assumption $\mathbb{E} \tau_V(x)<\infty$ is superfluous since it is a part of condition I). We notice
	that the conditions I--III hold true if $\mathbf{Z}^{\rm TCP}$ converge in total variation to its stationary law,
	see \cite[Theorem 2]{MR1157423}. Now, let us observe that the Markov chain $\mathbf{Z}^{\rm TCP}$ is an autoregressive
	process of order $1$, and it is positive Harris recurrent, see \cite[Theorem 2.1 and Remark B]{MR1956829}.
	Since $Z^{\rm TCP}_n$ has an absolutely continuous distribution, see e.g. \cite[Theorem 2.8]{MR1894253}, we immediately
	conclude that $\mathbf{Z}^{\rm TCP}$ convergence in total variation.
\end{proof}

\begin{remark}
	There is a purely analytic proof of Lemma \ref{lem:mu_t_erg}, however it would require preparatory steps which go
	beyond the scope of this article. The details will appear elsewhere. For the sake of completeness, here we provided a
	probabilistic arguments.
\end{remark}

\begin{remark}
	By Theorem~\ref{thm:weak_conv}, the measure $\mu$ is uniquely characterized by its moments. Since it has the same
	moments as the random variable $Z$ in \cite[(5.15)]{Kemperman}, the density of $\mu$ can be read of from
	\cite[(5.9)]{Kemperman}, that is
	\begin{equation}
		\label{eq:81}
		\mu(u)=
		\frac{1}{(m,m)_\infty}\sum_{k=0}^\infty (-1)^k \frac{m^{\frac{1}{2}k(k-1)}}{(m,m)_k}e^{-m^{-k}u},
		\quad u  \geq 0.
	\end{equation}
\end{remark}

The following lemma will be important in Section \ref{sec:cyl}.
\begin{lemma}
	\label{lem:inf_mu_t}
	For all $t_0 > 0$ and $\delta_2 > \delta_1 > 0$ such that $\delta_1 < t_0$ we have
	\[
		\inf_{t \in [t_0,\infty)} \mu_t\big([\delta_1,\delta_2]\big)  > 0.
	\]
\end{lemma}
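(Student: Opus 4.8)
The plan is to show the infimum is positive by combining a uniform positive lower bound for finite $t$ with the weak convergence established in Theorem~\ref{thm:weak_conv}. First I would observe that the map $t \mapsto \mu_t\big([\delta_1, \delta_2]\big)$ is continuous on $(0, \infty)$: indeed, by the representation \eqref{def:mu_t}, for $t > \delta_2$ the atom at $t$ does not contribute, and the density part is a locally uniformly convergent series of the continuous functions $t^{j-1} P_j(u/t)$ (the uniform convergence on $u \in [\delta_1, \delta_2]$, $t$ in a compact set follows from the bound \eqref{eq:9} together with $(m;m)_\infty > 0$). Hence on any compact interval $[t_0, T]$ with $T > \delta_2$ the function attains a minimum, which is strictly positive as soon as we know $\mu_t\big([\delta_1, \delta_2]\big) > 0$ for each fixed $t$; this positivity is clear from \eqref{def:mu_t} and Proposition~\ref{prop:2}, since, say, $P_1(u/t) = \tfrac{1}{1-m}\ind{(m, 1]}(u/t)$ is strictly positive on an interval, and one can always choose $j$ large enough so that $[m^j, 1] \supset [\delta_1/t, \delta_2/t]$, making $P_j(u/t) > 0$ on $[\delta_1, \delta_2]$.

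Second I would handle the tail $t \to \infty$ using Theorem~\ref{thm:weak_conv}: $\mu_t$ converges weakly to $\mu$, whose density is given explicitly by \eqref{eq:81} and is in particular strictly positive on $(0, \infty)$, so $\mu\big([\delta_1, \delta_2]\big) > 0$ (and indeed $\mu$ assigns positive mass to any subinterval of $(0, \infty)$). Shrinking $[\delta_1, \delta_2]$ to a slightly smaller closed interval $[\delta_1', \delta_2']$ with $\delta_1 < \delta_1' < \delta_2' < \delta_2$, the portmanteau theorem gives
\[
	\liminf_{t \to \infty} \mu_t\big([\delta_1, \delta_2]\big)
	\geq
	\liminf_{t \to \infty} \mu_t\big((\delta_1, \delta_2)\big)
	\geq
	\mu\big([\delta_1', \delta_2']\big) > 0,
\]
using that $(\delta_1, \delta_2)$ is open. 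Hence there is $T_1 \geq t_0$ such that $\mu_t\big([\delta_1, \delta_2]\big) \geq \tfrac12 \mu\big([\delta_1', \delta_2']\big) > 0$ for all $t \geq T_1$.

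Finally I would combine the two ranges: on $[t_0, T_1]$ (a compact set, with $T_1 > \delta_2$ arranged by enlarging $T_1$ if necessary) the continuous strictly positive function $t \mapsto \mu_t\big([\delta_1, \delta_2]\big)$ has a positive minimum $m_1 > 0$, and on $[T_1, \infty)$ it is bounded below by $m_2 = \tfrac12 \mu\big([\delta_1', \delta_2']\big) > 0$; therefore $\inf_{t \in [t_0, \infty)} \mu_t\big([\delta_1, \delta_2]\big) \geq \min\{m_1, m_2\} > 0$. The only mildly delicate point is the continuity and pointwise positivity claim for the series defining $\mu_t$ on the compact range of $t$; this is where one invokes the spline bound \eqref{eq:9} to get a dominating summable series $\sum_j T^{j-1} \tfrac{1}{(j-1)!} \tfrac{1}{(m;m)_j}$ and the explicit support $[m^j, 1]$ of $P_j$ from Proposition~\ref{prop:2} to locate an index $j$ with $P_j$ strictly positive on the relevant subinterval. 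Everything else is a routine packaging of weak convergence and compactness.
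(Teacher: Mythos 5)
Your overall architecture coincides with the paper's: positive $\mu$-mass on the interval plus portmanteau for large $t$, and continuity plus pointwise positivity on a compact $t$-range. However, there is one genuine gap: you assert that the density of $\mu$ given by \eqref{eq:81} ``is in particular strictly positive on $(0,\infty)$'' and deduce from this that $\mu\big([\delta_1',\delta_2']\big)>0$. Strict positivity is not at all obvious from \eqref{eq:81}, which is an alternating series; the paper itself never claims it. What the paper does instead is note that the density is non-negative (being a density), extends holomorphically to $\{\Re z>0\}$, and is not identically zero, so it has only finitely many (isolated) zeros in $(\delta_1,\delta_2)$; this already yields $\mu\big([\delta_1,\delta_2]\big)>0$, which is all that the portmanteau step needs. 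Without some argument of this kind (analyticity, or an identification of $\mu$ as the law of a random variable with full support and a.e.\ positive density), your tail estimate is unsupported.

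Two further slips are repairable but should be fixed. First, your claim that one can choose $j$ with $[m^j,1]\supset[\delta_1/t,\delta_2/t]$ fails whenever $t<\delta_2$, since then $\delta_2/t>1$; what is true, and sufficient, is that for $j$ large enough $(m^j t, t)\cap(\delta_1,\delta_2)$ is a nondegenerate interval (using $\delta_1<t_0\leq t$), on which $P_j(\cdot/t)>0$ by Proposition~\ref{prop:1}, so $\mu_t\big([\delta_1,\delta_2]\big)>0$ for each fixed $t$. Second, $t\mapsto\mu_t\big([\delta_1,\delta_2]\big)$ is \emph{not} continuous: by \eqref{def:mu_t} the atom $e^{-t}\delta_t$ contributes $e^{-t}$ for $t\in[t_0,\delta_2]$ and $0$ for $t>\delta_2$, producing a jump at $t=\delta_2$. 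Since the atom's contribution is non-negative, you should simply discard it and take the minimum of the continuous, pointwise positive density part; the paper streamlines this even further by retaining a single term $e^{-t}t^j\int_{\delta_1/t}^{\delta_2/t}P_j(u)\,{\rm d}u$ with one fixed $j$ chosen so that $m^j<\delta_2 T^{-1}$, which is manifestly continuous and positive on $[t_0,T]$.
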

\begin{proof}
	Since the function $\mu(u)$ is a density of the probability measure $\mu({\rm d} u)$ it is non-negative. By \eqref{eq:81},
	$\mu$ has holomorphic extension to $\big\{z \in \CC : \Re z > 0\big\}$. Therefore, it has finitely many zeros in
	$(\delta_1,\delta_2)$. In particular, $\mu([\delta_1,\delta_2]) > 0$. Hence, by Theorem~\ref{thm:weak_conv} there is $T > 0$
	such that for all $t > T$, $\mu_t([\delta_1,\delta_2]) > 0$. It remains to deal with $t \in [t_0, T]$. Let $j \geq 2$
	be such that $m^j < \delta_2 T^{-1}$. Using \eqref{def:mu_t}, for all $t \in [t_0, T]$ we get
	\begin{align}
		\nonumber
		\mu_t\big([\delta_1,\delta_2]\big)
		&\geq
		e^{-t}  t^j \int_{(\delta_1,\delta_2)} P_j(u/t) \frac{{\rm d} u}{t} \\
		\label{eq:82}
		&= e^{-t} t^j \int_{(\delta_1/t,\delta_2/t)} P_j(u) \: {\rm d} u.
	\end{align}
	Let us observe that \eqref{eq:82} defines a continuous function on $[t_0, T]$. Moreover, by Proposition \ref{prop:1}
	$P_j$ is positive on $[m_j, 1]$. Thus \eqref{eq:82} has positive infimum on $[t_0, T]$. This completes the proof.
\end{proof}

In the remaining part of this section we prove auxiliary lemmas which are helpful in studying ergodicity of the processes
with resetting.
\begin{lemma}
	\label{lem:unif_conv-1}
	Fix $C, \gamma > 0$ and let $\mathcal{G}$ be a family of functions $g : (0,\infty) \to \RR$, such that
	\[
		0 \leq g(u) \leq C\big(u+ u^{-1}\big)^\gamma, \quad\text{for all } u>0.
	\]
	Then
	\[
		\lim_{t\to+\infty}
		\sup_{g \in \mathcal{G}}
		\bigg| \int_0^{\infty}g(u) \: \mu_t({\rm d} u) - \int_0^{\infty}g(u)\: \mu({\rm d} u)\bigg|=0.
\]
\end{lemma}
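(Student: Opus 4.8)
The plan is to reduce the uniform convergence over $\mathcal{G}$ to the already-established convergence of moments of $\mu_t$. The key observation is that every $g \in \mathcal{G}$ is dominated by the single function $h(u) = C(u + u^{-1})^\gamma$, which by the binomial expansion is a finite linear combination of powers $u^k$ with $k \in \{-\lceil\gamma\rceil, \dots, \lceil\gamma\rceil\}$ (more precisely $C\sum_{j} \binom{\gamma}{j} u^{\gamma - 2j}$ if $\gamma$ is not an integer, but it suffices to bound $h$ crudely by $C'(u^N + u^{-N})$ with $N = \lceil \gamma \rceil$). By Theorem~\ref{thm:weak_conv}, $\int u^{\pm N} \mu_t({\rm d}u) \to \int u^{\pm N}\mu({\rm d}u)$, so $\int h \, {\rm d}\mu_t \to \int h \, {\rm d}\mu < \infty$; in particular $\sup_{t \geq t_0} \int h \, {\rm d}\mu_t < \infty$ by \eqref{ineq:sup-finite}, and the tails $\int_{(0,\epsilon) \cup (R,\infty)} h \, {\rm d}\mu_t$ are uniformly (in $t$) small when $\epsilon$ is small and $R$ large, by Chebyshev applied to $u^{2N}$ and $u^{-2N}$ together with the uniform moment bound.

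The main step is a truncation-plus-compactness argument. Fix $\varepsilon > 0$. First choose $0 < a < b < \infty$ so that $\int_{(0,a]} h\,{\rm d}\mu_t + \int_{[b,\infty)} h \, {\rm d}\mu_t < \varepsilon$ for all $t \geq t_0$ and also $\int_{(0,a]} h \, {\rm d}\mu + \int_{[b,\infty)} h \, {\rm d}\mu < \varepsilon$; this is possible by the uniform integrability just described (and since $\mu(\{0\}) = 0$). Since $0 \leq g \leq h$ for all $g \in \mathcal{G}$, the contribution to $\big|\int g \, {\rm d}\mu_t - \int g \, {\rm d}\mu\big|$ coming from outside $[a,b]$ is at most $2\varepsilon$, uniformly in $g$. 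It remains to control $\big|\int_{[a,b]} g \, {\rm d}\mu_t - \int_{[a,b]} g \, {\rm d}\mu\big|$ uniformly over $g \in \mathcal{G}$. On the compact interval $[a,b]$ the functions $g$ are uniformly bounded (by $\sup_{[a,b]} h$), but they need not be equicontinuous, so I cannot directly invoke a Portmanteau/Arzelà–Ascoli argument. Instead I use that $\mu_t \to \mu$ in total variation by Lemma~\ref{lem:mu_t_erg}: for any measurable $g$ with $\|g\|_{\infty,[a,b]} \leq K$,
\[
	\bigg| \int_{[a,b]} g \, {\rm d}\mu_t - \int_{[a,b]} g \, {\rm d}\mu \bigg|
	\leq K \, \|\mu_t - \mu\|_{TV},
\]
which tends to $0$ uniformly in $g$ as $t \to +\infty$. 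Combining, $\limsup_{t\to\infty} \sup_{g \in \mathcal{G}} \big|\int g \, {\rm d}\mu_t - \int g \, {\rm d}\mu\big| \leq 2\varepsilon$, and since $\varepsilon$ was arbitrary the lemma follows.

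The potential obstacle I anticipate is the lack of equicontinuity of $\mathcal{G}$, which rules out the naive weak-convergence argument; the fix, as above, is to lean on the strong (total variation) convergence from Lemma~\ref{lem:mu_t_erg}, against which only the $L^\infty$ norm of the integrand on the truncation window matters. A secondary point requiring a little care is the uniform-in-$t$ tail control: one should write, e.g., $\int_{(R,\infty)} h \, {\rm d}\mu_t \leq C' \int_{(R,\infty)} (u^N + u^{-N}) \, {\rm d}\mu_t \leq C' R^{-N} \int_0^\infty u^{2N} {\rm d}\mu_t + C' \int_{(R,\infty)} u^{-N} {\rm d}\mu_t$, and similarly near $0$, then invoke \eqref{ineq:sup-finite} (with both positive and negative exponents) to bound the $t$-dependent moment factors uniformly for $t \geq t_0$; choosing $t_0$ to be any fixed positive number and then $a$ small, $b$ large makes everything go through.
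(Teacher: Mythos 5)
Your proposal is correct and follows essentially the same route as the paper: uniform tail control via the uniform moment bounds \eqref{ineq:sup-finite} (the paper uses the single dominating moment $\int(u+u^{-1})^{\gamma+1}\mu_t({\rm d}u)$ with one Chebyshev step, where you split into $u^{\pm N}$; both work), followed by the total variation convergence of Lemma~\ref{lem:mu_t_erg} on the truncated compact window, where only the sup-norm of $g$ matters. No gaps; the only cosmetic point is that $\big|\int g\,{\rm d}(\mu_t-\mu)\big|\leq 2\|g\|_\infty\|\mu_t-\mu\|_{TV}$ with the paper's normalization of the TV distance, which does not affect the conclusion.
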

\begin{proof}
	Let $\epsilon \in(0,1)$. By \eqref{ineq:sup-finite}, for sufficiently large $M > 1$, we have
	\[
		\int_{(0,1/M] \cup [M,\infty)}
		g(u) \: \mu_t({\rm d} u)
		\leq
		\frac{C}{ M+ M^{-1}}
		\sup_{t \geq 1} \int_0^{\infty} \big(u+u^{-1}\big)^{\gamma+1} \mu_t({\rm d} u) \leq \epsilon/3.
	\]
	A similar inequality holds true for $\mu$ in place of $\mu_t$. Notice that $M$ may be chosen so that the inequality holds
	true uniformly for all $g \in \mathcal{G}$ and $t \geq 1$. Next we write
	\begin{align*}
		&\bigg| \int_{(1/M, M)} g(u) \: \mu_t({\rm d} u) - \int_{(1/M,M)} g(u) \: \mu({\rm d} u)  \bigg| \\
		&\qquad
		\leq \int_{(1/M,M)} g(u) |\mu_t-\mu| ({\rm d} u) \\
		&\qquad= C \big(M+ M^{-1}\big)^{\gamma} 2 \|\mu_t-\mu\|_{TV}
	\end{align*}
	where $|\mu_t-\mu|({\rm d}u)$ denotes the total variation measure. Finally, by Lemma~\ref{lem:mu_t_erg}, there is $T \geq 1$
	such that for all $t\geq T$ we have
	\[
		C \big(M+ M^{-1}\big)^{\gamma} 2 \|\mu_t-\mu\|_{TV} \leq \epsilon/3
	\]
	which completes the proof.
\end{proof}

Given $c \in (0, 1)$, for $\delta > 0$ and $t > 0$, we set
\begin{equation}
	\label{eq:52}
	\mathscr{X}(t; \delta) = \bigg\{x \in \RR^d : -\log_c \norm{x} \leq \frac{t}{1+\delta} \bigg\}.
\end{equation}

\begin{lemma}
	\label{lem:unif_conv-2}
	Let $c \in (0, 1)$, $\alpha \in (0, 2]$ and $m = c^\alpha$. Fix $n \in \NN$ and $C, \gamma > 0$.
	Let $\mathcal{F}$ denote the family of functions $f : (0,\infty)\times \RR^n \to \RR$, such that for all $x \in \RR^n$
	and $u>0$,
	\begin{align}
		\label{eq:78}
		0 \leq f(u,x) \leq C \big(u+u^{-1} \big)^\gamma,\qquad
		| f(u, x)-f(u,0)|\leq C |x| \big(u+u^{-1} \big)^\gamma.
	\end{align}
	Then for each $\delta > 0$, we have
	\begin{align*}
		\lim_{t \to +\infty}
		\sup_{\stackrel{x \in \mathscr{X}(t; \delta)}{f \in \mathcal{F}}}
		\bigg| e^{-t}f(t,x)+e^{-t}\sum_{j=1}^\infty t^j \int_0^1 f(tu,c^jx) P_j(u) \: {\rm d}u
		-\int_0^{\infty} f(u,0) \: \mu({\rm d} u) \bigg|=0.
	\end{align*}
\end{lemma}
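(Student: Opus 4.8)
The plan is to compare the bracketed quantity with $\int_0^\infty f(u,0)\,\mu_t(\ud u)$ and to bound the resulting difference directly, while Lemma~\ref{lem:unif_conv-1} disposes of everything else. By \eqref{def:mu_t} and the substitution $u\mapsto tu$,
\[
	\int_0^\infty f(u,0)\,\mu_t(\ud u)=e^{-t}f(t,0)+e^{-t}\sum_{j=1}^\infty t^j\int_0^1 f(tu,0)\,P_j(u)\,\ud u,
\]
and since the family $\{u\mapsto f(u,0):f\in\mathcal F\}$ fulfils the hypotheses of Lemma~\ref{lem:unif_conv-1} (one has $0\le f(u,0)\le C(u+u^{-1})^\gamma$), it is enough to prove that
\[
	D(t,x,f):=e^{-t}\big(f(t,x)-f(t,0)\big)+e^{-t}\sum_{j=1}^\infty t^j\int_0^1\big(f(tu,c^jx)-f(tu,0)\big)P_j(u)\,\ud u
\]
tends to $0$ uniformly over $x\in\mathscr{X}(t;\delta)$ and $f\in\mathcal F$. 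Combining the two inequalities in \eqref{eq:78} gives, for $a>0$, $y\in\RR^n$ and $j\ge 0$,
\[
	\big|f(a,c^jy)-f(a,0)\big|\le C(a+a^{-1})^\gamma\min\{2,\,c^j|y|\},
\]
so the first term of $D(t,x,f)$ is $\le 2Ce^{-t}(t+t^{-1})^\gamma\to 0$; recall also that $x\in\mathscr{X}(t;\delta)$ means $|x|\le c^{-t/(1+\delta)}$.

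I would split the series at $j_0:=\lfloor t/(1+\delta/2)\rfloor$, setting $\eta:=\tfrac1{1+\delta/2}-\tfrac1{1+\delta}>0$. For $j>j_0$ and $x\in\mathscr{X}(t;\delta)$ we have $c^j|x|<c^{t/(1+\delta/2)}c^{-t/(1+\delta)}=c^{t\eta}$, hence, using \eqref{def:mu_t} once more, the part of the series with $j>j_0$ is
\[
	\le Cc^{t\eta}\,e^{-t}\sum_{j=1}^\infty t^j\int_0^1(tu+(tu)^{-1})^\gamma P_j(u)\,\ud u
	\ \le\ Cc^{t\eta}\int_0^\infty(u+u^{-1})^\gamma\,\mu_t(\ud u),
\]
which tends to $0$ uniformly because $\sup_{t\ge 1}\int_0^\infty(u+u^{-1})^\gamma\mu_t(\ud u)<\infty$ by \eqref{ineq:sup-finite}.

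In the tail range $1\le j\le j_0$ I would keep only $\min\{2,c^j|x|\}\le 2$ together with $(tu+(tu)^{-1})^\gamma\le 2^\gamma\big((tu)^\gamma+(tu)^{-\gamma}\big)$, which bounds this part by a constant times
\[
	t^\gamma e^{-t}\sum_{j=1}^{j_0}t^j\,\mathbb A(\gamma,j)+t^{-\gamma}e^{-t}\sum_{j=1}^{j_0}t^j\,\mathbb A(-\gamma,j).
\]
As $\gamma>0$, one has $\mathbb A(\gamma,j)\le\mathbb A(0,j)=1/j!$ by Corollary~\ref{cor:A0}, and the Chernoff bound $e^{-t}\sum_{j\le\theta t}t^j/j!\le e^{-t(1-\theta+\theta\log\theta)}$ with $\theta=1/(1+\delta/2)\in(0,1)$ kills the first sum. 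For the second I would invoke Theorem~\ref{thm:all-moments}: writing $N=\lceil\gamma\rceil$, its product formula yields for $j>N$
\[
	\mathbb A(-\gamma,j)=\frac1{(m;m)_j}\prod_{k=1}^j\frac{1-m^{k-\gamma}}{k-\gamma}
	\ \le\ \frac{A_{\gamma,m}}{(m;m)_\infty}\prod_{k=N+1}^j\frac1{k-\gamma}
	\ \le\ \frac{C_\gamma}{(j-N)!},
\]
where $A_{\gamma,m}\ge0$ collects the first $N$ factors (interpreted via \eqref{eq:43} when $\gamma\in\NN$) and we used $0<1-m^{k-\gamma}<1$ and $k-\gamma\ge k-N$ for $k>N$. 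Bounding the finitely many terms with $j\le N$ trivially, the second sum becomes $\le C_\gamma t^{N-\gamma}e^{-t}\sum_{i\le j_0}t^i/i!+o(1)$, which vanishes by the same Chernoff estimate. Together with the bulk bound this gives $\sup_{x\in\mathscr{X}(t;\delta),\,f\in\mathcal F}|D(t,x,f)|\to0$, and the lemma follows.

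The main obstacle is precisely the control of the negative moments $\mathbb A(-\gamma,j)$ in the tail: the crude estimate $u^{-\gamma}\le m^{-j\gamma}$ on $\supp P_j=[m^j,1]$ produces a factor $t^{-\gamma}m^{-j\gamma}$ that overwhelms the Poisson tail when $\delta$ is small, so one genuinely needs Theorem~\ref{thm:all-moments}, which shows that $\mathbb A(-\gamma,j)$ exceeds $1/j!$ only by a polynomial-in-$j$ factor. Everything else is routine bookkeeping: the comparison with $\mu_t$ through \eqref{def:mu_t}, the constraint $|x|\le c^{-t/(1+\delta)}$ built into $\mathscr{X}(t;\delta)$, and the standard large-deviation bound for the Poisson law.
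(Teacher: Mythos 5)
Your proof is correct and follows essentially the same route as the paper's: reduce to comparing with $\int_0^\infty f(u,0)\,\mu_t({\rm d}u)$ via Lemma \ref{lem:unif_conv-1}, split the series at an index of order $t/(1+\delta')$, use the Lipschitz bound in \eqref{eq:78} for large $j$ (where $c^j\abs{x}$ is forced small by the definition of $\mathscr{X}(t;\delta)$), and control the low-$j$ block by the moment formula of Theorem \ref{thm:all-moments} together with a Poisson lower-tail estimate. The only cosmetic differences are that you split at a point independent of $x$ (obtaining the decaying factor $c^{t\eta}$ rather than a fixed $\epsilon$) and use a Chernoff bound where the paper uses Stirling's formula.
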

\begin{proof}
	In view of Lemma \ref{lem:unif_conv-1}, it is enough to show that
	\begin{equation}
		\lim_{t \to +\infty} \sup_{\stackrel{x \in \mathscr{X}(t; \delta)}{f \in \mathcal{F}}}
		\bigg|
		e^{-t} f(t, x) + e^{-t} \sum_{j = 1}^\infty t^j \int_0^1 f(tu, c^j x) P_j(u) {\: \rm d} u
		-
		\int_0^\infty f(u, 0) \mu_t({\rm d} u)
		\bigg|=0.
	\end{equation}
	Given $\epsilon > 0$ we set
	\[
		N = \max\big\{\gamma + 1, -\log_c (\norm{x}/\epsilon ) \big\}.
	\]
	By \eqref{def:mu_t}, we can write
	\begin{align*}
		&
		\bigg|
		e^{-t} f(t, x) + e^{-t} \sum_{j = 1}^\infty t^j \int_0^1 f(tu, c^j x) P_j(u) {\: \rm d} u
		- \int_0^\infty f(u, 0) {\: \rm d} \mu_t({\rm d} u) \bigg|\\
		&\qquad\qquad
		\leq
		e^{-t} \big( \abs{f(t, x)} + \abs{f(t, 0)}\big)
		+ e^{-t} \sum_{1 \leq j \leq N} t^j \int_0^1 \big(\abs{f(tu, c^j x)} + \abs{f(tu, 0)} \big) P_j(u) {\: \rm d} u \\
		&\qquad\qquad\phantom{\leq}
		+ e^{-t} \sum_{j > N} t^j  \int_0^1 \big| f(tu, c^j x) - f(tu, 0) \big| P_j(u) {\: \rm d} u.
	\end{align*}
	First, let us observe that if $x \in \mathscr{X}(t; \delta)$, then for $j > N$,
	$c^j \norm{x} \leq c^N \norm{x} \leq \epsilon$, and thus
	\begin{align*}
		e^{-t} \sum_{j > N} t^j  \int_0^1 \big| f(tu, c^j x) - f(tu, 0) \big| P_j(u) {\: \rm d} u
		&\leq
		C e^{-t}
		\sum_{j > N} t^j \int_0^1 c^j\norm{x} \Big( tu+\frac1{tu}\Big)^\gamma P_j(u) \: {\rm d}u \\
		&\leq
		\epsilon
		C \int_0^\infty \big(u + u^{-1}\big)^\gamma \mu_t({\rm d} u)
	\end{align*}
	which by Proposition \ref{prop:m-1b} is uniformly bounded by a constant multiply of $\epsilon$. Next, by
	Theorem \ref{thm:all-moments}, for each $j \in \NN$,
	\begin{align}
		\label{eq:56}
		\int_0^1 \abs{f(tu,c^jx)} P_j(u) \: {\rm d}u
		\leq
		C
		\frac{2^{\lceil \gamma \rceil - 1}}{(m;m)_\infty}
		&\bigg(
		t^{\lceil \gamma \rceil}
		\bigg\{\prod_{k=1}^j \frac{1-m^{k+\lceil \gamma \rceil}}{k+\lceil \gamma \rceil}\bigg\}
		+ t^{-\lceil \gamma \rceil}
		\bigg\{\prod_{k=1}^j \frac{1-m^{k-\lceil \gamma \rceil}}{k-\lceil \gamma \rceil}\bigg\}
		\bigg).
	\end{align}
	If $j \geq \lceil \gamma \rceil$, the second product is understood in the limiting sense, that is
	\[
		\prod_{k=1}^j \frac{1-m^{k-\lceil \gamma \rceil}}{k-\lceil \gamma \rceil}
		=
		(-\log m)
		\prod_{\stackrel{k=1}{k \neq \lceil \gamma \rceil}}^j \frac{1-m^{k-\lceil \gamma \rceil}}{k-\lceil \gamma \rceil}.
	\]
	Now, using \eqref{eq:56}, we get
	\[
		\sum_{1 \leq j \leq N} t^j \int_0^1 \abs{f(tu, c^j x)} P_j(u) {\: \rm d} u
		\leq
		C \frac{2^{\lceil \gamma \rceil-1}}{(m; m)_\infty} \big(I_1 + I_2\big)
	\]
	where
	\[
		I_1=\sum_{1 \leq j \leq N} t^{j+\lceil \gamma \rceil}
		\bigg\{\prod_{k=1}^j \frac{1-m^{k+\lceil \gamma \rceil}}{k+\lceil \gamma \rceil}\bigg\},
		\quad\text{and}\quad
		I_2 = \sum_{1 \leq j \leq N} t^{j-\lceil \gamma \rceil}
		\bigg\{\prod_{k=1}^j \frac{1-m^{k-\lceil \gamma \rceil}}{k-\lceil \gamma \rceil}\bigg\}.
	\]
	There is $c_1 > 0$ such that
	\[
		I_1 \leq
		c_1 \sum_{1 \leq j \leq N}
		\frac{t^{j+\lceil \gamma \rceil}}{(j+\lceil \gamma \rceil)!}
		\leq
		c_1 S(t, x)
	\]
	where
	\[
		S(t, x)= \sum_{0 \leq j \leq N+\lceil \gamma \rceil}\frac{t^j}{j!}.
	\]
	To bound $I_2$ for $t \geq 1$, we write
	\begin{align*}
		I_2
		&\leq \sum_{j = 1}^{\lceil \gamma \rceil-1} t^{j-\lceil \gamma \rceil}
		\bigg\{
		\prod_{k=1}^j \frac{1-m^{k-\lceil \gamma \rceil}}{k-\lceil \gamma \rceil}\bigg\}
		+
		(-\log m)
		\sum_{\lceil \gamma \rceil \leq j \leq N}
		t^{j-\lceil \gamma \rceil}
		\bigg\{
		\prod_{\stackrel{k=1}{k \neq \lceil \gamma \rceil}}^j
		\frac{1-m^{k-\lceil \gamma \rceil}}{k-\lceil \gamma \rceil}
		\bigg\} \\
		&\leq
		c_2 + c_3 \sum_{0 \leq j \leq N-\lceil \gamma \rceil}
		\frac{t^j}{j!}
	\end{align*}
	for certain $c_2, c_3 > 0$. Hence, there is $c_4 > 0$ such that
	\[
		\abs{f(t, x)} + \sum_{1 \leq j \leq N} t^j \int_0^1 \abs{f(tu, c^j x)} P_j(u) {\: \rm d} u
		\leq
		c_4 S(t, x).
	\]
	It remains to show that
	\begin{equation}
		\label{eq:50}
		\lim_{t \to +\infty} e^{-t} \sup_{x \in \mathscr{X}(t; \delta)} S(t, x) = 0.
	\end{equation}
	Let $T = \lceil N + \gamma \rceil$. If $-\log_c (\norm{x}/\epsilon) \leq \gamma+1$, then $N = \gamma+1$, and so for
	$t > T$,
	\[
		S(t, x) \leq e t^{2\gamma+1}.
	\]
	Otherwise, $N = - \log_c (\norm{x}/\epsilon)$. Since $x \in \mathscr{X}(t; \delta)$, if
	\[
		t \geq \frac{(1+\delta)^2}{\delta} (\log_c \epsilon + \gamma + 1),
	\]
	we have $t > (1+\delta)T$, and so
	\[
		S(t; x) = \sum_{j = 0}^T \frac{t^j}{j!} \leq (T+1) \frac{t^T}{T!}.
	\]
	The function $[1, \infty) \ni s \mapsto s^{-1} \log s$ is decreasing, hence for $t \geq (1+\delta) T$, we get
	we have
	\[
		T \log \frac{t}{T} \leq \frac{\log (1+\delta)}{1+\delta} t.
	\]
	Now, by the Stirling's formula
	\begin{align*}
		T \frac{t^T}{T!} e^{-t}
		&\leq
		c_5
		\sqrt{T}
		\exp\big\{ T \log t - T \log T + T - t\big\} \\
		&\leq
		c_5 \sqrt{\frac{t}{1+\delta}} \exp\Big\{t \frac{\log(1+\delta)-\delta}{1+\delta}\Big\},
	\end{align*}
	and because
	\[
		\frac{\log(1+\delta)-\delta}{1+\delta} < 0,
	\]
	we obtain \eqref{eq:50} and the theorem follows.
\end{proof}

\subsection{The function $\rho_{\mathbf{Y}}$}
\label{sec:2.4}
Let $(\mu_t : t > 0)$ be the family of probability measures defined by \eqref{def:mu_t} for certain $c \in (0, 1)$. In view
of Theorem \ref{thm:weak_conv}, $\mu_t$ weakly converge to the probability measure $\mu$ as $t$ tends to infinity. For a
given strictly $\alpha$-stable process $\mathbf{Y}$ in $\RR^d$, $\alpha \in (0, 2]$, with a transition density $p_0$, we
define a function
\begin{equation}
	\label{representationrho}
	\rho_{\mathbf{Y}} (y) = \int_0^{\infty} p_0(u;0,y) \: \mu({\rm d} u), \qquad y \in \RR^d.
\end{equation}
In this section we investigate the properties of $\rho_{\mathbf{Y}}$.
\begin{proposition}
	\label{prop:6}
	Suppose that $\mathbf{Y}$ is a strictly $\alpha$-stable process in $\RR^d$, $\alpha \in (0, 2]$, with a transition density.
	Then $\rho_{\mathbf{Y}} \in \calC_0^\infty(\RR^d)$. Moreover, for all $\gamma \in \RR$,
	\begin{equation}
		\label{eq:57}
		\int_{\RR^d} |y|^{\gamma} \rho_{\mathbf{Y}}(y) {\: \rm d}y
		= \frac{\Gamma(\gamma/\alpha+1)}{\Gamma_m(\gamma/\alpha+1)} (1-m)^{-\gamma/\alpha}\, \mathbb{E}|Y_1|^\gamma.
	\end{equation}
\end{proposition}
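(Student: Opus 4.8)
The plan is to reduce both assertions to two facts. The first is the self-similarity of $\mathbf{Y}$: writing $g = p_0(1;0,\cdot)$ one has $p_0(u;0,y) = u^{-d/\alpha} g(u^{-1/\alpha}y)$ for all $u>0$, $y\in\RR^d$, and, by the properties of strictly $\alpha$-stable densities collected in Appendix~\ref{appendix:A}, the function $g$ is bounded, of class $\calC^\infty$, tends to $0$ at infinity, and has bounded derivatives of all orders which likewise vanish at infinity; in particular $(u,y)\mapsto p_0(u;0,y)$ is jointly continuous on $\RR_+\times\RR^d$. The second is that, by Theorem~\ref{thm:weak_conv}, the measure $\mu$ in \eqref{representationrho} has finite moments of every real order. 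Differentiating the scaling identity gives, for every multi-index $\beta$,
\[
	\partial_y^\beta p_0(u;0,y) = u^{-(d+|\beta|)/\alpha}\,(\partial^\beta g)(u^{-1/\alpha}y),
	\qquad
	\bigl|\partial_y^\beta p_0(u;0,y)\bigr| \leq \|\partial^\beta g\|_\infty\, u^{-(d+|\beta|)/\alpha},
\]
the right-hand bound being independent of $y$.

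To prove $\rho_{\mathbf{Y}}\in\calC^\infty(\RR^d)$ I would differentiate \eqref{representationrho} under the integral sign. For each $\beta$ the displayed bound supplies a $\mu$-integrable majorant, since $\int_0^\infty u^{-(d+|\beta|)/\alpha}\,\mu(\mathrm{d}u)<\infty$ by Theorem~\ref{thm:weak_conv}; a routine induction on $|\beta|$ together with dominated convergence then yields $\rho_{\mathbf{Y}}\in\calC^\infty$ with $\partial_y^\beta\rho_{\mathbf{Y}}(y)=\int_0^\infty u^{-(d+|\beta|)/\alpha}(\partial^\beta g)(u^{-1/\alpha}y)\,\mu(\mathrm{d}u)$. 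For the decay at infinity I would again invoke dominated convergence: for fixed $u>0$ we have $u^{-1/\alpha}y\to\infty$ as $|y|\to\infty$, hence $(\partial^\beta g)(u^{-1/\alpha}y)\to0$, while the integrand stays dominated by $\|\partial^\beta g\|_\infty u^{-(d+|\beta|)/\alpha}$; since $\mu(\{0\})=0$ this gives $\partial_y^\beta\rho_{\mathbf{Y}}(y)\to0$ as $|y|\to\infty$ for every $\beta$, i.e.\ $\rho_{\mathbf{Y}}\in\calC_0^\infty(\RR^d)$.

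It remains to establish \eqref{eq:57}. Since all the integrands are nonnegative, Tonelli's theorem applied to \eqref{representationrho} gives
\[
	\int_{\RR^d}|y|^\gamma\rho_{\mathbf{Y}}(y)\,\mathrm{d}y
	=\int_0^\infty\Bigl(\int_{\RR^d}|y|^\gamma p_0(u;0,y)\,\mathrm{d}y\Bigr)\mu(\mathrm{d}u)
	=\int_0^\infty\EE|Y_u|^\gamma\,\mu(\mathrm{d}u).
\]
By self-similarity $Y_u\eqdistr u^{1/\alpha}Y_1$, so $\EE|Y_u|^\gamma=u^{\gamma/\alpha}\,\EE|Y_1|^\gamma$, and the right-hand side equals $\EE|Y_1|^\gamma\int_0^\infty u^{\gamma/\alpha}\,\mu(\mathrm{d}u)$. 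Substituting the value of the $(\gamma/\alpha)$-moment of $\mu$ from \eqref{eq:55} produces exactly \eqref{eq:57}; the identity is understood as an equality in $(0,\infty]$, with both sides infinite precisely when $\EE|Y_1|^\gamma=+\infty$, because $\int_0^\infty u^{\gamma/\alpha}\,\mu(\mathrm{d}u)$ is always finite and strictly positive.

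I expect the only genuinely delicate point to be the input quoted from Appendix~\ref{appendix:A}, namely the uniform boundedness and the vanishing at infinity of $\partial^\beta g$ for all multi-indices $\beta$; once that is in place, the rest is differentiation under the integral sign and an application of Tonelli's theorem. For Brownian motion and the isotropic case this is immediate, but since the statement also covers $\alpha$-stable subordinators and $d$-cylindrical stable processes one has to appeal to the structure of the corresponding stable densities (homogeneity of the characteristic exponent and the resulting Gaussian-type/polynomial decay estimates) recorded in the appendix.
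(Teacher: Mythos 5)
Your proposal is correct and follows essentially the same route as the paper: the regularity and decay of $\rho_{\mathbf{Y}}$ come from the scaling/derivative bounds on $p_0$ (the content of Lemma~\ref{lem:A3}) combined with the finiteness of all real moments of $\mu$ from Theorem~\ref{thm:weak_conv}, and the moment identity \eqref{eq:57} follows from Tonelli, the self-similarity $Y_u \eqdistr u^{1/\alpha}Y_1$, and the value of the $(\gamma/\alpha)$-moment of $\mu$ in \eqref{eq:55}. The "delicate point" you flag is precisely what the paper isolates in Lemma~\ref{lem:A3}\eqref{en:3:2}, proved via Fourier inversion using $\Re\Psi(z)\approx|z|^\alpha$.
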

\begin{proof}
	The regularity of $\rho_{\mathbf{Y}}$ follows by Lemma~\ref{lem:A3}\eqref{en:3:2}, and the finiteness of all moments of
	$\mu$, see Theorem~\ref{thm:weak_conv}. Next, by the scaling Lemma \ref{lem:A3}\eqref{en:3:1} and Tonelli's theorem, we get
	\begin{align*}
		\int_{\RR^d} |y|^{\gamma} \rho_{\mathbf{Y}}(y) {\: \rm d}y
		&= \int_0^\infty u^{-d/\alpha} \int_{\RR^d}|y|^\gamma p_0(1;0, u^{-1/\alpha} y){\: \rm d} y \: \mu({\rm d}u)\\
		&=\int_0^\infty u^{\gamma/\alpha}\mu({\rm d}u) \int_{\RR^d}|z|^\gamma p_0(1;0, z){\: \rm d} z,
	\end{align*}
	which in view of \eqref{eq:55} completes the proof.
\end{proof}

\begin{remark}
	Let us recall that a function $f: \RR^d \to \RR$ is a homogeneous function of degree $\gamma \in \RR$, if
	$f(sx)=s^\gamma f(x)$ for all $x\in\RR^d$ and $s > 0$. Reasoning similar to the proof of \eqref{eq:57} leads to
	the following statement: Under the assumptions of Proposition \ref{prop:6}, let $f$ be a homogeneous function of degree $\gamma \in \RR$, such that
	\[
		\sup_{|\theta|=1} |f(\theta)|<\infty.
	\]
	If
	\[
		\gamma >
		\begin{cases}
			-\frac{\alpha}{d} & \text{if } \alpha \in (0, 2), \\
			-d & \text{if } \alpha = 2,
		\end{cases}
	\]
	then
	\[
		\int_{\RR^d} f(y) \rho_{\mathbf{Y}}(y){\: \rm d}y =
		\frac{\Gamma(\gamma/\alpha+1)}{\Gamma_m(\gamma/\alpha+1)}
		(1-m)^{-\gamma/\alpha} \: \mathbb{E} f(Y_1)
	\]
	and the integral converges absolutely.
\end{remark}

There are examples of strictly $\alpha$-stable processes in $\RR$ for which the explicit expression for all moments is
known, see e.g. \cite[Proposition 1.4]{Hardin}.
\begin{example}[{see \cite{Shanbhag} and \cite[Example 25.10]{MR1739520}}]
	If $d=1$ and $\mathbf{Y}$ is an $\alpha$-stable subordinator with $\alpha \in (0,1)$, then for $\gamma<\alpha$,
	\[
		\mathbb{E}|Y_1|^\gamma= \frac{\Gamma\left(1-\frac{\gamma}{\alpha}\right)}{\Gamma\left(1-\gamma\right)}.
	\]
\end{example}

\begin{example}[see \cite{Shanbhag}]
	If $d = 1$ and $\mathbf{Y}$ is a symmetric $\alpha$-stable process with $\alpha \in (0,2)$,
	then for $-1<\gamma<\alpha$,
	\[
		\mathbb{E}|Y_1|^\gamma
		=\frac{2^\gamma \Gamma (\tfrac{1+\gamma}{2})\Gamma (1-\tfrac{\gamma}{\alpha})}{\sqrt{\pi}\,\Gamma(1-\tfrac{\gamma}{2})}.
	\]
\end{example}

\begin{example}
	If $d = 1$ and $\mathbf{Y}$ is a Brownian motion, then for $\gamma>-1$ one can see that
	\[
		\mathbb{E}|Y_1|^\gamma=\frac{2^{\gamma}\Gamma \big(\tfrac{1+\gamma}{2}\big)}{\sqrt{\pi}}.
	\]
	In particular, by \eqref{eq:57} for $k \in \NN$,
	\[
		\int_{\RR} y^{2k-1} \rho_{\mathbf{Y}}(y){\: \rm d}y=0, \quad\text{ and }\quad
		\int_{\RR} y^{2k} \rho_{\mathbf{Y}}(y){\: \rm d}y= \frac{(2k)!}{(m;m)_k}.
	\]
\end{example}

\begin{lemma}
	\label{lem:densities}
	Suppose that $\mathbf{Y}$ is a strictly $\alpha$-stable process in $\RR^d$, $\alpha \in (0, 2]$, with a transition density.
	Then, for all $y \in \RR^d$,
	\begin{equation}
		\label{eq:66}
		\rho_{\mathbf{Y}}(y)
		=\frac{1}{(m; m)_\infty}
		\sum_{k=0}^\infty (-1)^k \frac{m^{\frac{1}{2}k(k-1)}}{(m; m)_k} \, U^{(m^{-k})}(y)
	\end{equation}
	where $U^{(\beta)}(y)=\int_0^\infty e^{-\beta u}  \,p_0(u;0,y) {\: \rm d}u$, $\beta > 0$. Moreover,
if $\mathbf{Y}$ is a Brownian motion in $\RR^d$, then
	\begin{equation}
		\label{eq:67}
		\lim_{\norm{y} \to +\infty}
		\frac{\rho_{\mathbf{Y}}(y)}{|y|^{-\frac{d-1}{2}}e^{-|y|}}
		= \frac12 \frac{1}{(m; m)_\infty}  (2\pi)^{-\frac{d-1}{2}}.
	\end{equation}
\end{lemma}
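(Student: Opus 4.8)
\emph{Proof idea.} The plan is to read off \eqref{eq:66} directly from the defining formula \eqref{representationrho} of $\rho_{\mathbf{Y}}$ together with the explicit density of $\mu$ recorded in \eqref{eq:81}, and then to specialize to Brownian motion for the asymptotics \eqref{eq:67}. For \eqref{eq:66}, combining \eqref{representationrho} with \eqref{eq:81} gives
\[
	\rho_{\mathbf{Y}}(y)
	= \frac{1}{(m; m)_\infty}\int_0^\infty p_0(u; 0, y)
	\sum_{k=0}^\infty (-1)^k \frac{m^{\frac12 k(k-1)}}{(m; m)_k}\, e^{-m^{-k} u} \,{\rm d}u,
\]
so it remains to interchange summation and integration. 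I would justify this by Fubini's theorem: since $m \in (0, 1)$ we have $m^{-k} \geq 1$, hence $e^{-m^{-k}u} \leq e^{-u}$ for every $k \in \NN_0$ and $u \geq 0$, so the series of absolute values is dominated by $\big(\sum_{k} m^{\frac12 k(k-1)}/(m; m)_k\big)\, e^{-u}$, a constant multiple of $e^{-u}$; together with $\int_0^\infty p_0(u; 0, y)\, e^{-u}\,{\rm d}u = U^{(1)}(y)$ this makes the double integral absolutely convergent for every $y$ at which $U^{(1)}(y)$ is finite, in particular for all $y \neq 0$. Integrating term by term and using $\int_0^\infty p_0(u; 0, y)\, e^{-m^{-k}u}\,{\rm d}u = U^{(m^{-k})}(y)$ yields \eqref{eq:66}.

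For \eqref{eq:67}, when $\mathbf{Y}$ is Brownian motion we have $p_0(u; 0, y) = (4\pi u)^{-d/2} e^{-|y|^2/(4u)}$, hence
\[
	U^{(\beta)}(y) = (4\pi)^{-d/2}\int_0^\infty u^{-d/2}\, e^{-\beta u - |y|^2/(4u)}\,{\rm d}u .
\]
The plan is to analyze this integral as $\norm{y} \to \infty$ by Laplace's method — the exponent $\beta u + |y|^2/(4u)$ is convex with unique minimizer $u_* = \norm{y}/(2\sqrt{\beta})$ and minimal value $\sqrt{\beta}\,\norm{y}$ — equivalently, by recognizing $U^{(\beta)}$ as a modified Bessel function via $\int_0^\infty x^{\nu-1} e^{-bx - a/x}\,{\rm d}x = 2(a/b)^{\nu/2} K_\nu(2\sqrt{ab})$ and invoking $K_\nu(z) = \sqrt{\pi/(2z)}\,e^{-z}(1 + o(1))$. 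Either way one obtains
\[
	U^{(\beta)}(y) = \tfrac12\, (2\pi)^{-\frac{d-1}{2}}\, \beta^{\frac{d-3}{4}}\, \norm{y}^{-\frac{d-1}{2}}\, e^{-\sqrt{\beta}\,\norm{y}}\,\big(1 + o(1)\big)
	\qquad \text{as } \norm{y}\to\infty,
\]
and, keeping track of the $\beta$-dependence, $U^{(\beta)}(y) \leq C\, \beta^{\frac{d-3}{4}}\, \norm{y}^{-\frac{d-1}{2}}\, e^{-\sqrt{\beta}\,\norm{y}}$ uniformly for $\sqrt{\beta}\,\norm{y} \geq 1$ (this also follows from the elementary bound $K_\nu(z) \leq C_\nu z^{-1/2} e^{-z}$ for $z \geq 1$, or straight from $\beta u + |y|^2/(4u) \geq \sqrt{\beta}\,\norm{y}$).

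Finally I insert these estimates into \eqref{eq:66}. The $k = 0$ term contributes $\frac{1}{(m; m)_\infty} U^{(1)}(y) \sim \frac12 \frac{1}{(m; m)_\infty} (2\pi)^{-\frac{d-1}{2}}\, \norm{y}^{-\frac{d-1}{2}}\, e^{-\norm{y}}$, which is the claimed main term. For the tail $k \geq 1$, applying the uniform bound with $\beta = m^{-k}$ and using $e^{-m^{-k/2}\norm{y}} \leq e^{-m^{-1/2}\norm{y}}$, the sum $\sum_{k\geq1} \frac{m^{\frac12 k(k-1)}}{(m; m)_k} U^{(m^{-k})}(y)$ is bounded by a constant times $\norm{y}^{-\frac{d-1}{2}}\, e^{-m^{-1/2}\norm{y}} \sum_{k\geq1} m^{\frac12 k(k-1) + \frac{3-d}{4}k}$; the last series converges, and since $m^{-1/2} > 1$ the tail is $o\big(\norm{y}^{-\frac{d-1}{2}}\, e^{-\norm{y}}\big)$. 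Dividing by $\norm{y}^{-\frac{d-1}{2}}\, e^{-\norm{y}}$ and letting $\norm{y}\to\infty$ gives \eqref{eq:67}. The routine parts are the Fubini justification and the bookkeeping in the Laplace/Bessel asymptotics; the genuinely delicate point is pinning down the multiplicative constant in the asymptotics of $U^{(1)}$ exactly (so that it matches $\tfrac12 (m; m)_\infty^{-1} (2\pi)^{-(d-1)/2}$) while simultaneously controlling the $k \geq 1$ tail uniformly in $k$ — in particular checking that the super-exponential decay $m^{\frac12 k(k-1)}$ of the coefficients defeats the growing prefactor $\beta^{(d-3)/4} = m^{-(d-3)k/4}$ carried by $U^{(m^{-k})}$.
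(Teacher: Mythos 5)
Your proof is correct and follows essentially the same route as the paper: \eqref{eq:66} is obtained by substituting the explicit density \eqref{eq:81} of $\mu$ into \eqref{representationrho} and interchanging sum and integral (the domination $e^{-m^{-k}u}\leq e^{-u}$ you use is exactly the right justification), and \eqref{eq:67} is obtained from the modified-Bessel representation of $U^{(\beta)}$, isolating the $k=0$ term as the main contribution. The only cosmetic difference is in the tail: the paper controls the $k\geq 1$ terms via the crude ratio bound $U^{(m^{-k})}(y)/U^{(1)}(y)\leq 1$ plus absolute summability of the coefficients and a termwise limit, whereas you use a quantitative uniform-in-$\beta$ asymptotic bound — both work.
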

\begin{proof}
	The formula follows immediately from \eqref{representationrho} and \eqref{eq:81}. Indeed, we get
	\[
		\rho_{\mathbf{Y}}(y) = \int_0^\infty p_0(u; 0, y) \mu({\rm d} u)
		=
		\frac{1}{(m; m)_\infty} \sum_{k = 0}^\infty \frac{m^{\frac{1}{2}k(k-1)}}{(m; m)_k} \int_0^\infty
		e^{-m^k u} p_0(u; 0, y) \mu({\rm d} u).
	\]
	To show \eqref{eq:67}, let us recall that
	\begin{equation}
		\label{eq:U_Bessel}
		U^{(\beta)}(y)=(2\pi)^{-d/2}\left(\frac{|y|}{\sqrt{\beta}}\right)^{1-d/2} K_{d/2-1}(\sqrt{\beta}|y|)
	\end{equation}
	where $K_{d/2-1}$ is the modified Bessel function of the second type. Hence, by \eqref{eq:66} we get
	\[
		\frac{\rho_{\mathbf{Y}}(y)}{|y|^{-\frac{d-1}{2}}e^{-|y|}}
		=
		\frac{1}{(m; m)_\infty}
		\left( \frac{U^{(1)}(y)}{{|y|^{-\frac{d-1}{2}}e^{-|y|}}}\right)
		\sum_{k=0}^\infty (-1)^k \frac{m^{\frac{1}{2}k(k-1)}}{(m; m)_k} \frac{U^{(m^{-k})}(y)}{U^{(1)}(y)}.
	\]
	Because
	\[
		\lim_{|y|\to +\infty}
		\left(\frac{U^{(1)}(y)}{{|y|^{-\frac{d-1}{2}}e^{-|y|}}}\right)
		= \frac12 (2\pi)^{-\frac{d-1}{2}},
	\]
	and
	\[
		0\leq \frac{U^{(m^{-k})}(y)}{U^{(1)}(y)}\leq 1
	\]
	it is enough to treat sums of the form
	\[
		1+ \sum_{k=1}^N (-1)^k \frac{m^{\frac{1}{2}k(k-1)}}{(m; m)_k} \frac{U^{(m^{-k})}(y)}{U^{(1)}(y)}
	\]
	for a fixed $N$. For each $k \in \{1, \ldots, N\}$, by \eqref{eq:U_Bessel} and the asymptotic behavior of $K_{d/2-1}$,
	we obtain
	\[
		\lim_{|y|\to +\infty} \frac{U^{(m^{-k})}(y)}{U^{(1)}(y)}=0,
	\]
	which completes the proof.
\end{proof}

\begin{remark}
	Under the assumptions of Lemma \ref{lem:densities} for $d = 1$, we get the following formula
	\[
		\rho_{\mathbf{Y}} (y)= \frac12 \frac{1}{(m; m)_\infty}\sum_{k=0}^\infty (-1)^k
		\frac{m^{\frac{k^2}{2}}}{(m; m)_k}
		e^{-m^{-\frac{k}2 }|y|},
	\]
	and
	\[
		\lim_{|y|\to +\infty} \rho_{\mathbf{Y}}(y) \,e^{|y|} = \frac12 \frac{1}{(m; m)_\infty}.
	\]
\end{remark}

\section{Stable processes with resetting}
\label{sec:stationary}
In this section, we describe the resetting procedure for a large class of L{\'e}vy processes.
Let $\mathbf{Y}$ be a L{\'e}vy process in $\RR^d$ and let $c \in (0, 1)$.
We consider another L\'{e}vy process $\big((Y_t, N_t) : t \geq 0\big)$ with a natural filtration
$\big(\tilde{\calF}_t : t \geq 0\big)$ where $\mathbf{N}$ is a Poisson process independent of $\mathbf{Y}$ with intensity $1$.
Then the Poisson arrival moments $(T_k : k \geq 0)$ are Markov times relative to $(\tilde{\calF}_t : t\geq 0)$ where $T_0=0$.
Let $\mathbf{X}$ be a process with resetting defined in \eqref{eq:18}. Then for all $t > 0$ and every Borel set
$A \subset \RR^d$, we have
\begin{align*}
	&
	\PP^{(x,0)} (X_t \in A) \\
	&\qquad= \sum^\infty_{n=0} \PP^{(x, 0)}( X_t\in A, N_t=n) \\
	&\qquad= \sum^\infty_{n=0}
	\PP^{(x,0)} \bigg(\sum^n_{k=0} c^{n-k} \Big(Y_{t \wedge T_{k+1}}-Y_{T_k}\Big)+c^n x \in A, T_n \leq t <T_{n+1}\bigg)\\
	&\qquad= \PP^{(x,0)}(Y_t \in A) \PP^{(x, 0)}(N_t=0) +
	\sum^\infty_{n=1} \PP^{(x,0)}\bigg(\sum^n_{k=1}c^{n-k}
	\Big(Y_{t\wedge T_{k+1}}-Y_{T_k}\Big)+c^nY_{T_1}\in A,T_n\leq t <T_{n+1}\bigg).
\end{align*}
Hence, if for each $t > 0$, $Y_t$ has an absolutely continuous distribution, then for each $t > 0$ also $X_t$ has an absolutely
continuous distribution. Let us denote by $p_0$ and $p$ the transition densities of $\mathbf{Y}$ and $\mathbf{X}$, respectively.
Using the strong Markov property we get $\big( (\tilde{Y}_t, \tilde{N}_t) : t \geq 0\big)
= \big((Y_{t+T_1}-Y_{T_1}, N_{t+T_1}-1) : t \geq 0\big)$ is a L\'evy process with the same distribution as
$\big((Y_t,N_t) : t \geq 0\big)$ and independent of $Y_{T_1}$. Hence,
\begin{equation}
	\label{eq:pomoc}
	\begin{aligned}
	\PP^{(x,0)}(X_t\in A)
	&= \PP^{(x,0)}(Y_t\in A)e^{-t}\\
	&\phantom{=}
	+
	\sum^\infty_{n=1} \PP^{(x,0)}\bigg(\sum^{n-1}_{k=0}c^{n-(k+1)}
	\Big(\tilde{Y}_{(t-T_1) \wedge \tilde{T}_{k+1}}-\tilde{Y}_{\tilde{T}_k}\Big)
	+c^nY_{T_1}\in A,\tilde{T}_{n-1}\leq t-T_1 <\tilde{T}_{n}\bigg)\\
	&=\PP^{(x,0)}(Y_t\in A)e^{-t}+\PP^{(x,0)}\Big(T_1\leq t,\PP^{(cY_{T_1},0)}(\tilde{X}_{t-T_1}\in A)\Big),
\end{aligned}
\end{equation}
that is the transition densities satisfy
\begin{equation}
	\label{eq:6}
	p(t; x, y)
	=
	e^{-t} p_0(t; x, y)
	+
	\int_0^t \int_{\RR^d}
	e^{-s} p_0(s; x, z) p(t-s; cz, y) {\: \rm d} z {\: \rm d} s,
	\qquad x, y \in \RR^d, t > 0.
\end{equation}
Similarly, the natural filtration of $\textbf{X}$ is a subfiltration of $(\tilde{\calF}_t : t \geq 0)$, thus
$\PP^{(x,0)}(X_{t+u}\in A | \tilde{\calF_t})$ is a function of $X_t$, that is $\mathbf{X}$ is a Markov process.
\begin{proposition}
	\label{prop:5}
	Suppose that $\mathbf{Y}$ is a L{\'e}vy process in $\RR^d$ with a transition density $p_0$.
	Assume that $\mathbf{X}$ is obtained from $\mathbf{Y}$ by partial resetting with factor $c\in(0,1)$. Then the process
	$\mathbf{X}$ has the transition density $p$ satisfying
	\begin{equation}
		\label{eq:2}
		p(t; x, y) = e^{-t} \sum_{j = 0}^\infty p_j(t; x, y), \quad \text{for all } x,y \in \RR^d, t > 0
	\end{equation}
	where $(p_n : n \in \NN_0)$ is a sequence of functions that satisfy the recursion
	\begin{equation}
		\label{eq:15}
		p_{n+1}(t; x, y) = \int_0^t \int_{\RR^d} p_0(s; x, z) p_n(t-s; cz, y) {\: \rm d}z {\: \rm d} s,
		\quad\text{for all }x, y \in \RR^d, t >0.
	\end{equation}
\end{proposition}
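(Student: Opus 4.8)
The plan is to solve the integral equation \eqref{eq:6} by Picard iteration and to recognise that the Neumann series it produces is exactly $e^{-t}\sum_j p_j$, the remaining issue being to show that the iteration does not lose mass so that the limit is genuinely the transition density of $\mathbf{X}$. Introduce the linear operator $\mathcal{K}$ acting on nonnegative jointly measurable kernels by
\[
	(\mathcal{K} f)(t;x,y) = \int_0^t \int_{\RR^d} e^{-s} p_0(s;x,z) f(t-s;cz,y) {\: \rm d} z {\: \rm d} s,
\]
so that \eqref{eq:6} reads $p = g + \mathcal{K} p$ with $g(t;x,y) = e^{-t} p_0(t;x,y)$. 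Since $\mathcal{K}$ has a nonnegative kernel it preserves nonnegativity, and iterating the fixed-point equation gives, for every $N \in \NN_0$,
\[
	p = \sum_{k=0}^N \mathcal{K}^k g + \mathcal{K}^{N+1} p, \qquad \mathcal{K}^{N+1} p \geq 0.
\]

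First I would check, by induction on $n$, that the functions $(p_n : n \in \NN_0)$ — with $p_0$ the transition density of $\mathbf{Y}$ and $p_{n+1}$ given by \eqref{eq:15} — are well defined (nonnegative, jointly measurable) and satisfy $\mathcal{K}^n g = e^{-t} p_n$; the base case is the definition of $g$, and the inductive step is the direct rewriting of \eqref{eq:15} obtained after pulling the factors $e^{-s}$ and $e^{-(t-s)}$ together under the double integral. Simultaneously I would record the mass identity
\[
	\int_{\RR^d} p_n(t;x,y) {\: \rm d} y = \frac{t^n}{n!}, \qquad t > 0,\ x \in \RR^d,
\]
which again follows by induction from $\int_{\RR^d} p_0(s;x,z) {\: \rm d} z = 1$ together with Tonelli's theorem; in particular each $p_n(t;x,\cdot)$ is finite a.e. Feeding this back into the displayed iteration shows that the partial sums $e^{-t}\sum_{k=0}^N p_k$ increase and are dominated pointwise by $p$, so $q(t;x,y) := e^{-t}\sum_{k=0}^\infty p_k(t;x,y)$ is well defined, nonnegative, jointly measurable, and satisfies $q \leq p$ pointwise.

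It remains to identify $q$ with $p$. Since $\mathbf{X}$ is conservative, $\int_{\RR^d} p(t;x,y) {\: \rm d} y = 1$, while by the mass identity and monotone convergence $\int_{\RR^d} q(t;x,y) {\: \rm d} y = e^{-t}\sum_{k=0}^\infty t^k/k! = 1$. Hence $p(t;x,\cdot) - q(t;x,\cdot)$ is a nonnegative function with vanishing integral, so it is zero for a.e. $y$; choosing the version of the transition density of $\mathbf{X}$ given by $q$ yields \eqref{eq:2}. The one genuinely delicate point is precisely this identification: the Picard iteration only shows a priori that $e^{-t}\sum_j p_j$ is a \emph{subsolution} of \eqref{eq:6} below $p$, and one must exclude a strictly smaller limit — equivalently, that $\mathcal{K}^{N+1} p \to 0$ — which is exactly what the total-mass accounting provides (one may instead estimate $\int_{\RR^d}\mathcal{K}^{N+1} p(t;x,y){\: \rm d} y \leq t^{N+1}/(N+1)! \to 0$ directly by the same induction). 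Finally, it is worth noting the probabilistic reading of the decomposition, $e^{-t} p_n(t;x,y){\: \rm d} y = \PP^{(x,0)}\big(X_t \in {\rm d} y,\, N_t = n\big)$, which can alternatively be derived by applying the strong Markov property at $T_1$ exactly as in the derivation of \eqref{eq:pomoc}, this time keeping track of the number of resetting epochs; summing over $n$ reproduces \eqref{eq:2}.
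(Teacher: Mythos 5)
Your proposal is correct and follows essentially the same route as the paper: establish the mass identity $\int_{\RR^d} p_n(t;x,y)\,{\rm d}y = t^n/n!$ by induction, note that the series therefore integrates to $1$, and obtain the identification by combining the pointwise domination coming from iterating the integral equation \eqref{eq:6} with the equality of total masses. The paper phrases the domination step as a uniqueness statement for probabilistic solutions of \eqref{eq:6} rather than as a Picard/Neumann iteration, but the content is identical.
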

\begin{proof}
	First, let us observe that for all $j \in \NN_0$, $x \in \RR^d$ and $t > 0$,
	\begin{equation}
		\label{eq:20}
		\int_{\RR^d} p_j(t; x, y) {\: \rm d} y = \frac{t^j}{j!}.
	\end{equation}
	To see this, we reason by induction over $j \in \NN_0$. For $j = 0$ the formula trivially holds true. Next,
	\begin{align*}
		\int_{\RR^d} p_{j+1}(t; x, y) {\: \rm d} y
		&=
		\int_{\RR^d} \int_0^t \int_{\RR^d} p_0(s; x, z) p_j(t-s; cz, y) {\: \rm d} z {\: \rm d} s {\: \rm d} y \\
		&=
		\frac{1}{j!}
		\int_0^t \int_{\RR^d} p_0(s; x, z) (t-s)^j {\: \rm d}z {\: \rm d} s \\
		&=
		\frac{1}{j!}
		\int_0^t (t-s)^j {\: \rm d} s = \frac{t^{j+1}}{(j+1)!},
	\end{align*}
	as claimed.

	Since all the terms of the series \eqref{eq:2} are positive, by Tonelli's theorem \eqref{eq:20} we get
	\begin{align*}
		\int_{\RR^d} e^{-t} \sum_{j = 0}^\infty p_j(t; x, y) {\: \rm d} y
		&=
		e^{-t} \sum_{j = 0}^\infty \int_{\RR^d} p_j(t; x, y) {\: \rm d} y \\
		&=
		e^{-t} \sum_{j = 0}^\infty \frac{t^j}{j!} = 1.
	\end{align*}
	Hence, \eqref{eq:2} defines actual solution of \eqref{eq:6}. Next, we observe that it is also the unique probabilistic
	solution of \eqref{eq:6}. Indeed, if $\tilde{p}(t; x , y)$ would be another solution, then
	\[
		e^{-t} p_0(t; x, y) \leq \tilde{p}(t; x, y), \quad \text{for all } x, y \in \RR^d, t > 0.
	\]
	Thus reasoning by induction one can prove that for all $N \in \NN$,
	\[
		e^{-t} \sum_{j = 0}^N p_j(t; x, y) \leq \tilde{p}(t; x, y), \quad \text{for all } x, y \in \RR^d, t > 0.
	\]
	Consequently,
	\[
		p(t; x, y) \leq \tilde{p}(t; x, y), \quad \text{for all } x, y \in \RR^d, t > 0.
	\]
	Since both $p(t; x, \cdot)$ and $\tilde{p}(t; x, \cdot)$ integrate to $1$, for each $\epsilon > 0$
	by Chebyshev's inequality we obtain
	\[
		\big|\big\{y \in \RR^d : p(t; x, y) \geq \tilde{p}(t; x, y) + \epsilon \big\}\big|
		\leq
		\epsilon^{-1} \int_{\RR^d} (p(t; x, y) - \tilde{p}(t; x, y)) {\: \rm d} y = 0,
	\]
	that is $p(t; x, y) = \tilde{p}(t; x, y)$. This completes the proof.
\end{proof}

Let us observe that certain properties of $p_0$ translate to $p_n$.
\begin{proposition}
	\label{prop:4}
	Suppose that $\mathbf{Y}$ is a L\'evy process on $\RR^d$ with a transition density $p_0$.
	\begin{enumerate}[label=\rm (\roman*), start=1, ref=\roman*]
		\item
		\label{en:2:1}
		Then for all $n \in \NN$,
		\[
			p_n(t; x, y) = p_n(t; 0, y - c^n x).
		\]
		\item
		\label{en:2:2}
		If there is $\Upsilon \in \GL(\RR, d)$ such that
		\[
			p_0(t; x, y) = p_0(t; \Upsilon x, \Upsilon y), \quad \text{for all } x, y \in \RR^d, t > 0,
		\]
		then for each $j \in \NN$,
		\[
			p_j(t; x, y) = p_j(t; \Upsilon x, \Upsilon y), \quad \text{for all } x, y \in \RR^d, t > 0.
		\]
	\end{enumerate}
\end{proposition}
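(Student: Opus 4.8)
The plan is to prove both parts by induction on the index, using only the defining recursion \eqref{eq:15} together with the translation invariance of the transition density of a L\'evy process, i.e. $p_0(t; x, z) = p_0(t; 0, z - x) = p_0(t; x + w, z + w)$ for every $w \in \RR^d$. Since every integrand in \eqref{eq:15} is nonnegative, Tonelli's theorem lets us permute the iterated integrals and perform changes of variables freely. For part \eqref{en:2:1} the natural base of the induction is the index $n = 0$, where the asserted identity $p_0(t; x, y) = p_0(t; 0, y - c^0 x)$ is exactly translation invariance; for part \eqref{en:2:2} the base case $j = 0$ is the hypothesis on $p_0$ itself.

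For part \eqref{en:2:1}, assume $p_n(t; x, y) = p_n(t; 0, y - c^n x)$ for all $t > 0$ and $x, y \in \RR^d$. Substituting $z \mapsto z + x$ in \eqref{eq:15} and invoking translation invariance of $p_0$ gives
\[
	p_{n+1}(t; x, y)
	=
	\int_0^t \int_{\RR^d} p_0(s; 0, z) \, p_n\big(t - s; c(z + x), y\big) {\: \rm d} z {\: \rm d} s.
\]
Two applications of the inductive hypothesis yield $p_n\big(t-s; c(z+x), y\big) = p_n(t-s; 0, y - c^{n+1} x - c^{n+1} z) = p_n(t-s; cz, y - c^{n+1} x)$. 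Inserting this into the last display and comparing with \eqref{eq:15} evaluated at the pair $\big(0, y - c^{n+1} x\big)$ gives $p_{n+1}(t; x, y) = p_{n+1}(t; 0, y - c^{n+1} x)$, which closes the induction.

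For part \eqref{en:2:2}, I would first record that the hypothesis of part \eqref{en:2:2} already forces $\abs{\det \Upsilon} = 1$: taking $x = 0$ gives $p_0(t; 0, y) = p_0(t; 0, \Upsilon y)$, and since $p_0(t; 0, \cdot)$ is a probability density, integrating this identity over $\RR^d$ and substituting $y = \Upsilon^{-1} u$ produces $1 = \abs{\det \Upsilon}^{-1}$. Now assume $p_j(t; x, y) = p_j(t; \Upsilon x, \Upsilon y)$ for all $t, x, y$. In \eqref{eq:15} with $n = j$ evaluated at $(\Upsilon x, \Upsilon y)$, substitute $z = \Upsilon w$; the Jacobian equals $\abs{\det \Upsilon} = 1$, and using $c\Upsilon w = \Upsilon(cw)$ together with the hypothesis on $p_0$ and the inductive hypothesis on $p_j$ we may rewrite $p_0(s; \Upsilon x, \Upsilon w) = p_0(s; x, w)$ and $p_j(t - s; c\Upsilon w, \Upsilon y) = p_j(t-s; cw, y)$. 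The integral becomes exactly \eqref{eq:15} at $(x, y)$, so $p_{j+1}(t; \Upsilon x, \Upsilon y) = p_{j+1}(t; x, y)$, completing the induction.

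The argument is essentially bookkeeping of elementary changes of variables in \eqref{eq:15}; the only spot needing a moment's care is the observation that the assumption in part \eqref{en:2:2} entails $\abs{\det \Upsilon} = 1$, which is precisely what makes the substitution $z = \Upsilon w$ cost nothing. For completeness I would also note that both statements are transparent from the probabilistic picture underlying \eqref{eq:18}: on the event $\{N_t = n\}$ one has $X_t = c^n x + R_n$ where $R_n$ is built linearly from increments of $\mathbf{Y}$ and is independent of the starting point $x$, which gives \eqref{en:2:1}; and since the scalars $c^k$ commute with $\Upsilon$ and $\Upsilon \mathbf{Y}$ started from $\Upsilon x$ has the same law as $\mathbf{Y}$ started from $x$, applying $\Upsilon$ to this representation gives \eqref{en:2:2}.
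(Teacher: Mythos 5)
Your proof is correct and follows essentially the same route as the paper: induction on the recursion \eqref{eq:15} using translation invariance of $p_0$ for part \eqref{en:2:1}, and the substitution $z = \Upsilon w$ for part \eqref{en:2:2}. The only (cosmetic) difference is that you establish $\abs{\det \Upsilon} = 1$ up front from the hypothesis on $p_0$, whereas the paper carries the determinant factor through the induction and deduces $\det \Upsilon = 1$ at the end from the normalization of the densities; your ordering is if anything slightly cleaner.
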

\begin{proof}
	The proof of \eqref{en:2:1} is by induction with respect to $n \in \NN$. For $n = 0$ the formula trivially holds true.
	Using the inductive hypothesis and the change of variables we can write
	\begin{align*}
		p_{n+1}(t; x, y)
		&= \int_0^t \int_{\RR^d} p_0(s; x, z) p_n(t-s; cz, y) {\: \rm d}z {\: \rm d} s\\
		&= \int_0^t \int_{\RR^d} p_0(s; 0, z-x) p_n(t-s; cz, y) {\: \rm d} z {\: \rm d} s \\
		&= \int_0^t \int_{\RR^d} p_0(s; 0, z) p_n(t-s; c(z +x), y) {\: \rm d} z {\: \rm d} s \\
		&= \int_0^t \int_{\RR^d} p_0(s; 0, z) p_n(t-s; cz , y - c^{n+1} x) {\: \rm d} z {\: \rm d} s \\
		&= p_{n+1}(t; 0, y - c^{n+1} x),
	\end{align*}
	completing the proof of the first part of the proposition. To show \eqref{en:2:2} we again use induction to get
	\begin{align*}
		p_{j+1}(t; \Upsilon x, \Upsilon y)
		&= \int_0^t \int_{\RR^d} p_0(s; \Upsilon x, z) p_j(t-s; cz, \Upsilon y) {\: \rm d}z {\: \rm d} s \\
		&= \int_0^t \int_{\RR^d} p_0(s; x, \Upsilon^{-1} z) p_j(t-s; c\Upsilon^{-1} z, y) (\det \Upsilon)^{j}
		{\: \rm d}z {\: \rm d} s \\
		&= (\det \Upsilon)^{j+1} p_{j+1}(t; x, y).
	\end{align*}
	Note that $p(t; x, y)$ integrates to $1$, hence $\det \Upsilon =1$ and the proposition follows.
\end{proof}

\begin{remark}
	Due to Proposition \ref{prop:4}\eqref{en:2:2} we can easily see that $p_n$ is isotropic if $p_0$ is isotropic.
	Moreover, if $p_0$ is the transition density of isotropic unimodal L\'evy process, then
	$\RR^d \ni y \mapsto p_0(t; 0, y)$ is radial and non-increasing, thus $\RR^d \ni y \mapsto p_j(t; 0, y)$ is radial and
	non-increasing too. Indeed, by Proposition \ref{prop:4}\eqref{en:2:1},
	\begin{align*}
		p_{j+1}(t; 0, y)
		&=
		\int_0^t \int_{\RR^d} p_0(s; 0, z) p_j(t-s; cz, y) {\: \rm d} z {\: \rm d} s \\
		&
		=
		\int_0^t \int_{\RR^d} p_0(s; 0, z) p_j\big(t-s; 0, c^{j+1} (c^{-j-1}y - z) \big)  {\: \rm d} z {\: \rm d} s.
	\end{align*}
	Consequently, the process $\mathbf{X}$ is isotropic and unimodal if $\mathbf{X}$ is such.
\end{remark}

\subsection{Representation of the transition density}
\label{sec:repr}
We are now ready to prove the key representation of the density. From this point we assume that the original process is
$\alpha$-stable for $\alpha \in (0, 2]$.
\begin{theorem}
	\label{thm:rep-gen-st}
	Suppose that $\mathbf{Y}$ is a strictly $\alpha$-stable process in $\RR^d$, $\alpha \in (0, 2]$, with a transition density
	$p_0$. Suppose that $\mathbf{X}$ is obtained from $\mathbf{Y}$ by partial resetting with factor $c \in (0, 1)$. Then
	the transition density of $\mathbf{X}$ satisfies
	\begin{align}
		\label{eq:7}
		p_n(t; 0, y)
		= t^n \int_0^1 p_0(t u; 0, y) P_n(u) {\: \rm d} u, \quad \text{for all } n \in \NN, y \in \RR^d, t > 0.
	\end{align}
	Moreover, for all $x, y \in \RR^d$ and $t > 0$,
	\begin{align}
		\label{eq:rep-p-x}
		p(t; x, y)=e^{-t}p_0(t; 0, y-x)+e^{-t}\sum_{j=1}^\infty t^j \int_0^1 p_0(tu;0,y-c^jx)  P_j(u) {\: \rm d} u.
	\end{align}
\end{theorem}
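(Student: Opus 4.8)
The representation \eqref{eq:rep-p-x} is an immediate consequence of \eqref{eq:7}: substituting \eqref{eq:7} into the series \eqref{eq:2} of Proposition~\ref{prop:5} and applying Proposition~\ref{prop:4}\eqref{en:2:1} (together with $p_0(t;x,y)=p_0(t;0,y-x)$ for the term $j=0$) yields \eqref{eq:rep-p-x}. So everything reduces to \eqref{eq:7}. By Proposition~\ref{prop:3} the function $W_n$ is homogeneous of degree $n-1$, hence $W_n(t,u)=t^{n-1}P_n(u/t)$, and after the substitution $u=tv$ the identity \eqref{eq:7} is equivalent to
\begin{equation}
	\label{eq:plan-Wn}
	p_n(t;0,y)=\int_0^\infty p_0(u;0,y)\, W_n(t,u)\, {\rm d}u,\qquad n\in\NN,\ y\in\RR^d,\ t>0,
\end{equation}
which I would prove by induction on $n$.

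The analytic core is the following scaling/convolution identity. Since $\mathbf{Y}$ is \emph{strictly} $\alpha$-stable and $m=c^\alpha$, the scaling property (Lemma~\ref{lem:A3}\eqref{en:3:1}) gives $c^k Y_s\eqdistr Y_{m^k s}$, so the transition density of $c^kY_s$ is $p_0(m^ks;0,\cdot)$; combining this with the convolution property of the Lévy transition density one obtains
\begin{equation}
	\label{eq:plan-conv}
	\int_{\RR^d} p_0(s;0,z)\, p_0(r;0,y-c^k z)\, {\rm d}z=p_0\big(m^k s+r;\,0,\,y\big),\qquad s,r>0,\ k\in\NN,\ y\in\RR^d .
\end{equation}
For the base case $n=1$, I combine the recursion \eqref{eq:15} (with $x=0$) with Proposition~\ref{prop:4}\eqref{en:2:1}, which turns $p_0(t-s;cz,y)$ into $p_0(t-s;0,y-cz)$; applying \eqref{eq:plan-conv} with $k=1$, $r=t-s$ gives $p_1(t;0,y)=\int_0^t p_0(ms+t-s;0,y)\,{\rm d}s$, and the change of variables $u=t-(1-m)s$ turns this into $\tfrac1{1-m}\int_{mt}^t p_0(u;0,y)\,{\rm d}u$, which is exactly \eqref{eq:plan-Wn} for $n=1$ because $W_1(t,\cdot)=\tfrac1{1-m}\ind{(mt,t]}$.

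For the inductive step, assume \eqref{eq:plan-Wn} for some $n\in\NN$. Insert it into \eqref{eq:15} with $x=0$, rewrite $p_n(t-s;cz,y)=p_n(t-s;0,y-c^{n+1}z)$ via Proposition~\ref{prop:4}\eqref{en:2:1}, and use Tonelli's theorem (all integrands are nonnegative, and for fixed $t>0$ the relevant $u$-ranges are compact subintervals of $(0,\infty)$ on which $p_0(\cdot;0,y)$ is bounded) to move the $z$-integration inside. The bracket that appears is then exactly the left-hand side of \eqref{eq:plan-conv} with $k=n+1$ and $r$ the integration variable of $W_n(t-s,r)$, so it collapses to $p_0(m^{n+1}s+r;0,y)$, leaving
\[
	p_{n+1}(t;0,y)=\int_0^t\int_0^\infty W_n\big(t-s,r\big)\, p_0\big(m^{n+1}s+r;0,y\big)\,{\rm d}r\,{\rm d}s .
\]
The substitution $u=m^{n+1}s+r$ and one more application of Tonelli's theorem regroup this as $\int_0^\infty p_0(u;0,y)\big(\int_0^t W_n(t-s,u-m^{n+1}s)\,{\rm d}s\big)\,{\rm d}u$. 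Since $W_n(t',\cdot)$ is supported on $[m^nt',t']$, the inner integrand is nonzero only when $u-m^{n+1}s\in(m^n(t-s),t-s]$, which restricts $s$ to an interval of the form appearing in the defining recursion of $W_{n+1}$ and forces $u\in(m^{n+1}t,t]$; hence the $s$-integral equals $W_{n+1}(t,u)$, proving \eqref{eq:plan-Wn} for $n+1$. Undoing the reduction ($W_{n+1}(t,u)=t^nP_{n+1}(u/t)$, $u=tv$) gives \eqref{eq:7}, and then \eqref{eq:rep-p-x} follows as explained above.

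The step I expect to be the most delicate is this last one of the induction: matching the limits of the $s$-integral — including the truncations at $0$ and the support indicator $\ind{(m^{n+1}t,t]}$ — to the recursion defining $W_{n+1}$, which requires carefully tracking the support constraint $u-m^{n+1}s\in(m^n(t-s),t-s]$ inherited from the support of $W_n(t-s,\cdot)$. The Fubini/Tonelli interchanges are routine because of nonnegativity and the compactness (away from $0$) of the supports involved for fixed $t>0$, and the only structural ingredient beyond bookkeeping is the strict stability of $\mathbf{Y}$, used through \eqref{eq:plan-conv}.
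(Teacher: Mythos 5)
Your proposal is correct and follows essentially the same route as the paper: the paper also first establishes $p_n(t;0,y)=\int_{m^nt}^t p_0(u;0,y)\,W_n(t,u)\,{\rm d}u$ by induction, using the scaling of $p_0$ together with Chapman--Kolmogorov (your identity \eqref{eq:plan-conv}, written out inline there), then the substitution $u=m^{n+1}s+r$ and the change of order of integration with exactly the region-matching you flag as delicate, and finally deduces \eqref{eq:7} via homogeneity of $W_n$ and \eqref{eq:rep-p-x} via Propositions~\ref{prop:5} and~\ref{prop:4}\eqref{en:2:1}. No gaps.
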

\begin{proof}
	First, we link $W_n$ with $p_n$ which are defined by \eqref{eq:15}. Namely, we claim that
	\begin{equation}
		\label{eq:32}
		p_n(t; 0, y) = \int_{m^n t}^t p_0(u; 0, y) W_n(t, u) {\: \rm d} u,
		\quad \text{for all } y \in \RR^d,  t > 0.
	\end{equation}
	The proof is by induction with respect to $n \in \NN$. For $n = 1$, we have
	\begin{align*}
		p_1(t; 0, y)
		&= \int_0^t \int_{\RR^d} p_0(s; 0, z) p_0(t - s; m^{\frac{1}{\alpha}} z, y) {\: \rm d} z {\: \rm d} s \\
		&= \int_0^t \int_{\RR^d}
		p_0(s; 0, z m^{-\frac{1}{\alpha}}) p_0(t - s; z, y) m^{-\frac{d}{\alpha}} {\: \rm d} z {\: \rm d} s.
	\end{align*}
	Since $p_0$ is the density of $\alpha$-stable process, it satisfies the scaling property
	(see \cite[Theorems 14.2 and 14.7]{MR1739520}), hence
	\begin{align*}
		p_1(t; 0, y)
		&=
		\int_0^t \int_{\RR^d}
		p_0(m s; 0, z) p_0(t - s; z, y) {\: \rm d} z {\: \rm d} s \\
		&=
		\int_0^t p_0(t + ms - s; 0, y) {\: \rm d} s \\
		&=
		\int^t_{m t} p_0(s; 0, y) \frac{1}{1-m} {\: \rm d} s.
	\end{align*}
	Next, we write
	\begin{align*}
		p_{n+1}(t; 0, y)
		&=
		\int_0^t \int_{\RR^d} p_0(s; 0, z) p_n(t-s; 0, y- m^{\frac{n+1}{\alpha} z}) {\: \rm d} z {\: \rm d} s \\
		&=
		\int_0^t \int_{\RR^d} p_0(s; 0, z) \int_{m^n (t-s)}^{t-s} p_0(u; 0, y - m^{\frac{n+1}{\alpha}} z)
		W_n(t-s, u) {\: \rm d} u {\: \rm d} z {\: \rm d} s \\
		&=
		\int_0^t \int_{m^n (t-s)}^{t-s}
		\int_{\RR^d} p_0(s; 0, z) p_0(u; 0, y-m^{\frac{n+1}{\alpha}} z) {\: \rm d} z W_n(t-s, u)
		{\: \rm d}  u {\: \rm d} s \\
		&=
		\int_0^t \int_{m^n (t-s)}^{t-s}
		\int_{\RR^d} m^{-d\frac{n+1}{\alpha}} p_0(s; 0, m^{-\frac{n+1}{\alpha}} z)
		p_0(u; 0, y - z) {\: \rm d} z W_n(t-s, u) {\: \rm d} u {\: \rm d} s \\
		&=
		\int_0^t
		\int_{m^n (t-s)}^{t-s} \int_{\RR^d} p_0(m^{n+1} s; 0, z)
		p_0(u; z, y) {\: \rm d} z W_n(t-s, u) {\: \rm d} u {\: \rm d} s
	\end{align*}
	where in the last equality we have used properties of $\alpha$-stable density. Hence,
	\begin{align*}
		p_{n+1}(t; 0, y)
		&=
		\int_0^t
		\int_{m^n (t-s)}^{t-s} p_0(u + m^{n+1} s; 0, y) W_n(t - s, u) {\: \rm d} u {\: \rm d} s \\
		&=
		\int_0^t \int^{t-s + m^{n+1}s}_{m^n (t-s)+m^{n+1}s} p_0(u; 0, y) W_n(t-s, u - m^{n+1}s) {\: \rm d} u {\: \rm d} s.
	\end{align*}
	Now, we observe that
	\begin{align*}
		&
		\Big\{ (s, u) \in \RR^2 :
		0 \leq s \leq t \text{ and } m^n(t-s) + m^{n+1} s \leq u \leq t-s+m^{n+1}s \Big\} \\
		&\qquad=
		\bigg\{(s, u) \in \RR^2 :
		m^{n+1} t \leq u \leq t, \text{ and } \frac{m^n t - u}{m^n - m^{n+1}} \vee 0 \leq s \leq \frac{t - u}{1- m^{n+1}}
		\bigg\}.
	\end{align*}
	Thus changing the order of integration we obtain
	\begin{align*}
		p_{n+1}(t; 0, y)
		&=
		\int_{m^{n+1} t}^t
		\int_{\frac{m^n t- u}{m^n - m^{n+1}} \vee 0}^{\frac{t-u}{1-m^{n+1}}}
		p_0(u; 0, y) W_n(t-s, u - m^{n+1}s) {\: \rm d} s {\: \rm d} u
	\end{align*}
	as claimed.
	
	Having proved \eqref{eq:23}, by Proposition \ref{prop:3} and \eqref{eq:21}, we can write
	\begin{align*}
		p_n(t; 0, y)
		&= \int_{m^n t}^t p_0(u; 0, y) W_n(t, u) {\: \rm d} u \\
		&= \int_{m^n t}^t p_0(u; 0, y) t^{n-1} P_n(u t^{-1}) {\: \rm d} u
		= t^n \int_{m^n}^1 p_0(t u; 0, y) P_n(u) {\: \rm d} u.
	\end{align*}
	Now, \eqref{eq:rep-p-x} follows by Proposition \ref{prop:5}, Proposition~\ref{prop:4}\eqref{en:2:1} and \eqref{eq:7}.
\end{proof}

\begin{corollary}
	\label{cor:rep-1}
	Under the assumptions of Theorem \ref{thm:rep-gen-st}, the transition density of the process $\mathbf{X}$ satisfies
	\begin{align}
		\label{eq:rep-p-0.1}
		p(t;0,y)= \int_0^\infty p_0(u;0,y) \: \mu_t({\rm d} u), \quad\text{for all } y \in \RR^d, t > 0
	\end{align}
	where $\mu_t$ is the measure defined by the formula \eqref{def:mu_t}.
\end{corollary}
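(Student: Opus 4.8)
The plan is to read off \eqref{eq:rep-p-0.1} directly from the representation \eqref{eq:rep-p-x} of Theorem~\ref{thm:rep-gen-st} specialized to $x = 0$, combined with the definition \eqref{def:mu_t} of the measure $\mu_t$. First I would write \eqref{eq:rep-p-x} at $x = 0$, which reads
\[
p(t; 0, y) = e^{-t} p_0(t; 0, y) + e^{-t} \sum_{j = 1}^\infty t^j \int_0^1 p_0(tu; 0, y) P_j(u) {\: \rm d} u .
\]
On the other side, integrating $p_0(\,\cdot\,; 0, y)$ against $\mu_t$ and using \eqref{def:mu_t} term by term gives
\[
\int_0^\infty p_0(u; 0, y) \: \mu_t({\rm d} u)
= e^{-t} p_0(t; 0, y) + e^{-t} \sum_{j = 1}^\infty t^j \int_0^\infty p_0(u; 0, y) P_j(u/t) \, \frac{{\rm d} u}{t} ,
\]
where pulling the series out of the integral is legitimate by Tonelli's theorem, all summands being nonnegative since $p_0 \geq 0$ and $P_j \geq 0$.

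Next I would perform, in each summand, the substitution $v = u/t$. Since $P_n$ is supported on $[m^n, 1] \subseteq [0, 1]$ by Proposition~\ref{prop:1}, the $j$-th integral becomes $t^{-1}\!\int_0^\infty p_0(tv; 0, y) P_j(v)\, t\, {\rm d} v = \int_0^1 p_0(tv; 0, y) P_j(v) {\: \rm d} v$, which matches the $j$-th term of the first display. Comparing the two expressions term by term yields \eqref{eq:rep-p-0.1}.

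There is essentially no obstacle here: the corollary is a bookkeeping consequence of Theorem~\ref{thm:rep-gen-st} and \eqref{def:mu_t}. The only points deserving a word are the convergence of the series and the termwise change of variables, and both are immediate from nonnegativity (monotone convergence / Tonelli); finiteness of the total mass is also clear, since $\mu_t$ is a probability measure and $p(t; 0, \cdot)$ integrates to $1$ by Proposition~\ref{prop:5}. This identity is what allows the subsequent analysis to transfer the moment computations for $\mu_t$ from Section~\ref{sec:2.2} into statements about $p(t; 0, y)$.
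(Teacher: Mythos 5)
Your proposal is correct and matches what the paper intends: the corollary is stated without proof precisely because it is the immediate combination of \eqref{eq:rep-p-x} at $x=0$ with the definition \eqref{def:mu_t}, via the substitution $v=u/t$ and the support property of $P_j$ from Proposition~\ref{prop:1}. Your justification of the termwise manipulations by nonnegativity is exactly the right (and only needed) remark.
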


\begin{corollary}
	\label{cor:rep-2}
	Under the assumptions of Theorem \ref{thm:rep-gen-st}, the transition density of the process $\mathbf{X}$ satisfies
	\begin{align}
		\label{eq:rep-p-0}
		p(t;0,y)= e^{-t}p_0(t;0,y)+e^{-t} \int_0^1 p_0(tu;0,y) \Phi(t,u) {\: \rm d} u, \quad\text{for all } y \in \RR^d, t > 0
	\end{align}
	where
	\begin{align}
		\label{def:Phi}
		\Phi(t, u)= \sum_{j = 1}^\infty t^j P_j(u),\qquad  u \in [0, 1],\, t > 0.
	\end{align}
\end{corollary}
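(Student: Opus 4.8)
The plan is to deduce \eqref{eq:rep-p-0} from the representation \eqref{eq:rep-p-x} of Theorem \ref{thm:rep-gen-st} taken at $x = 0$, namely
\[
	p(t; 0, y)
	=
	e^{-t} p_0(t; 0, y)
	+ e^{-t} \sum_{j = 1}^\infty t^j \int_0^1 p_0(tu; 0, y) P_j(u) {\: \rm d} u,
\]
by interchanging the summation over $j$ with the integration in $u$. First I would check that $\Phi(t, \cdot)$ is a finite-valued function on $[0,1]$ for every $t > 0$, so that the right-hand side of \eqref{eq:rep-p-0} makes sense: by the pointwise estimate \eqref{eq:9} and the monotonicity of $j \mapsto (m; m)_j$ we have $P_j(u) \le \frac{1}{(j-1)!} \frac{1}{(m; m)_\infty}$ for all $u \in [0,1]$, hence
\[
	0 \le \Phi(t, u) \le \frac{1}{(m; m)_\infty} \sum_{j = 1}^\infty \frac{t^j}{(j-1)!} = \frac{t \, e^t}{(m; m)_\infty},
\]
and the series defining $\Phi$ converges uniformly in $u \in [0,1]$ for $t$ ranging over bounded subsets of $\RR_+$.

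Next, since $p_0 \ge 0$ and, by Propositions \ref{prop:1} and \ref{prop:2}, every $P_j$ is nonnegative, all the summands in the displayed series for $p(t; 0, y)$ are nonnegative. Tonelli's theorem then yields, as an identity in $[0, +\infty]$,
\[
	\sum_{j = 1}^\infty t^j \int_0^1 p_0(tu; 0, y) P_j(u) {\: \rm d} u
	=
	\int_0^1 p_0(tu; 0, y) \Big( \sum_{j = 1}^\infty t^j P_j(u) \Big) {\: \rm d} u
	=
	\int_0^1 p_0(tu; 0, y) \Phi(t, u) {\: \rm d} u,
\]
and substituting this into \eqref{eq:rep-p-x} with $x = 0$ gives \eqref{eq:rep-p-0}. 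Note that the left-hand side of the last display equals $e^{t} p(t; 0, y) - p_0(t; 0, y)$, hence is finite, so both sides of \eqref{eq:rep-p-0} are automatically finite for every $y \in \RR^d$ and $t > 0$.

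There is essentially no real obstacle here: the only point worth attention is that one should appeal to Tonelli rather than Fubini, so that no a priori integrability of $u \mapsto p_0(tu; 0, y) \Phi(t, u)$ has to be established beforehand. The nonnegativity of all the $P_j$ that makes this possible is transparent from the explicit formula \eqref{eq:P1u} for $P_1$ together with the recursion \eqref{Pnu}, both of which manifestly preserve positivity; finiteness of the resulting integral is then a byproduct of \eqref{eq:rep-p-x}, or, alternatively, of Corollary \ref{cor:rep-1} combined with the finiteness of the moments of $\mu_t$ established in Section \ref{sec:2.2}.
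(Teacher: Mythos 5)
Your proposal is correct and follows exactly the route the paper intends: the corollary is just \eqref{eq:rep-p-x} at $x=0$ with the sum and integral interchanged via Tonelli, justified by the nonnegativity of $p_0$ and of the $P_j$, and your bound $\Phi(t,u)\le t e^t/(m;m)_\infty$ from \eqref{eq:9} confirms that $\Phi$ is well defined. Nothing is missing.
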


\subsection{Ergodicity of $\mathbf{X}$}
In this section we show that the process obtained by partial resetting of $\alpha$-stable process, $\alpha \in (0, 2]$,
is ergodic, see Theorem \ref{thm:lim_p_t_infty}. In fact, we are going to prove that $p(t; x, y)$ converges uniformly in a
certain space-time region in $(t, y)$ as $t$ tends to infinity.
\begin{theorem}
	\label{thm:lim_p_t_infty}
	Suppose that $\mathbf{Y}$ is a strictly $\alpha$-stable process in $\RR^d$, $\alpha \in (0, 2]$, with a transition density.
	Assume that $\mathbf{X}$ is obtained from $\mathbf{Y}$ by partial resetting with factor $c \in (0, 1)$.
	Then for each $\delta > 0$,
	\begin{align}
		\label{eq:lim_p_t_infty-unif}
		\lim_{t\to+\infty} \sup_{y \in \RR^d}\sup_{x \in \mathscr{X}(\delta; t)}
		| p(t;x,y) - \rho_{\mathbf{Y}}(y)|=0
	\end{align}
	where $\mathscr{X}(\delta; t)$ is defined in \eqref{eq:52} and $\rho_{\mathbf{Y}}$ is defined in \eqref{representationrho}.
	In particular, for all $x, y \in \RR^d$,
	\begin{equation}
		\label{eq:64}
		\lim_{t\to+\infty} p(t;x,y)=\rho_{\mathbf{Y}}(y).
	\end{equation}
\end{theorem}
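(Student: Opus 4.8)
The plan is to reduce everything to Lemma~\ref{lem:unif_conv-2}, and then read off \eqref{eq:64} as a special case. I would begin from the representation \eqref{eq:rep-p-x} of Theorem~\ref{thm:rep-gen-st},
\[
	p(t;x,y)=e^{-t}p_0(t;0,y-x)+e^{-t}\sum_{j=1}^\infty t^j \int_0^1 p_0(tu;0,y-c^jx) P_j(u) {\: \rm d} u,
\]
and from the definition $\rho_{\mathbf{Y}}(y)=\int_0^\infty p_0(u;0,y)\,\mu({\rm d}u)$ in \eqref{representationrho}. For a fixed $y\in\RR^d$, set $f_y(u,x)=p_0(u;0,y-x)$ on $(0,\infty)\times\RR^d$. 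Then $f_y(u,0)=p_0(u;0,y)$, so the expression appearing inside the supremum of Lemma~\ref{lem:unif_conv-2} (taken with $n=d$), evaluated at $f=f_y$, is exactly $\abs{p(t;x,y)-\rho_{\mathbf{Y}}(y)}$.

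The heart of the matter is to verify that the family $\mathcal{F}=\{f_y:y\in\RR^d\}$ satisfies the growth and Lipschitz conditions \eqref{eq:78} with one common constant $C$ and exponent $\gamma=(d+1)/\alpha$. Both bounds follow from the scaling identity $p_0(u;0,z)=u^{-d/\alpha}p_0(1;0,u^{-1/\alpha}z)$ (Lemma~\ref{lem:A3}\eqref{en:3:1}) together with the boundedness of $p_0(1;0,\cdot)$ and of $\nabla p_0(1;0,\cdot)$ on $\RR^d$ (Lemma~\ref{lem:A3}). Indeed, since $u+u^{-1}\ge1$,
\[
	0\le f_y(u,x)\le \qnorm{p_0(1;0,\cdot)}_\infty\,u^{-d/\alpha}\le \qnorm{p_0(1;0,\cdot)}_\infty\big(u+u^{-1}\big)^\gamma,
\]
while differentiating the scaling identity gives $\abs{\nabla_z p_0(u;0,z)}\le\qnorm{\nabla p_0(1;0,\cdot)}_\infty\,u^{-(d+1)/\alpha}$, so by the mean value theorem along the segment from $y-x$ to $y$,
\[
	\abs{f_y(u,x)-f_y(u,0)}\le \qnorm{\nabla p_0(1;0,\cdot)}_\infty\,\norm{x}\,u^{-(d+1)/\alpha}\le\qnorm{\nabla p_0(1;0,\cdot)}_\infty\,\norm{x}\big(u+u^{-1}\big)^\gamma.
\]

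Granting this, Lemma~\ref{lem:unif_conv-2} applied to $\mathcal{F}$ yields, using that $\{f_y\}\subseteq\mathcal{F}$ and that $\sup$ over $y$ and over $x$ commute,
\[
	\lim_{t\to+\infty}\sup_{x\in\mathscr{X}(t;\delta)}\sup_{y\in\RR^d}\abs{p(t;x,y)-\rho_{\mathbf{Y}}(y)}\le\lim_{t\to+\infty}\sup_{x\in\mathscr{X}(t;\delta)}\sup_{f\in\mathcal{F}}\Big|\,\cdots\,\Big|=0,
\]
which, together with nonnegativity of the left-hand side, is \eqref{eq:lim_p_t_infty-unif} (the set $\mathscr{X}(\delta;t)$ in the statement being the set $\mathscr{X}(t;\delta)$ of \eqref{eq:52}, the arguments being transposed). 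For the pointwise statement \eqref{eq:64}, it then suffices to observe that for any fixed $x$ one has $x\in\mathscr{X}(t;\delta)$ as soon as $t\ge-(1+\delta)\log_c\norm{x}$ (and for all $t>0$ when $x=0$), so \eqref{eq:64} is a specialization of \eqref{eq:lim_p_t_infty-unif}.

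The only genuine obstacle is the input about the stable density used above: the boundedness of $p_0(1;0,\cdot)$ together with its gradient. For $\alpha=2$ this is immediate from the Gaussian kernel; for $\alpha\in(0,2)$ it holds because the existence of a transition density forces the law to be full, hence $\operatorname{Re}\Psi(\xi)\gtrsim\abs{\xi}^\alpha$, so $p_0(1;0,\cdot)$ and all its derivatives are Fourier transforms of integrable functions. This is the content of Lemma~\ref{lem:A3}; everything else in the argument is the scaling relation and a direct invocation of Lemma~\ref{lem:unif_conv-2}.
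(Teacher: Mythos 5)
Your proposal is correct and is essentially identical to the paper's own (one-line) proof, which likewise derives \eqref{eq:lim_p_t_infty-unif} by applying Lemma~\ref{lem:unif_conv-2} to the family $\{p_0(u;0,y-\cdot):y\in\RR^d\}$ and obtains \eqref{eq:64} as the pointwise specialization. The only difference is that you spell out the verification of the hypotheses \eqref{eq:78} via the scaling identity and Lemma~\ref{lem:A3}, which the paper leaves implicit; that verification is accurate.
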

\begin{proof}
	The convergence \eqref{eq:lim_p_t_infty-unif} is a consequence of Lemma \ref{lem:unif_conv-2} applied to the family of
	functions
	\[
		\mathcal{F}=\big\{p_0(u; 0, y-x) : y\in\RR^d \big\}. \qedhere
	\]
\end{proof}

By Theorem \ref{thm:lim_p_t_infty} together with Scheff\'e's lemma, see e.g. \cite{MR2728440}, we easily get the following
corollary.
\begin{corollary}
	\label{cor:1}
	Under the assumptions of Theorem \ref{thm:lim_p_t_infty}, for all $x \in \RR^d$,
	\begin{equation}
		\label{eq:65}
		\lim_{t\to+\infty} \int_{\RR^d}|p(t;x,y)-\rho_{\mathbf{Y}}(y)| {\: \rm d}y=0.
	\end{equation}
	In particular, the measure $\rho_{\mathbf{Y}}(y)$ is an invariant measure of the process $\mathbf{X}$.
\end{corollary}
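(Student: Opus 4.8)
The plan is to deduce \eqref{eq:65} from Scheff\'e's lemma, and then obtain the invariance from the Chapman--Kolmogorov identity together with the $L^1$-contractivity of the transition operator. First I would record that, for every fixed $t > 0$ and $x \in \RR^d$, both $y \mapsto p(t; x, y)$ and $\rho_{\mathbf{Y}}$ are probability densities on $\RR^d$: integrating the representation \eqref{representationrho} and using Tonelli's theorem together with $\int_{\RR^d} p_0(u; 0, y)\, {\rm d} y = 1$ yields $\int_{\RR^d} \rho_{\mathbf{Y}}(y)\, {\rm d} y = \int_0^\infty \mu({\rm d} u) = 1$ (equivalently, this is \eqref{eq:3} with $\gamma = 0$). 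By \eqref{eq:64} in Theorem~\ref{thm:lim_p_t_infty}, $p(t; x, y) \to \rho_{\mathbf{Y}}(y)$ as $t \to \infty$ for each fixed $x, y$. Hence along any sequence $t_n \to \infty$ the nonnegative functions $p(t_n; x, \cdot)$ converge pointwise to $\rho_{\mathbf{Y}}$ while $\int_{\RR^d} p(t_n; x, y)\, {\rm d} y = 1 = \int_{\RR^d} \rho_{\mathbf{Y}}(y)\, {\rm d} y$; Scheff\'e's lemma then gives $\int_{\RR^d} |p(t_n; x, y) - \rho_{\mathbf{Y}}(y)|\, {\rm d} y \to 0$, and since the sequence is arbitrary, \eqref{eq:65} follows.

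For the invariance, introduce the forward operator $(P_s f)(y) = \int_{\RR^d} f(z)\, p(s; z, y)\, {\rm d} z$ acting on $f \in L^1(\RR^d)$, $s > 0$. Using $\int_{\RR^d} p(s; z, y)\, {\rm d} y = 1$ for every $z$ and Tonelli's theorem,
\[
	\|P_s f\|_{L^1}
	\leq \int_{\RR^d} \int_{\RR^d} |f(z)|\, p(s; z, y)\, {\rm d} z\, {\rm d} y
	= \int_{\RR^d} |f(z)|\, {\rm d} z = \|f\|_{L^1},
\]
so $P_s$ is a contraction on $L^1(\RR^d)$. Since $\mathbf{X}$ is Markov with transition density $p$, the Chapman--Kolmogorov equation $p(t+s; x, y) = \int_{\RR^d} p(t; x, z)\, p(s; z, y)\, {\rm d} z$ holds, that is $P_s\big(p(t; x, \cdot)\big) = p(t+s; x, \cdot)$. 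Applying the contraction property to $f = p(t; x, \cdot) - \rho_{\mathbf{Y}}$ gives
\[
	\big\| p(t+s; x, \cdot) - P_s \rho_{\mathbf{Y}} \big\|_{L^1}
	\leq \big\| p(t; x, \cdot) - \rho_{\mathbf{Y}} \big\|_{L^1},
\]
and, by \eqref{eq:65}, both the right-hand side and $\| p(t+s; x, \cdot) - \rho_{\mathbf{Y}} \|_{L^1}$ tend to $0$ as $t \to \infty$. Therefore $P_s \rho_{\mathbf{Y}} = \rho_{\mathbf{Y}}$ in $L^1(\RR^d)$ for every $s > 0$, which is precisely the statement that the probability measure $\rho_{\mathbf{Y}}(y)\, {\rm d} y$ is invariant for $\mathbf{X}$.

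There is no genuine obstacle here: the argument relies only on the pointwise limit \eqref{eq:64} together with the normalization of the two densities, Scheff\'e's lemma, the elementary $L^1$-contractivity of the forward transition operator, and the Chapman--Kolmogorov identity for the Markov process $\mathbf{X}$. The only point that deserves a line of care is the interchange of the limit $t \to \infty$ with the operator $P_s$ in the second step, which is exactly what the contraction estimate is designed to handle; if one additionally wants the identity $P_s \rho_{\mathbf{Y}} = \rho_{\mathbf{Y}}$ to hold pointwise rather than merely almost everywhere, it suffices to note that $\rho_{\mathbf{Y}}$ is bounded (Proposition~\ref{prop:6}) and that $p(s;\cdot,\cdot)$ is continuous in its spatial variables, so that $P_s \rho_{\mathbf{Y}}$ is continuous by dominated convergence.
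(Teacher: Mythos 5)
Your argument is correct and follows the paper's route exactly: the limit \eqref{eq:65} is obtained from the pointwise convergence \eqref{eq:64} and the normalization of the two densities via Scheff\'e's lemma. The paper states the invariance claim without further detail, and your Chapman--Kolmogorov plus $L^1$-contraction argument is a valid way to fill in that step.
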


\subsection{The transition density of $\mathbf{X}$}
\label{sec:3.3}
We study the regularity of the transition density of the process obtained by a partial resetting from $\alpha$-stable process.
\begin{lemma}
	\label{lem:p_reg}
	Let $\mathbf{Y}$ be a strictly $\alpha$-stable process in $\RR^d$, $\alpha \in (0, 2]$, with a transition density
	$p_0$. Suppose that $\mathbf{X}$ is obtained from $\mathbf{Y}$ by partial resetting with factor $c \in (0, 1)$.
	Then $p$ the transition density of $\mathbf{X}$ belongs to $\calC^{\infty}\big((0,\infty)\times\RR^d\times\RR^d\big)$.
	Moreover, for all $\ell \in \NN_0$, $\mathbf{a}, \mathbf{b} \in \NN_0^d$, $x, y \in \RR^d$ and $t>0$, both functions
	\[
		z \mapsto \partial_t^\ell \partial_{x}^{\mathbf{a}}\partial_{y}^\mathbf{b} p(t; z,y),
		\quad
		\text{and}
		\quad
		z \mapsto \partial_t^\ell \partial_{x}^\mathbf{a} \partial_{y}^\mathbf{b} \,p(t;x,z)
	\]
	belongs to $\calC_0^\infty(\RR^d)$.
\end{lemma}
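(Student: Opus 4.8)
The plan is to differentiate term by term in the representation \eqref{eq:rep-p-x}. I would write $p(t;x,y) = e^{-t}\sum_{j\ge 0} q_j(t,x,y)$ with $q_0(t,x,y) = p_0(t;0,y-x)$ and, for $j\ge 1$,
\[
	q_j(t,x,y) = t^j \int_0^1 p_0(tu;0,y-c^jx)\, P_j(u)\, {\rm d} u .
\]
First I would record, using the scaling property (Lemma~\ref{lem:A3}\eqref{en:3:1}), that $p_0(s;0,w) = s^{-d/\alpha} g(s^{-1/\alpha} w)$ with $g = p_0(1;0,\cdot) \in \calC^\infty(\RR^d)$; hence $p_0$ is $\calC^\infty$ on $(0,\infty)\times\RR^d$, and a direct computation with the chain rule gives, for all $i\in\NN_0$ and $\mathbf{c}\in\NN_0^d$,
\[
	\partial_s^i \partial_w^{\mathbf{c}} p_0(s;0,w) = s^{-\frac{d+|\mathbf{c}|}{\alpha}-i}\, G_{i,\mathbf{c}}\big(s^{-1/\alpha} w\big),
\]
where $G_{i,\mathbf{c}}$ is a finite linear combination of functions $z\mapsto z^{\mathbf{e}}(\partial^{\mathbf{c}+\mathbf{f}}g)(z)$ with $|\mathbf{e}| = |\mathbf{f}|\le i$; by the decay estimates for $g$ and its derivatives collected in Appendix~\ref{appendix:A} (Lemma~\ref{lem:A3}\eqref{en:3:2}), each $G_{i,\mathbf{c}}$ is bounded on $\RR^d$ and tends to $0$ at infinity. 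Since $P_j$ is a spline supported on $[m^j,1]$ (Proposition~\ref{prop:2}), in the integral defining $q_j$ the argument $tu$ stays bounded away from $0$ locally in $(t,x,y)$, so differentiation under the integral sign is legitimate and each $q_j$ lies in $\calC^\infty\big((0,\infty)\times\RR^d\times\RR^d\big)$.

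The core step is a locally uniform summability estimate: for every $\ell\in\NN_0$, $\mathbf{a},\mathbf{b}\in\NN_0^d$ and every $0<t_1<t_2<\infty$, I would prove that
\[
	\sum_{j\ge 0}\ \sup_{t\in[t_1,t_2]}\ \sup_{x,y\in\RR^d}\ \big|\partial_t^\ell\partial_x^{\mathbf{a}}\partial_y^{\mathbf{b}} q_j(t,x,y)\big| < \infty .
\]
To this end I would write $q_j = t^j h_j$ and apply the Leibniz rule; differentiating $h_j(t,x,y) = \int_0^1 p_0(tu;0,y-c^jx)P_j(u)\,{\rm d}u$ yields
\[
	\partial_t^{\ell-i}\partial_x^{\mathbf{a}}\partial_y^{\mathbf{b}} h_j(t,x,y) = (-c^j)^{|\mathbf{a}|}\int_0^1 u^{\ell-i}\, \big(\partial_s^{\ell-i}\partial_w^{\mathbf{a}+\mathbf{b}} p_0\big)(tu;0,y-c^jx)\, P_j(u)\, {\rm d} u .
\]
Inserting the formula for $\partial_s^{\ell-i}\partial_w^{\mathbf{c}}p_0$, using the bounds just recorded together with $tu\ge t_1 m^j$ and $u^{\ell-i}\le 1$ on $\supp P_j$, and $\int_0^1 P_j(u)\,{\rm d}u = 1/j!$ (Corollary~\ref{cor:A0}), one gets, uniformly in $x,y\in\RR^d$,
\[
	\big|\partial_t^{\ell-i}\partial_x^{\mathbf{a}}\partial_y^{\mathbf{b}} h_j(t,x,y)\big| \le C\, \big(c^{|\mathbf{a}|} m^{-\sigma}\big)^j\, \frac{1}{j!}, \qquad \sigma = \frac{d+|\mathbf{a}|+|\mathbf{b}|}{\alpha}+\ell ,
\]
with $C$ depending only on $t_1,t_2,\ell,\mathbf{a},\mathbf{b},d,\alpha,c$. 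Multiplying by $|\partial_t^i(t^j)| = \tfrac{j!}{(j-i)!}t^{j-i}$, summing over $0\le i\le\ell$ and then over $j$, the resulting series is of exponential type and converges — the factor $1/j!$ beats the geometric factor $m^{-j\sigma}$, and $c^{j|\mathbf{a}|}\le 1$ only helps. With this estimate in hand, the classical theorem on termwise differentiation of series of $\calC^\infty$ functions shows that $p = e^{-t}\sum_j q_j$ belongs to $\calC^\infty\big((0,\infty)\times\RR^d\times\RR^d\big)$ and that all its partial derivatives are obtained by differentiating the series term by term.

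For the $\calC_0^\infty$ assertions I would note that any further $x$- (respectively $y$-) derivative of $\partial_t^\ell\partial_x^{\mathbf{a}}\partial_y^{\mathbf{b}} p$ is again of the same form, so it suffices to show $\partial_t^\ell\partial_x^{\mathbf{a}}\partial_y^{\mathbf{b}} p(t;x,y)\to 0$ as $\norm{x}\to\infty$ with $t,y$ fixed, and as $\norm{y}\to\infty$ with $t,x$ fixed. By the summability estimate one may pass to the limit inside $\sum_j$; in each term the argument $y-c^jx$ of $p_0$ (and $y-x$ for $j=0$) has norm tending to infinity, and since $\big(\partial_s^{\ell-i}\partial_w^{\mathbf{c}}p_0\big)(tu;0,y-c^jx)$ equals a negative power of $tu$ times $G_{\ell-i,\mathbf{c}}\big((tu)^{-1/\alpha}(y-c^jx)\big)$, a quantity that tends to $0$ boundedly in $u\in[m^j,1]$, each integral — hence each term — tends to $0$, and summing over $j$ gives the claim.

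The step I expect to require the most care is the summability estimate, and more precisely the bookkeeping of the interplay it rests on: the support restriction $\supp P_j\subset[m^j,1]$ is exactly what prevents $p_0(tu;0,\cdot)$ and its derivatives from blowing up as $u\downarrow 0$, but it costs a geometric factor $m^{-j\sigma}$ growing in $j$, which has to be absorbed by the super-exponential decay $\int_0^1 P_j = 1/j!$ while simultaneously controlling the Leibniz weights $\tfrac{j!}{(j-i)!}t^{j-i}$. Everything else — differentiation under the integral, the scaling formula for the derivatives of $p_0$, and the termwise passage to the limit at infinity — is routine once the decay estimates from Appendix~\ref{appendix:A} are available.
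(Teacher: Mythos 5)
Your proposal is correct and follows the same overall strategy as the paper's proof: expand $p$ via the representation \eqref{eq:rep-p-x}, differentiate term by term (using that $\supp P_j\subset[m^j,1]$ keeps the time argument of $p_0$ bounded away from zero, so differentiation under the integral is harmless), and then justify termwise differentiation of the series by a locally uniform summable bound. The one substantive difference lies in how that summability is obtained. The paper bounds $(\partial_u^{\ell_2}\partial_y^{\mathbf{a}+\mathbf{b}}p_0)(u;0,\cdot)$ by $C(u+u^{-1})^\gamma$ and then integrates against $P_j$ using the explicit (negative) moment formulas of Theorem~\ref{thm:all-moments}, which give bounds of roughly factorial decay in $j$ directly. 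You instead use only the support restriction $tu\ge t_1 m^j$ together with $\int_0^1 P_j=1/j!$, accepting a geometric loss $m^{-j\sigma}$ that is then absorbed by the $1/j!$; this is cruder but entirely sufficient and somewhat more elementary, since it bypasses Theorem~\ref{thm:all-moments} and needs only Proposition~\ref{prop:1} and Corollary~\ref{cor:A0}. One detail worth making explicit: the boundedness and vanishing at infinity of your functions $G_{i,\mathbf{c}}$ are most cleanly justified by evaluating your scaling identity at $s=1$, which identifies $G_{i,\mathbf{c}}$ with $\partial_s^i\partial_w^{\mathbf{c}}p_0(1;0,\cdot)$, a function that Lemma~\ref{lem:A3}\eqref{en:3:2} asserts lies in $\calC_0^\infty(\RR^d)$; appealing to ``decay estimates for $g$ and its derivatives'' with the polynomial weights $z^{\mathbf{e}}$ is not literally what that lemma states, although the conclusion is the same.
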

\begin{proof}
	First, using \eqref{eq:rep-p-x} and the induction with respect to the order of the differentiation one can prove that
	$\partial_t^\ell \partial_{x}^{\mathbf{a}}\partial_{y}^{\mathbf{b}} p(t;x,y)$ equals to
	$\partial_t^\ell \partial_{x}^{\mathbf{a}}\partial_{y}^{\mathbf{b}} (e^{-t} p_0(t;0,y-x))$ plus a linear combination
	of expressions of the form
	\begin{align}
		\label{expr:aux}
		e^{-t}\sum_{j=1}^\infty  c_j  t^{j-\ell_1} \int_0^1 u^{\ell_2} f(tu,y-c^jx) P_j(u) \: {\rm d}u
	\end{align}
	where $\ell_1 + \ell_2 = \ell$,
	\[
		c_j= \begin{cases}
			\frac{j!}{(j-\ell_1)!}(-c^j)^{|\mathbf{a}|} & \text{if } j \geq \ell_1, \\
			0 & \text{if } j<\ell_1,
		\end{cases}
	\]
	and
	\[
		f(u,y)= \big(\partial_u^{\ell_2} \,\partial_{y}^{\mathbf{a}+\mathbf{b}}\, p_0\big)(u;0,y).
	\]
	The inductive step will be verified once we justify the differentiation under both the series and the integral sign.
	By Lemma~\ref{lem:A3}\eqref{en:3:2} we have
	\begin{align}
		\label{bound:aux}
		|\partial_u f(u,y)| +|\nabla_y f(u,y)|+|f(u,y)| \leq C\big(u+u^{-1}\big)^\gamma,
	\end{align}
	for certain $C,\gamma > 0$. Recall that each $P_j$ has support bounded away from zero, see Proposition~\ref{prop:1}.
	Thus, due to \eqref{bound:aux}, any partial sum of the series in \eqref{expr:aux} may be differentiated as
	many times as one needs.
	Let us consider $j \geq \ell_1+\lceil \gamma \rceil+1$. By \eqref{bound:aux} each difference quotient may be bounded
	terms of the form
	\begin{align*}
		\sum_{j=\ell_1+\lceil \gamma \rceil+1}^\infty
		\frac{j!}{(j-k)!}  (2t)^{j-k}
		\int_0^1 u^{\ell_2-\lceil\gamma\rceil}\left(t^{\lceil \gamma \rceil}+
		t^{-\lceil\gamma\rceil}\right)  P_j(u) \: {\rm d}u
	\end{align*}
	where $k = \ell_1$ or $k = \ell_1+1$, which by Theorem~\ref{thm:all-moments} is finite. Hence, to conclude it is enough to
	invoke the dominated convergence theorem.

	Lastly, the convergence to zero as $\norm{x} + \norm{y}$ gets large follows by reasoning similar to the proof of
	Lemma~\ref{lem:A3}\eqref{en:3:2}, together with \eqref{expr:aux} and \eqref{bound:aux}.
\end{proof}

\begin{remark}
	The Markov property of $\mathbf{X}$ implies that
	\[
		P_t f(x) = \int_{\RR^d} p(t; x, y) f(y) {\: \rm d} y, \qquad x \in \RR^d
	\]
	forms a semigroup on nonnegative bounded functions. In view of the regularity of $p$, the Chapmann--Kolmogorov equation
	holds true, that is
	\begin{equation}
		\label{eq:90}
		p(t + s; x, y) = \int_{\RR^d} p(t; x, z) p(s; z, y) {\: \rm d} z,
		\qquad\text{ for all } s, t > 0 \text{ and } x, y \in \RR^d.
	\end{equation}
\end{remark}

\subsection{Non-equilibrium stationary state of $\mathbf{X}$}
\label{sec:3.4}
In this section our aim is to show that $\mathbf{X}$ has non-equilibrium stationary state. To do so, we prove that
the infinitesimal generator of $\mathbf{X}$ on $L^2(\RR^d, \rho_{\mathbf{Y}}(y) {\rm d} y)$ is \emph{not} self-adjoint,
see Theorem \ref{thm:NESS}. We also show that $\rho_{\mathbf{Y}}$ is harmonic with respect to $L^2$-adjoint operator
to the infinitesimal generator of $\mathbf{X}$, see Theorem \ref{thm:H+F-P}.

For $f \in \calC_0^2(\RR^d)$ we set
\[
	\begin{aligned}
	\mathscr{L} f(x)=
	\sum_{j,k=1}^d a_{jk} \partial_{x_j} \partial_{x_k} f(x) &+ \sprod{\nabla f(x)}{\gamma} \\
	&+\int_{\RR^d} \big(f(x+z)-f(x)-\ind{\{|z|<1\}} \sprod{\nabla f(x)}{z} \big) \: \nu({\rm d} z)
	\end{aligned}
\]
where $(A, \gamma, \nu)$ with $A=[a_{jk}]_{j,k=1}^d$ is the L{\'e}vy triplet of $\mathbf{Y}$, see Appendix \ref{appendix:A}.
Furthermore, we set
\[
	\begin{aligned}
	\mathscr{L}^*f(x)
	=
	\sum_{j,k=1}^d a_{jk} \partial_{x_j} \partial_{x_k} f (x) &+ \sprod{\nabla f(x)}{\gamma^*} \\
	&+\int_{\RR^d} \big(f(x+z)-f(x)-\ind{\{|z|<1\}} \sprod{\nabla f(x)}{z}\big) \: \nu^*({\rm d} z)
	\end{aligned}
\]
where $\gamma^*=-\gamma$ and $\nu^*(B)=\nu(-B)$, for a Borel set $B \subset \RR^d$. Accordingly, we define
\[
	\mathscr{A} f(x) = \mathscr{L} f(x) + f(cx) - f(x),
	\qquad\text{and}\qquad
	\mathscr{A}^* f(x) = \mathscr{L}^* f(x) + \frac{1}{c} f\Big(\frac{x}{c}\Big) - f(x).
\]
Given $r \in [1, \infty)$, we let
\[
	\mathcal{B}_r =
		\begin{cases}
			L^r(\RR^d) & \text{if } r \in [1, \infty), \\
			\calC_0(\RR^d) & \text{if } r = \infty.
		\end{cases}
\]
In view of Proposition \ref{prop:9}, $\mathbf{Y}$ generates a strongly continuous contractive semigroup on $\mathcal{B}_r$.
Let $\mathscr{L}_r$ be the infinitesimal generator of this semigroup, and let $D(\mathscr{L}_r)$ be the domain of $\mathscr{L}_r$.
\begin{theorem}
	\label{thm:1}
	Suppose that $\mathbf{Y}$ is a strictly $\alpha$-stable process in $\RR^d$, $\alpha \in (0, 2]$, with a transition density.
	Assume that $\mathbf{X}$ is obtained from $\mathbf{Y}$ by partial resetting with factor $c \in (0, 1)$. Let
	$r \in [1, \infty]$. Then the process $\mathbf{X}$ generates strongly continuous semigroup of bounded operators on
	$\mathcal{B}_r$. Its infinitesimal generator is
	\begin{equation}
		\label{eq:87}
		\mathscr{A}_r f(x) = \mathscr{L}_r f(x) + f(cx) - f(x), \qquad f \in D(\mathscr{A}_r).
	\end{equation}
	Moreover, $D(\mathscr{A}_r) = D(\mathscr{L}_r)$.
\end{theorem}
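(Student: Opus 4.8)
The statement combines two assertions: (a) the process $\mathbf{X}$ generates a strongly continuous semigroup of bounded operators on $\mathcal{B}_r$ for every $r\in[1,\infty]$, and (b) this generator is $\mathscr{A}_r f = \mathscr{L}_r f + f(c\,\cdot) - f$ with domain equal to $D(\mathscr{L}_r)$. The overall strategy is to view the resetting as a bounded perturbation of the underlying L\'evy semigroup, so that $\mathscr{A}_r = \mathscr{L}_r + (R_c - \mathrm{Id})$, where $R_c f(x) = f(cx)$. The key observation is that $R_c$ is a bounded operator on each $\mathcal{B}_r$: on $\calC_0(\RR^d)$ it is obviously a contraction, and on $L^r(\RR^d)$ one has $\|R_c f\|_r = c^{-d/r}\|f\|_r$ by the change of variables $x\mapsto x/c$. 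Hence $B_c := R_c - \mathrm{Id}$ is a bounded operator on $\mathcal{B}_r$, and the classical bounded-perturbation theorem for $C_0$-semigroups (Phillips' perturbation theorem) immediately gives that $\mathscr{L}_r + B_c$ generates a strongly continuous semigroup on $\mathcal{B}_r$ with the same domain $D(\mathscr{L}_r)$. This takes care of assertion (b) once we know that the generator of $\mathbf{X}$ is indeed $\mathscr{L}_r + B_c$.

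The substantive step is therefore to \emph{identify} the semigroup $(P_t)$ generated by $\mathbf{X}$ (acting by $P_t f(x) = \int p(t;x,y) f(y)\,{\rm d}y$, cf.\ the remark following Lemma~\ref{lem:p_reg}) with the semigroup $(S_t)$ generated by $\mathscr{L}_r + B_c$. First I would record that $(P_t)$ is a well-defined semigroup of positivity-preserving operators: on $\calC_0$ this follows from the regularity and decay of $p$ in Lemma~\ref{lem:p_reg} together with the Chapman--Kolmogorov identity \eqref{eq:90}, and on $L^r$ from the fact that $p(t;x,\cdot)$ and $p(t;\cdot,y)$ are probability densities (so $P_t$ is a contraction on $L^1$ and on $L^\infty$, hence on all $L^r$ by interpolation; one also uses the second part of Lemma~\ref{lem:p_reg} to see $P_t f \in \mathcal{B}_r$). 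Strong continuity on $\mathcal{B}_r$ can be read off from the representation \eqref{eq:6}: writing $P_t = e^{-t} P_t^0 + \int_0^t e^{-s} P_s^0 \,\widetilde{R}_c P_{t-s}\,{\rm d}s$, where $P_t^0$ is the L\'evy semigroup (strongly continuous by Proposition~\ref{prop:9}) and $\widetilde{R}_c$ encodes multiplication by $c$ before composition, one sees $\|P_t f - f\| \to 0$ as $t\to 0^+$.

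To match the two semigroups I would use the standard criterion that a $C_0$-semigroup is uniquely determined by its generator, so it suffices to check that on the common domain $D(\mathscr{L}_r)$ the two generators agree, or equivalently that $(P_t)$ solves the same integrated Cauchy problem. The cleanest route is via resolvents: one shows that for $\lambda$ large the resolvent $\int_0^\infty e^{-\lambda t} P_t f\,{\rm d}t$ coincides with $(\lambda - \mathscr{L}_r - B_c)^{-1} f$. This follows by Laplace-transforming the renewal-type equation \eqref{eq:6}: the Laplace transform of $e^{-t}p_0(t;x,y)$ gives the resolvent of $\mathscr{L}_r - \mathrm{Id}$, and the convolution term in \eqref{eq:6} produces exactly the geometric-series expansion of $(\lambda - \mathscr{L}_r - R_c + \mathrm{Id})^{-1}$ in powers of $R_c(\lambda+1-\mathscr{L}_r)^{-1}$, which converges since that operator has norm $<1$ for $\lambda$ large. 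Alternatively, and perhaps more transparently, one differentiates $P_t f$ for $f\in D(\mathscr{L}_r) = D(\mathscr{A}_r)$ using \eqref{eq:6} and the fact that $t\mapsto P_t^0 f$ is differentiable with derivative $\mathscr{L}_r P_t^0 f$, obtaining $\frac{d}{dt}P_t f = \mathscr{L}_r P_t f + (R_c - \mathrm{Id}) P_t f = \mathscr{A}_r P_t f$, which pins down the generator as $\mathscr{A}_r$ on $D(\mathscr{L}_r)$; the equality of domains $D(\mathscr{A}_r) = D(\mathscr{L}_r)$ is then automatic from the perturbation theorem. Finally, formula \eqref{eq:87} is just the unpacking of $\mathscr{L}_r f + (R_c f - f)$.

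\textbf{Main obstacle.} The delicate point is not the algebra but the functional-analytic bookkeeping in the $L^r$ setting for $r\in(1,\infty)$ and especially $r=\infty$ (where $\mathcal{B}_\infty = \calC_0$): one must verify that $P_t$ genuinely maps $\mathcal{B}_r$ into itself with the claimed bounds, that the convolution integral in \eqref{eq:6} converges in the norm of $\mathcal{B}_r$ (not merely pointwise), and that strong continuity holds up to $t=0$. All of these rest on Proposition~\ref{prop:9} (strong continuity and contractivity of the L\'evy semigroup on $\mathcal{B}_r$), on Lemma~\ref{lem:p_reg} (for the $\calC_0$ statements and smoothness), and on the elementary scaling bound for $R_c$; assembling them carefully is the bulk of the work, while the perturbation-theoretic conclusion itself is soft.
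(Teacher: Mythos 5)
Your proposal is correct, but it takes a genuinely different route from the paper. The paper does not invoke any perturbation theorem: it computes the difference quotient $(P_t f - f)/t$ directly from the series representation \eqref{eq:rep-p-x}, bounding the terms with $j \geq 2$ in $\mathcal{B}_r$-norm by $\frac{1+c^{-dj/r}}{j!}\|f\|_{\mathcal{B}_r}$ (via Corollary \ref{cor:A0}, i.e.\ $\int_0^1 P_j = 1/j!$), showing the $j=1$ term converges to $f(c\,\cdot)$ by strong continuity of the L\'evy semigroup, and observing that the remaining term converges precisely when $f \in D(\mathscr{L}_r)$ — which yields the generator formula, the domain identity, and (by the same estimates) strong continuity of $(P_t)$ all at once. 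Your route — boundedness of $R_c f = f(c\,\cdot)$ on $\mathcal{B}_r$ with norm $c^{-d/r}$, Phillips' bounded-perturbation theorem for $\mathscr{L}_r + R_c - \mathrm{Id}$, and identification of the two semigroups by Laplace-transforming the renewal equation \eqref{eq:6} into the geometric resolvent series $\sum_k [(\lambda+1-\mathscr{L}_r)^{-1}R_c]^k(\lambda+1-\mathscr{L}_r)^{-1}$ — is sound and gets $D(\mathscr{A}_r)=D(\mathscr{L}_r)$ for free; its cost is that the semigroup property, a uniform bound $\|P_s\| \leq e^{s(c^{-d/r}-1)}$, and strong continuity of $(P_t)$ must be secured beforehand so that the resolvent determines the semigroup, which is exactly where the paper's term-by-term estimates reappear. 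One caution: your secondary route of differentiating $P_t f$ in \eqref{eq:6} is shakier than you suggest, since after the change of variables the derivative of the convolution term requires $R_c P_s f \in D(\mathscr{L}_r)$, which is not obvious; stick with the resolvent argument.
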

\begin{proof}
	Fix $r \in [1, \infty]$. For $f \in \mathcal{B}_r$ and $t > 0$ we set
	\[
		P_t f(x) = \int_{\RR^d} p(t; x, y) f(y) {\: \rm d} y, \qquad\text{ for } x \in \RR^d.
	\]
	By Lemma \ref{lem:p_reg} and H\"older's inequality, the integral is well-defined. The proof that $(P_t : t > 0)$ is
	a strongly continuous semigroup of bounded operators on $\mathcal{B}_r$, is essentially
	the same as the proof that its infinitesimal generator is given by \eqref{eq:87}, which is proved as follows:
	First, using \eqref{eq:rep-p-x}, we write
	\begin{align*}
		\frac{P_t f(x) - f(x)}{t}
		&= e^{-t} \frac{1}{t} \int_{\RR^d} p_0(t; x, y) \big(f(y) - f(x)\big) {\: \rm d} y \\
		&\phantom{=}
		+ e^{-t} \sum_{j = 1}^\infty t^{j-1} \int_{\RR^d} \int_0^1 p_0(tu; 0, y- c^j x) \big(f(y) -f(x)\big) P_j(u) {\: \rm d} u.
	\end{align*}
	Let us observe that in the sum the terms for $j \geq 2$ tends to zero in $\mathcal{B}_r$. Indeed, we have
	\begin{align*}
		&
		\bigg\| \int_{\RR^d} \int_0^1 p_0(tu; 0, y - c^j x) \big(f(y) - f(x) \big) P_j(u) {\: \rm d} u {\: \rm d} y
		\bigg\|_{\mathcal{B}_r(x)} \\
		&\qquad=
		\bigg\| \int_{\RR^d} \int_0^1 p_0(tu; 0, y) \big(f(y+c^j x) - f(x) \big) P_j(u) {\: \rm d} u {\: \rm d} y
		\bigg\|_{\mathcal{B}_r(x)} \\
		&\qquad\leq
		\int_{\RR^d} \int_0^1 p_0(tu; 0, y) \big\|f(y+c^j x) - f(x) \big\|_{\mathcal{B}_r(x)} {\: \rm d} u {\: \rm d} y \\
		&\qquad\leq
		\frac{1 + c^{-d j/r}}{j!} \|f\|_{\mathcal{B}_r}
	\end{align*}
	where in the last inequality we have used Corollary \ref{cor:A0}. Hence,
	\begin{align*}
		\bigg\|
		\sum_{j = 2}^\infty t^{j-1} \int_{\RR^d} \int_0^1 p_0(tu; 0, y- c^j x) \big(f(y) -f(x)\big) P_j(u)
		{\: \rm d} u
		{\: \rm d} y
		\bigg\|_{\mathcal{B}_r(x)}
		\leq
		\|f\|_{\mathcal{B}_r} t
		\sum_{j = 0}^\infty
		\frac{1 + c^{-d (j+2)/r}}{(j+2)!} t^j.
	\end{align*}
	For $j = 1$, we write
	\begin{align*}
		&\bigg\|
		\int_{\RR^d} \int_0^1 p_0(tu; 0, y) f(y + c x) P_1(u) {\: \rm d} u {\: \rm d} y
		-f(cx)
		\bigg\|_{\mathcal{B}_r(x)} \\
		&\qquad\leq
		\int_0^1 \bigg\| \int_{\RR^d} p_0(tu; 0, y) \big( f(y + c x) - f(cx)\big) {\: \rm d} y \bigg\|_{\mathcal{B}_r(x)}
		P_1(u) {\: \rm d} u,
	\end{align*}
	thus by the dominated convergence together with the strong continuity on $\mathcal{B}_r$, we conclude that
	\[
		\lim_{t \to 0^+} \int_{\RR^d} \int_0^1 p_0(tu; 0, y) f(y + c x) P_1(u) {\: \rm d} u {\: \rm d} y
		=
		f(cx),
	\]
	and the theorem follows.
\end{proof}

\begin{corollary}
	\label{cor:2}
	Under the assumptions of Theorem \ref{thm:1}, for every $r \in [1, \infty]$, if $f \in \calC_0^2(\RR^d)$ is such that
	$\partial_x^{\mathbf{a}} f \in \mathcal{B}_r$ for each $\mathbf{a} \in \NN_0^d$, $\abs{\mathbf{a}} \leq 2$, then
	$f \in D(\mathscr{A}_r)$ and
	\[
		\mathscr{A}_r f = \mathscr{A} f.
	\]
\end{corollary}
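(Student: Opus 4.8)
The plan is to reduce the assertion to the corresponding statement for the L\'evy generator $\mathscr{L}$ and then invoke Theorem~\ref{thm:1}. That theorem gives $D(\mathscr{A}_r)=D(\mathscr{L}_r)$ and $\mathscr{A}_r g=\mathscr{L}_r g+g(c\,\cdot)-g$ for every $g\in D(\mathscr{A}_r)$. Since $\mathscr{A}f=\mathscr{L}f+f(c\,\cdot)-f$ by the definition of $\mathscr{A}$, and since $f(c\,\cdot)-f\in\mathcal{B}_r$ (for $r\in[1,\infty)$ a change of variables gives $\|f(c\,\cdot)\|_r=c^{-d/r}\|f\|_r<\infty$, while for $r=\infty$ this is clear because $f\in\calC_0(\RR^d)$), it will suffice to prove that $f\in D(\mathscr{L}_r)$ with $\mathscr{L}_r f=\mathscr{L}f$; adding $f(c\,\cdot)-f$ to both sides then yields the corollary.

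First I would check that $\mathscr{L}f\in\mathcal{B}_r$. The local part $\sum_{j,k=1}^d a_{jk}\partial_{x_j}\partial_{x_k}f+\sprod{\nabla f(x)}{\gamma}$ lies in $\mathcal{B}_r$ directly by the hypothesis $\partial_x^{\mathbf{a}}f\in\mathcal{B}_r$ for $\abs{\mathbf{a}}\leq 2$. For the integral part I would split the L\'evy measure at $|z|=1$: on $\{|z|\geq 1\}$ use $\|f(\cdot+z)-f(\cdot)\|_{\mathcal{B}_r}\leq 2\|f\|_{\mathcal{B}_r}$ together with $\nu(\{|z|\geq 1\})<\infty$, while on $\{|z|<1\}$ insert the second order Taylor remainder
\[
	f(x+z)-f(x)-\sprod{\nabla f(x)}{z}=\int_0^1(1-s)\sum_{j,k=1}^d\partial_{x_j}\partial_{x_k}f(x+sz)\,z_jz_k\,{\rm d}s,
\]
apply Minkowski's integral inequality, use translation invariance of $\|\cdot\|_{\mathcal{B}_r}$, and bound the outcome by a constant multiple of $\int_{|z|<1}|z|^2\,\nu({\rm d}z)<\infty$. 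This shows $\mathscr{L}f\in\mathcal{B}_r$, and in particular that $\mathscr{L}f$ is bounded. For $r=\infty$ I would additionally note that $\mathscr{L}f$ vanishes at infinity by the dominated convergence theorem applied to all three terms, using $f,\nabla f,\partial_{x_j}\partial_{x_k}f\in\calC_0(\RR^d)$ and the same integrability of $\nu$.

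Finally, let $Q_t f(x)=\int_{\RR^d}p_0(t;x,y)f(y)\,{\rm d}y$ denote the semigroup of $\mathbf{Y}$ on $\mathcal{B}_r$ provided by Proposition~\ref{prop:9}. Since $f\in\calC_0^2(\RR^d)$ and $\mathscr{L}f$ is bounded, $f$ belongs to the domain of the generator of $(Q_s : s>0)$ on $\calC_0(\RR^d)$, which acts on $f$ as $\mathscr{L}f$ (a classical fact for L\'evy processes, see, e.g., \cite{MR1739520}). Hence the identity
\[
	Q_t f(x)=f(x)+\int_0^t Q_s(\mathscr{L}f)(x)\,{\rm d}s
\]
holds in $\calC_0(\RR^d)$, and therefore pointwise for every $x\in\RR^d$ and $t>0$. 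Consequently
\[
	\Big\|\frac{Q_t f-f}{t}-\mathscr{L}f\Big\|_{\mathcal{B}_r}
	=\Big\|\frac{1}{t}\int_0^t\big(Q_s(\mathscr{L}f)-\mathscr{L}f\big)\,{\rm d}s\Big\|_{\mathcal{B}_r}
	\leq\frac{1}{t}\int_0^t\big\|Q_s(\mathscr{L}f)-\mathscr{L}f\big\|_{\mathcal{B}_r}\,{\rm d}s,
\]
by Minkowski's integral inequality (the estimate is trivial for $r=\infty$). Because $\mathscr{L}f\in\mathcal{B}_r$ and $(Q_s : s>0)$ is strongly continuous on $\mathcal{B}_r$, the right-hand side tends to $0$ as $t\to 0^+$, so $f\in D(\mathscr{L}_r)$ and $\mathscr{L}_r f=\mathscr{L}f$, which completes the reduction. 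The main obstacle I expect is the $L^r$ estimate of the compensated small-jump integral (uniformly for $r$ near $1$); the identification of $f$ with an element of the domain of the $\calC_0$-generator and the final averaging argument are routine.
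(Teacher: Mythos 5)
Your proposal is correct and follows essentially the same route as the paper: the corollary is obtained by combining Theorem~\ref{thm:1} (which gives $D(\mathscr{A}_r)=D(\mathscr{L}_r)$ and $\mathscr{A}_r=\mathscr{L}_r+f(c\,\cdot)-f$) with Proposition~\ref{prop:9}, whose proof is exactly your argument -- checking $\mathscr{L}f\in\mathcal{B}_r$, invoking \cite[Theorem 31.5]{MR1739520} for the Dynkin-type identity, and concluding by Minkowski's integral inequality and strong continuity. The only difference is that you spell out the splitting of the L\'evy measure behind the bound $\lVert\mathscr{L}f\rVert_{\mathcal{B}_r}\leq c\sum_{\abs{\mathbf{a}}\leq 2}\lVert\partial^{\mathbf{a}}f\rVert_{\mathcal{B}_r}$, which the paper states without detail.
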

Let $\mathscr{A}^*_2$ denote the $L^2(\RR^d)$-adjoint operator to $\mathscr{A}_2$. Recall that $g \in D(\mathscr{A}_2^*)$ if
and only if there is $h \in L^2(\RR^d)$ such that
\[
	\sprod{\mathscr{A}_2 f}{g} = \sprod{f}{h} \qquad\text{for all } f \in D(\mathscr{A}_2).
\]
Then $\mathscr{A}_2^* g = h$. We have the following lemma.
\begin{lemma}
	\label{lem:3}
	Suppose that $\mathbf{Y}$ is a strictly $\alpha$-stable process in $\RR^d$, $\alpha \in (0, 2]$, with a transition density.
	Assume that $\mathbf{X}$ is obtained from $\mathbf{Y}$ by partial resetting with factor $c \in (0, 1)$. If
	$g \in \calC_0^2(\RR^d)$ is such that $\partial^{\mathbf{a}} g \in L^2(\RR^d)$ for each $\mathbf{a} \in \NN_0^d$,
	$\abs{\mathbf{a}} \leq 2$, then $g \in D(\mathscr{A}_2^*)$ and $\mathscr{A}_2^* g = \mathscr{A}^* g$.
\end{lemma}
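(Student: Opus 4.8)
The plan is to unwind the definition of the adjoint. I want to show that for every $f \in D(\mathscr{A}_2)$ one has $\sprod{\mathscr{A}_2 f}{g} = \sprod{f}{\mathscr{A}^* g}$, which by definition of $\mathscr{A}_2^*$ gives both $g \in D(\mathscr{A}_2^*)$ and $\mathscr{A}_2^* g = \mathscr{A}^* g$. Since $\mathscr{A}_2$ is a closed operator and, by Corollary~\ref{cor:2}, the space $\calC_c^\infty(\RR^d)$ (or more generally the Schwartz-type functions described there) is contained in $D(\mathscr{A}_2)$ with $\mathscr{A}_2 f = \mathscr{A} f$, it suffices to verify $\sprod{\mathscr{A} f}{g} = \sprod{f}{\mathscr{A}^* g}$ for $f$ in such a core and then pass to the limit: given arbitrary $f \in D(\mathscr{A}_2)$ pick $f_n$ in the core with $f_n \to f$ and $\mathscr{A}_2 f_n \to \mathscr{A}_2 f$ in $L^2$, and use continuity of the inner product together with $g, \mathscr{A}^* g \in L^2$ (the latter because $g$ and its derivatives up to order two are in $L^2$, and the nonlocal part of $\mathscr{L}^*$ maps such $g$ into $L^2$ by the same estimate used for $\mathscr{L}$).

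The core computation is the identity $\sprod{\mathscr{A} f}{g} = \sprod{f}{\mathscr{A}^* g}$ for nice $f, g$, which I would split according to the three pieces of $\mathscr{A} = \mathscr{L} + (\text{dilation} - \Id)$. First, the second-order part $\sum_{j,k} a_{jk}\partial_{x_j}\partial_{x_k}$ is symmetric: integrating by parts twice (all boundary terms vanish since $f, g$ and their derivatives decay) moves both derivatives onto $g$, and since $A = [a_{jk}]$ is symmetric the same operator reappears. Second, the drift term $\sprod{\nabla f}{\gamma}$ integrates by parts to $-\sprod{\nabla g}{\gamma} = \sprod{\nabla g}{\gamma^*}$ against $f$, which is exactly the drift in $\mathscr{L}^*$. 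Third, for the jump part one writes
\[
	\int_{\RR^d}\int_{\RR^d}\big(f(x+z)-f(x)-\ind{\{|z|<1\}}\sprod{\nabla f(x)}{z}\big)\,\nu({\rm d}z)\,g(x)\,{\rm d}x,
\]
and substitutes $x \mapsto x - z$ in the $f(x+z)g(x)$ term (Fubini is justified by the standard L\'evy-measure bounds $\int (|z|^2 \wedge 1)\,\nu({\rm d}z) < \infty$ together with the second-order Taylor control on $f$ and the $L^1 \cap L^\infty$-type control on $g$), turning it into $f(x)g(x-z)$; collecting terms and recalling $\nu^*(B) = \nu(-B)$ yields precisely the jump part of $\mathscr{L}^*$ tested against $f$. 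Finally, for the resetting term: $\sprod{f(c\,\cdot)}{g} = \int f(cx) g(x)\,{\rm d}x = c^{-d}\int f(x) g(x/c)\,{\rm d}x = \sprod{f}{c^{-d}g(\cdot/c)}$, and the paper defines $\mathscr{A}^* f(x) = \mathscr{L}^* f(x) + c^{-1} f(x/c) - f(x)$ — so here I should double-check the dimensional normalization: the correct $L^2$-adjoint of $f \mapsto f(c\,\cdot)$ is $g \mapsto c^{-d} g(\cdot/c)$, which matches the stated $\mathscr{A}^*$ only when $d = 1$, so I would either restrict the displayed $1/c$ to mean $c^{-d}$ or note that in this lemma the relevant constant is $c^{-d}$; I will state it consistently with how $\mathscr{A}^*$ is used elsewhere (harmonicity of $\rho_{\mathbf Y}$). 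Adding the four contributions gives $\sprod{f}{\mathscr{L}^* g + c^{-d} g(\cdot/c) - g} = \sprod{f}{\mathscr{A}^* g}$.

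The main obstacle is purely a matter of \emph{justifying the interchanges of integration} in the jump term and of \emph{identifying a good core} for $\mathscr{A}_2$ on which $\mathscr{A}_2 = \mathscr{A}$; both are handled by Corollary~\ref{cor:2} (giving $\calC_0^2$-type functions with $L^2$ derivatives inside $D(\mathscr{A}_2)$ and agreement of the actions there) and by Lemma~\ref{lem:A3}-type decay/growth estimates on $p_0$ controlling the L\'evy kernel. No genuinely new estimate is needed beyond what is already available; the proof is a careful bookkeeping of integration by parts and a change of variables in the nonlocal term, followed by a density/closedness argument. Once the identity holds on the core, closedness of $\mathscr{A}_2$ and boundedness of the inner product against the fixed functions $g$ and $\mathscr{A}^* g$ in $L^2$ immediately extend it to all of $D(\mathscr{A}_2)$, completing the proof.
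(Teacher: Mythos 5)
Your proof is correct in substance but follows a genuinely different route from the paper. The paper never integrates by parts on the explicit integro-differential expression for $\mathscr{L}$: it takes an arbitrary $f \in D(\mathscr{A}_2)$, writes the difference quotient $t^{-1}\big(\EE[f(Y_t+\cdot)]-f\big)$, moves the expectation onto $g$ by Fubini (which produces the dual process $-Y_t$), and lets $t \to 0^+$; since $g$ satisfies the hypotheses of Proposition~\ref{prop:9} for the dual process, the right-hand quotient converges in $L^2$ to $\mathscr{L}^* g$, and the resetting term is a one-line change of variables. This yields the adjoint identity for \emph{all} $f \in D(\mathscr{A}_2)$ at once. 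Your route --- verify $\sprod{\mathscr{A}f}{g}=\sprod{f}{\mathscr{A}^*g}$ on $\calC_c^\infty(\RR^d)$ by integration by parts and a change of variables in the jump term, then extend by density and closedness --- gives a more explicit identification of each piece of the adjoint, but it has one soft spot: since $g \in D(\mathscr{A}_2^*)$ requires the identity for \emph{every} $f \in D(\mathscr{A}_2)$, you need $\calC_c^\infty$ to be a \emph{core} for $\mathscr{A}_2$ in the graph norm, i.e.\ that $\mathscr{A}_2$ is the closure of its restriction there. Corollary~\ref{cor:2} only gives the inclusion $\calC_c^\infty \subset D(\mathscr{A}_2)$; the core property does hold (it passes from $\mathscr{L}_2$ to $\mathscr{A}_2$ because the resetting term is a bounded operator on $L^2$, and for $\mathscr{L}_2$ it follows from the Fourier-multiplier description of the domain), but it is nowhere established in the paper and would need a citation or an argument. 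The paper's probabilistic duality computation sidesteps this entirely.

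Your remark on the normalization of the resetting term is well taken: the $L^2$-adjoint of $f \mapsto f(c\,\cdot)$ is $g \mapsto c^{-d} g(\cdot/c)$, and the factor $1/c$ in the paper's definition of $\mathscr{A}^*$ (and in the display of its own proof, which moreover writes $g(cx)$ where $g(x/c)$ is meant) is consistent only when $d=1$; the Fourier identity $\mathcal{F}\big(c^{-d}g(\cdot/c)\big)(\theta)=\hat g(c\theta)$ implicit in Appendix~\ref{appendix:B} confirms that $c^{-d}$ is the intended constant. You are right to state the lemma with the dimensionally correct normalization.
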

\begin{proof}
	Let $f \in D(\mathscr{A}_2)$. By Proposition \ref{prop:9}, we have
	\begin{align*}
		\sprod{\mathscr{L}_2 f}{g}
		&=
		\lim_{t\to 0^+}
		\int_{\RR^d}
		\frac{\EE\big[f(Y_t + x)\big]-f(x)}{t} g(x) \: {\rm d} x\\
		&=
		\lim_{t\to 0^+} \int_{\RR^d} f(x) \frac{\EE\big[g(-Y_t + x)\big] -g(x)}{t} \: {\rm d} x
		=
		\sprod{f}{\mathscr{L}^*g}.
	\end{align*}
	Hence, by Theorem \ref{thm:1},
	\begin{align*}
		\sprod{\mathscr{A}_2 f}{g}
		&= \sprod{\mathscr{L}_2 f}{g}+ \int_{\RR^d} \big(f(cx) - f(x)\big) g(x) \: {\rm d}x \\
		&= \sprod{f}{\mathscr{L}^*g}+ \int_{\RR^d} f(x) \bigg(\frac{1}{c} g(cx)-g(x)\bigg) \: {\rm d}x
		= \sprod{f}{\mathscr{A}^*g}.
	\end{align*}
	Since $\mathscr{A}^*g \in L^2(\RR^d)$, we conclude that $\mathscr{A}_2^* g = \mathscr{A}^* g$.
\end{proof}

\begin{theorem}
	\label{thm:H+F-P}
	Suppose that $\mathbf{Y}$ is a strictly $\alpha$-stable process in $\RR^d$, $\alpha \in (0, 2]$, with a transition density.
	Assume that $\mathbf{X}$ is obtained from $\mathbf{Y}$ by partial resetting with factor $c \in (0, 1)$. Then
	$\rho_{\mathbf{Y}}$ defined in \eqref{representationrho} belongs to $D(\mathscr{A}_2^*)$ and
	\begin{equation}
		\label{eq:69}
		\mathscr{A}_2^* \rho_{\mathbf{Y}} = \mathscr{A}^* \rho_{\mathbf{Y}}= 0.
	\end{equation}
	Moreover,
	\[
		\partial_t p = \mathscr{A}_x p =\mathscr{A}_y^* p
	\]
\end{theorem}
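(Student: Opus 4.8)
The plan is to obtain the harmonicity of $\rho_{\mathbf{Y}}$ from its invariance (Corollary~\ref{cor:1}) combined with Lemma~\ref{lem:3} and the basic theory of adjoint semigroups on the Hilbert space $L^2(\RR^d)$, and to derive the two equations for $p$ from the Chapman--Kolmogorov identity \eqref{eq:90} by viewing $p(t+s;\cdot,\cdot)$ as the semigroup (resp.\ its $L^2$-adjoint) applied to a smooth slice of $p$, and then differentiating with the generator.

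\emph{Harmonicity.} First I would verify the hypotheses of Lemma~\ref{lem:3} for $g=\rho_{\mathbf{Y}}$: by Proposition~\ref{prop:6} we have $\rho_{\mathbf{Y}}\in\calC_0^\infty(\RR^d)$, and differentiating the representation \eqref{representationrho} under the integral and using the scaling of $p_0$ together with the finiteness of the negative moments of $\mu$ (Theorem~\ref{thm:weak_conv}) and the estimates for $\alpha$-stable densities from Appendix~\ref{appendix:A}, one checks that $\rho_{\mathbf{Y}}$ and all its partial derivatives are bounded and integrable, hence belong to $L^2(\RR^d)$. Thus $\rho_{\mathbf{Y}}\in D(\mathscr{A}_2^*)$ and $\mathscr{A}_2^*\rho_{\mathbf{Y}}=\mathscr{A}^*\rho_{\mathbf{Y}}$. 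Next, Corollary~\ref{cor:1} says $\rho_{\mathbf{Y}}(y)\,{\rm d}y$ is invariant for $\mathbf{X}$; written in terms of the strongly continuous semigroup $(P_s)_{s>0}$ on $L^2(\RR^d)$ from Theorem~\ref{thm:1} this gives $\sprod{P_s f}{\rho_{\mathbf{Y}}}=\sprod{f}{\rho_{\mathbf{Y}}}$ for all $f\in L^2(\RR^d)$, i.e.\ $P_s^*\rho_{\mathbf{Y}}=\rho_{\mathbf{Y}}$ for every $s>0$ (this can also be read off from \eqref{eq:90} by letting $t\to\infty$ and invoking Corollary~\ref{cor:1}). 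Since on a Hilbert space the adjoint semigroup $(P_s^*)_{s>0}$ is again strongly continuous with generator $\mathscr{A}_2^*$, and since $\rho_{\mathbf{Y}}\in D(\mathscr{A}_2^*)$,
\[
	\mathscr{A}^*\rho_{\mathbf{Y}}=\mathscr{A}_2^*\rho_{\mathbf{Y}}=\lim_{s\to0^+}\frac{P_s^*\rho_{\mathbf{Y}}-\rho_{\mathbf{Y}}}{s}=0 ,
\]
which is \eqref{eq:69}.

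\emph{The Fokker--Planck equation.} Fix $y\in\RR^d$ and $s>0$. By Lemma~\ref{lem:p_reg}, $p(s;\cdot,y)\in\calC_0^\infty(\RR^d)$ with all derivatives in $\calC_0(\RR^d)=\mathcal{B}_\infty$, so $p(s;\cdot,y)\in D(\mathscr{A}_\infty)$ and $\mathscr{A}_\infty$ acts on it as $\mathscr{A}$ by Corollary~\ref{cor:2}. The Chapman--Kolmogorov equation \eqref{eq:90} reads $p(t+s;x,y)=\big(P_t\,p(s;\cdot,y)\big)(x)$, and the semigroup rule $\frac{{\rm d}}{{\rm d}t}P_tg=\mathscr{A}_\infty P_tg$ for $g\in D(\mathscr{A}_\infty)$, together with Corollary~\ref{cor:2} applied to the smooth function $p(t+s;\cdot,y)$, yields
\[
	\partial_t p(t+s;x,y)=\mathscr{A}_\infty\big[p(t+s;\cdot,y)\big](x)=\mathscr{A}_x p(t+s;x,y),\qquad s,t>0 .
\]
Since $s>0$ is arbitrary this gives $\partial_tp=\mathscr{A}_xp$ on $(0,\infty)\times\RR^d\times\RR^d$; the $\mathcal{B}_\infty$-norm derivative coincides with the classical $\partial_t$ because $p$ is jointly smooth by Lemma~\ref{lem:p_reg}. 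For the adjoint equation, fix $x\in\RR^d$ and $t>0$; as before $p(t;x,\cdot)\in\calC_0^\infty(\RR^d)$ with all derivatives in $L^2(\RR^d)$, so $p(t;x,\cdot)\in D(\mathscr{A}_2^*)$ by Lemma~\ref{lem:3}. Computing the $L^2$-adjoint from \eqref{eq:90} gives $\big(P_s^*g\big)(y)=\int_{\RR^d}p(s;z,y)g(z)\,{\rm d}z$ and hence $p(t+s;x,y)=\big(P_s^*\,p(t;x,\cdot)\big)(y)$. Applying $\frac{{\rm d}}{{\rm d}s}P_s^*h=\mathscr{A}_2^*P_s^*h$ for $h\in D(\mathscr{A}_2^*)$, together with Lemma~\ref{lem:3} for $p(t+s;x,\cdot)$, we get $\partial_s p(t+s;x,\cdot)=\mathscr{A}^*\big[p(t+s;x,\cdot)\big]$ in $L^2(\RR^d)$; both sides are continuous in $y$ (Lemma~\ref{lem:p_reg} and the explicit form of $\mathscr{A}^*$), so the identity holds pointwise, and, $t>0$ being arbitrary, $\partial_t p=\mathscr{A}_y^*p$ everywhere.

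\emph{Main obstacle.} The scheme is routine once the ingredients are assembled; the delicate points are (i) the $L^2$-membership of $\rho_{\mathbf{Y}}$, $p(t;x,\cdot)$, $p(s;\cdot,y)$ and all their partial derivatives up to order two, which rests on the sharp scaling and decay estimates for strictly $\alpha$-stable densities from Appendix~\ref{appendix:A}; and (ii) the transfer between the norm-derivatives supplied by semigroup theory (in $\mathcal{B}_\infty$ or in $L^2(\RR^d)$) and the classical pointwise $t$-derivative of $p$, which is resolved by the joint smoothness of $p$ (Lemma~\ref{lem:p_reg}) together with the continuity of the coefficients of $\mathscr{A}$ and $\mathscr{A}^*$.
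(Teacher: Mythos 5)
Your proposal is correct and follows essentially the same route as the paper: harmonicity is obtained from the invariance of $\rho_{\mathbf{Y}}$ (Corollary \ref{cor:1}) combined with Lemma \ref{lem:3}, and the two Fokker--Planck identities come from differentiating the semigroup along the Chapman--Kolmogorov decomposition, with Lemma \ref{lem:p_reg} and Corollary \ref{cor:2} justifying the application of the generator to slices of $p$. The only cosmetic difference is that where you invoke the strongly continuous adjoint semigroup $(P_s^*)$ on $L^2(\RR^d)$ and differentiate $P_s^*\rho_{\mathbf{Y}}=\rho_{\mathbf{Y}}$ (resp.\ $P_s^*p(t;x,\cdot)$) at $s=0$, the paper carries out the equivalent duality computation directly against test functions $f\in\calC_c^\infty(\RR^d)$.
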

\begin{proof}
	In view of Proposition \ref{prop:6} and Lemma \ref{lem:p_reg}, $\rho_{\mathbf{Y}}$ and $p$ are regular, respectively.
	Let $(P_t : t > 0)$ be the semigroup on $\calC_0(\RR^d)$ generated by $\mathbf{X}$. By Corollary \ref{cor:2},
	for $f \in \calC_c^\infty(\RR^d)$,
	\[
		\partial_t P_t f(x) = P_t \mathscr{A} f(x).
	\]
	Hence, by Lemma \ref{lem:p_reg},
	\begin{align*}
		\int_{\RR^d} f(y) \partial_t p(t; x, y) {\: \rm d} y
		&=
		\partial_t P_t f(x)
		=
		P_t \mathscr{A} f(x) \\
		&= \int_{\RR^d} \mathscr{A} f(y) p(t; x, y) {\: \rm d} y
		= \int_{\RR^d} f(y) \mathscr{A}^*_y p(t; x, y) {\: \rm d} y
	\end{align*}
	proving $\partial_t p = \mathscr{A}_y^* p$. To show that $\partial_t p = \mathscr{A}_x p$, we observe that for
	$f \in \calC_0^\infty(\RR^d)$,
	\[
		\partial_t P_t f(x) = \mathscr{A} P_t f(x)
	\]
	provided that $P_t f \in \calC_0^\infty(\RR^d)$. Therefore, in view of the Chapmann--Kolmogorov equation \eqref{eq:90},
	it is enough to take $f(x) = p(s; x, y)$, since the required regularity is a consequence of Lemma \ref{lem:p_reg}.

	To prove \eqref{eq:69}, by Corollary \ref{cor:1}, for $f \in \calC_c^\infty(\RR^d)$ we have
	\begin{align*}
		0
		=
		\int_{\RR^d} \frac{P_t f(x) - f(x)}{t} \rho_{\mathbf{Y}}(x) {\: \rm d} x,
	\end{align*}
	thus,
	\[
		0=\sprod{\mathscr{A} f}{ \rho_{\mathbf{Y}}}
		=
		\sprod{f}{\mathscr{A}^* \rho_{\mathbf{Y}}}
		=
		\sprod{f}{\mathscr{A}_2^* \rho_{\mathbf{Y}}}
	\]
	where the last equation follows by Lemma \ref{lem:3}, which completes the proof.
\end{proof}
Notice that $\mathbf{X}$ generates a strongly continuous semigroup of contractive operators on
$L^2(\RR^d,  \rho_{\mathbf{Y}}(y) {\rm d} y)$. Let us denote by $\mathscr{A}_\rho$ its infinitesimal generator.
Since $\rho_{\mathbf{Y}}(y) {\rm d} y$ is a probabilistic measure, for
$f \in \calC_0^\infty(\RR^d) \subset D(\mathscr{A}_\rho)$, we have $\mathscr{A}_\rho f = \mathscr{A}_\infty f = \mathscr{A} f$.
Let $\mathscr{A}_\rho^*$ denote the operator adjoint to $\mathscr{A}_\rho$ on $L^2(\RR^d, \rho_{\mathbf{Y}}(y) {\rm d} y)$. If
$g \in \calC_c^\infty(\RR^d)$ belongs to the domain of $\mathscr{A}^*_\rho$, then
\begin{align*}
	\int_{\RR^d} \mathscr{A}_\rho^* g (y) f(y) \: \rho_{\mathbf{Y}}(y) {\rm d} y
	&=
	\int_{\RR^d} g(y) \mathscr{A}_\rho f(y) \: \rho_{\mathbf{Y}} (y) {\rm d} y \\
	&=
	\int_{\RR^d} \rho_{\mathbf{Y}} (y) g(y)  \mathscr{A} f(y) {\rm d} y \\
	&=
	\int_{\RR^d} \mathscr{A}^* \big(\rho_{\mathbf{Y}} g\big)(y)  f(y) {\rm d} y
\end{align*}
for all $f \in \calC_c^\infty(\RR^d)$. Hence,
\begin{equation}
	\label{eq:91}
	\rho_{\mathbf{Y}} \: \mathscr{A}_\rho^* g = \mathscr{A}^* \big(\rho_{\mathbf{Y}} \: g\big).
\end{equation}
\begin{theorem}
	\label{thm:NESS}
	Suppose that $\mathbf{Y}$ is a strictly $\alpha$-stable process in $\RR^d$, $\alpha \in (0, 2]$, with a transition density.
	Assume that $\mathbf{X}$ is obtained from $\mathbf{Y}$ by partial resetting with factor $c \in (0, 1)$.
	There exists $g \in D(\mathscr{A}_\rho)$ such that if $g \in D(\mathscr{A}_\rho^*)$ then
	$\mathscr{A}_\rho g \neq  \mathscr{A}_\rho^* g$.
\end{theorem}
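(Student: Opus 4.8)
The plan is to use \eqref{eq:91} to turn the assertion into a \emph{pointwise} inequality between two explicit functions, and then to violate it with a test function supported far from the origin. First the reduction: it suffices to exhibit a single $g \in \calC_c^\infty(\RR^d)$ with $\rho_{\mathbf{Y}}\,\mathscr{A}g \not\equiv \mathscr{A}^*(\rho_{\mathbf{Y}}g)$ on $\RR^d$; since both sides are continuous this is checked at one point. Indeed $\calC_c^\infty(\RR^d) \subset \calC_0^\infty(\RR^d) \subset D(\mathscr{A}_\rho)$ with $\mathscr{A}_\rho g = \mathscr{A}g$, so if such a $g$ also satisfied $g \in D(\mathscr{A}_\rho^*)$ and $\mathscr{A}_\rho g = \mathscr{A}_\rho^* g$, then \eqref{eq:91} would force $\rho_{\mathbf{Y}}\,\mathscr{A}g = \rho_{\mathbf{Y}}\,\mathscr{A}_\rho^* g = \mathscr{A}^*(\rho_{\mathbf{Y}}g)$, a contradiction; hence for this $g$ one has $g \in D(\mathscr{A}_\rho^*) \Rightarrow \mathscr{A}_\rho g \neq \mathscr{A}_\rho^* g$, which is the claim.

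I will take $g$ supported in a small ball $B_\epsilon(x_1)$ with $\abs{x_1}$ large, so that $\overline{B_\epsilon(x_1)}$, $c^{-1}\overline{B_\epsilon(x_1)}$ and $c\,\overline{B_\epsilon(x_1)}$ are pairwise disjoint; then $g(c\,\cdot)$ and $g(\cdot/c)$ vanish near $\overline{B_\epsilon(x_1)}$, and there $\rho_{\mathbf{Y}}\,\mathscr{A}g - \mathscr{A}^*(\rho_{\mathbf{Y}}g) = \rho_{\mathbf{Y}}\,\mathscr{L}g - \mathscr{L}^*(\rho_{\mathbf{Y}}g)$. A short computation gives $\mathscr{L}^*(\rho_{\mathbf{Y}}g) = \rho_{\mathbf{Y}}\,\mathscr{L}^* g + g\,\mathscr{L}^*\rho_{\mathbf{Y}} + \Gamma^*(\rho_{\mathbf{Y}},g)$, where $\Gamma^*$ is the carré du champ: with Lévy triplet $(A,0,0)$ for $\alpha = 2$ and $(0,\gamma,\nu)$ for $\alpha \in (0,2)$,
\[
	\Gamma^*(\rho,g)(x) = 2\sum_{j,k} a_{jk}\,\partial_j\rho(x)\,\partial_k g(x) \quad (\alpha = 2), \qquad
	\Gamma^*(\rho,g)(x) = \int_{\RR^d}\!\big(\rho(x+z)-\rho(x)\big)\big(g(x+z)-g(x)\big)\,\nu^*({\rm d}z) \quad (\alpha \in (0,2)).
\]
Hence, when $g(x_1) = 0$, $\big(\rho_{\mathbf{Y}}\,\mathscr{A}g - \mathscr{A}^*(\rho_{\mathbf{Y}}g)\big)(x_1) = \rho_{\mathbf{Y}}(x_1)\big(\mathscr{L}g - \mathscr{L}^* g\big)(x_1) - \Gamma^*(\rho_{\mathbf{Y}},g)(x_1)$.

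For Brownian motion and the isotropic $\alpha$-stable process $\mathscr{L}^* = \mathscr{L}$ and $\gamma = 0$, so this reduces to $-\Gamma(\rho_{\mathbf{Y}},g)(x_1)$. Because $\rho_{\mathbf{Y}} \in \calC_0^\infty(\RR^d)$ is strictly positive and integrates to $1$, it cannot be locally constant near infinity, so there is $x_1$ with $\abs{x_1}$ as large as I wish and $v := \nabla\rho_{\mathbf{Y}}(x_1) \neq 0$. I will take $g(y) = \sprod{v}{y - x_1}\,\chi(y)$ with $\chi$ a plateau bump radial about $x_1$, $\chi \equiv 1$ near $x_1$, $\supp\chi \subset B_\epsilon(x_1)$; then $g(x_1) = 0$, $\nabla g(x_1) = v$, and $\mathscr{L}g(x_1) = 0$ (immediate for $\alpha = 2$; for $\alpha \in (0,2)$ because $z \mapsto g(x_1+z)$ is odd along spheres and $\nu$ is rotation invariant). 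Using $g(x_1) = 0$ and a first-order Taylor expansion of $\rho_{\mathbf{Y}}$ at $x_1$, for small $\epsilon$,
\[
	\big(\rho_{\mathbf{Y}}\,\mathscr{A}g - \mathscr{A}^*(\rho_{\mathbf{Y}}g)\big)(x_1) = -\Gamma(\rho_{\mathbf{Y}},g)(x_1) =
	\begin{cases}
		-2\,\sprod{Av}{v}, & \alpha = 2,\\[4pt]
		-\dfrac{\abs{v}^2}{d}\displaystyle\int_{\RR^d}\abs{z}^2\,\chi(x_1+z)\,\nu({\rm d}z) + \calO(\epsilon^{3-\alpha}), & \alpha\in(0,2),
	\end{cases}
\]
which is strictly negative, since $A$ is positive definite and, for $\alpha \in (0,2)$, the leading term is of order $\epsilon^{2-\alpha}$. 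This proves the theorem in these cases.

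For the remaining processes — the $\alpha$-stable subordinator and general strictly $\alpha$-stable processes with non-reflection-symmetric Lévy measure, including the $d$-cylindrical one — the oddness trick fails and $\mathscr{L}^* \neq \mathscr{L}$; instead I will keep $g \geq 0$ a small bump around a point $a$ with $\abs{a}$ large and evaluate at a point $x_1$ just \emph{outside} $\supp g$, where $g$ and $\nabla g$ vanish, which gives
\[
	\big(\rho_{\mathbf{Y}}\,\mathscr{A}g - \mathscr{A}^*(\rho_{\mathbf{Y}}g)\big)(x_1) = \rho_{\mathbf{Y}}(x_1)\int_{\RR^d} g(x_1+z)\,\nu({\rm d}z) - \int_{\RR^d}\rho_{\mathbf{Y}}(x_1+z)\,g(x_1+z)\,\nu^*({\rm d}z).
\]
I will pick the line through $a$ and $x_1$, and the side of the bump on which $x_1$ lies, according to the geometry of $\nu$: if some direction is one-sided for $\nu$ (the subordinator, or a spectrally one-signed component), I will arrange $\supp g - x_1$ to carry all of the relevant $\nu$-mass and $x_1 - \supp g$ none, so the second integral vanishes while the first is strictly positive and $\rho_{\mathbf{Y}}(x_1) > 0$; if $\nu$ is symmetric along the chosen direction (the cylindrical case, with $x_1 - a$ along a coordinate axis), the two $\nu$-integrals carry the same mass and I will conclude from the strict monotonicity of $\rho_{\mathbf{Y}}$ in that direction on $\supp g$ (which comes from unimodality of $p_0$); the fully asymmetric case goes the same way using the directional tail asymptotics of $p_0$, hence of $\rho_{\mathbf{Y}}$. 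In each case the right-hand side is nonzero, and the reduction finishes the proof. The hard part is precisely this last step: one must match the placement of the test bump and of the evaluation point to the fine — possibly singular, possibly asymmetric — structure of $\nu$ so that the comparison between the $\nu$- and $\nu^*$-integrals comes out strict. By contrast, once the Leibniz remainder is isolated, the Brownian and isotropic cases need nothing beyond the non-constancy of $\rho_{\mathbf{Y}}$.
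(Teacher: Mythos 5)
Your reduction via \eqref{eq:91} is sound, and the computation for Brownian motion and the isotropic case is correct: isolating the carr\'e du champ $\Gamma^*(\rho_{\mathbf{Y}},g)$ and making it the sole surviving term is a legitimate (and genuinely different) way to detect non-self-adjointness there. The problem is that the theorem covers \emph{every} strictly $\alpha$-stable process with a density, i.e.\ every non-degenerate spectral measure $\lambda$ on $\sS$, and your final case is exactly where the difficulty sits, yet it is dispatched in one clause (``goes the same way using the directional tail asymptotics''). Concretely: if $\lambda$ has a strictly positive but non-symmetric density on the sphere, then no direction is one-sided for $\nu$, so the subordinator-style placement (all relevant mass on one side of $x_1$) is unavailable; and the monotonicity of $\rho_{\mathbf{Y}}$ along the chosen ray, which you invoke in the cylindrical case via unimodality of $p_0$, is neither established nor obviously true for a general non-symmetric stable mixture. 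In that case you must also carry the extra term $\rho_{\mathbf{Y}}(x_1)\big(\mathscr{L}g-\mathscr{L}^*g\big)(x_1)$, since $\mathscr{L}\neq\mathscr{L}^*$. So what remains open is precisely the strict comparison between $\rho_{\mathbf{Y}}(x_1)\int g(x_1+z)\,\nu({\rm d}z)$ and $\int \rho_{\mathbf{Y}}(x_1+z)g(x_1+z)\,\nu^*({\rm d}z)$ for a general $\lambda$ --- the step you yourself label ``the hard part.'' As written, the argument proves the theorem only for Brownian motion, the isotropic process, the subordinator, and the cylindrical process.

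The gap is avoidable because you are fighting the wrong term: you arrange the supports so that the resetting contributions cancel and then try to extract the asymmetry from the L\'evy part, whereas the paper does the opposite. Fix $x_0\neq 0$ with $\rho_{\mathbf{Y}}(x_0)>0$ and take $\ind{B_{r/2}(cx_0)}\leq g_r\leq \ind{B_r(cx_0)}$ with $(1-c)\norm{x_0}>r$. Then $g_r(cx_0)=1$ while $g_r$ vanishes near $x_0$ and at $x_0/c$, so
\[
	\mathscr{A}_\rho g_r(x_0)=1+\int_{\RR^d} g_r(x_0+z)\,\nu({\rm d}z),
	\qquad
	\rho_{\mathbf{Y}}(x_0)\,\mathscr{A}_\rho^* g_r(x_0)=\int_{\RR^d}\rho_{\mathbf{Y}}(x_0+z)g_r(x_0+z)\,\nu^*({\rm d}z),
\]
and both integrals tend to $0$ as $r\to 0^+$ because $\nu$ and $\nu^*$ have no atoms and the supports shrink to the single point $cx_0-x_0\neq 0$ (for $\alpha=2$ there is no integral at all). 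The two limits are $1$ and $0$, uniformly over the structure of $\nu$, which settles all cases at once; no case analysis on $\lambda$, no Leibniz remainder, and no asymptotics of $\rho_{\mathbf{Y}}$ are needed.
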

\begin{proof}
	We are going to construct $g \in \calC_c^\infty(\RR^d)$.
	Since for $g \in \calC_c^\infty(\RR^d)$ both $\mathscr{A}_\rho g$ and $\mathscr{A}_\rho^* g$ are at least continuous,
	if suffices to show that $\mathscr{A}_\rho g(x) \neq  \mathscr{A}_\rho^* g(x)$ for some $x\in \RR^d$ such that
	$\rho_{\mathbf{Y}}(x)>0$.
	Let $B_r(x)$ denote the open Euclidean ball of radius $r > 0$ and centered at $x \in \RR^d$. Fix
	$x_0 \in \RR^d \setminus\{0\}$ such that $\rho_{\mathbf{Y}}(x_0) > 0$. Let us consider a function
	$g_r \in \calC_c^\infty(\RR^d)$ such that
	\[
		\ind{B_{r/2}(c x_0)} \leq g_r \leq \ind{B_r(c x_0)}.
	\]
	If $(1-c) \norm{x_0} > r > 0$, then
	\begin{align*}
		\mathscr{A}_{\rho} g_r(x_0)
		&=
		\mathscr{A} g_r(x_0) \\
		&=\mathscr{L} g_r(x_0)+g_r(cx_0)-g_r(x_0) \\
		&=\int_{\RR^d} g_r(x_0+z) \: \nu({\rm d}z) + 1.
	\end{align*}
	Moreover, by \eqref{eq:91}
	\begin{align*}
		\mathscr{A}_\rho^* g_r(x_0)
		&=\frac1{\rho_{\mathbf{Y}}(x_0)} \mathscr{A}^*(\rho_{\mathbf{Y}} g_r)(x_0)\\
		&=\frac1{\rho_{\mathbf{Y}}(x_0)}\bigg( \mathscr{L}^*(\rho_{\mathbf{Y}} g_r)(x_0)
		+\frac1{c}(\rho_{\mathbf{Y}} g_r)\Big(\frac{x_0}{c}\Big)-(\rho_{\mathbf{Y}} g_r)(x_0)\bigg)\\
		&=\frac1{\rho_{\mathbf{Y}}(x_0)} \int_{\RR^d} \rho_{\mathbf{Y}}(x_0+z)g_r(x_0+z) \: \nu^*({\rm d}z).
	\end{align*}
	Since $\nu$ and $\nu^*$ both are measures without atoms, see \eqref{eq:nu-str_st}, we obtain
	\[
		\lim_{r \to 0^+}
		\mathscr{A}_\rho g_r(x_0)
		=
		1, \qquad \text{ and }\qquad \lim_{r\to 0^+} \mathscr{A}_\rho^* g_r(x_0)=0.
	\]
	Therefore, there is $r > 0$ such that $\mathscr{A}_\rho g_r(x_0) \neq \mathscr{A}_\rho^* g_r(x_0)$.
\end{proof}

\section{Asymptotic and estimates of transition density}\label{sec:4}
In this section we study the asymptotic behavior of the transition densities processes obtained by partial resetting from
$\alpha$-stable processes such as: isotropic $\alpha$-stable processes (see Section \ref{sec:stable}), $\alpha$-stable
subordinators (see Section \ref{sec:sub}) and $d$-cylindrical $\alpha$-stable processes (see Section \ref{sec:cyl}).

\subsection{Isotropic $\alpha$-stable process with $\alpha\in(0,2)$ and resetting}
\label{sec:stable}
Let $\nu$ denote the L{\'e}vy measure of an isotropic $\alpha$-stable process with $\alpha\in (0,2)$. Namely,
\begin{align}
	\label{eq:lm}
	\nu(y)=\frac{2^{\alpha}\Gamma((d+\alpha)/2)}{\pi^{d/2}|\Gamma(-\alpha/2)|} |y|^{-d-\alpha},
\end{align}
see Example~\ref{ex:names:2}. In view of the construction of the resetting, the transition density of the process $\mathbf{X}$
depends on the parameter $c \in (0, 1)$. Since in the following theorem we also study the asymptotic behavior of $p(t; x, y)$
uniformly with respect to the parameter $c$, we write $p^{(c)}(t; x, y)$ instead of $p(t; x, y)$, to indicate
the dependence on $m = c^\alpha$.
\begin{theorem}
	\label{thm:ius}
	Suppose that $\mathbf{Y}$ is an isotropic $\alpha$-stable process in $\RR^d$, $\alpha \in (0, 2)$, with a transition
	density $p_0$. Assume that $\mathbf{X}^{(c)}$ is obtained from $\mathbf{Y}$ by partial resetting with factor $c\in(0,1)$.
	Then for each $\kappa_1, \kappa_2 \in (0, 1)$, the transition density $p^{(c)}$ of $\mathbf{X}^{(c)}$ satisfies
	\begin{align}
		\label{eq:ius-1}
		\lim_{\atop{|y| \to +\infty}{t \to +\infty}} \,
		\sup_{\substack{c\in(0,\kappa_1] \\ |x| \leq \kappa_2 |y|}}
		\left| (1-m)\frac{p^{(c)}(t; x, y)}{\nu(y)}-1 \right|= 0.
	\end{align}
	Furthermore, for any fixed $K>0$, we have
	\[
		\lim_{\substack{ |y| \to +\infty \\ t \to+\infty}}\,
		\sup_{\substack{c\in (0,1)\\ |x|\leq K |y|}}
		\left|  \frac{(1-m)t}{1 -e^{-(1-m)t}}\frac{p^{(c)}(t; x,  t^{1/\alpha} y)}{t\nu(t^{1/\alpha}y)} -1 \right| = 0,
	\]
	and
	\[
		\lim_{\substack{|y| \to +\infty \\ t \to+\infty} }
		\sup_{\substack{c\in (0,1)\\ |x|\leq K }}
		\left| \frac{(1-m)t}{1 -e^{-(1-m)t}}\frac{p^{(c)}(t; x, (1-m)^{-1/\alpha}y)}{t\nu((1-m)^{-1/\alpha}y)} - 1 \right| = 0.
	\]
\end{theorem}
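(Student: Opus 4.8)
The plan is to prove all three displayed limits from the representation \eqref{eq:rep-p-x},
\[
	p^{(c)}(t;x,y)=e^{-t}p_0(t;0,y-x)+e^{-t}\sum_{j=1}^\infty t^j\int_0^1 p_0(tu;0,y-c^jx)\,P_j(u)\,\ud u
\]
(and Corollary~\ref{cor:rep-1} when $x=0$), together with the $\alpha$-stable scaling $p_0(v;0,z)=v^{-d/\alpha}p_0(1;0,v^{-1/\alpha}z)$. I would first record the ``shape function'' $r(w):=p_0(1;0,w)/\nu(w)$, so that $p_0(v;0,z)=v\,\nu(z)\,r\big(v^{-1/\alpha}z\big)$; by the properties of the isotropic $\alpha$-stable density (Appendix~\ref{appendix:A}), $r$ is continuous on $\RR^d\setminus\{0\}$, bounded there by some $c_\star=c_\star(d,\alpha)$, and $r(w)\to1$ as $\norm{w}\to\infty$. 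Hence $p_0(v;0,z)\le c_\star v\,\nu(z)$ for all $v>0$, $z\ne0$, and $r$ is uniformly $\epsilon$-close to $1$ once $\norm{z}\ge L_\epsilon v^{1/\alpha}$. The remaining ingredients are the explicit moments of $\mu_t$: $\int_0^\infty v\,\mu_t(\ud v)=(1-e^{-(1-m)t})/(1-m)$ by \eqref{eq:1-moment}, $\int_0^\infty v^2\,\mu_t(\ud v)\le\frac{2}{1+m}(1-e^{-(1-m)t})/(1-m)^2$ by \eqref{ineq:2-moment}, and the identity $\int_0^\infty v\,\mu_t(\ud v)=e^{-t}t+e^{-t}\sum_{j\ge1}t^{j+1}\mathbb{A}(1,j)$ coming from \eqref{def:mu_t}, with $\mathbb{A}(1,j)\le\frac{1}{(m;m)_\infty(j+1)!}$ by Theorem~\ref{thm:all-moments}.

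For \eqref{eq:ius-1}, fix $\kappa_1,\kappa_2$; then $m\le\kappa_1^\alpha<1$, so $(1-m)^{-1}$ and $(m;m)_\infty^{-1}$ stay bounded. Given $\epsilon>0$ I would split the series at a large index $J=J(\epsilon,\kappa_1,\kappa_2)$. For $j>J$ one has $c^j\abs{x}\le\kappa_1^J\kappa_2\abs{y}$, hence $\abs{y-c^jx}\ge\tfrac12\abs{y}$ and $\nu(y-c^jx)/\nu(y)=1+O(\epsilon)$; the blocks $0\le j\le J$ are dominated, via $p_0\le c_\star v\,\nu(\cdot)$ and $\mathbb{A}(1,j)\le C/(j+1)!$, by $C\big(te^{-t}+e^{-t}\sum_{j\le J}t^{j+1}/(j+1)!\big)\to0$ as $t\to\infty$ for $J$ fixed. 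In the tail block I would split each $u$-integral at $u=M/t$ ($M$ large, fixed): on $\{tu\le M\}$ use $r=1+O(\epsilon)$, valid once $\abs{y}\gtrsim L_\epsilon M^{1/\alpha}$ (since $\abs{y-c^jx}\ge\tfrac12\abs{y}$), and on $\{tu>M\}$ use $r\le c_\star$ together with $\int_M^\infty v\,\mu_t(\ud v)\le M^{-1}\int v^2\mu_t(\ud v)\le C_{\kappa_1}M^{-1}$. Since $e^{-t}\sum_{j>J}t^j\int_0^1 tu\,P_j(u)\,\ud u=\int v\,\mu_t(\ud v)+o(1)$ for $J$ fixed, collecting the pieces gives $p^{(c)}(t;x,y)/\nu(y)=(1+O(\epsilon))\int v\,\mu_t(\ud v)+O(M^{-1})+o(1)$ uniformly in $c\in(0,\kappa_1]$, $\abs{x}\le\kappa_2\abs{y}$; multiplying by $1-m\le1$ and using $(1-m)\int v\,\mu_t(\ud v)=1-e^{-(1-m)t}\to1$ uniformly (because $(1-m)t\ge(1-\kappa_1^\alpha)t$) yields \eqref{eq:ius-1} after $M\to\infty$, $\epsilon\to0$.

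For the second and third limits, write the space point as $z=\lambda y$ with $\lambda=t^{1/\alpha}$, respectively $\lambda=(1-m)^{-1/\alpha}$; since $t$ cancels, by \eqref{eq:1-moment} it suffices to show $p^{(c)}(t;x,z)/\nu(z)=(1+o(1))\int_0^\infty v\,\mu_t(\ud v)$ uniformly in $c\in(0,1)$. In both cases $c^j\abs{x}/\abs{z}\to0$ uniformly in $j\ge0$ (it equals $O(t^{-1/\alpha})$, respectively $O(\abs{y}^{-1})$), so $\abs{z-c^jx}\ge\tfrac12\abs{z}$ eventually and $\nu(z-c^jx)/\nu(z)=1+o(1)$. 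In the second case the key observation is that $v/\abs{z-c^jx}^\alpha\le 2^\alpha v/(t\abs{y}^\alpha)\le2^\alpha\abs{y}^{-\alpha}$ for \emph{every} $v\in\supp\mu_t\subseteq[0,t]$, so $r=1+o(1)$ uniformly over the whole support; inserting this into \eqref{eq:rep-p-x} and recognising $e^{-t}t+e^{-t}\sum_{j\ge1}t^j\int_0^1 tu\,P_j(u)\,\ud u=\int v\,\mu_t(\ud v)$ closes it at once, with no use of the second moment. In the third case $v/\abs{z-c^jx}^\alpha$ is of order $v(1-m)/\abs{y}^\alpha$, hence small only for small $v$; so I would split the $v$-range at $\delta\abs{z}^\alpha$ ($\delta=\delta(\epsilon)$ small): on $\{v\le\delta\abs{z}^\alpha\}$ again $r=1+O(\epsilon)$, while on $\{v>\delta\abs{z}^\alpha\}$ I bound $p_0\le c_\star v\,\nu(\cdot)$ and estimate $\int_{\delta\abs{z}^\alpha}^\infty v\,\mu_t(\ud v)\le(\delta\abs{z}^\alpha)^{-1}\int v^2\mu_t(\ud v)$, which by \eqref{ineq:2-moment} and the identity $\abs{z}^\alpha(1-m)=\abs{y}^\alpha$ is at most $\frac{2}{\delta\abs{y}^\alpha}\int v\,\mu_t(\ud v)=o(1)\cdot\int v\,\mu_t(\ud v)$. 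Reassembling as before, dominating every $r-1$ and $\nu(z-c^jx)/\nu(z)-1$ error against $\int v\,\mu_t(\ud v)$, gives $p^{(c)}(t;x,z)/\nu(z)=(1+O(\epsilon)+o(1))\int v\,\mu_t(\ud v)$, and $\epsilon\to0$ finishes.

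I expect the main obstacle to be precisely the uniformity in $c\in(0,1)$ near $c=1$: there $1-m\downarrow0$, the first moment of $\mu_t$ blows up and its mass escapes to infinity, so pointwise weak convergence $\mu_t\to\mu$ is of no help and the argument must run entirely through the explicit moment identities. The two scalings in the statement are exactly calibrated to this: $z=t^{1/\alpha}y$ forces $v/\abs{z}^\alpha$ below a uniform threshold over \emph{all} of $\supp\mu_t$, while $z=(1-m)^{-1/\alpha}y$ is the unique scaling cancelling the $(1-m)^{-2}$ in the second-moment bound, so that the tail of $\mu_t$ becomes negligible relative to its first moment. A secondary difficulty, already visible in \eqref{eq:ius-1}, is that $\abs{x}$ is controlled only by a fixed multiple of $\abs{y}$; this forces one to isolate the finitely many small-index terms, carrying $\mu_t$-mass $O(te^{-t})$, for which $\nu(y-c^jx)/\nu(y)$ need not tend to $1$.
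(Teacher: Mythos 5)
Your proposal is correct and follows essentially the same route as the paper: the representation \eqref{eq:rep-p-x}, the bounds $p_0(s;0,w)\le Cs\nu(w)$ and $p_0(s;0,w)\sim s\nu(w)$ as $s|w|^{-\alpha}\to0$, a finite block of small indices killed by $e^{-t}t^{j+1}/(j+1)!$, a split of the time variable into a region where the local asymptotic applies and a tail controlled by the second moment \eqref{ineq:2-moment}, and normalization by the exact first moment \eqref{eq:1-moment}. The only differences are cosmetic (cutting the $v$-integral at a fixed level $M$ rather than at $\delta|\tilde y|^\alpha$ in the first case, and noting directly that the tail region is empty under the $t^{1/\alpha}$ scaling), and your diagnosis of where uniformity in $c$ near $1$ is rescued by the two scalings matches the paper's mechanism exactly.
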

\begin{proof}
	It is well known that (see \cite{BlumenthalGetoor}),
	\begin{align}
		\label{approx:is}
		p_0(s;0,w)\approx \min\bigg\{ s^{-d/\alpha}, \frac{s}{|w|^{d+\alpha}}\bigg\}
	\end{align}
	uniformly with respect to $s > 0$ and $w \in \RR^d$. In particular, there is $C > 0$ such that for all $s > 0$ and
	$w \in \RR^d$,
	\begin{align}
		\label{ineq:2}
		p_0(s;0,w)\leq C s \nu(w).
	\end{align}
	We are going to prove all three statements simultaneously. We set
	\[
		\tilde{y}=y\, \qquad\qquad \tilde{y}=t^{1/\alpha}y\,, \qquad\qquad \tilde{y}=(1-m)^{-1/\alpha}y,
	\]
	respectively. Fix $\kappa_1, \kappa_2 \in (0,1)$ and $K>0$. By \eqref{eq:rep-p-x}, we get
	\begin{equation}
		\label{eq:70}
		p^{(c)}(t; x, \tilde{y})
		= e^{-t} p_0(t; 0, \tilde{y}-x) +
		e^{-t} \sum_{j = 1}^\infty t^j \int_0^1   p_0(t u; 0, \tilde{y}-c^j x)  P_j(u) {\: \rm d} u.
	\end{equation}
	We emphasize that the splines $(P_j)$ depend on $c$. If $|y| \geq K/\kappa_2$ and $t \geq (K/\kappa_2)^{\alpha}$, then
	$|x| \leq \kappa_2 |\tilde{y}|$. Hence, for $j \geq \NN_0$,
	\begin{align}
		\label{ineq:1}
		\frac{\nu(\tilde{y}-c^jx)}{\nu(\tilde{y})} \leq (1-\kappa_2)^{-d-\alpha}.
	\end{align}
	Let us observe that for $u > 0$,
	\[
		\frac{u}{1-e^{-u}} = u + \frac{u}{e^u - 1} \leq u + 1.
	\]
	Therefore, by \eqref{ineq:2} and \eqref{ineq:1}, we get
	\[
		\frac{(1-m)t}{1 -e^{-(1-m)t}}
		e^{-t}\frac{p_0(t;0,\tilde{y}-x)}{t\nu(\tilde{y})}
		\leq
		C (1 - \kappa_2)^{-d-\alpha} (1+t) e^{-t},
	\]
	and so, the first term in \eqref{eq:70} uniformly tends to zero. Next, by \eqref{ineq:2}, \eqref{ineq:1} and
	Theorem \ref{thm:all-moments} we obtain
	\begin{align}
		\nonumber
		&\frac{(1-m) t}{1 -e^{-(1-m)t}} e^{-t} t^j
		\int_0^1 \bigg( \frac{p_0(t u; 0, \tilde{y}-c^j x)}{tu\nu(\tilde{y})}+ 1 \bigg) u P_j(u) {\: \rm d} u \\
		\nonumber
		&\qquad\qquad\leq
		(1+t) e^{-t} t^j \int_0^1 \Bigg(\frac{\nu(\tilde{y} - c^j x)}{\nu(\tilde{y})} + 1\bigg) u P_j(u) {\: \rm d} u \\
		\label{ineq:b-2}
		&\qquad\qquad\leq
		\big(C (1 - \kappa_2)^{-d-\alpha} + 1 \big) (1+t) e^{-t} \frac{t^j}{j!}.
	\end{align}
	Next, let us recall that by \cite{BlumenthalGetoor}
	\[
		\lim_{s |w|^{-\alpha} \to 0^+} \frac{p_0(s; 0, w)}{s \nu(w)} = 1.
	\]
	For $j \in \NN$, we can write
	\begin{align*}
		\bigg|\frac{p_0(s; 0,\tilde{y}-c^j x)}{s \nu(\tilde{y})} - 1 \bigg|
		\leq
		\bigg|\frac{p_0(s; 0, \tilde{y}-c^j x)}{s \nu(\tilde{y}-c^j x)} - 1 \bigg|\cdot
		\frac{\nu(\tilde{y}-c^j x)}{\nu(\tilde{y})}
		+\bigg|\frac{\nu(\tilde{y}-c^j x)}{\nu(\tilde{y})} -1\bigg|.
	\end{align*}
	Since $|x|\leq \kappa_2|\tilde{y}|$, we get
	\[
		\frac{s}{|\tilde{y}-c^jx|^{\alpha}} \leq  \frac{s |\tilde{y}|^{-\alpha}}{(1-\kappa_2)^{\alpha}}.
	\]
	Hence,
	\begin{itemize}
		\item if $|x|\leq \kappa_2|y|$, then
		\[
			\frac{\nu(\tilde{y})}{(1+\kappa_1^j\kappa_2)^{d+\alpha}}
			\leq \nu(\tilde{y}-c^j x) \leq \frac{\nu(\tilde{y})}{(1-\kappa_1^j\kappa_2)^{d+\alpha}}
			\quad\text{ for all } c \in (0,\kappa_1];
		\]
		\item if $|x|\leq K |y|$, then
		\[
			\frac{\nu(\tilde{y})}{(1+Kt^{-1/\alpha})^{d+\alpha}}
			\leq \nu(\tilde{y}-c^jx) \\
			\leq \frac{\nu(\tilde{y})}{(1-Kt^{-1/\alpha})^{d+\alpha}}
			\quad \text{ for all }t > K^\alpha;
		\]
		\item if $|x|\leq K$, then
		\[
			\frac{\nu(\tilde{y})}{(1+K/|y|)^{d+\alpha}} \leq \nu(\tilde{y}-c^jx) \leq \frac{\nu(\tilde{y})}{(1-K/|y|)^{d+\alpha}}
			\quad \text{ for all } |y|>K.
		\]
	\end{itemize}
	Therefore, for a given $\epsilon>0$ there are $\delta > 0$ and $n_0\in\NN$ depending only on $d$, $\alpha$, $\epsilon$,
	$\kappa_1$, $\kappa_2$ and $K$, such that
	\begin{align}
		\label{ineq:3}
		\bigg|\frac{p_0(s; 0, \tilde{y}-c^j x)}{s \nu(\tilde{y})} - 1 \bigg| \leq \epsilon
	\end{align}
	\begin{itemize}
		\item for all $j \geq n_0$, and $c \in (0, \kappa_1]$ provided that $s |\tilde{y}|^{-\alpha} \leq \delta$;
		\item for all $j \in \NN$, $c \in (0, 1)$ and $t \geq \delta^{-1}$ provided that $\norm{x} \leq K \norm{y}$;
		\item for all $j \in \NN$, $c \in (0, 1)$ and $\norm{y} \geq \delta^{-1}$ provided that $\norm{x} \leq K$.
	\end{itemize}
	In all three cases, we further investigate (see \eqref{def:Phi} for the definition of $\Phi$)
	\begin{equation}
		\label{eq:26}
		\begin{aligned}
		&
		\frac{(1-m)t}{1 -e^{-(1-m)t}} e^{-t}
		\sum_{j = 1}^\infty  t^j \int_0^1  \frac{p_0(t u; 0, \tilde{y}-c^j x)}{t\nu(\tilde{y})}  P_j(u) {\: \rm d} u
		- \frac{(1-m)t}{1 -e^{-(1-m)t}} e^{-t}\int_0^1 u \Phi(t, u) {\: \rm d} u\\
		&\qquad\qquad
		=\frac{(1-m)t}{1 -e^{-(1-m)t}} e^{-t} \sum_{j =1}^{n_0} t^j \int_0^1
		\bigg( \frac{p_0(t u; 0, \tilde{y}-c^j x)}{tu\nu(\tilde{y})}- 1 \bigg)  u P_j(u) {\: \rm d} u \\
		&\qquad\qquad\phantom{=}
		+ \frac{(1-m)t}{1 -e^{-(1-m)t}} e^{-t} \sum_{j =n_0+1}^{\infty} t^j
		\int_{\frac{\delta |\tilde{y}|^{\alpha}}{t} \land 1}^1
		\bigg( \frac{p_0(t u; 0, \tilde{y}-c^j x)}{tu\nu(\tilde{y})}- 1 \bigg)   u P_j(u) {\: \rm d} u\\
		&\qquad\qquad\phantom{=}
		+\frac{(1-m)t}{1 -e^{-(1-m)t}}e^{-t} \sum_{j =n_0+1}^{\infty} t^j \int_0^{\frac{\delta |\tilde{y}|^{\alpha}}{t} \land 1}
		\bigg( \frac{p_0(t u; 0, \tilde{y}-c^j x)}{tu\nu(\tilde{y})}- 1 \bigg)   u P_j(u) {\: \rm d} u.
		\end{aligned}
	\end{equation}
	The first term is small due to \eqref{ineq:b-2}. Furthermore, in all considered cases we can assure that
	$|y| \geq \delta$ which together with \eqref{ineq:3} and \eqref{eq:1-moment} (see \eqref{def:mu_t} for the definition
	of $\mu_t$) allows us to bound the absolute value of third term by
	\begin{align*}
		&\frac{(1-m)t}{1 -e^{-(1-m)t}}e^{-t}\sum_{j =n_0+1}^{\infty}
		\int_0^{\frac{\delta |\tilde{y}|^{\alpha}}{t} \land 1}  t^j
		\, \bigg| \frac{p_0(t u; 0, \tilde{y}-c^j x)}{tu\nu(\tilde{y})}- 1 \bigg| \,  u P_j(u) {\: \rm d} u \\
		&\qquad\qquad
		\leq
		\frac{(1-m) \epsilon}{1-e^{-(1-m)t}} \int_0^\infty u \: \mu_t({\rm d} u) = \epsilon.
	\end{align*}
	Next, the middle term in \eqref{eq:26} equals zero in the case when $\tilde{y}=t^{1/\alpha}y$. If $\tilde{y}=y$ or
	$\tilde{y}=(1-m)^{-1/\alpha}y$, by \eqref{ineq:2}, \eqref{ineq:1} and \eqref{ineq:2-moment} we obtain
	\begin{align*}
		&\frac{(1-m)t}{1 -e^{-(1-m)t}}e^{-t} \sum_{j =1}^\infty t^j \int_{\frac{\delta |\tilde{y}|^{\alpha}}{t} \land 1}^1
		\bigg( \frac{p_0(t u; 0, \tilde{y}-c^j x)}{tu\nu(\tilde{y})}+ 1 \bigg) u P_j(u) {\: \rm d} u \\
		&\qquad\qquad \leq \frac{1-m}{1 -e^{-(1-m)t}}
		\big(C(1-\kappa_2)^{-d-\alpha}+1\big) \int_{\delta |\tilde{y}|^{\alpha} \land t}^t u \: \mu_t({\rm d} u) \\
		&\qquad\qquad\leq  \frac{1-m}{1 -e^{-(1-m)t}}
		\Big(C(1-\kappa_2)^{-d-\alpha}+1\Big)  \frac1{\delta |\tilde{y}|^{\alpha}} \int_0^\infty u^2 \: \mu_t({\rm d} u)  \\
		&\qquad\qquad\leq  \frac{2 \big(C(1-\kappa_2)^{-d-\alpha}+1\big)}{\delta(1-m)|\tilde{y}|^{\alpha}}.
	\end{align*}
	Now we use either $(1-m)\geq (1-\kappa_1^\alpha)$ or $(1-m)|\tilde{y}|^{\alpha}=|y|$. To conclude the proof, it enough to
	notice that by \eqref{eq:1-moment} the expression
	\[
		 \frac{(1-m)t}{1 -e^{-(1-m)t}} e^{-t}\int_0^1 u \Phi(t, u) {\: \rm d} u
		 = 1 -  \frac{(1-m)t}{1 -e^{-(1-m)t}} e^{-t}
	\]
	converges to the stated limits.
\end{proof}

\begin{corollary}
	\label{cor:ius}
	Under the assumptions of Theorem~\ref{thm:ius} we have
	\begin{equation}
		\label{eq:27a}
		\lim_{|y|\to +\infty} \frac{\rho_{\mathbf{Y}}(y)}{\nu(y)}= \frac1{1-m},
	\end{equation}
	and
	\begin{equation}
		\label{eq:27b}
		\rho_{\mathbf{Y}} \approx 1 \land \nu, \qquad\text{on}\quad \RR^d.
	\end{equation}
	Furthermore, for all $\delta,r>0$
	\begin{align}
		\label{approx:p_rho}
		p \approx  \rho_{\mathbf{Y}}, \qquad\text{on}\quad [\delta,\infty)\times B_r \times \RR^d.
	\end{align}
\end{corollary}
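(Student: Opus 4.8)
The three assertions will be proved in the order \eqref{eq:27a}, \eqref{eq:27b}, \eqref{approx:p_rho}, the first feeding the other two. For \eqref{eq:27a} the plan is to work directly from the representation \eqref{representationrho}, writing $\rho_{\mathbf{Y}}(y)/\nu(y) = \int_0^\infty \big(p_0(u;0,y)/\nu(y)\big)\,\mu(\mathrm{d}u)$. Fix $\varepsilon>0$. By the local asymptotics $p_0(s;0,w)/\big(s\nu(w)\big)\to 1$ as $s|w|^{-\alpha}\to 0^+$ (from \cite{BlumenthalGetoor}, as recalled in the proof of Theorem~\ref{thm:ius}) there is $\eta>0$ with $\big|p_0(u;0,y)/(u\nu(y))-1\big|\le\varepsilon$ whenever $u\le\eta|y|^\alpha$, so the part of the integral over $\{u\le\eta|y|^\alpha\}$ differs from $\int_0^{\eta|y|^\alpha}u\,\mu(\mathrm{d}u)$ by at most $\varepsilon\int_0^\infty u\,\mu(\mathrm{d}u)$; over $\{u>\eta|y|^\alpha\}$ the global bound \eqref{ineq:2} forces a contribution $\le C\int_{\eta|y|^\alpha}^\infty u\,\mu(\mathrm{d}u)$, which vanishes as $|y|\to\infty$ since $\mu$ has a finite first moment (Theorem~\ref{thm:weak_conv}). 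Letting $|y|\to\infty$ and then $\varepsilon\to 0$ gives $\rho_{\mathbf{Y}}(y)/\nu(y)\to\int_0^\infty u\,\mu(\mathrm{d}u)=1/(1-m)$, the value of the first moment of $\mu$.

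For \eqref{eq:27b} I would combine \eqref{eq:27a} with qualitative properties of $\rho_{\mathbf{Y}}$: it is continuous and bounded by Proposition~\ref{prop:6}, and strictly positive because $p_0(u;0,\cdot)>0$ everywhere and $\mu$ has no atom at the origin. Choose $R_0$, large enough that also $\nu(y)\le 1$ for $|y|\ge R_0$, so that $\rho_{\mathbf{Y}}(y)\approx\nu(y)=1\land\nu(y)$ on $\{|y|\ge R_0\}$ by \eqref{eq:27a}; on the compact ball $\{|y|\le R_0\}$ both $\rho_{\mathbf{Y}}$ and $1\land\nu$ are continuous and bounded below by positive constants, hence comparable. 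Concatenating the two regions yields $\rho_{\mathbf{Y}}\approx 1\land\nu$ on $\RR^d$.

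The substance is \eqref{approx:p_rho}; I would fix $r,\delta>0$, keep $R_0\ge 2r$ as above, and split $[\delta,\infty)\times B_r\times\RR^d$ according to whether $|y|\ge R_0$. On $\{|y|\ge R_0\}$ I would estimate $p(t;x,y)$ from \eqref{eq:rep-p-x}: since $|y-c^jx|\in[\tfrac12|y|,\tfrac32|y|]$ one has $\nu(y-c^jx)\approx\nu(y)$ uniformly in $j$, so \eqref{ineq:2} gives $p(t;x,y)\le C\nu(y)\int_0^\infty u\,\mu_t(\mathrm{d}u)=C\nu(y)(1-e^{-(1-m)t})/(1-m)$, bounded by $C'\nu(y)$ uniformly in $t\ge 0$ by \eqref{eq:1-moment}; and the local lower asymptotics give $p_0(tu;0,y-c^jx)\ge c\,tu\,\nu(y)$ whenever $tu\le C''|y|^\alpha$, so plugging this into \eqref{eq:rep-p-x} and unwinding the definition \eqref{def:mu_t} of $\mu_t$ yields $p(t;x,y)\ge c\,\nu(y)\int_0^{C''|y|^\alpha}u\,\mu_t(\mathrm{d}u)$, which by Chebyshev's inequality together with \eqref{eq:1-moment} and \eqref{ineq:2-moment} is $\ge\tfrac12 c\,\nu(y)(1-e^{-(1-m)\delta})/(1-m)>0$ once $|y|\ge R_0$ (enlarging $R_0$ if needed) and $t\ge\delta$. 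With \eqref{eq:27a} this gives $p\approx\nu\approx\rho_{\mathbf{Y}}$ on $[\delta,\infty)\times B_r\times\{|y|\ge R_0\}$. On $\{|y|\le R_0\}$, where $\rho_{\mathbf{Y}}$ is pinched between positive constants, I only need two-sided bounds on $p$: the upper bound follows from \eqref{eq:rep-p-x} and $p_0(s;0,w)\le Cs^{-d/\alpha}$ (a consequence of \eqref{approx:is}), giving $p(t;x,y)\le C\int_0^\infty u^{-d/\alpha}\,\mu_t(\mathrm{d}u)$, finite uniformly for $t\ge\delta$ by \eqref{ineq:sup-finite}; for the lower bound I would use Theorem~\ref{thm:lim_p_t_infty} to cover $t\ge T_1$ (for such $t$, $\overline{B_r}\subset\mathscr{X}(t;\delta)$ and $p(t;x,y)\ge\tfrac12\rho_{\mathbf{Y}}(y)$ once $T_1$ is large), and then dispose of the leftover compact box $[\delta,T_1]\times\overline{B_r}\times\overline{B_{R_0}}$ by the continuity and strict positivity of $p$ ($p\in\calC^\infty$ by Lemma~\ref{lem:p_reg}, and $p(t;x,y)\ge e^{-t}p_0(t;0,y-x)>0$).

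The main obstacle is the lower bound in \eqref{approx:p_rho} on $\{|y|\ge R_0\}$ \emph{uniformly in $t\ge\delta$}: the ergodic limit of Theorem~\ref{thm:lim_p_t_infty} does not control the whole $t$-range, so one must argue from the series \eqref{eq:rep-p-x}, and the crucial mechanism is that the closed-form first and second moments of $\mu_t$ keep the truncated first moment $\int_0^{C''|y|^\alpha}u\,\mu_t(\mathrm{d}u)$ bounded away from zero simultaneously for all $t\ge\delta$ and all large $|y|$. Everything else reduces to the moment identities of Section~\ref{sec:2.2} or to a routine compactness argument.
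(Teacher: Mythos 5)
Your proposal is correct and follows essentially the same route as the paper: the far region $\norm{y}\ge R_0$ is handled by a two-sided comparison of $p$ with $\nu$ via \eqref{approx:is} together with the first and second moment identities \eqref{eq:1-moment}, \eqref{ineq:2-moment} for $\mu_t$, and the bounded region by the uniform ergodic limit \eqref{eq:lim_p_t_infty-unif} plus continuity and strict positivity on the remaining compact box. The only cosmetic differences are that you derive \eqref{eq:27a} directly from \eqref{representationrho} and the moments of $\mu$ (the paper simply lets $t\to\infty$ in \eqref{eq:ius-1}), and that you truncate the lower-bound integral at $C''\norm{y}^{\alpha}$ with Chebyshev's inequality where the paper uses the fixed cutoff $M=4/(1-m^2)$ from \eqref{ineq:cut-M}.
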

\begin{proof}
	The equality \eqref{eq:27a} follows from \eqref{eq:ius-1} by passing with $t$ to infinity and using
	\eqref{representationrho}. The integral representation \eqref{representationrho} implies that $\rho_{\mathbf{Y}}$
	is bounded from above by $\rho_{\mathbf{Y}}(0)$. Moreover, it is bounded from below by a positive constant, on every
	compact subset of $\RR^d$. From \eqref{eq:27a} we easily deduce that $\rho_{\mathbf{Y}}(y) \approx 1 \land \nu(y)$ on
	$\RR^d$. Since $\inf_{y\in B_{2r}} \rho_{\mathbf{Y}}(y) > 0$, it follows from \eqref{eq:lim_p_t_infty-unif} that there
	is $T > 0$ such that $p \approx \rho_{\mathbf{Y}}$ on $[T,\infty)\times B_r \times B_{2r}$
	Now, by positivity and continuity, see Lemma~\ref{lem:p_reg}, $p \approx \rho_{\mathbf{Y}}$ on
	$[\delta,T]\times B_r \times B_{2r}$. It remains to investigate
	$(t, x, y) \in [\delta,\infty)\times B_r \times (\RR^d \setminus B_{2r})$. Using the representation \eqref{eq:rep-p-x}
	and \eqref{approx:is}, there is $C > 0$ such that
	\begin{equation}
		\label{eq:86}
		C^{-1} p(t; 0, y) \leq p(t;x,y) \leq C p(t;0,y)
	\end{equation}
	for all $(t, x, y) \in (0,\infty)\times B_r \times (\RR^d \setminus B_{2r})$.
	Now, by \eqref{eq:rep-p-0.1}, \eqref{ineq:2} and \eqref{eq:1-moment}, for all $t>0$, $y\in\RR^d$,
	\[
		\frac{p(t;0,y)}{\nu(y)} \leq C \int_0^\infty u \: \mu_t({\rm d} u) \leq \frac{C}{1-m}\,.
	\]
	Using again \eqref{eq:1-moment} and \eqref{ineq:2-moment}, we obtain that for all $t>0$,
	\begin{align}
		\label{ineq:cut-M}
		\int_{[M,\infty)} u \: \mu_t({\rm d} u)
		\leq \frac12 \int_0^\infty u \: \mu_t({\rm d} u)
	\end{align}
	where $M= 4/(1-m^2)$. Thus, by \eqref{approx:is} we have $p_0(s;0,w) \geq C^{-1} s \nu(w)$ provided that
	$|w| \geq  r (s/M)^{1/\alpha}$. Hence, by \eqref{eq:rep-p-0.1}, for all $t \geq \delta$, $|y|\geq 2r$,
	\begin{align*}
		\frac{p(t;0,y)}{\nu(y)} \geq  C^{-1} \int_{(0,M]} u \: \mu_t({\rm d} u)
		&\geq \frac{C^{-1}}2 \int_0^\infty u \: \mu_t({\rm d} u) \\
		&\geq \frac{C^{-1}}2  \frac{1-e^{-(1-m)\delta}}{1-m},
	\end{align*}
	which together with \eqref{eq:27b} and \eqref{eq:86} completes the proof.
\end{proof}

\subsection{$\alpha$-stable subordinator with $\alpha\in (0,1)$ and resetting}
\label{sec:sub}
Let $\nu$ be the density of the L{\'e}vy measure of an $\alpha$-stable subordinator, $\alpha \in (0, 1)$. Namely,
\[
	\nu(s)=\frac{\alpha}{\Gamma\left(1-\alpha\right)} \frac{\ind{(0,\infty)}(s)}{s^{1+\alpha}},
\]
see Example~\ref{ex:names:1}. A similar result to Theorem \ref{thm:ius} holds in the present setting as well.
\begin{theorem}
	\label{thm:s-s}
	Suppose that $\mathbf{Y}$ is an $\alpha$-stable subordinator with $\alpha \in (0, 1)$. Assume that $\mathbf{X}^{(c)}$
	is obtained from $\mathbf{Y}$ by partial resetting with factor $c\in(0,1)$. Then for each $\kappa_1, \kappa_2 \in (0, 1)$,
	the transition density $p^{(c)}$ of $\mathbf{X}^{(c)}$ satisfies
	\begin{align}
		\label{eq:s-s-1}
		\lim_{\atop{y \to +\infty}{t \to +\infty}} \, \sup_{\substack{ c \in(0,\kappa_1] \\ |x| \leq \kappa_2 y}}
		\left| (1-m)\frac{p^{(c)}(t; x, y)}{\nu(y)}-1 \right|= 0\,.
	\end{align}
	Furthermore, for all $K>0$,
	\[
		\lim_{\substack{ y \to +\infty \\ t \to+\infty}}\,
		\sup_{\substack{c \in (0,1)\\ |x|\leq K y}}
		\left|  \frac{(1-m)t}{1 -e^{-(1-m)t}}\frac{p^{(c)}(t; x,  t^{1/\alpha} y)}{t\nu(t^{1/\alpha}y)} - 1 \right|
		= 0,
	\]
	and
	\[
		\lim_{\substack{y \to +\infty \\ t \to+\infty} }
		\sup_{\substack{c \in (0,1)\\ |x|\leq K }}
		\left| \frac{(1-m)t}{1 -e^{-(1-m)t}}\frac{p^{(c)}(t; x, (1-m)^{-1/\alpha}y)}{t\nu((1-m)^{-1/\alpha}y)} - 1 \right|
		= 0.
	\]	
\end{theorem}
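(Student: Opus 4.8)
The plan is to follow, \emph{mutatis mutandis}, the proof of Theorem~\ref{thm:ius}. Inspecting that argument, one sees that the only properties of the transition density $p_0$ that enter are: (a) the pointwise bound $p_0(s;0,w)\le C\,s\,\nu(w)$ valid for all $s>0$ and $w>0$ (the analogue of \eqref{ineq:2}); and (b) the ratio asymptotic $p_0(s;0,w)/(s\nu(w))\to 1$ as $s\,w^{-\alpha}\to 0^+$. Both are elementary consequences of the scaling identity $p_0(s;0,w)=s^{-1/\alpha}p_0(1;0,w\,s^{-1/\alpha})$ together with the classical behaviour of the one-dimensional density $p_0(1;0,\cdot)$ of $Y_1$: it is bounded, it decays super-polynomially as its argument tends to $0^+$, and $p_0(1;0,x)\sim\frac{\alpha}{\Gamma(1-\alpha)}x^{-1-\alpha}=\nu(x)$ as $x\to+\infty$. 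From the boundedness near $0$ and the power-law tail at $\infty$ one gets $p_0(1;0,x)\le C' x^{-1-\alpha}$ for all $x>0$, which rescales to (a); the tail asymptotic rescales directly to (b). Note also that, $\mathbf{Y}$ being a subordinator, $p_0(s;0,w)=0$ for $w\le 0$, so the integrals in the representation \eqref{eq:rep-p-x} automatically restrict to the range where $\tilde y-c^j x>0$.

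With (a) and (b) in hand, one reruns the proof of Theorem~\ref{thm:ius} with $d=1$ and $|y|$ replaced by $y>0$. First I would set $\tilde y=y$, $\tilde y=t^{1/\alpha}y$ and $\tilde y=(1-m)^{-1/\alpha}y$ for the three assertions and treat them simultaneously through the quantity $\frac{(1-m)t}{1-e^{-(1-m)t}}\cdot\frac{p^{(c)}(t;x,\tilde y)}{t\,\nu(\tilde y)}$. Under the standing constraints one has, for $t$ and $y$ large, $\tilde y-c^j x\ge(1-\kappa_2)\tilde y>0$ for every $j$ (respectively $\tilde y-c^j x\ge(1-Kt^{-1/\alpha})\tilde y$ and $\tilde y-c^j x\ge(1-K/y)\tilde y$), which yields the analogue of \eqref{ineq:1}, namely a uniform two-sided comparison of $\nu(\tilde y-c^j x)$ and $\nu(\tilde y)$. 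Using \eqref{eq:rep-p-x}, the bound (a) disposes of the free term $e^{-t}p_0(t;0,\tilde y-x)$. Splitting the series exactly as in \eqref{eq:26}: the first $n_0$ terms are killed by the factor $e^{-t}\sum_{j\le n_0}t^j/j!\to 0$ after bounding the inner integrals via Theorem~\ref{thm:all-moments}; on the range $u\in(0,\delta|\tilde y|^\alpha/t\wedge 1)$ the ratio asymptotic (b), applicable because $tu\,(\tilde y-c^j x)^{-\alpha}$ is small (and because $c^j x/\tilde y$ is small for $j\ge n_0$, exactly as in \eqref{ineq:3}), bounds the integrand by $\epsilon$, while $\int_0^\infty u\,\mu_t(\mathrm d u)=O(1/(1-m))$ from \eqref{eq:1-moment} is absorbed by the prefactor; on the range $u\in(\delta|\tilde y|^\alpha/t\wedge 1,1)$ the bound (a) together with the second-moment estimate \eqref{ineq:2-moment} gives a contribution $O\big(1/(\delta(1-m)|\tilde y|^\alpha)\big)$, which is controlled using either $1-m\ge 1-\kappa_1^\alpha$ or $(1-m)|\tilde y|^\alpha=y$ according to the case. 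Finally $\frac{(1-m)t}{1-e^{-(1-m)t}}e^{-t}\int_0^1 u\,\Phi(t,u)\,\mathrm d u=1-\frac{(1-m)t}{1-e^{-(1-m)t}}e^{-t}$ by \eqref{eq:1-moment} and converges to the claimed limits; for the first assertion \eqref{eq:s-s-1} one additionally uses that $e^{-(1-m)t}\to 0$ uniformly in $c\in(0,\kappa_1]$.

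The only genuinely new point — and therefore the only place where care is needed — is the verification of (a) and (b) for the subordinator. Unlike the isotropic case there is \emph{no} clean two-sided estimate $p_0(s;0,w)\approx\min\{s^{-1/\alpha},\,s\,w^{-1-\alpha}\}$, since for $w$ small compared with $s^{1/\alpha}$ the subordinator density is only super-polynomially small rather than comparable to $s^{-1/\alpha}$. Fortunately the proof of Theorem~\ref{thm:ius} never invokes the lower part of that estimate for the theorem itself (it is needed only for the accompanying corollary), so the one-sided bound (a) and the ratio asymptotic (b) suffice, and the whole argument transfers without further obstruction.
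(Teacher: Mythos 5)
Your proposal is correct and follows essentially the same route as the paper: the paper's proof likewise consists of recording the one-sided bound $p_0(s;0,w)\leq Cs\nu(w)$ and the ratio asymptotic $p_0(s;0,w)/(s\nu(w))\to 1$ as $sw^{-\alpha}\to 0^+$ (citing the literature rather than rederiving them from scaling, as you do), and then declaring that the argument of Theorem~\ref{thm:ius} carries over verbatim. Your observation that only the upper half of the two-sided estimate \eqref{approx:is} is ever used in that argument is exactly the point that makes the transfer work.
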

\begin{proof}
	First, let us recall that
	\begin{equation}
		\label{eq:34}
		\lim_{s w^{-\alpha} \to 0^+} \frac{p_0(s;0,w)}{s\nu(w)}=1,
	\end{equation}
	see e.g. \cite[Theorem 37.1]{MR0344810} or \cite[Theorem 2.5.6]{MR854867}. In particular, \eqref{eq:34} describes the
	asymptotic behavior of $p_0(1/\tilde{c}; 0, y)$ with $\tilde{c}=\cos(\pi\alpha/2)$, cf. \cite[Example 24.12]{MR1739520}.
	By the formula \cite[(9)]{MR2013738} we also have
	\begin{align}
		\label{ineq:s-s-2}
		p_0(s;0,w) \leq C s \nu(w).
	\end{align}
	Now the arguments follows by the same line of reasoning as the proof of Theorem~\ref{thm:ius}.
\end{proof}

\begin{corollary}
	\label{cor:s-s}
	Under the assumptions of Theorem~\ref{thm:s-s} we have
	\begin{equation}
		\label{eq:35a}
		\lim_{y\to +\infty}
		\frac{\rho_{\mathbf{Y}}(y)}{\nu(y)}
		=
		\frac1{1-m}.
	\end{equation}
	Moreover, for each $\delta > 0$,
	\begin{equation}
		\label{eq:35b}
		\rho_{\mathbf{Y}} \approx \nu \qquad\text{on } [\delta, \infty).
	\end{equation}
	Furthermore, for each $\delta, r>0$,
	\begin{align}
		\label{approx:p_rho-2}
		p \approx \rho_{\mathbf{Y}} \qquad\text{on } [\delta,\infty)\times (-r,r)\times [\delta,\infty).
	\end{align}
\end{corollary}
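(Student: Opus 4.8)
The proof would follow the scheme of Corollary~\ref{cor:ius}, with two adjustments forced by the geometry of the subordinator: the driving kernel lives on the half‑line, and in place of the two‑sided estimate \eqref{approx:is} one has only the asymptotics \eqref{eq:34} together with the one‑sided bound \eqref{ineq:s-s-2}. For \eqref{eq:35a} I would fix $y$, let $t\to+\infty$ in \eqref{eq:s-s-1} with $x=0$, and use that $p(t;0,y)\to\rho_{\mathbf{Y}}(y)$ by Theorem~\ref{thm:lim_p_t_infty}; this forces $(1-m)\rho_{\mathbf{Y}}(y)/\nu(y)\to 1$ as $y\to+\infty$. For \eqref{eq:35b}, the limit \eqref{eq:35a} already gives $\rho_{\mathbf{Y}}\approx\nu$ on a half‑line $[R,\infty)$, while on the compact interval $[\delta,R]$ both functions are continuous and strictly positive — positivity of $\rho_{\mathbf{Y}}$ on $(0,\infty)$ follows from the representation \eqref{representationrho} and the positivity of the $\alpha$‑stable subordinator density on its support — so patching the two regions yields $\rho_{\mathbf{Y}}\approx\nu$ on $[\delta,\infty)$; the restriction to $[\delta,\infty)$ is genuine because $\rho_{\mathbf{Y}}(y)\to 0$ while $\nu(y)\to+\infty$ as $y\to 0^+$.

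For \eqref{approx:p_rho-2} I would split $[\delta,\infty)\times(-r,r)\times[\delta,\infty)$ into a bounded part $[\delta,\infty)\times(-r,r)\times[\delta,R]$ and a tail $[\delta,\infty)\times(-r,r)\times[R,\infty)$ with $R>2r$ large (to be fixed below). On the bounded part, observe that the set $\mathscr{X}(t;\delta)$ of \eqref{eq:52} is a ball whose radius tends to $+\infty$, so $(-r,r)\subset\mathscr{X}(t;\delta)$ for all large $t$; Theorem~\ref{thm:lim_p_t_infty} then gives $p(t;x,y)\to\rho_{\mathbf{Y}}(y)$ uniformly over $x\in(-r,r)$, $y\in[\delta,R]$, and since $\inf_{[\delta,R]}\rho_{\mathbf{Y}}>0$ this yields $p\approx\rho_{\mathbf{Y}}$ on $[T,\infty)\times(-r,r)\times[\delta,R]$ for some $T$. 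On the remaining compact box $[\delta,T]\times(-r,r)\times[\delta,R]$ one uses the continuity of $p$ from Lemma~\ref{lem:p_reg} together with its strict positivity: for every $t>0$, $x\in\RR$ and $y>0$ one has $p(t;x,y)>0$, because choosing $j$ so large that $c^j\abs{x}<y$ one gets $p_0(tu;0,y-c^jx)P_j(u)>0$ on the interval $(m^j,1)$ by Proposition~\ref{prop:1}, whence the $j$‑th term of \eqref{eq:rep-p-x} is strictly positive. As $\rho_{\mathbf{Y}}$ is likewise continuous and positive there, $p\approx\rho_{\mathbf{Y}}$ on this box.

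It remains to show $p(t;x,y)\approx\nu(y)$ on the tail, which then combines with \eqref{eq:35b}. For $y\ge R>2r$ and $x\in(-r,r)$ we have $\nu(y-c^jx)\approx\nu(y)$ uniformly in $j$ and $x$, so inserting \eqref{ineq:s-s-2} into \eqref{eq:rep-p-x} and using \eqref{eq:1-moment} gives $p(t;x,y)\le C\nu(y)\int_0^\infty u\,{\rm d}\mu_t(u)\le C(1-m)^{-1}\nu(y)$. For the lower bound, \eqref{eq:34} furnishes $\delta_0>0$ with $p_0(s;0,w)\ge\tfrac12 s\nu(w)$ whenever $s\le\delta_0 w^\alpha$; fixing $R$ so large that $\delta_0(R-r)^\alpha\ge M:=4/(1-m^2)$, one checks that for $t\in[\delta,M]$ the single term $e^{-t}p_0(t;0,y-x)$ is already at least $C^{-1}\nu(y)$, while for $t>M$ one discards the atom of $\mu_t$ at $s=t$ and bounds the spline part of \eqref{eq:rep-p-x} below by $C^{-1}\nu(y)\int_{(0,M]}u\,{\rm d}\mu_t(u)$, which is bounded away from zero by \eqref{ineq:cut-M} and \eqref{eq:1-moment}; this gives $p(t;x,y)\ge C^{-1}\nu(y)$ on the tail. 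The main obstacle is exactly the absence of a clean two‑sided heat‑kernel estimate like \eqref{approx:is}: because the lower tail of the $\alpha$‑stable subordinator density decays like a stretched exponential, $p_0(s;0,\cdot)$ is not comparable at nearby points uniformly in $s$, so the lower estimate \eqref{eq:34} may only be invoked in the regime $s\lesssim w^\alpha$ — which is precisely what the cut at $M$ together with \eqref{ineq:cut-M} secures, and why one cannot simply quote the comparison \eqref{eq:86} as in the isotropic case.
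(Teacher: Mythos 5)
Your argument is correct and follows essentially the same route as the paper: limit in \eqref{eq:s-s-1} for \eqref{eq:35a}, continuity and positivity on compacta plus the uniform ergodic limit for the bounded region, the bound \eqref{ineq:s-s-2} with \eqref{eq:1-moment} for the upper tail estimate, and the cut at $M=4/(1-m^2)$ with \eqref{ineq:cut-M} for the lower one. The only cosmetic difference is that you extract the lower heat-kernel bound $p_0(s;0,w)\geq C^{-1}s\nu(w)$ from the asymptotics \eqref{eq:34} at the cost of enlarging $R$, whereas the paper quotes the explicit two-sided subordinator estimate of \cite[(10)]{MR2013738}, which works directly for $y\geq 2r$.
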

\begin{proof}
	The equality \eqref{eq:35a} follows from \eqref{eq:s-s-1} by passing with $t$ to infinity and using
	\eqref{representationrho}. Since both $\rho_{\mathbf{Y}}$ and $\nu$ are continuous and positive on $(0,\infty)$, by
	\eqref{eq:35a} we get $\rho_{\mathbf{Y}} \approx  \nu$ for $y \in [\delta, \infty)$. Because
	$\inf_{y\in (\delta,2r)} \rho_{\mathbf{Y}}(y)>0$, by \eqref{eq:lim_p_t_infty-unif} there is $T > 0$ such that
	$p \approx \rho_{\mathbf{Y}}$ on $[T,\infty)\times (-r,r) \times [\delta, 2r)$. We notice that $p(t;x,y)>0$ for all
	$t>0$, $x\in\RR$ and $y>0$, see \eqref{eq:rep-p-x}. Since $p$ is continuous, see Lemma \ref{lem:p_reg}, the comparability
	$p \approx \rho_{\mathbf{Y}}$ holds on $[\delta,T] \times (-r,r) \times [\delta,2r)$. Therefore, it remains to consider
	$(t, x, y) \in [\delta,\infty)\times (-r,r)\times [2r,\infty)$. Let us observe that for $|x|\leq r$, and $y \geq 2r$,
	\[
		\nu(y-c^jx)\leq 2^{1+\alpha}\nu(y).
	\]
	Hence, using the representation \eqref{eq:rep-p-x} and \eqref{ineq:s-s-2}, for all
	$(t, x, y) \in [\delta,\infty)\times (-r,r)\times [2r,\infty)$, we get
	\begin{align*}
		p(t;x,y)
		&\leq
		2^{1+\alpha} \nu(y)\, C \int_0^\infty u \: \mu_t({\rm d} u) \\
		&\leq 2^{1+\alpha}\frac{C}{1-m} \nu(y)
	\end{align*}
	where the last estimate is a consequence of \eqref{eq:1-moment}. In view of the formula \cite[(10)]{MR2013738}, for
	$w \geq (s/M)^{1/\alpha}$ with $M=4/(1-m^2)$, and $s > 0$,
	\[
		p_0(s;0,w) \geq C^{-1} s \nu(w).
	\]
	Moreover, for $u \in (0, 1)$,
	\[
		u -c^jx \geq r (tu/M)^{1/\alpha},
	\]
	thus, using \eqref{eq:rep-p-x} together with \eqref{ineq:cut-M} and \eqref{eq:1-moment}, we get
	\begin{align*}
		p(t;x,y)
		&\geq
		e^{-t} p_0(t;0,y-x) \ind{(0,M]}(t)+e^{-t}\sum_{j=1}^\infty t^j
		\int_0^1 p_0(tu;0,y-c^jx)  \ind{(0,M]}(tu) P_j(u) {\: \rm d} u \\
		&\geq 2^{-1-\alpha} \nu(y)\, C^{-1} \int_{(0,M]} u \: \mu_t({\rm d} u) \\
		&\geq 2^{-1-\alpha} \nu(y)\,\frac{C^{-1}}2 \int_0^\infty u \: \mu_t({\rm d} u)\\
		&\geq 2^{-1-\alpha} \frac{C^{-1}}2  \frac{1-e^{-(1-m)\delta}}{1-m} \nu(y),
	\end{align*}
	which together with \eqref{eq:35b} completes the proof.
\end{proof}

\subsection{$d$-Cylindrical $\alpha$-stable process with $\alpha\in(0,2)$ and resetting}
\label{sec:cyl}
In this section we study $d$-dimensional cylindrical $\alpha$-stable process, $\alpha \in (0, 2)$, namely a L\'evy
process with the density
\[
	p_0(t; x, y) = \prod_{j=1}^d q(t; x_j, y_j)
\]
where $q$ denotes the density of the symmetric one-dimensional $\alpha$-stable process. Let $\nu$ be
the L{\'e}vy measure of the one-dimensional process, that is
\[
	\nu(r)=\frac{2^{\alpha}\Gamma((1+\alpha)/2)}{\pi^{1/2}|\Gamma(-\alpha/2)|}
	|r|^{-1-\alpha}, \qquad r \in \RR.
\]
The function $\nu$ should not be confused with the L{\'e}vy measure of the $d$-cylindrical $\alpha$-stable process,
which is singular, see Example~\ref{ex:names:3} for details.

Given $\theta\in\RR^d$ let us define
\[
	S_1=\{i \in \{1, \ldots, d\} : \theta_i\neq 0\}\,,\qquad
	d_1=\#S_1\,,\qquad
	S_0=\{1, \ldots, d\} \setminus S_1.
\]
Recall that $p(t;x,y)$ depends on $c$ (or equivalently on $m$), see \eqref{def:m}.

\begin{theorem}
	\label{thm:cylindrical}
	Suppose that $\mathbf{Y}$ is a cylindrical $\alpha$-stable process in $\RR^d$, $\alpha \in (0, 2)$, with a transition
	density $p_0$. Assume that $\mathbf{X}$ is obtained from $\mathbf{Y}$ by partial resetting with factor $c\in(0,1)$.
	Then for each $\theta\in \RR^d$, $\kappa\in (0,1)$, $K\geq 0$ and $\delta>0$, the transition density of $\mathbf{X}$
	satisfies
	\begin{align}
		\label{eq:c-1}
		\lim_{\substack{ r \to +\infty \\ t \to+\infty}}
		\sup_{y \in \mathscr{Y}} \sup_{x\in \mathscr{X}(r,t)}
		\bigg| \frac{p(t;x,r\theta+y)}{\prod_{i\in S_1}\nu(r\theta_i)}- L \bigg|=0
	\end{align}
	where
	\begin{align*}
		\mathscr{X}(r,t)
		&=
		\left\{x\in\RR^d :
		\begin{aligned}
			\abs{x_i} &\leq \kappa \norm{r \theta_i}, \quad \text{for all}\quad i \in S_1, \\
			\log_{1/c} \abs{x_i} &\leq \frac{t}{1+\delta}, \quad \text{for all}\quad i \in S_0,
		\end{aligned}
		\right\} \\
		\mathscr{Y}&=\Big\{y\in\RR^d : |y_i|\leq K \quad \text{for all}\quad i \in S_1\Big\},
	\end{align*}
	and
	\[
		L= \int_0^{\infty} u^{d_1}  \prod_{i\in S_0} q(u;0,y_i) \: \mu({\rm d}u)\,.
	\]
\end{theorem}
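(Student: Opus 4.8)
The plan is to reduce, step by step, to a situation covered by Lemma~\ref{lem:unif_conv-2}, exploiting the product form $p_0(u;0,w)=\prod_{i=1}^d q(u;0,w_i)$ and the two one-dimensional facts already used in the proof of Theorem~\ref{thm:ius}: $q(u;0,w)\le C u\,\nu(w)$ for all $u>0$, $w\ne0$, and $q(u;0,w)/(u\,\nu(w))\to1$ as $u|w|^{-\alpha}\to0^+$ (see \cite{BlumenthalGetoor}), together with the standard gradient bounds $|\partial_w q(u;0,w)|\le Cu^{-2/\alpha}$ and $|\partial_w q(u;0,w)|\le Cu\,\norm{w}^{-2-\alpha}$. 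Throughout, $r$ is taken larger than a threshold $r_0=r_0(\theta,\kappa,K)$ so that $\norm{r\theta_i+y_i-c^jx_i}\ge\tfrac12(1-\kappa)\norm{r\theta_i}$ for all $i\in S_1$, $j\ge0$, $x\in\mathscr{X}(r,t)$, $y\in\mathscr{Y}$; in particular $\nu(r\theta_i+y_i-c^jx_i)\approx\nu(r\theta_i)$ uniformly.

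\emph{Step 1: eliminating the $S_1$-coordinates of $x$.} Write $x'$ for the vector with $x'_i=0$ for $i\in S_1$ and $x'_i=x_i$ for $i\in S_0$. By \eqref{eq:rep-p-x}, and since the $S_0$-factors of the shifts $r\theta+y-c^jx$ and $r\theta+y-c^jx'$ coincide, the difference $p(t;x,r\theta+y)-p(t;x',r\theta+y)$, divided by $\prod_{i\in S_1}\nu(r\theta_i)$, admits a term-by-term bound involving only $\prod_{i\in S_1}q(tu;0,r\theta_i+y_i-c^jx_i)-\prod_{i\in S_1}q(tu;0,r\theta_i+y_i)$. I would split the series at $N=\lceil t/(1+\delta)\rceil$. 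For $j<N$ the crude estimate $p_0(tu;0,r\theta+y-c^jx)/\prod_{i\in S_1}\nu(r\theta_i)\le C(tu)^{d_1-d_0/\alpha}$ (valid because $\norm{x_i}\le\kappa\norm{r\theta_i}$ keeps each $S_1$-factor of order $tu\,\nu(r\theta_i)$ and each $S_0$-factor of order $(tu)^{-1/\alpha}$), together with the moment formula of Theorem~\ref{thm:all-moments}, reduces this piece to $C\,e^{-t}\,\mathrm{poly}(t)\sum_{j<N}t^j/j!$, which tends to $0$ by the standard large-deviation bound for the Poisson distribution, since $N<t$. For $j\ge N$ one has $\norm{c^jx_i}\le c^{N}\kappa\norm{r\theta_i}$ with $c^{N}\to0$, so a telescoping argument using $|\partial_w q(tu;0,w)|\le C(tu)\norm{w}^{-2-\alpha}$ on $\norm{w}\approx\norm{r\theta_i}$ bounds the $j$-th integrand by $C\,c^{N}(tu)^{d_1-d_0/\alpha}$, and summing against the spline moments again gives $C\,c^{N}\,\mathrm{poly}(t)\to0$; the first, non-series term is handled the same way. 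Hence $\sup_{x\in\mathscr{X}(r,t),\,y\in\mathscr{Y}}\bigl|p(t;x,r\theta+y)-p(t;x',r\theta+y)\bigr|/\prod_{i\in S_1}\nu(r\theta_i)\to0$, uniformly in $r\ge r_0$.

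\emph{Step 2: applying Lemma~\ref{lem:unif_conv-2} with $n=d_0$.} For the vector $x'$ (with $x'_{S_1}=0$, and writing $x_{S_0}$ for its $S_0$-block, the same for $x$ and $x'$) formula \eqref{eq:rep-p-x} gives, with $f_{r,y}(u,z)=\bigl(\prod_{i\in S_1}\tfrac{q(u;0,r\theta_i+y_i)}{\nu(r\theta_i)}\bigr)\prod_{i\in S_0}q(u;0,y_i-z_i)$,
\[
\frac{p(t;x',r\theta+y)}{\prod_{i\in S_1}\nu(r\theta_i)}
= e^{-t}f_{r,y}(t,x_{S_0})+e^{-t}\sum_{j=1}^{\infty}t^j\int_0^1 f_{r,y}(tu,c^j x_{S_0})\,P_j(u)\,{\rm d} u.
\]
The family $\calF=\{f_{r,y}: r\ge r_0,\ y\in\mathscr{Y}\}$ meets the hypotheses of Lemma~\ref{lem:unif_conv-2} with constants independent of $r,y$: $q(u;0,r\theta_i+y_i)/\nu(r\theta_i)\le Cu$ and $\prod_{i\in S_0}q(u;0,y_i-z_i)\le Cu^{-d_0/\alpha}$ give $0\le f_{r,y}(u,z)\le C(u+u^{-1})^\gamma$, while $|\partial_w q(u;0,w)|\le Cu^{-2/\alpha}$ and telescoping give $|f_{r,y}(u,z)-f_{r,y}(u,0)|\le C\norm{z}(u+u^{-1})^\gamma$, for a fixed $\gamma$. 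Since the $S_0$-part of the definition of $\mathscr{X}(r,t)$ is, up to an inessential shrinking of $\delta$, the membership $x_{S_0}\in\mathscr{X}(t;\delta)$ of \eqref{eq:52}, Lemma~\ref{lem:unif_conv-2} shows that $p(t;x',r\theta+y)/\prod_{i\in S_1}\nu(r\theta_i)$ converges as $t\to\infty$ to $\int_0^\infty f_{r,y}(u,0)\,\mu({\rm d} u)$, uniformly in $r\ge r_0$, $y\in\mathscr{Y}$, $x\in\mathscr{X}(r,t)$; by \eqref{representationrho} this limit equals $\rho_{\mathbf Y}(r\theta+y)/\prod_{i\in S_1}\nu(r\theta_i)$.

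\emph{Step 3: letting $r\to\infty$, and conclusion.} For $u$ in a compact subset of $(0,\infty)$ and $\norm{y_i}\le K$, the factorisation $q(u;0,r\theta_i+y_i)/\nu(r\theta_i)=\frac{q(u;0,r\theta_i+y_i)}{u\,\nu(r\theta_i+y_i)}\cdot\frac{\nu(r\theta_i+y_i)}{\nu(r\theta_i)}\cdot u$ shows this quantity $\to u$ uniformly as $r\to\infty$ while remaining bounded by $Cu$; splitting the $\mu$-integral into a compact range and a tail that is uniformly small because $\mu$ has finite moments of every order (Theorem~\ref{thm:weak_conv}), one gets $\int_0^\infty f_{r,y}(u,0)\,\mu({\rm d} u)\to\int_0^\infty u^{d_1}\prod_{i\in S_0}q(u;0,y_i)\,\mu({\rm d} u)=L$ as $r\to\infty$, uniformly in $y\in\mathscr{Y}$. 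Combining the three steps by the triangle inequality — first fixing $r_0$ so the Step~3 difference is small for $r\ge r_0$, then choosing the time threshold of Step~2 for that $r_0$, then the threshold of Step~1 — yields \eqref{eq:c-1}. The main obstacle is Step~1: eliminating the $S_1$-coordinates of $x$ uniformly in the genuine two-parameter limit $r,t\to\infty$, where one must balance $r$ (controlling the $S_1$-shifts through $\norm{r\theta_i}$) against $t$ (controlling the $S_0$-shifts and the spline/Poisson weights, concentrated near $j\approx t$), which is what forces the splitting index $N$ to be a fraction of $t$ dictated by $\delta$.
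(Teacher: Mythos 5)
Your proof is correct, and it reorganizes the argument in a way that genuinely differs from the paper's. The paper compares $p(t;x,r\theta+y)/\prod_{i\in S_1}\nu(r\theta_i)$ in one step with the expression built from $f(u,z)=u^{d_1}\prod_{i\in S_0}q(u;0,y_i-z_i)$, i.e.\ it replaces each $S_1$-factor $q(tu;0,r\theta_i+y_i-c^jx_i)/((tu)\nu(r\theta_i))$ by $1$ at finite $t$; since that replacement is only justified when $tu\lesssim r^\alpha$, the paper must split the $u$-integral at $\tilde{\delta}r^\alpha/t$ and kill the complementary range with the Chebyshev-type bound $\int_{\tilde\delta r^\alpha\wedge t}^{t}u^{d_1-d_0/\alpha}\mu_t({\rm d}u)\le(\tilde\delta r^\alpha)^{-1}\sup_t\int u^{d_1-d_0/\alpha+1}\mu_t({\rm d}u)$ — this is where the interplay between $r$ and $t$ is negotiated. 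You instead remove only the $x$-dependence of the $S_1$-factors at finite $t$ (your Step 1), keep the $r$-dependent factors inside the function family fed to Lemma~\ref{lem:unif_conv-2}, identify the $t\to\infty$ limit as $\rho_{\mathbf Y}(r\theta+y)/\prod_{i\in S_1}\nu(r\theta_i)$ (thereby reproving \eqref{eq:79} of Proposition~\ref{prop:cylindrical} en route), and only then send $r\to\infty$ in a $t$-free integral against $\mu$, where the problematic large-$u$ regime is controlled by the finiteness of the moments of $\mu$ rather than of $\mu_t$. Since your Steps 1 and 2 are uniform in $r\ge r_0$, the two sequential limits do recombine into the joint limit \eqref{eq:c-1}, so the decoupling is legitimate; what it buys is a cleaner separation of the roles of $r$ and $t$, at the price of the extra comparison in Step 1. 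Two small points. First, Step 1 uses the bound $\abs{\partial_w q(u;0,w)}\le C u\norm{w}^{-2-\alpha}$, which is a standard fact for one-dimensional symmetric stable densities but is not among the estimates recorded in the paper (Lemma~\ref{lem:A3}\eqref{en:3:2} only gives $\abs{\partial_w q(u;0,w)}\le Cu^{-2/\alpha}$); you should either cite it or derive it from scaling and the decay of $\partial_v q(1;0,v)$. Second, in the $j<N$ part of Step 1 the reduction to $e^{-t}\sum_{j<N}t^j/j!$ requires the spline moments $\int_0^1 u^{\gamma}P_j(u)\,{\rm d}u$ with $\gamma=d_1-d_0/\alpha$ possibly negative; this is exactly the computation carried out around \eqref{eq:50} in the proof of Lemma~\ref{lem:unif_conv-2}, so it should be invoked rather than waved at as a Poisson tail bound.
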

\begin{proof}
	Let $d_0=\#S_0$. For $s > 0$ and $w \in \RR^d$,
	\begin{align*}
		p_0(s;0,w)=
		\bigg\{ \prod_{i\in S_1} q(s;0,w_i) \bigg\}
		\bigg\{\prod_{i\in S_0} q(s;0,w_i)\bigg\}.
	\end{align*}
	Recall that for all $s>0$, $v\in\RR$, (see \cite{BlumenthalGetoor})
	\begin{align}
		\label{approx:is-c}
		q(s;0,v) \approx \min\bigg\{ s^{-d/\alpha}, \frac{s}{|v|^{1+\alpha}}\bigg\}.
	\end{align}
	In particular,
	\begin{align}
		\label{ineq:2-c}
		q(s;0,v) & \leq C \min\big\{ s^{-1/\alpha}, s \nu(v)\big\}.
	\end{align}
	Using \eqref{eq:rep-p-x} we write
	\begin{align*}
		I_1 &= \frac{p(t; x, r\theta + y)}{\prod_{i\in S_1}\nu(r\theta_i)} \\
		&=
		e^{-t} t^{d_1} \frac{p_0(t;0,r\theta+y-x)}{\prod_{i\in S_1}t \nu(r\theta_i)}
		+ e^{-t}
		\sum_{j = 1}^\infty t^j \int_0^1 \frac{p_0(t u; 0,r\theta+ y-c^j x)}{\prod_{i\in S_1} (tu)
		\nu(r\theta_i)}(tu)^{d_1} P_j(u) {\: \rm d} u.
	\end{align*}
	First, we show that
	\[
		\lim_{\substack{ r \to +\infty \\ t \to+\infty}}
		\sup_{y \in \mathscr{Y}} \sup_{x\in \mathscr{X}(r,t)} \abs{I_1-I_2}=0
	\]
	where
	\[
		I_2=e^{-t}f(t,x)+e^{-t}\sum_{j=1}^\infty t^j \int_0^1 f(tu,c^jx) P_j(u) \: {\rm d}u,
	\]
	and
	\[
		f(u,x)= u^{d_1} \prod_{i\in S_0} q(u;0,y_i-x_i).
	\]
	Let
	\[
		r > 2 \frac{K}{1-\kappa}\min\big\{\abs{\theta_i}^{-1} : i \in S_1\big\}.
	\]
	Since for $i \in S_1$,
	\begin{equation}
		\label{eq:71}
		|x_i| \leq \kappa r|\theta_i|, \qquad\text{ and }\qquad r |\theta_i| \geq |y_i| \frac{2}{1-\kappa},
	\end{equation}
	for each $j \in \NN_0$, we have
	\begin{align}
		\label{ineq:1-c}
		\frac{\nu(r\theta_i+y_i-c^jx_i)}{\nu(r\theta_i)}
		\leq \left(\frac{1-\kappa}{2}\right)^{-1-\alpha}.
	\end{align}
	Now using \eqref{ineq:2-c} and \eqref{ineq:1-c} we obtain
	\[
		e^{-t} t^{d_1} \frac{p_0(t;0,r\theta+y-x)}{\prod_{i\in S_1} t \nu(r\theta_i)}
		\leq
		\tilde{C}^{d_1} C^{d_0} e^{-t} t^{d_1-d_0/\alpha},
	\]
	and
	\[
		e^{-t}f(t,x) \leq C^{d_0} e^{-t} t^{d_1-d_0/\alpha}
	\]
	where $\tilde{C} = 2^{1+\alpha} C (1-\kappa)^{-1-\alpha}$. Hence, the first terms in $I_1$ as well as in $I_2$ are
	negligible. Furthermore, by \eqref{ineq:2-c} and \eqref{ineq:1-c} for $j\in\NN$ we have
	\begin{align}
 		e^{-t} t^j
		\int_0^1
		\bigg( \frac{p_0(tu;0,r\theta+y-c^j x)}{\prod_{i\in S_1} (tu)\nu(r\theta_i)}(tu)^{d_1}+ f(tu,c^j x)\bigg)
		P_j(u) {\: \rm d} u \nonumber \\
		\leq
		\big( \tilde{C}^{d_1}+1\big) C^{d_0} e^{-t} t^j
		\int_0^1 (tu)^{d_1-d_0/\alpha} P_j(u) {\: \rm d} u. \label{ineq:b-2-c}
	\end{align}
	Let us recall that (see e.g. \cite{BlumenthalGetoor})
	\begin{equation}
		\label{eq:72}
		\lim_{s /|v|^{\alpha} \to 0^+} \frac{q(s; 0, v)}{s \nu(v)} = 1.
	\end{equation}
	For $j \in \NN$, we write
	\begin{equation}
		\label{eq:75}
		\begin{aligned}
		\bigg|\frac{q(s; 0,r\theta_i+ y_i-c^j x_i)}{s \nu(r\theta_i)} - 1 \bigg|
		&\leq
		\bigg|\frac{q(s; 0, r\theta_i+ y_i-c^j x_i)}{s \nu(r\theta_i+ y_i-c^j x_i)} - 1 \bigg|
		\frac{\nu(r\theta_i+ y_i-c^j x_i)}{\nu(r\theta_i)}\\
		&\phantom{\leq}+\bigg|\frac{\nu(r\theta_i+ y_i-c^j x_i)}{\nu(r\theta_i)} -1\bigg|.
		\end{aligned}
	\end{equation}
	By \eqref{eq:71}, for $i \in S_1$ and $j \in \NN$,
	\begin{equation}
		\label{eq:73}
		\frac{s}{|r\theta_i +y_i-c^jx_i|^{\alpha}}
		\leq
		2^{\alpha} (1 - \kappa)^{-\alpha}
		\frac{s}{\abs{r \theta_i}^\alpha},
	\end{equation}
	and
	\begin{equation}
		\label{eq:74}
		\frac{\nu(r\theta_i)}{\big(1+K/|r\theta_i|+c^j\kappa\big)^{1+\alpha}}
		\leq
		\nu(r\theta_i+ y_i-c^j x_i)
		\leq
		\frac{\nu(r\theta_i)}{\big(1-K/|r\theta_i|-c^j\kappa\big)^{1+\alpha}}.
	\end{equation}
	Let $\epsilon>0$. By \eqref{eq:75}, in view of \eqref{eq:72}, \eqref{eq:73} and \eqref{eq:74}, we deduce that
	there are $\tilde{\delta} > 0$ and $n_0 \in \NN$, such that if $s r^{-\alpha} \leq \tilde{\delta}$, then
	for all $j > n_0$, $r \geq (\tilde{\delta})^{-1}$ and $i \in S_1$,
	\begin{equation}
		\label{ineq:3-c}
		\bigg|\frac{q(s; 0, r\theta_i+ y_{0.i}-c^j xi)}{s \nu(r\theta_i)} - 1 \bigg| \leq \epsilon.
	\end{equation}
	Both $\tilde{\delta}$ and $n_0$ depend only on $\alpha$, $\epsilon$, $\kappa$, $K$ and $\theta$. Now, we write
	\begin{equation}
		\label{eq:77}
		\begin{aligned}
		&e^{-t} \sum_{j = 1}^\infty t^j
		\int_0^1 \frac{p_0(t u; 0,r\theta+ y-c^j x)}{\prod_{i\in S_1} (tu) \nu(r\theta_i)}(tu)^{d_1} P_j(u) {\: \rm d} u
		-e^{-t}\sum_{j=1}^\infty t^j \int_0^1 f(tu,c^jx) P_j(u) \: {\rm d}u\\
		&\qquad= e^{-t} \sum_{j=1}^{n_0} t^j \int_0^1
		\bigg( \frac{p_0(tu;0,r\theta+y-c^j x)}{\prod_{i\in S_1} (tu)\nu(r\theta_i)}(tu)^{d_1} - f(tu,c^j x)\bigg) P_j(u)
		{\: \rm d} u\\
		&\qquad\phantom{=}
		+e^{-t} \sum_{j=n_0+1}^\infty  t^j\int_{\frac{\tilde{\delta} r^\alpha}{t}\land 1}^1
		\bigg( \frac{p_0(tu;0,r\theta+y-c^j x)}{\prod_{i\in S_1} (tu)\nu(r\theta_i)}(tu)^{d_1} - f(tu,c^j x) \bigg)
		P_j(u) {\: \rm d} u \\
		&\qquad\phantom{=}
		+ e^{-t} \sum_{j=n_0+1}^\infty t^j
		\int_0^{\frac{\tilde{\delta} r^\alpha}{t}\land 1}
		\bigg( \frac{p_0(tu;0,r\theta+y-c^j x)}{\prod_{i\in S_1} (tu)\nu(r\theta_i)}(tu)^{d_1} - f(tu,c^j x)\bigg)
		P_j(u) {\: \rm d} u.
		\end{aligned}
	\end{equation}
	By \eqref{ineq:b-2-c}, the first term on the right-hand side of \eqref{eq:77} is small, see also Theorem
	\ref{thm:all-moments}. The absolute value of the third term can be bounded by
	\begin{align*}
		e^{-t}
		\sum_{j = n_0+1}^\infty t^j
		\int_0^{\frac{\tilde{\delta} r^\alpha}{t} \land 1}
		\bigg| \prod_{i\in S_1} \frac{q(tu;0,r\theta_i+y_i-c^jx_i)}{tu \nu(r\theta_i)} - 1\bigg|
		\bigg( \prod_{i\in S_0} q(tu;0,y_i-c^jx_i)\bigg) (tu)^{d_1} P_j(u) {\: \rm d} u.
	\end{align*}
	In view of \eqref{ineq:2-c}, \eqref{ineq:1-c} and \eqref{ineq:3-c} the last displayed formula is bounded by
	\begin{align*}
		\epsilon e^{-t}
		\sum_{j = 1}^\infty  t^j \int_0^1 \big(\tilde{C}\vee 1 \big)^{d_1-1} C^{d_0} (tu)^{d_1-d_0/\alpha} P_j(u) {\: \rm d} u
		\leq
		\epsilon \big( \tilde{C}\vee 1 \big)^{d_1-1} C^{d_0} \sup_{t\geq 1} \int_0^\infty u^{d_1-d_0/\alpha} \mu_t({\rm d}u)
	\end{align*}
	which is small by \eqref{ineq:sup-finite}. Next, using \eqref{ineq:2-c} and \eqref{ineq:1-c}, the absolute value of the
	second term in \eqref{eq:77} is controlled as follows
	\begin{align*}
		&e^{-t}
		\sum_{j=1}^\infty
		t^j\int_{\frac{\tilde{\delta} r^\alpha}{t}\land 1}^1
		\bigg( \frac{p_0(tu;0,r\theta+y-c^j x)}{\prod_{i\in S_1} (tu)\nu(r\theta_i)}(tu)^{d_1} + f(tu,c^j x)\bigg)
		P_j(u) {\: \rm d} u\\
		&\qquad
		\leq
		\big(\tilde{C}^{d_1}+1\big) C^{d_0}  \int_{\tilde{\delta} r^\alpha \land t}^t
		u^{d_1-d_0/\alpha}  \mu_t({\rm d}u)\\
		&\qquad
		\leq \big( \tilde{C}^{d_1}+1\big) C^{d_0}
		\frac1{\tilde{\delta} r^\alpha}
		\sup_{t\geq 1} \int_0^\infty u^{d_1-d_0/\alpha+1} \mu_t({\rm d}u).
	\end{align*}
	Consequently, $|I_1-I_2|$ converges to zero as desired.
	
	Therefore, it suffices to show that
	\[
		\lim_{\substack{r \to +\infty \\ t \to +\infty}} \sup_{y \in \mathscr{Y}} \sup_{x \in \mathscr{X}(r, t)}
		\big|I_2 - L \big| = 0.
	\]
	Without loss of generality we assume that $S_0=\{1,\ldots, d_0\}$. Since $\prod_{i\in S_0} q(s;0,w_i)$ may be regarded as
	a transition density of a strictly $\alpha$-stable process in $\RR^{d_0}$, it follows from Lemma~\ref{lem:A3} that
	the family
	\[
		\mathcal{F}=\Big\{ f : \RR_+ \times \RR^d \rightarrow \RR :
		f(u, x) =   u^{d_1} \prod_{i\in S_0} q(u;0,y_i-x_i), \, y\in \RR^{d_0} \Big\}
	\]
	satisfies \eqref{eq:78}. Notice that for each $f \in \mathcal{F}$ we have $\int_0^\infty f(u,0)\: \mu({\rm d} u)=L$.
	Applying now Lemma \ref{lem:unif_conv-2} with $\delta$ replaced by $\delta/2$, we get
	\[
		\lim_{t\to +\infty} \sup_{f\in \mathcal{F}} \sup_{x \in \mathscr{X}_{\delta/2}(t)} |I_2-L|=0.
	\]
	In particular, $I_2 \rightarrow L$ uniformly with respect to $y \in \RR^{d_0}$. Furthermore, since
	$x \in \mathscr{X} (r,t)$ implies that $x \in \mathscr{X}_{\delta/2}(t)$ provided that
	\[
		t \geq (1+\delta/2)(1+\delta^{-1}) \log_{1/c} d_0,
	\]
	the theorem follows.
\end{proof}

In addition to studying the asymptotic behavior of $p$, we also show its upper and lower estimates.
\begin{proposition}
	\label{prop:cylindrical}
	Under the assumptions of Theorem~\ref{thm:cylindrical}, for all $\theta\in\RR^d$ and $K\geq 0$,
	\begin{equation}
		\label{eq:79}
		\lim_{r\to\infty} \sup_{y\in\mathscr{Y}}
		\bigg| \frac{\rho_{\mathbf{Y}}(r\theta+y)}{\prod_{i\in S_1}\nu(r\theta_i)} - L\bigg|=0 \,.
	\end{equation}
	Furthermore, for all $\delta, r>0$,
	\begin{equation}
		\label{eq:80}
		p \approx \rho_{\mathbf{Y}} \qquad\text{on } [\delta,\infty) \times B_r \times \RR^d,
	\end{equation}
	and
	\[
		\rho_{\mathbf{Y}}(y) \approx \prod_{i=1}^d 1\land \nu(y_i) \qquad\text{for } y \in \RR^d.
	\]
\end{proposition}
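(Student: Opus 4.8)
The plan is to establish the three assertions in the order \eqref{eq:79}, then the global two-sided bound $\rho_{\mathbf{Y}}(y)\approx\prod_{i=1}^d 1\wedge\nu(y_i)$, and finally \eqref{eq:80}, since that last step will use the one before it. To obtain \eqref{eq:79} I would specialize Theorem~\ref{thm:cylindrical} to $x=0$, which lies in $\mathscr{X}(r,t)$ for every $r,t>0$: given $\varepsilon>0$, by \eqref{eq:c-1} there are $R,T>0$ such that $\bigl|p(t;0,r\theta+y)/\prod_{i\in S_1}\nu(r\theta_i)-L\bigr|<\varepsilon$ whenever $r\ge R$, $t\ge T$ and $y\in\mathscr{Y}$. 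Fixing such $r$ and $y$ and letting $t\to\infty$, Theorem~\ref{thm:lim_p_t_infty} gives $p(t;0,r\theta+y)\to\rho_{\mathbf{Y}}(r\theta+y)$, hence $\bigl|\rho_{\mathbf{Y}}(r\theta+y)/\prod_{i\in S_1}\nu(r\theta_i)-L\bigr|\le\varepsilon$ for all $r\ge R$ and $y\in\mathscr{Y}$; letting $\varepsilon\downarrow0$ yields \eqref{eq:79}.

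For the global estimate on $\rho_{\mathbf{Y}}$ I would start from \eqref{representationrho}, which here reads $\rho_{\mathbf{Y}}(y)=\int_0^\infty\prod_{i=1}^d q(u;0,y_i)\,\mu({\rm d}u)$. Fix first $0<a<b<\infty$ with $\mu([a,b])>0$, then a constant $r_0$ so large that $\nu(r_0)\le1$ and $r_0^{\alpha}\ge b$, and split $\{1,\dots,d\}=I_{\rm big}\cup I_{\rm small}$ with $I_{\rm big}=\{i:\abs{y_i}>r_0\}$, $k=\#I_{\rm big}$. For the upper bound apply \eqref{ineq:2-c} in the form $q(u;0,y_i)\le Cu\nu(y_i)$ on $I_{\rm big}$ and $q(u;0,y_i)\le Cu^{-1/\alpha}$ on $I_{\rm small}$, obtaining $\rho_{\mathbf{Y}}(y)\le C^d\prod_{i\in I_{\rm big}}\nu(y_i)\int_0^\infty u^{\,k-(d-k)/\alpha}\,\mu({\rm d}u)$, and bound the integral by $\int_0^\infty\bigl(u^d+u^{-d/\alpha}\bigr)\,\mu({\rm d}u)<\infty$, finite by Theorem~\ref{thm:weak_conv}. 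For the lower bound restrict the integral to $[a,b]$: there $q(u;0,y_i)\ge C^{-1}u\nu(y_i)$ for $i\in I_{\rm big}$ by \eqref{approx:is-c} (as $\abs{y_i}^{\alpha}>r_0^{\alpha}\ge b\ge u$), while $q(u;0,y_i)\ge c_3:=\inf\{q(u;0,v):u\in[a,b],\ \abs{v}\le r_0\}>0$ for $i\in I_{\rm small}$, so $\rho_{\mathbf{Y}}(y)\ge C^{-k}a^{k}c_3^{\,d-k}\mu([a,b])\prod_{i\in I_{\rm big}}\nu(y_i)$. Since $\nu(r_0)\le1$, one has $\prod_{i\in I_{\rm big}}\nu(y_i)\approx\prod_{i=1}^d 1\wedge\nu(y_i)$ with constants depending only on $d$, $\alpha$, $r_0$, which is the asserted equivalence.

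To prove \eqref{eq:80} I would follow the scheme of Corollaries~\ref{cor:ius} and \ref{cor:s-s}. On a large ball $\overline{B_R}$ the function $\rho_{\mathbf{Y}}$ is continuous and strictly positive (Proposition~\ref{prop:6} and \eqref{representationrho}), hence comparable to a positive constant, so Theorem~\ref{thm:lim_p_t_infty} — together with $B_r\subset\mathscr{X}(t;\delta)$ for $t$ large — gives $p\approx\rho_{\mathbf{Y}}$ on $[T,\infty)\times B_r\times B_R$, and continuity and positivity of $p$ and $\rho_{\mathbf{Y}}$ (Lemma~\ref{lem:p_reg} and \eqref{eq:rep-p-x}) extend this to the compact block $[\delta,T]\times\overline{B_r}\times\overline{B_R}$. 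For $\norm{y}>R$ with $R$ so large that $\max_i\abs{y_i}$ then exceeds $\rho_1:=\max\{2r,\,r_0,\,2b^{1/\alpha}\}$, split the coordinates relative to $\rho_1$ as above; for $x\in B_r$ and every $j\ge0$ one has $\abs{y_i-c^jx_i}\approx\abs{y_i}$, hence $\nu(y_i-c^jx_i)\approx\nu(y_i)$, on $I_{\rm big}$. Feeding this into \eqref{eq:rep-p-x}, the upper bound goes exactly as for $\rho_{\mathbf{Y}}$, now using \eqref{ineq:2-c} followed by \eqref{eq:moments-mu_t-P_j} and \eqref{ineq:sup-finite} to replace the moment of $\mu$ by a bound uniform over the moments of $(\mu_t)_{t\ge\delta}$. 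For the lower bound I would keep in \eqref{eq:rep-p-x} only the contribution of the set $\{ut\in[a',b']\}$ for fixed $0<a'<\delta<b'$ with $\rho_1\ge2(b')^{1/\alpha}$: there each factor $q(ut;0,y_i-c^jx_i)$ is bounded below by a positive constant on $I_{\rm small}$ and by a constant multiple of $\nu(y_i)$ on $I_{\rm big}$, so the retained part is $\gtrsim\mu_t([a',b'])\prod_{i\in I_{\rm big}}\nu(y_i)$ by \eqref{def:mu_t}, and $\inf_{t\ge\delta}\mu_t([a',b'])>0$ by Lemma~\ref{lem:inf_mu_t}. Combining these with the global estimate of the previous paragraph, $p(t;x,y)\approx\prod_{i\in I_{\rm big}}\nu(y_i)\approx\prod_{i=1}^d 1\wedge\nu(y_i)\approx\rho_{\mathbf{Y}}(y)$ uniformly on $[\delta,\infty)\times B_r\times(\RR^d\setminus B_R)$.

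The main obstacle is the lower bound — both for $p(t;x,y)$ and for $\rho_{\mathbf{Y}}$ — when the coordinates of $y$ have genuinely different magnitudes: since $p_0$ factorizes over coordinates, the one-dimensional factors $q(ut;0,y_i-c^jx_i)$ lie in different asymptotic regimes depending on the size of $ut$ relative to $\abs{y_i}^{\alpha}$, so a naive coordinate-by-coordinate comparison of $p(t;x,y)$ with $p(t;0,y)$ breaks down for the bounded coordinates when $ut$ is small. This is handled by retaining only the portion of the spline expansion on which $ut$ stays in a fixed compact interval — where every one-dimensional factor is controlled from below — and invoking $\inf_{t\ge\delta}\mu_t([a',b'])>0$ (Lemma~\ref{lem:inf_mu_t}) to see that this truncated expansion still carries a fixed fraction of the total mass.
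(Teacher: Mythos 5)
Your proposal is correct and follows essentially the same route as the paper: deduce \eqref{eq:79} by letting $t\to\infty$ in \eqref{eq:c-1}, split the coordinates of $y$ into large and small ones, use \eqref{ineq:2-c} together with the uniform moment bounds \eqref{ineq:sup-finite} for the upper estimate, and for the lower estimate restrict $tu$ to a fixed compact interval and invoke Lemma~\ref{lem:inf_mu_t}. The only (harmless) organizational difference is that you prove $\rho_{\mathbf{Y}}\approx\prod_i 1\land\nu(y_i)$ separately from the representation \eqref{representationrho}, whereas the paper obtains both this and \eqref{eq:80} from the single two-sided bound $p\approx\prod_i 1\land\nu(y_i)$.
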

\begin{proof}
	To get \eqref{eq:79}, it is enough to use \eqref{representationrho} and take $t$ tending to infinity in \eqref{eq:c-1}.
	Now, to prove \eqref{eq:80}, it is enough to show that
	\[
		\frac{p(t;x,y)}{\prod_{i=1}^d 1\land \nu(y_i)} \approx 1,
		\qquad\text{on } [\delta,\infty)\times B_r \times \RR^d.
	\]
	Without loss of generality we assume that $\delta < 1$. Given $y\in\RR^d$ and $r > 0$ we set
	\begin{align*}
		S_1(y, r)&=\{i \in \{1, \ldots, d\} : |y_i|> 2r\}, &\qquad
		d_1(y, r)&=\#S_1(y, r), \\
		S_0(y, r)&=\{i \in \{1, \ldots, d\} : |y_i|\leq 2r\}, &\qquad
		d_0(y, r)&=\#S_0(y, r).
	\end{align*}
	The reasoning is analogous to the proof of Theorem \ref{thm:cylindrical}. For $s > 0$ and $y, w \in \RR^d$, we write
	\[
		p_0(s;0,y-w)= \bigg\{ \prod_{i\in S_1(y, r)} q(s;0,y_i-w_i) \bigg\}
		\bigg\{\prod_{i\in S_0(y, r)} q(s;0,y_i-w_i)\bigg\}.
	\]
	Let $t \geq \delta$ and $r \geq \norm{x}$. For $y \in \RR^d$, by \eqref{ineq:2-c}, we have
	\begin{align*}
		\frac{q(s;0,y_i-c^jx_i)}{s \nu(y_i)} &\leq 2^{1+\alpha} C,
		\qquad\text{ for all } i \in S_1(y, r),
		\intertext{and}
		q(s;0,y_i-c^jx_i) &\leq C s^{-1/\alpha}, \qquad\text{ for all } i \in S_0(y, r),
	\end{align*}
	hence by \eqref{eq:rep-p-x}
	\begin{align*}
		p(t; x,y)
		&\leq
		2^{d_1(y, r) (1+\alpha)} C^d
		\bigg(\prod_{i\in S_1(y, r)} \nu(y_i)\bigg) \int_0^\infty u^{d_1(y,r)-d_0(y,r)/\alpha} \: \mu_t({\rm d} u).
	\end{align*}
	Since the latter integral is bounded by the sum of moments of $\mu_t$ of orders $-d/\alpha$ and $d$, using
	\eqref{ineq:sup-finite} we obtain the desired upper bound.

	Next, \eqref{approx:is-c} implies that
	\begin{align*}
		q(s;,0,v) &\geq C^{-1} s \nu(v), \qquad\text{if } |v|\geq r s^{1/\alpha},
		\intertext{and}
		q(s;,0,v)&\geq C^{-1}, \qquad \text{if } \norm{v} \leq 3r, s \in [\delta/2,1].
	\end{align*}
	Therefore, for $s \in [\delta/2,1]$,
	\begin{align*}
		\frac{q(s;0,y_i-c^jx_i)}{s\nu(y_i)} &\geq 2^{-1-\alpha}C^{-1},
		\qquad\text{for all } i \in S_1(y, r),
		\intertext{and}
		q(s;0,y_i-c^jx_i) &\geq C^{-1}, \qquad\text{for all } i \in S_0(y, r).
	\end{align*}
	Therefore, by \eqref{eq:rep-p-x} we get
	\begin{align*}
		p(t;x,y)
		&\geq
		e^{-t} p_0(t;0,y-x) \ind{[\delta/2,1]}(t)+
		e^{-t}\sum_{j=1}^\infty t^j \int_0^1 p_0(tu;0,y-c^jx)
		\ind{[\delta/2,1]}(tu) P_j(u) {\: \rm d} u\\
		&\geq 2^{-d_1(1+\alpha)} C^{-d} \bigg(\prod_{i\in S_1(y, r) } \nu(y_i)\bigg)
		\int_{\delta/2}^1 u^{d_1(y, r)} \mu_t({\rm d}u)\\
		&\geq 2^{-d_1(y, r) (1+\alpha)} C^{-d} (\delta/2)^d \bigg(\prod_{i\in S_1(y, r)}
		\nu(y_i)\bigg)
		\inf_{t \in [t_0,\infty)} \mu_t\big([\delta/2,1]\big).
	\end{align*}
	To conclude the proof, we invoke Lemma~\ref{lem:inf_mu_t}.
\end{proof}

\subsection{Brownian motion with resetting}
In this section we consider the case where $\mathbf{Y}$ is a Brownian motion with a transition density
\begin{align}
	\label{p0gauss}
	p_0(t;x,y)=(4\pi t)^{-\frac{d}{2}} e^{-\frac{|y-x|^2}{4t}}.
\end{align}
We are going to study the asymptotic behavior of $p(t; 0, y)$ when $t$ tends to infinity while $(t, y)$ stays in
certain space-time regions. By Corollary \ref{cor:rep-2} and \eqref{p0gauss}, we have
\begin{align}
	\label{eq:p_gauss}
	p(t; 0, y) = e^{-t} (4\pi t)^{-\frac{d}{2}} e^{-\frac{|y|^2}{4t}}
	+
	t e^{-t}(4\pi t)^{-\frac{d}{2}} \int_0^1 u^{-\frac{d}{2}} e^{-\frac{|y|^2}{4 tu}+\log \frac{\Phi(t, u)}{t}} {\: \rm d} u.
\end{align}
Let us define
\[
	\phi(t) = \sum_{j = 0}^\infty \frac{1}{j!} \frac{1}{(m; m)_{j+1}} t^j, \qquad t \geq 0.
\]
In view of \eqref{def:Phi} and Proposition \ref{prop:2}
\begin{equation}
	\label{eq:29}
	\Phi(t, u) \leq t \phi(t(1-u)), \qquad \text{for all } t >0,\, u \in [0, 1],
\end{equation}
and
\begin{equation}
	\label{eq:30}
	\Phi(t, u) = t \phi(t(1-u)), \qquad \text{for all } t > 0,\, u \in [m, 1].
\end{equation}
We first show a few key properties of the function $\phi(t)$. Note that for $k\in\NN_0$,
\begin{equation}
	\label{eq:31}
	\phi^{(k)}(t) = \sum_{j = 0}^\infty \frac{t^j}{j!} \frac{1}{(m; m)_{j+k+1}}, \qquad t \geq 0.
\end{equation}

\begin{proposition}
	\label{prop:7}
	For each $k \in \NN_0$, and $t > 0$,
	\begin{equation}
		\label{eq:10}
		\frac{e^t}{(m; m)_{k+1}} \leq \phi^{(k)}(t) \leq \frac{e^t}{(m; m)_\infty},
	\end{equation}
	and
	\begin{equation}
		\label{eq:11}
		\phi(t) \leq \phi^{(k)}(t) \leq \frac{1}{(m; m)_k} \phi(t).
	\end{equation}
\end{proposition}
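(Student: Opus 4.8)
The plan is to read everything off the series \eqref{eq:31}, reducing both displays to elementary monotonicity properties of the $q$-Pochhammer symbol $(m;m)_n = \prod_{i=1}^n(1-m^i)$. Since $m\in(0,1)$, every factor $1-m^i$ lies in $(0,1)$; hence $n\mapsto(m;m)_n$ is strictly decreasing, and in particular
\[
	(m;m)_\infty \le (m;m)_{j+k+1} \le (m;m)_{k+1}, \qquad j,k\in\NN_0.
\]
Taking reciprocals, multiplying by $t^j/j!\ge 0$, and summing over $j\in\NN_0$ — which is legitimate because all the power series involved are entire in $t$ — immediately yields \eqref{eq:10}, since $\sum_{j\ge 0}t^j/j!=e^t$.

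For \eqref{eq:11} the lower bound $\phi(t)\le\phi^{(k)}(t)$ is again a term-by-term comparison, using $(m;m)_{j+k+1}\le(m;m)_{j+1}$. The upper bound rests on the inequality
\[
	(m;m)_{j+1}\,(m;m)_k \le (m;m)_{j+k+1}, \qquad j,k\in\NN_0,
\]
which I would prove by writing $(m;m)_{j+k+1}=(m;m)_{j+1}\prod_{\ell=1}^k\bigl(1-m^{j+1+\ell}\bigr)$ and observing that $1-m^{j+1+\ell}\ge 1-m^\ell$ for each $\ell$, so that $\prod_{\ell=1}^k\bigl(1-m^{j+1+\ell}\bigr)\ge(m;m)_k$. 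Dividing gives $(m;m)_{j+k+1}^{-1}\le(m;m)_k^{-1}(m;m)_{j+1}^{-1}$, and summing this against $t^j/j!$ produces $\phi^{(k)}(t)\le(m;m)_k^{-1}\phi(t)$.

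I do not expect any genuine obstacle here: all the estimates are term-by-term on absolutely (indeed locally uniformly) convergent power series, so no interchange of limits is in question. The only step that takes a moment's thought is the superadditivity bound $(m;m)_{j+1}(m;m)_k\le(m;m)_{j+k+1}$, and even that is immediate once the product defining $(m;m)_{j+k+1}$ is split at index $j+1$ and the elementary estimate $m^{j+1+\ell}\le m^\ell$ is applied.
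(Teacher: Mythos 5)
Your proof is correct and follows essentially the same route as the paper: both arguments compare the series \eqref{eq:31} term by term, using the monotonicity $(m;m)_\infty \le (m;m)_{j+k+1}\le (m;m)_{k+1}$ for \eqref{eq:10} and the factorization $(m;m)_{j+k+1}=(m;m)_{j+1}\prod_{\ell}(1-m^{j+1+\ell})\ge (m;m)_{j+1}(m;m)_k$ for \eqref{eq:11}. Nothing further is needed.
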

\begin{proof}
	Since
	\[
		\frac{1}{(m; m)_{k+1}} \leq \frac{1}{(m; m)_{j+k+1}} \leq \frac{1}{(m; m)_{\infty}},
	\]
	we easily get \eqref{eq:10}. Moreover,
	\[
		\frac{1}{(m; m)_{j+1}}
		\leq
		\frac{1}{(m; m)_{j+k+1}}
		=
		\frac{1}{(m; m)_{j+1}} \prod_{\ell = 0}^{k-1} \frac{1}{1-m^{j+2+\ell}}
		\leq
		\frac{1}{(m; m)_{j+1}} \frac{1}{(m; m)_{k}}
	\]
	which leads to \eqref{eq:11}.
\end{proof}

\begin{proposition}
	\label{prop:8}
	For each $n \in \NN$ and $k \in \NN$,
	\begin{equation}
		\label{lim:phi-1}
		\lim_{t \to \infty} t^n \frac{\phi^{(k+1)}(t) - \phi^{(k)}(t)}{\phi^{(k)}(t)} = 0,
	\end{equation}
	and
	\begin{align}
		\label{lim:phi-2}
		\lim_{t \to \infty} t^n \Big(\frac{\phi(t)}{e^t} - \frac1{(m;m)_\infty}\Big) = 0.
	\end{align}
\end{proposition}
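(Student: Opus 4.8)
The plan is to use the fact that the coefficients $1/(m;m)_{j+k+1}$ appearing in \eqref{eq:31} converge to $1/(m;m)_\infty$ \emph{geometrically} in $j$, which reduces both assertions to the elementary limit $t^n e^{-(1-m)t}\to 0$ (valid since $m<1$). First I would record, for fixed $k\in\NN_0$, the two coefficient estimates
\[
	0\le \frac1{(m;m)_\infty}-\frac1{(m;m)_{j+k+1}}
	=\frac1{(m;m)_\infty}\bigl(1-(m^{j+k+2};m)_\infty\bigr)
	\le \frac1{(m;m)_\infty}\,\frac{m^{j+k+2}}{1-m},
\]
and, using $(m;m)_{j+k+2}=(m;m)_{j+k+1}(1-m^{j+k+2})$,
\[
	0\le \frac1{(m;m)_{j+k+2}}-\frac1{(m;m)_{j+k+1}}
	=\frac1{(m;m)_{j+k+1}}\,\frac{m^{j+k+2}}{1-m^{j+k+2}}
	\le \frac1{(m;m)_\infty}\,\frac{m^{j+k+2}}{1-m}.
\]
Here the bound $1-(m^{j+k+2};m)_\infty\le m^{j+k+2}/(1-m)$ follows from $\prod_{\ell\ge 0}(1-a_\ell)\ge 1-\sum_{\ell\ge 0}a_\ell$ applied to $a_\ell=m^{j+k+2+\ell}$. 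In particular both right-hand sides are bounded by $C_1 m^j$ with $C_1=m^{k+2}\bigl((m;m)_\infty(1-m)\bigr)^{-1}$, a constant independent of $t$.

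Next I would sum these estimates against $t^j/j!$. From \eqref{eq:31} and $\sum_{j\ge 0}(mt)^j/j!=e^{mt}$ the first estimate gives
\[
	0\le \frac{e^t}{(m;m)_\infty}-\phi^{(k)}(t)\le C_1 e^{mt},\qquad t>0,
\]
and the second gives
\[
	0\le \phi^{(k+1)}(t)-\phi^{(k)}(t)\le C_1 e^{mt},\qquad t>0.
\]
For \eqref{lim:phi-2} we take $k=0$ in the first display: it yields $\bigl|\phi(t)/e^t-1/(m;m)_\infty\bigr|\le C_1 e^{-(1-m)t}$, so multiplying by $t^n$ and letting $t\to\infty$ gives $0$ since $1-m>0$.

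For \eqref{lim:phi-1} I would combine the second display with the lower bound $\phi^{(k)}(t)\ge e^t/(m;m)_{k+1}$ from Proposition~\ref{prop:7}: then
\[
	0\le t^n\,\frac{\phi^{(k+1)}(t)-\phi^{(k)}(t)}{\phi^{(k)}(t)}
	\le (m;m)_{k+1}\,C_1\, t^n e^{-(1-m)t},
\]
which tends to $0$ as $t\to\infty$. I do not anticipate a genuine obstacle here; the only point requiring a moment of care is the product inequality $1-(m^{j+k+2};m)_\infty\le m^{j+k+2}/(1-m)$ and checking that the resulting constant $C_1$ depends only on the fixed data $m$ and $k$, not on $t$ or on the order $n$.
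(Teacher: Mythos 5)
Your proof is correct. For \eqref{lim:phi-2} it coincides with the paper's argument: both bound $1-(m^{j+2};m)_\infty$ by the geometric tail $\sum_{\ell}m^{j+2+\ell}$ (generalized Bernoulli) and sum against $t^j/j!$ to get an $e^{mt}$ error. For \eqref{lim:phi-1} your route is a genuine, if mild, variant: you bound the coefficient increments $\tfrac{1}{(m;m)_{j+k+2}}-\tfrac{1}{(m;m)_{j+k+1}}$ directly by $C_1 m^j$ and sum, obtaining the explicit estimate $0\le\phi^{(k+1)}(t)-\phi^{(k)}(t)\le C_1 e^{mt}$, which combined with the lower bound $\phi^{(k)}(t)\ge e^t/(m;m)_{k+1}$ from Proposition~\ref{prop:7} gives an exponential rate $\calO(e^{-(1-m)t})$ for the quotient. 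The paper instead expands $\tfrac{1}{1-m^{j+k+2}}$ as a geometric series to get the identity $\phi^{(k+1)}(t)=\sum_{\ell\ge0}m^{(k+2)\ell}\phi^{(k)}(m^\ell t)$ and then applies dominated convergence in $\ell$ after multiplying by $t^n$. Both arguments rest on the same geometric convergence of the $q$-Pochhammer symbols; yours is slightly more elementary (no interchange of limit and sum is needed) and delivers a stronger, explicit exponential decay in place of the stated polynomial one, at the cost of not exhibiting the structurally pleasant self-similarity identity for $\phi^{(k+1)}$.
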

\begin{proof}
	Using \eqref{eq:31}, we get
	\[
		\phi^{(k+1)}(t) = \sum_{j = 0}^\infty \frac{t^j}{j!} \frac{1}{(m; m)_{j+k+1}} \frac{1}{1-m^{j+k+2}}.
	\]
	Thus
	\begin{align*}
		\phi^{(k+1)}(t)
		&= \sum_{j = 0}^\infty
		\frac{t^j}{j!} \frac{1}{(m; m)_{j+k+1}}
		\sum_{\ell = 0}^\infty m^{(j+k+2)\ell} \\
		&=\sum_{\ell=0}^\infty m^{(k+2)\ell} \phi^{(k)}\big(m^\ell t\big).
	\end{align*}
	By Proposition \ref{prop:7}, for $\ell \in \NN$,
	\[
		\frac{\phi^{(k)}(m^\ell t)}{\phi^{(k)}(t)} \leq \frac{(m; m)_{k+1}}{(m; m)_\infty} e^{-(1-m^\ell) t},
	\]
	hence
	\[
		\lim_{t \to +\infty} t^n \frac{\phi^{(k)}(m^\ell t)}{\phi^{(k)}(t)} = 0.
	\]
	Since for each $\ell\in\NN$,
	\begin{align*}
		t^n \frac{\phi^{(k)}(m^\ell t)}{\phi^{(k)}(t)}
		&\leq \frac{(m; m)_{k+1}}{(m; m)_\infty} n! \big(1-m^\ell\big)^{-n} \\
		&\leq \frac{(m; m)_{k+1}}{(m; m)_\infty} n! (1-m)^{-n},
	\end{align*}
	by the Lebesgue's dominated convergence theorem
	\[
		\lim_{t \to +\infty} \sum_{\ell = 1}^\infty m^{(k+2)\ell} t^n \frac{\phi^{(k)}(m^\ell t)}{\phi^{(k)}(t)} = 0,
	\]
	which completes the proof of \eqref{lim:phi-1}.	
	
	Next, we write
	\[
		\frac{1}{(m; m)_\infty} e^{t} - \phi(t) = \sum_{j = 0}^\infty \frac{t^j}{j!} \frac{1}{(m; m)_\infty}
		\Big(1 - \prod_{k = j+2}^\infty (1-m^k)\Big).
	\]
	By the generalized Bernoulli's inequality,
	\[
		1 - \prod_{k = j+2}^\infty (1 - m^k) \leq \sum_{k = j+2}^\infty m^k,
	\]
	hence
	\begin{align*}
		\frac{1}{(m; m)_\infty} e^{t} - \phi(t)
		&\leq
		\sum_{j = 0}^\infty \frac{t^j}{j!} \frac{1}{(m; m)_\infty} \sum_{k = j+2}^\infty m^k \\
		&\leq
		\frac{1}{(m; m)_\infty} \frac{m^2}{1-m} e^{m t}.
	\end{align*}
	Lastly,
	\begin{align*}
		0 \leq \Big(\frac{1}{(m; m)_\infty} e^{t} - \phi(t)\Big)e^{-t}
		\leq \frac{1}{(m; m)_\infty} \frac{m^2}{1-m}
		e^{-(1-m)t}
	\end{align*}
	which completes the proof of \eqref{lim:phi-2}.
\end{proof}

\begin{lemma}
	\label{lem:1}
	For all $t \geq 0$,
	\[
		\frac{\phi''(t)}{\phi(t)} - \bigg(\frac{\phi'(t)}{\phi(t)} \bigg)^2 < 0.
	\]
\end{lemma}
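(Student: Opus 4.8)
The plan is to recognize the left-hand side as $(\log\phi)''$ and to derive its negativity from the log-concavity of the coefficient sequence of $\phi$. Since every Taylor coefficient of $\phi$ at the origin is positive, $\phi(t)>0$ for all $t\geq0$ and
\[
\frac{\phi''(t)}{\phi(t)}-\Big(\frac{\phi'(t)}{\phi(t)}\Big)^{2}=\big(\log\phi\big)''(t),
\]
so Lemma~\ref{lem:1} is equivalent to $\phi(t)\phi''(t)-\phi'(t)^{2}<0$ for every $t\geq0$. By \eqref{eq:31} with $k=0$ one may write $\phi(t)=\sum_{n=0}^{\infty}\frac{a_{n}}{n!}t^{n}$ with $a_{n}=\tfrac1{(m;m)_{n+1}}>0$, and a short computation with $q$-Pochhammer symbols gives, for $n\geq1$,
\[
\frac{a_{n}^{2}}{a_{n-1}a_{n+1}}=\frac{(m;m)_{n}\,(m;m)_{n+2}}{(m;m)_{n+1}^{2}}=\frac{1-m^{n+2}}{1-m^{n+1}}>1
\]
because $0<m<1$; equivalently, the ratios $r_{n}:=a_{n+1}/a_{n}=\tfrac1{1-m^{n+2}}$ strictly decrease in $n$. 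Thus $(a_{n})_{n\geq0}$ is a positive, strictly log-concave sequence with no internal zeros.

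I would then pass to a probabilistic reformulation that makes the reduction transparent. If $N_{t}$ denotes a Poisson random variable with mean $t\geq0$, then $\phi(t)=e^{t}\EE[a_{N_{t}}]$, and differentiating under the expectation via $\tfrac{d}{dt}\EE[f(N_{t})]=\EE[f(N_{t}+1)-f(N_{t})]$ twice gives $\phi'(t)=e^{t}\EE[a_{N_{t}+1}]$ and $\phi''(t)=e^{t}\EE[a_{N_{t}+2}]$. Writing $b_{k}(t)=\EE[a_{N_{t}+k}]=\sum_{n\geq0}a_{n+k}\tfrac{e^{-t}t^{n}}{n!}$, one obtains $\phi(t)\phi''(t)-\phi'(t)^{2}=e^{2t}\big(b_{0}(t)b_{2}(t)-b_{1}(t)^{2}\big)$, so the lemma reduces, for each fixed $t\geq0$, to the strict log-concavity inequality $b_{1}(t)^{2}>b_{0}(t)b_{2}(t)$. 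For $t=0$ this is simply $a_{1}^{2}>a_{0}a_{2}$, already noted above.

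For $t>0$ I would note that, with the convention $a_{\ell}=0$ for $\ell<0$, the two-sided sequence $\big(b_{k}(t)\big)_{k\in\ZZ}$ is the convolution of $(a_{n})_{n\geq0}$ with the reflected Poisson weights $\big(\tfrac{e^{-t}t^{-k}}{(-k)!}\big)_{k\leq0}$; the latter is strictly log-concave with no internal zeros on $\ZZ_{\leq0}$, since $\tfrac{(e^{-t}t^{n}/n!)^{2}}{(e^{-t}t^{n-1}/(n-1)!)(e^{-t}t^{n+1}/(n+1)!)}=\tfrac{n+1}{n}>1$. By the classical fact that the convolution of two non-negative log-concave sequences with no internal zeros is again log-concave with no internal zeros --- and, if one of the factors is strictly log-concave, then the convolution is too (Hoggar's theorem) --- we get $b_{1}(t)^{2}>b_{0}(t)b_{2}(t)$, which completes the argument. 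The step requiring care is exactly this last one: the non-strict convolution inequality is entirely standard, but one must ensure strictness here; this is guaranteed because $(a_{n})$ is strictly log-concave at every index while the Poisson weights are positive at every non-negative integer, so neither factor is geometric and the resulting convolution is strictly log-concave.
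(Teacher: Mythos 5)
Your argument is correct, but it follows a genuinely different route from the paper's. The paper proves the inequality by a direct computation: it expands $\phi'(s)\phi(t)-\phi'(t)\phi(s)$ as a double series, antisymmetrizes over the pair of summation indices so that each term carries the factor $\bigl(\tfrac{1}{1-m^{j_1+2}}-\tfrac{1}{1-m^{j_2+2}}\bigr)\bigl(s^{j_1}t^{j_2}-s^{j_2}t^{j_1}\bigr)$, differentiates in $s$ and sets $s=t$, at which point every term becomes manifestly nonpositive (and some strictly negative) because $j\mapsto \tfrac{1}{1-m^{j+2}}$ is decreasing. You instead identify the left-hand side as $(\log\phi)''$, observe that the coefficient sequence $a_n=1/(m;m)_{n+1}$ is strictly log-concave, pass to the Poisson representation $\phi^{(k)}(t)=e^t\,\EE[a_{N_t+k}]$, and invoke the preservation of log-concavity under convolution of $\mathrm{PF}_2$ sequences. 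The underlying monotonicity fact is the same in both proofs (your ratio $r_n=1/(1-m^{n+2})$ decreasing is exactly the paper's positivity of $\tfrac{1}{1-m^{j_1+2}}-\tfrac{1}{1-m^{j_2+2}}$), but your reduction is more conceptual and explains \emph{why} the lemma holds, at the price of importing an external theorem; the paper's computation is self-contained and elementary. The one step of yours that is not fully watertight as written is the strictness of the convolution inequality: the classical statement gives only $b_1^2\geq b_0b_2$, and ``neither factor is geometric'' is not by itself a proof. To close this, either expand $b_1^2-b_0b_2$ by the Cauchy--Binet identity for the associated Toeplitz kernels --- every summand is a product of two $2\times 2$ minors, all nonnegative by $\mathrm{PF}_2$, and the nearest-neighbour terms are strictly positive because both factors are strictly log-concave with positive entries there --- or note that a naive symmetrization of $b_0b_2-b_1^2$ does \emph{not} have termwise negativity, so some such refinement is genuinely needed. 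With that detail supplied, the proof is complete.
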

\begin{proof}
	Using \eqref{eq:31}, for $s,t\geq 0$, we obtain
	\[
		\phi'(s) \phi(t) - \phi'(t) \phi(s)
		=
		\sum_{k = 0}^\infty \sum_{\stackrel{j_1 + j_2 = k}{0 \leq j_1, j_2}}
		\frac{s^{j_1} t^{j_2}}{j_1! j_2!} \frac{1}{(m; m)_{j_1+1}}
		\frac{1}{(m; m)_{j_2+1}} \Big(\frac{1}{1-m^{j_1+2}} - \frac{1}{1-m^{j_2+2}}\Big).
	\]
	Observe that we can assume that $j_1 \neq j_2$. If $0 \leq j_1 < j_2$, then
	\[
		\frac{1}{1-m^{j_1+2}} - \frac{1}{1-m^{j_2+2}} = \frac{m^{j_1+2} - m^{j_2+2}}{(1-m^{j_1+2}) (1-m^{j_2+2})} > 0.
	\]
	Hence,
	\begin{align*}
		&
		\sum_{\stackrel{j_1 + j_2 = k}{0 \leq j_1, j_2}}
		\frac{s^{j_1} t^{j_2}}{j_1! j_2!} \frac{1}{(m; m)_{j_1+1}}
		\frac{1}{(m; m)_{j_2+1}} \Big(\frac{1}{1-m^{j_1+2}} - \frac{1}{1-m^{j_2+2}}\Big) \\
		&\qquad=
		\sum_{\stackrel{j_1 + j_2 = k}{0 \leq j_1 < j_2}}
		\frac1{j_1! j_2!} \frac{1}{(m; m)_{j_1+1}} \frac{1}{(m; m)_{j_2+1}}
		\Big(\frac{1}{1-m^{j_1+2}} - \frac{1}{1-m^{j_2+2}}\Big)
		\Big(s^{j_1} t^{j_2} - s^{j_2} t^{j_1}\Big).
	\end{align*}
	Therefore, $\phi''(s) \phi(t) - \phi'(t) \phi'(s)$ equals
	\begin{align*}
		\sum_{k = 1}^\infty
		\sum_{\stackrel{j_1 + j_2 = k}{0 \leq j_1 < j_2}}
		\frac1{j_1! j_2!} \frac{1}{(m; m)_{j_1+1}} \frac{1}{(m; m)_{j_2+1}}
		\Big(\frac{1}{1-m^{j_1+2}} - \frac{1}{1-m^{j_2+2}}\Big)
		\Big(j_1 s^{j_1-1} t^{j_2} - j_2 s^{j_2-1} t^{j_1}\Big).
	\end{align*}
	Now, to conclude the proof it suffices to put $s=t$ and use that $j_1<j_2$.
\end{proof}

\begin{proposition}
	\label{prop:log_phi}
	For each $k, n \in \NN$, $k \geq 2$, there is $C_{n, k} > 0$ such that for all $t \geq 0$,
		\[
			t^n \Big| \frac{{\rm d}^k}{{\rm d} t^k} \log \phi(t) \Big| \leq C_{n, k}.
		\]
\end{proposition}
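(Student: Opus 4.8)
The idea is to strip the exponential growth off $\phi$. Write $\phi(t) = e^t a(t)$ with $a(t) := \phi(t) e^{-t}$, so that $\log \phi(t) = t + \log a(t)$ and therefore
\[
	\frac{{\rm d}^k}{{\rm d} t^k} \log \phi(t) = \frac{{\rm d}^k}{{\rm d} t^k} \log a(t), \qquad k \geq 2.
\]
The plan is to show that $a$ is bounded below by a positive constant while each derivative $a^{(j)}$, $j \geq 1$, decays exponentially; the asserted estimate then drops out of the Fa\`a di Bruno expansion of $(\log a)^{(k)}$, since each of its (finitely many) terms is a product of factors of the form $a^{(j)}/a$ with $j \geq 1$.

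For the lower bound, Proposition~\ref{prop:7} applied with $k = 0$ gives $a(t) = \phi(t) e^{-t} \geq \frac{1}{1-m}$ for all $t \geq 0$; in particular $\phi > 0$ and $a > 0$ on $[0,\infty)$, so $\log \phi$ is smooth there. For the derivatives, let $D$ denote differentiation and $I$ the identity operator. From $\frac{{\rm d}}{{\rm d} t}\big(g(t) e^{-t}\big) = \big((D-I) g\big)(t)\, e^{-t}$ a one-line induction yields $a^{(j)}(t) = e^{-t} (D-I)^j \phi(t)$ for all $j \in \NN_0$, hence for $j \geq 1$
\[
	a^{(j)}(t) = e^{-t} (D-I)^{j-1}\big(\phi' - \phi\big)(t).
\]
By \eqref{eq:31} the Taylor coefficient of $t^i / i!$ in $\phi' - \phi$ equals $\frac{1}{(m;m)_{i+2}} - \frac{1}{(m;m)_{i+1}} = \frac{m^{i+2}}{(m;m)_{i+2}}$, which is nonnegative and bounded by $\frac{m^2}{(m;m)_\infty}\, m^i$. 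Consequently, for every $r \geq 0$ the derivative $(\phi' - \phi)^{(r)}$ has Taylor coefficients in the same range, so $0 \leq (\phi' - \phi)^{(r)}(t) \leq \frac{m^2}{(m;m)_\infty}\, e^{m t}$ for all $t \geq 0$. Expanding $(D-I)^{j-1}$ by the binomial theorem then gives, for $j \geq 1$ and $t \geq 0$,
\[
	\big| a^{(j)}(t) \big| \leq e^{-t}\, 2^{j-1}\, \frac{m^2}{(m;m)_\infty}\, e^{m t} = 2^{j-1}\, \frac{m^2}{(m;m)_\infty}\, e^{-(1-m) t}.
\]

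Now fix $k \geq 2$ and $n \in \NN$. By the Fa\`a di Bruno formula, $(\log a)^{(k)}$ is a finite sum, indexed by the set partitions $\pi$ of $\{1, \dots, k\}$, of terms $(-1)^{|\pi|-1} (|\pi|-1)! \prod_{B \in \pi} \frac{a^{(|B|)}}{a}$. Every block satisfies $1 \leq |B| \leq k$, so by the two displays above $\big| a^{(|B|)}(t) / a(t) \big| \leq C\, e^{-(1-m) t}$ for a constant $C$ depending only on $k$ and $m$. Since $e^{-(1-m) t} \leq 1$ and $1 \leq |\pi| \leq k$, each term is bounded in absolute value by $(k-1)!\, \max(1, C)^k\, e^{-(1-m) t}$, and summing the finitely many of them produces a constant $C_k > 0$ with $\big| (\log \phi)^{(k)}(t) \big| = \big| (\log a)^{(k)}(t) \big| \leq C_k\, e^{-(1-m) t}$ for all $t \geq 0$. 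Multiplying by $t^n$ and using $\sup_{t \geq 0} t^n e^{-(1-m) t} < \infty$ yields the statement with $C_{n,k} = C_k \sup_{t \geq 0} t^n e^{-(1-m) t}$.

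The computations are routine. The one genuinely useful point is that the substitution $\phi = a e^t$ turns the behaviour ``$\phi^{(j)} / \phi \to$ positive constant'' --- which carries no decay and keeps the cancellations in $(\log \phi)^{(k)}$ invisible --- into ``$a^{(j)} \to 0$ exponentially for $j \geq 1$'', after which the decay of $(\log \phi)^{(k)}$ for $k \geq 2$ is immediate. The only steps needing a moment's care are the elementary identity $a^{(j)} = e^{-t} (D-I)^j \phi$ and the coefficient bookkeeping for $(\phi' - \phi)^{(r)}$; notably, neither Proposition~\ref{prop:8} nor Lemma~\ref{lem:1} is needed here, only the lower bound of Proposition~\ref{prop:7} and formula \eqref{eq:31}.
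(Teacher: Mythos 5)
Your proof is correct, and it takes a genuinely different route from the paper's. The paper applies Fa\`a di Bruno directly to $\log\phi$ and then has to engineer a cancellation: since the combinatorial coefficients do not depend on the function, substituting $f(t)=e^t$ shows the sum of coefficients vanishes for $k\ge 2$; subtracting that identity and writing each factor as $\big(\phi^{(\ell_i)}/\phi-1\big)+1$ leaves only terms containing at least one factor $\phi^{(\ell_i)}/\phi-1$, which decays faster than any polynomial by Proposition~\ref{prop:8} (\eqref{lim:phi-1}), the remaining factors being bounded via \eqref{eq:11}. You instead strip the exponential first: with $a=\phi e^{-t}$ the identity $a^{(j)}=e^{-t}(D-I)^{j}\phi$ together with the explicit coefficient computation $\frac{1}{(m;m)_{i+2}}-\frac{1}{(m;m)_{i+1}}=\frac{m^{i+2}}{(m;m)_{i+2}}$ shows that $a$ is bounded below by $(1-m)^{-1}$ while every $a^{(j)}$, $j\ge1$, is $O(e^{-(1-m)t})$, after which Fa\`a di Bruno for $\log a$ needs no cancellation at all --- every block contributes a factor $a^{(|B|)}/a$ of order $\ge 1$. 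Your argument buys a strictly stronger conclusion (exponential rather than merely faster-than-polynomial decay of $(\log\phi)^{(k)}$ for $k\ge2$) and dispenses with Proposition~\ref{prop:8} and Lemma~\ref{lem:1}, at the cost of the coefficient bookkeeping for $(\phi'-\phi)^{(r)}$; the paper's version is shorter on computation but leans on the auxiliary limit \eqref{lim:phi-1} and the coefficient-cancellation trick. All the individual steps you use (the lower bound from Proposition~\ref{prop:7} with $k=0$, the operator identity, the binomial expansion of $(D-I)^{j-1}$, and the partition form of Fa\`a di Bruno) check out.
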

\begin{proof}
	Let us recall that for a positive smooth real function $f$, by Fa\'a di Bruno's formula, there are positive constants
	$C_{\ell}$ such that
	\[
		\frac{{\rm d}^k }{{\rm d} t^k} \log f(t) = \sum_{j = 1}^k j! (-1)^{j+1} \frac{1}{(f(t))^j}
		\sum_{\ell} C_\ell \prod_{i = 1}^j \frac{{\rm d}^{\ell_i}}{{\rm d} t^{\ell_i}} f(t)
	\]
	where in the inner sum $\ell$ runs over all sequences $\ell = (\ell_1, \ldots, \ell_j)$, $\ell_i \in \NN$, with
	$\ell_1+\ldots+\ell_j = k$. Since the coefficients $C_\ell$ are independent of $f$, by taking $f(t) = e^t$ and $k \geq 2$,
	we get
	\[
		0 = \sum_{j = 1}^k j! (-1)^{j+1} \sum_{\ell} C_\ell.
	\]
	Hence, for $f \equiv \phi$, and $k \geq 2$, we obtain
	\begin{align*}
		\frac{{\rm d}^k}{{\rm d} t^k} \log \phi(t)
		&= \sum_{j = 1}^k j! (-1)^{j+1} \sum_{\ell} C_\ell \prod_{i = 1}^j \bigg(\frac{\phi^{(\ell_i)}(t)}{\phi(t)} - 1 + 1\bigg)
		-\sum_{j = 1}^k j! (-1)^{j+1} \sum_{\ell} C_\ell \\
		&= \sum_{j = 1}^k j! (-1)^{j+1} \sum_{\ell} C_\ell \sum_{\stackrel{I \subset \{1, \ldots, j\}}{I \neq \emptyset}}
		\prod_{i \in I} \bigg(\frac{\phi^{(\ell_i)}(t)}{\phi(t)} - 1\bigg).
	\end{align*}
	Now, the claim follows by \eqref{lim:phi-1} and \eqref{eq:11}.
\end{proof}

In view of \eqref{eq:p_gauss}, \eqref{eq:29}, and \eqref{eq:30} our aim is to understand the critical points of the function
\begin{equation}
	\label{psi}
	\psi(u) = -\frac{d}{2} \log u -\frac{L}{u} + \log \phi\big(t(1-u)\big), \qquad u \in (0, 1)
\end{equation}
where
\begin{equation}\label{A}
	L = \frac{|y|^2}{4 t}.
\end{equation}
We note that
\begin{equation}
	\label{eq:33}
	\psi'(u) = -\frac{d}{2} \frac{1}{u} + \frac{L}{u^2} - t \frac{\phi'\big(t(1-u)\big)}{\phi\big(t(1-u)\big)},
\end{equation}
and
\begin{equation}
	\label{eq:37}
	\psi''(u) = \frac{d}{2} \frac{1}{u^2} - 2 \frac{L}{u^3} +
	t^2\bigg\{
	\frac{\phi''\big(t(1-u)\big)}{\phi\big(t(1-u)\big)} -
	\bigg(\frac{\phi'\big(t(1-u)\big)}{\phi\big(t(1-u)\big)}\bigg)^2
	\bigg\}.
\end{equation}
The following lemma is instrumental in treating the integral \eqref{eq:p_gauss} restricted to the interval $(0, m)$.
\begin{lemma}
	\label{lem:psi_prim}
	If
	\[
		\frac{L}{t} \geq m^2 + \frac{C}{t}
	\]
	for certain $C > \frac{d}{2} m$, then $\psi'$ is decreasing and $\psi'(m)>0$. Moreover, there is $T_0 > 0$ such that
	for all $t \geq T_0$,
	\[
		\int_0^m u^{-\frac{d}{2}} e^{-\frac{|y|^2}{4 tu}+\log \frac{\Phi(t, u)}{t}} {\: \rm d} u
		\leq m e^{\psi(m)}.
	\]
\end{lemma}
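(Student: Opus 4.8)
The plan is to control the integrand by $e^{\psi(u)}$, with $\psi$ as in \eqref{psi}, and then to show that $\psi$ is nondecreasing on $(0,m)$ once $t$ is large. For the pointwise bound observe that by \eqref{eq:29} and the definition \eqref{A} of $L$,
\[
	u^{-\frac d2}e^{-\frac{|y|^2}{4tu}+\log\frac{\Phi(t,u)}{t}}
	\le u^{-\frac d2}e^{-\frac Lu}\,\phi\bigl(t(1-u)\bigr)=e^{\psi(u)},
	\qquad u\in(0,1).
\]
Hence it suffices to prove $\int_0^m e^{\psi(u)}\,\ud u\le m\,e^{\psi(m)}$, and for that it is enough to have $\psi'>0$ on $(0,m)$.

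First I would show that $\psi'$ is strictly decreasing on $(0,m)$, valid for every $t>0$ satisfying the hypothesis. By Lemma~\ref{lem:1} the bracketed expression in \eqref{eq:37} is negative, so
\[
	\psi''(u)\le\frac{d}{2u^2}-\frac{2L}{u^3}=\frac{1}{u^3}\Bigl(\frac d2\,u-2L\Bigr),\qquad u\in(0,1).
\]
The hypothesis $L/t\ge m^2+C/t$ gives $L\ge m^2t+C\ge C>\tfrac d2 m$, so for $u\in(0,m)$ we have $\tfrac d2 u<\tfrac d2 m<2C\le 2L$ and therefore $\psi''(u)<0$. Consequently $\psi'$ is strictly decreasing on $(0,m]$, so $\psi'(u)>\psi'(m)$ whenever $u\in(0,m)$.

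Next I would show $\psi'(m)>0$ for all sufficiently large $t$. From \eqref{eq:33} and $L\ge m^2t+C$,
\[
	\psi'(m)
	=-\frac{d}{2m}+\frac{L}{m^2}-t\,\frac{\phi'\bigl(t(1-m)\bigr)}{\phi\bigl(t(1-m)\bigr)}
	\ge\Bigl(\frac{C}{m^2}-\frac{d}{2m}\Bigr)+t\Bigl(1-\frac{\phi'\bigl(t(1-m)\bigr)}{\phi\bigl(t(1-m)\bigr)}\Bigr),
\]
and the constant $\tfrac{C}{m^2}-\tfrac{d}{2m}$ is strictly positive precisely because $C>\tfrac d2 m$. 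So it remains to check that $t\bigl(1-\phi'(t(1-m))/\phi(t(1-m))\bigr)\to0$ as $t\to+\infty$; this is the one genuinely quantitative step and it follows from the growth estimates \eqref{eq:10}--\eqref{eq:11}. Concretely, as in the proof of Proposition~\ref{prop:8} (using \eqref{eq:31}), $\phi'(s)-\phi(s)=\sum_{\ell\ge1}m^{2\ell}\phi(m^\ell s)$, and by \eqref{eq:10} one has $0\le\phi(m^\ell s)/\phi(s)\le\frac{1-m}{(m;m)_\infty}e^{-(1-m)s}$ for $\ell\ge1$, whence $\bigl|1-\phi'(s)/\phi(s)\bigr|\le\frac{m^2}{(1+m)(m;m)_\infty}e^{-(1-m)s}$; taking $s=t(1-m)$ and multiplying by $t$ gives a bound of the form $\text{const}\cdot t\,e^{-(1-m)^2 t}\to0$. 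Choosing $T_0>0$ (depending only on $d,m,C$) so that this quantity is smaller than $\tfrac{C}{m^2}-\tfrac{d}{2m}$ for all $t\ge T_0$ yields $\psi'(m)>0$.

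Finally I would assemble the pieces: for $t\ge T_0$, since $\psi'$ is decreasing on $(0,m)$ and $\psi'(m)>0$, we get $\psi'>0$ and hence $\psi$ increasing on $(0,m)$; thus $\psi(u)\le\psi(m)$ there and $\int_0^m e^{\psi(u)}\,\ud u\le m\,e^{\psi(m)}$, which together with the pointwise bound above proves the claim. The only delicate point is the last estimate, i.e.\ quantifying how fast $\phi'/\phi\to1$; everything else is bookkeeping with the strict margin supplied by the assumption $C>\tfrac d2 m$.
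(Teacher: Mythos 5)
Your proof is correct and follows essentially the same route as the paper: bound the integrand by $e^{\psi(u)}$ via \eqref{eq:29}, use Lemma~\ref{lem:1} to get $\psi''<0$, and combine the margin $C>\tfrac{d}{2}m$ with $\phi'/\phi\to 1$ to get $\psi'(m)>0$ for large $t$. Your explicit exponential rate for $1-\phi'(s)/\phi(s)$ is a nice bonus (the paper simply cites \eqref{lim:phi-1}, which is stated for $k\geq 1$, so your self-contained $k=0$ computation actually tightens that reference).
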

\begin{proof}
	In view of \eqref{eq:37} and Lemma \ref{lem:1}, $\psi''(u) < 0$ for all $u \in (0, 1)$ provided that $L > d/4$.
	Next, by \eqref{eq:33} we have
	\[
		\psi'(m) = -\frac{d}{2} \frac{1}{m}
		+ t\bigg(1 - \frac{\phi'\big(t(1-m)\big)}{\phi\big(t(1-m)\big)}\bigg)
		+ \frac{t}{m^2} \bigg(\frac{L}{t} - m^2\bigg).
	\]
	Since
	\[
		-\frac{d}{2} \frac{1}{m}
		+ \frac{t}{m^2} \Big(\frac{L}{t} - m^2\Big)
		\geq -\frac{d}{2} \frac{1}{m}+\frac{C}{m^2}>0,
	\]
	by \eqref{lim:phi-1}, we get $\psi'(m) > 0$ provided that $t$ is sufficiently large. Lastly, by \eqref{eq:29} and
	\eqref{psi} we get
	\begin{align*}
		\int_0^m u^{-\frac{d}{2}} e^{-\frac{|y|^2}{4 tu}+\log \frac{\Phi(t, u)}{t}} {\: \rm d} u
 		&\leq \int_0^m e^{\psi(u)} {\: \rm d} u
	\end{align*}
	which is bounded by $m e^{\psi(m)}$.
\end{proof}

Before we formulate the next result let us define
\[
	\vphi(r) = \int_0^\infty e^{-s} \phi(r s) {\: \rm d} s, \qquad\text{ for } r \in [0, 1).
\]
Using the definition of the function $\phi$ we can write
\begin{align*}
	\vphi(r)
	&=
	\sum_{j = 0}^\infty \frac{1}{j!} \frac{r^j}{(m; m)_{j+1}} \int_0^\infty s^j e^{-s} {\: \rm d} s \\
	&=
	\sum_{j = 0}^\infty \frac{r^j}{(m; m)_{j+1}}.
\end{align*}

\begin{theorem}
	\label{thm:5}
	Suppose that $\mathbf{Y}$ is Brownian motion in $\RR^d$. Assume that $\mathbf{X}$ is obtained from $\mathbf{Y}$
	by partial resetting with factor $c \in (0,1)$. Then for each $\delta > 0$, the transition density of $\mathbf{X}$
	satisfies
	\[
		p(t; 0, y)
		= e^{-t}
		(4\pi t)^{-\frac{d}{2}} e^{-\frac{|y|^2}{4t}} \bigg\{
		1 + \bigg(\frac{4t^2}{|y|^2}\bigg)  \vphi\bigg(\frac{4t^2}{\norm{y}^2}\bigg)+
		\calO\bigg(\frac{t}{\norm{y}^2}\bigg)
		\bigg\}
	\]
	as $t$ tends to infinity, uniformly in the region
	\begin{equation}
		\label{rg1}
		\Big\{(t, y) \in \RR_+ \times \RR^d : \frac{|y|^2}{4t^2} \geq 1 +\delta \Big\}.
	\end{equation}
\end{theorem}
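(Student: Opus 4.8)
The plan is to run the asymptotic analysis directly on the representation \eqref{eq:p_gauss}. Set $L = |y|^2/(4t)$; then the region \eqref{rg1} is exactly $\{L \geq (1+\delta)t\}$, and on it $t/L = 4t^2/|y|^2 \leq (1+\delta)^{-1} < 1$, so $\vphi(t/L)$ is well defined. Dividing \eqref{eq:p_gauss} by $e^{-t}(4\pi t)^{-d/2} e^{-|y|^2/(4t)}$, and noting that $\frac tL\vphi(\frac tL) = \frac{4t^2}{|y|^2}\vphi(\frac{4t^2}{|y|^2})$ and $\frac1L = \frac{4t}{|y|^2}$, the theorem reduces to proving
\[
	t\, e^{L} \int_0^1 u^{-d/2}\, e^{-L/u}\, \frac{\Phi(t,u)}{t} \ud u = \frac tL\, \vphi\Big(\frac tL\Big) + \calO\Big(\frac1L\Big)
\]
uniformly in the region as $t \to \infty$. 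I would split $\int_0^1 = \int_0^m + \int_m^1$, using \eqref{eq:30} to replace $\Phi(t,u)/t$ by $\phi(t(1-u))$ on $[m,1]$.

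On the singular interval $[0,m]$: since $1+\delta > 1 > m^2$, for $t$ large the hypothesis of Lemma \ref{lem:psi_prim} holds, so the integral over $[0,m]$ is at most $m\, e^{\psi(m)} = m^{1-d/2} e^{-L/m}\phi(t(1-m))$, and by the upper bound in \eqref{eq:10}, $\phi(t(1-m)) \leq e^{t(1-m)}/(m;m)_\infty$. Multiplying by $t\,e^L$, the resulting bound is at most a constant times $t\,e^{(1-m)(t-L/m)}$; writing the exponent as $-ct-c'(L-(1+\delta)t)$ with $c,c'>0$ (which uses $L \geq (1+\delta)t$), this is a constant times $t\,e^{-ct-c'(L-(1+\delta)t)}$. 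The interval $[m, 1-\epsilon_0]$, with $\epsilon_0 = \epsilon_0(m,\delta) \in (0, 1-m)$ chosen so that $(1+\delta)\tfrac{\epsilon_0}{1-\epsilon_0} > 1-m$ (possible because $m < 1+\delta$), is treated identically: on it $e^{L}e^{-L/u} \leq e^{-L\epsilon_0/(1-\epsilon_0)}$ and $\phi(t(1-u)) \leq e^{t(1-m)}/(m;m)_\infty$, yielding a bound that is a constant times $t\,e^{-\eta t-c''(L-(1+\delta)t)}$, $\eta,c''>0$. Both are $\calO(1/L)$: if $L \leq 2(1+\delta)t$ the last exponential is $\leq 1$ and $Lt\, e^{-ct} \leq 2(1+\delta)t^2 e^{-ct} \leq$ const, while if $L > 2(1+\delta)t$ then $L-(1+\delta)t > L/2$ and $L\,e^{-c'L/2} \leq$ const.

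The main term comes from $t\, e^L\int_{1-\epsilon_0}^1 u^{-d/2} e^{-L/u}\phi(t(1-u)) \ud u$. The changes of variable $v = 1-u$ and then $w = v/(1-v)$ turn this into $t\int_0^{c_1}(1+w)^{d/2-2} e^{-Lw}\phi(\tfrac{tw}{1+w}) \ud w$ with $c_1 = \epsilon_0/(1-\epsilon_0)$; on the other hand, inserting $s = Lw$ into $\vphi(r) = \int_0^\infty e^{-s}\phi(rs) \ud s$ gives $\tfrac tL\vphi(\tfrac tL) = t\int_0^\infty e^{-Lw}\phi(tw) \ud w$. Their difference splits into a tail $t\int_{c_1}^\infty e^{-Lw}\phi(tw) \ud w$, a geometric-factor error $t\int_0^{c_1}\!\big[(1+w)^{d/2-2}-1\big] e^{-Lw}\phi(\tfrac{tw}{1+w}) \ud w$, and an argument-shift error $t\int_0^{c_1} e^{-Lw}\big[\phi(\tfrac{tw}{1+w}) - \phi(tw)\big] \ud w$. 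I would estimate $\phi$ and $\phi'$ by the crude global bounds $\phi(s),\phi'(s) \leq e^s/(m;m)_\infty$ from \eqref{eq:10} (and $\phi'$ is increasing, so $|\phi(a)-\phi(b)| \leq |a-b|\,e^{a\vee b}/(m;m)_\infty$), use $|(1+w)^{d/2-2}-1| \leq Cw$ and $|tw - \tfrac{tw}{1+w}| \leq tw^2$ on $[0,c_1]$, and absorb each $e^{tw}$ into $e^{-Lw}$ to form $e^{-(L-t)w}$; the three errors become respectively $\calO\big(\tfrac{t}{L-t}e^{-(L-t)c_1}\big)$, $\calO\big(\tfrac{t}{(L-t)^2}\big)$ and $\calO\big(\tfrac{t^2}{(L-t)^3}\big)$. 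Adding the four contributions (with the $[0,1-\epsilon_0]$ parts being $\calO(1/L)$) then yields the displayed identity.

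I expect the main obstacle to be the uniformity over the \emph{entire} region \eqref{rg1}, which is unbounded in the direction $|y| \gg t$ — there $t/L \to 0$, $\vphi(t/L) \to (1-m)^{-1}$, and the right-hand integral degenerates to $\asymp 1/L$, so one cannot argue by fixing $|y|/t$ and letting $t \to \infty$. The mechanism that turns every bound above into a genuine $\calO(1/L) = \calO(t/|y|^2)$ is to use $L \geq (1+\delta)t$ in two complementary ways at once: as the lower bound $L - t \geq \delta t$, and — since it equally gives $L - t \geq \tfrac{\delta}{1+\delta}L$ — as the comparability $L - t \asymp L$ for large $L$. Together these give $(L-t)^2 \geq \tfrac{\delta^2}{1+\delta}\, tL$ and $(L-t)^3 \geq \tfrac{\delta^3}{1+\delta}\, t^2 L$, hence $\tfrac{t}{(L-t)^2} = \calO(\tfrac1L)$ and $\tfrac{t^2}{(L-t)^3} = \calO(\tfrac1L)$, while $\tfrac{t}{L-t}e^{-(L-t)c_1} \leq \tfrac1\delta\, e^{-\frac{\delta c_1}{1+\delta}L} = \calO(\tfrac1L)$; the same split $L \lessgtr 2(1+\delta)t$ disposes of the exponentially small terms coming from $[0, 1-\epsilon_0]$.
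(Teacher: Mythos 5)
Your proposal is correct and follows essentially the same route as the paper: the same representation \eqref{eq:p_gauss}, the same split at $u=m$ with Lemma~\ref{lem:psi_prim} controlling $[0,m]$, and the same change of variables (yours in $w=(1-u)/u$, the paper's in $v=Lw$) producing the same three error terms — geometric factor, argument shift, and tail — each bounded by $\calO(1/L)$ using the two complementary consequences $L-t\gtrsim t$ and $L-t\gtrsim L$ of $L\geq(1+\delta)t$. The extra cut at $1-\epsilon_0$ is a harmless variation; the paper handles all of $[m,1]$ at once via the bound $e^{-v}\phi(\tfrac{t}{L}\tfrac{Lv}{v+L})\leq (m;m)_\infty^{-1}e^{-\frac{\delta}{1+\delta}v}$.
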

\begin{proof}
	Thank to \eqref{eq:p_gauss}, we can write
	\begin{equation}
		\label{eq:58}
		e^t (4 \pi t)^{\frac{d}{2}}
		p(t; 0, y) = e^{-\frac{\norm{y}^2}{4 t}} + t
		\int_0^m u^{-\frac{d}{2}}
		u^{-\frac{d}{2}} e^{-\frac{\norm{y}^2}{4tu} + \log \frac{\Phi(t, u)}{t}} {\: \rm d} u
		+t I
	\end{equation}
	where we have set
	\[
		I = \int_m^1 u^{-\frac{d}{2}} e^{-\frac{|y|^2}{4 tu}+\log \frac{\Phi(t, u)}{t}} {\: \rm d} u.
	\]
	Using \eqref{eq:30}, we get
	\[
		I=
		\int_m^1
		u^{-\frac{d}{2}} e^{-\frac{L}{u}}
		\phi\big(t(1-u)\big) {\: \rm d} u.
	\]
	Let us observe that for $(t, y)$ in the region \eqref{rg1} we have $L \geq (1+\delta) t$.
	By the change of variable $v = L(u^{-1}-1)$, we obtain
	\begin{align*}
		\int_m^1 u^{-\frac{d}{2}} e^{-\frac{L}{u}} \phi\big(t(1-u)\big)
		{\: \rm d} u
		=
		\frac{1}{L} e^{-L}
		\int_0^{L (m^{-1}-1)} e^{-v}
		\Big(1 + \frac{v}{L} \Big)^{\frac{d}{2}-2}
		\phi\Big(\frac{t}{L} \frac{L v}{v+L} \Big)
		{\: \rm d} v.
	\end{align*}
	By \eqref{eq:10}, for $v > 0$ we have
	\begin{align*}
		e^{-v} \phi\Big(\frac{t}{L} \frac{L v}{v+L} \Big)
		&\leq
		\frac{1}{(m; m)_\infty}
		\exp\Big\{-v + v \frac{t}{L} \frac{L}{v+L} \Big\} \\
		&\leq
		\frac{1}{(m; m)_\infty}
		\exp\Big\{-v + v \frac{1}{1+\delta} \Big\} \\
		&=
		\frac{1}{(m; m)_\infty} e^{-\frac{\delta}{1+\delta} v}.
	\end{align*}
	Moreover, for $v \in [0, L(m^{-1} - 1) ]$,
	\begin{align*}
		\Big|\Big(1 + \frac{v}{L} \Big)^{\frac{d}{2}-2} - 1 \Big|
		\leq C \frac{v}{L}.
	\end{align*}
	Hence,
	\begin{align*}
		&\bigg|
		\int_0^{L(m^{-1}-1)}
		e^{-v}
		\Big(1 + \frac{v}{L} \Big)^{\frac{d}{2}-2}
		\phi\Big(\frac{t}{L} \frac{L v}{v+L} \Big)
		{\: \rm d} v
		-
		\int_0^{L(m^{-1}-1)} e^{-v}
		\phi\Big(\frac{t}{L} \frac{L v}{v+L} \Big)
		{\: \rm d} v
		\bigg| \\
		&\qquad\qquad \leq
		\int_0^{L(m^{-1}-1)} e^{-v} \phi\Big(\frac{t}{L} \frac{L v}{v+L} \Big)
		\Big|\Big(1 + \frac{v}{L} \Big)^{\frac{d}{2}-2} - 1 \Big|
		{\: \rm d} v \\
		&\qquad\qquad\leq
		\frac{C}{(m;m)_\infty}
		\frac1{L} \int_0^\infty e^{-\frac{\delta}{1+\delta} v} v {\: \rm d} v.
	\end{align*}
	Since $\phi'$ is increasing, by the mean value theorem and \eqref{eq:10},
	\begin{align*}
		\bigg|
		\phi\Big(\frac{t}{L} \frac{L v}{v+L} \Big) - \phi\Big(\frac{t}{L} v \Big)
		\bigg|
		&\leq
		\phi'\Big(\frac{t}{L} v \Big) \Big|\frac{t}{L} v \frac{L}{v+L} - \frac{t}{L} v \Big| \\
		&\leq
		\frac{v}{(m;m)_\infty} \exp\Big\{\frac{t}{L} v\Big\} \Big|\frac{L}{v+L} - 1\Big| \\
		&\leq
		\frac{v^2}{(m;m)_\infty} \frac1{L} \exp\Big\{\frac{t}{L} v\Big\} \\
		&\leq
		\frac1{(m;m)_\infty} \frac1{L} v^2 e^{\frac{1}{1+\delta} v}.
	\end{align*}
	Therefore,
	\begin{align*}
		&\bigg|
		\int_0^{L(m^{-1}-1)}
		e^{-v}
		\phi\Big(\frac{t}{L} \frac{L v}{v+L} \Big)
		{\: \rm d} v
		-
		\int_0^{L(m^{-1}-1)} e^{-v} \phi\Big(\frac{t}{L}v\Big)
		{\: \rm d} v
		\bigg| \\
		&\qquad\qquad\leq
		\int_0^{L(m^{-1}-1)}
		e^{-v}
		\bigg|\phi\Big(\frac{t}{L} \frac{L v}{v+L} \Big) - \phi\Big(\frac{t}{L}v\Big) \bigg|
		{\: \rm d} v \\
		&\qquad\qquad\leq
		\frac1{(m;m)_\infty} \frac1{L} \int_0^\infty e^{-\frac{\delta}{1+\delta} v} v^2 {\: \rm d} v.
	\end{align*}
	Lastly, we can write
	\begin{align*}
		\int_0^{L(m^{-1}-1)} e^{-v} \phi\Big(\frac{t}{L}v\Big) {\: \rm d} v
		&=
		\int_0^\infty  e^{-v} \phi\Big(\frac{t}{L}v\Big) {\: \rm d} v
		-
		\int_{L(m^{-1}-1)}^\infty
		e^{-v} \phi\Big(\frac{t}{L}v\Big) {\: \rm d} v \\
		&=
		\vphi\Big(\frac{t}{L}\Big)
		-
		\int_{L(m^{-1}-1)}^\infty
		e^{-v} \phi\Big(\frac{t}{L}v\Big) {\: \rm d} v
	\end{align*}
	and so, by \eqref{eq:10},
	\begin{align*}
		\int_{L(m^{-1}-1)}^\infty
		e^{-v} \phi\Big(\frac{t}{L}v\Big) {\: \rm d} v
		&\leq
		\frac1{(m;m)_\infty}
		\int_{L(m^{-1}-1)}^\infty \exp\Big\{-\Big(1-\frac{t}{L}\Big) v\Big\} {\: \rm d} v \\
		&\leq
		\frac1{(m;m)_\infty}
		\int_{L(m^{-1}-1)}^\infty e^{-\frac{\delta}{1+\delta} v} {\: \rm d} v \\
		&\leq
		 \frac1{(m;m)_\infty} e^{-L (1-m) \frac{\delta}{1+\delta}}
		 \int_0^\infty e^{-(1-m) \frac{\delta}{1+\delta} v} {\: \rm d} v.
	\end{align*}
	Summarizing, we obtain
	\begin{align*}
		I=
		e^{-L} \Big( \frac{t}{L}\Big)
		\Big\{ \vphi\Big(\frac{t}{L}\Big) + \calO\big(L^{-1}\big)\Big\}.
	\end{align*}
	Hence, by \eqref{eq:58} and Lemma \ref{lem:psi_prim}
	\begin{align*}
		\Big|
		e^t (4 \pi t)^{\frac{d}{2}} p(t; 0, y) - e^{-L} - t I
		\Big|
		&\leq
		t \int_0^m u^{-\frac{d}{2}} e^{-\frac{L}{u} + \log \frac{\Phi(t, u)}{t}} {\: \rm d} u \\
		&\leq
		m t e^{\psi(m)},
	\end{align*}
	provided that $t$ is sufficiently large. To complete the proof we need to show that
	\[
		t e^{\psi(m)+L}=\calO\big(L^{-1}\big).
	\]
	To see this, we apply \eqref{eq:10} to get
	\begin{align*}
		t e^{\psi(m)+L}
		&\leq C_1 \exp\Big\{-\frac{L}{m} +t(1-m)+L\Big\}\\
		&=C_1 \exp\Big\{ -L (1-m) \Big(\frac{1}{m} - \frac{t}{L}\Big) \Big\}\\
		&\leq C_1 \exp\Big\{ -L (1-m) \Big(\frac{1}{m} -1\Big) \Big\}.
	\end{align*}
	and the theorem follows.
\end{proof}

\begin{corollary}
	Under the assumptions of Theorem \ref{thm:5},
	\[
		\lim_{t \to +\infty}
		\frac{p(t;0,y)}{\abs{y}^{-\frac{d-1}{2}} e^{-\abs{y}}} = 0
	\]
	uniformly in the region \eqref{rg1}.
\end{corollary}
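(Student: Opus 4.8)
The plan is to plug the asymptotic expansion from Theorem~\ref{thm:5} into the quotient and reduce the whole statement to an elementary estimate of a Gaussian-type exponent on the region \eqref{rg1}. First I would note that on \eqref{rg1} the argument $x = 4t^2/\abs{y}^2$ of $\vphi$ satisfies $x \le (1+\delta)^{-1} < 1$, so, since $\vphi(x) = \sum_{j\ge 0} x^j/(m;m)_{j+1}$ is nondecreasing on $[0,1)$, the term $x\,\vphi(x)$ is bounded by $\vphi\big((1+\delta)^{-1}\big)$; likewise $t/\abs{y}^2 \le \big(4(1+\delta)t\big)^{-1}\to 0$. Hence for $t$ large the curly bracket in Theorem~\ref{thm:5} is bounded on \eqref{rg1} by an absolute constant $C_\delta$, and since $p\ge 0$ we get $p(t;0,y)\le C_\delta\,e^{-t}(4\pi t)^{-d/2}e^{-\abs{y}^2/(4t)}$ uniformly there. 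Thus it suffices to prove
\[
	\lim_{t\to+\infty}\ \sup_{\abs{y}^2\ge 4t^2(1+\delta)}\ t^{-d/2}\,\abs{y}^{(d-1)/2}\,
	\exp\Big\{-t-\tfrac{\abs{y}^2}{4t}+\abs{y}\Big\}=0 .
\]

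The key step is the substitution $\abs{y}=2ts$ with $s\ge\sqrt{1+\delta}$: a short computation turns the exponent into $-t(s-1)^2$ and the prefactor into $2^{(d-1)/2}\,t^{-1/2}\,s^{(d-1)/2}$, so the displayed supremum becomes $2^{(d-1)/2}\,t^{-1/2}\sup_{s\ge\sqrt{1+\delta}} s^{(d-1)/2}e^{-t(s-1)^2}$. The only genuine point is uniformity over the \emph{unbounded} range $s\ge\sqrt{1+\delta}$ (equivalently $\abs{y}/t$ arbitrarily large), which is where I expect the minor obstacle to lie. I would handle it by monotonicity: differentiating $\tfrac{d-1}{2}\log s - t(s-1)^2$ gives $\tfrac{d-1}{2s}-2t(s-1)$, which is at most $\tfrac{d-1}{2\sqrt{1+\delta}}-2t(\sqrt{1+\delta}-1)<0$ once $t$ is large enough, so $s\mapsto s^{(d-1)/2}e^{-t(s-1)^2}$ is decreasing on $[\sqrt{1+\delta},\infty)$ and its supremum equals its value at $s=\sqrt{1+\delta}$, namely $(1+\delta)^{(d-1)/4}e^{-t(\sqrt{1+\delta}-1)^2}$.

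Finally I would conclude by combining the two bounds: the supremum in the reduced limit is at most $2^{(d-1)/2}(1+\delta)^{(d-1)/4}\,t^{-1/2}e^{-t(\sqrt{1+\delta}-1)^2}$, which tends to $0$ as $t\to+\infty$ because the exponential decay dominates the factor $t^{-1/2}$; together with $p(t;0,y)\le C_\delta\,e^{-t}(4\pi t)^{-d/2}e^{-\abs{y}^2/(4t)}$ this yields the claimed uniform limit. In short, once the expansion of Theorem~\ref{thm:5} is available the argument is essentially a one-line Gaussian computation, with the only care needed being the uniform control over the tail $\abs{y}\gg t$, dispatched by the monotonicity step above.
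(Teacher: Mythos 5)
Your proof is correct and follows essentially the same route as the paper's: substitute the expansion of Theorem~\ref{thm:5}, complete the square so the exponent becomes $-t(s-1)^2$ with $s=\abs{y}/(2t)\ge\sqrt{1+\delta}$ (the paper writes this as $-t(\sqrt{L/t}-1)^2$ with $L=\abs{y}^2/(4t)$), and control the polynomial prefactor uniformly over the unbounded range of $\abs{y}/t$. The only cosmetic difference is that the paper merely observes the exponent together with the logarithmic terms is uniformly bounded above and lets the leftover factor $t^{-1/2}$ force the limit to zero, whereas your monotonicity argument pins the supremum at $s=\sqrt{1+\delta}$ and yields the slightly stronger exponential bound $e^{-t(\sqrt{1+\delta}-1)^2}$; both are valid.
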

\begin{proof}
	Since $2\sqrt{L t} = |y|$, by Theorem~\ref{thm:5} we get
	\[
		\lim_{t \to +\infty}
		\frac{p(t;0,y)}{\abs{y}^{-\frac{d-1}{2}} e^{-\abs{y}}}
		=
		\lim_{t \to +\infty}
		t^{-\frac12} \exp\Big\{-t-L -\frac{d-1}{2}\log t +2\sqrt{L t} +\frac{d-1}{2}\log |y|\Big\}.
	\]
 	Now, it is sufficient to observe that in the region \eqref{rg1},
	\[
		-t \Big(\sqrt{\frac{L}{t}}-1\Big)^2 +\frac{d}{2} \log \sqrt{\frac{L}{t}}
	\]
	is uniformly bounded from above.
\end{proof}

\begin{theorem}
	\label{thm:6}
	Suppose that $\mathbf{Y}$ is a Brownian motion in $\RR^d$. Assume that $\mathbf{X}$ is obtained from $\mathbf{Y}$
	by partial resetting with factor $c\in(0,1)$. Then for each $\delta > 0$, the transition density of $\mathbf{X}$
	satisfies
	\[
		p(t; 0, y)
		=
		\frac{1}{2} \frac1{(m;m)_\infty} (2\pi)^{-\frac{d-1}{2}} \abs{y}^{-\frac{d-1}{2}} e^{-\abs{y}}
		\Big(1 + \calO\big(t^{-1}\big)\Big)
	\]
	as $t$ tends to infinity, uniformly in the region
	\begin{equation}	
		\label{eq:38}
		\Big\{(t, y) \in \RR_+ \times \RR^d : m^2 +\delta \leq \frac{\norm{y}^2}{4t^2} \leq 1 - \delta \Big\}.
	\end{equation}
\end{theorem}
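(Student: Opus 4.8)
The starting point is the representation \eqref{eq:p_gauss} together with \eqref{eq:29}--\eqref{eq:30}. Writing $L=\norm{y}^2/(4t)$ as in \eqref{A} and using \eqref{eq:30} on $[m,1]$, one has
\[
	e^t(4\pi t)^{d/2} p(t;0,y)=e^{-L}+t\int_0^m u^{-d/2}e^{-\norm{y}^2/(4tu)+\log(\Phi(t,u)/t)}\,{\rm d}u+tI,
	\qquad I=\int_m^1 e^{\psi(u)}\,{\rm d}u,
\]
with $\psi$ as in \eqref{psi}. In the region \eqref{eq:38} we have $L/t\in[m^2+\delta,1-\delta]$; I set $v_*=\sqrt{L/t}$, so that $v_*$ lies in $[\sqrt{m^2+\delta},\sqrt{1-\delta}]$, a compact subinterval of $(m,1)$, and $\sqrt{Lt}=tv_*=\norm{y}/2$. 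The plan is to evaluate $tI$ by a uniform Laplace (steepest-descent) argument and to show that the other two terms are negligible.

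First I locate and control the critical point of $\psi$. Since $L\to\infty$ in the region, \eqref{eq:37} and Lemma~\ref{lem:1} give $\psi''<0$ on $(0,1)$, so $\psi$ is strictly concave there. Lemma~\ref{lem:psi_prim} applies (as $L/t\ge m^2+\delta\ge m^2+C/t$ for $t$ large), giving $\psi'(m)>0$, while \eqref{eq:33} yields $\psi'(u)\to-\tfrac d2+L-t/(1-m^2)<0$ as $u\to1^-$ because $L\le(1-\delta)t$ and $1/(1-m^2)>1$; hence $\psi$ has a unique critical point $u_*\in(m,1)$. Evaluating $\psi'$ at $v_*\pm\eta$ for a fixed small $\eta>0$ and invoking Proposition~\ref{prop:8} (which gives $\phi'/\phi=1+o(t^{-n})$ on arguments bounded below by $ct$) shows $u_*\in(v_*-\eta,v_*+\eta)$, hence bounded away from $1$; feeding this back into $\psi'(u_*)=0$, i.e. $L/u_*^2=t\,\phi'(t(1-u_*))/\phi(t(1-u_*))+d/(2u_*)$, refines it to $u_*=v_*+\calO(t^{-1})$, and together with \eqref{eq:37} gives $\psi''(u_*)=-\tfrac{2t}{v_*}\bigl(1+\calO(t^{-1})\bigr)$.

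The core estimate is then $I=e^{\psi(u_*)}\sqrt{2\pi/\abs{\psi''(u_*)}}\,\bigl(1+\calO(t^{-1})\bigr)$. To get the error $\calO(t^{-1})$ rather than merely $o(1)$, I will Taylor expand $\psi$ about $u_*$ to fourth order on $\abs{u-u_*}\le t^{-1/2}\log t$, bound the tails by strict concavity (they contribute $\calO(e^{-c(\log t)^2})$), and use that near $u_*$ one has $\psi^{(3)}(u)=\calO(t)$ and $\psi^{(4)}(u)=\calO(t)$: the $-L/u$ part contributes explicit $\calO(t)$ derivatives, whereas Proposition~\ref{prop:log_phi} makes the $\log\phi(t(1-u))$ part contribute $\calO(t^k t^{-n})=\calO(t^{-1})$ for $n$ large since $t(1-u)\ge ct$ there. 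The cubic term is odd and integrates to zero against the Gaussian, and the remaining corrections are of relative size $(\psi^{(3)})^2\abs{\psi''}^{-3}+\psi^{(4)}\abs{\psi''}^{-2}=\calO(t^{-1})$. It remains to pin down $\psi(u_*)$ to additive accuracy $\calO(t^{-1})$: since $\psi'(v_*)=\calO(1)$, $u_*-v_*=\calO(t^{-1})$ and $\psi''=\calO(t)$ near $v_*$, a Taylor expansion gives $\psi(u_*)=\psi(v_*)+\calO(t^{-1})$, and by Proposition~\ref{prop:8} (valid as $t(1-v_*)\ge ct$) $\psi(v_*)=-\tfrac d2\log v_*-\sqrt{Lt}+t(1-v_*)-\log(m;m)_\infty+o(t^{-n})$, which using $tv_*=\norm{y}/2$ equals $-\tfrac d2\log v_*+t-\norm{y}-\log(m;m)_\infty+\calO(t^{-1})$.

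Finally I dispose of the leftover pieces and assemble. The $[0,m]$ integral is $\le m\,e^{\psi(m)}$ by Lemma~\ref{lem:psi_prim}, and strict concavity with $\psi''\le-t/v_*$ on $(m,v_*)$ gives $\psi(m)\le\psi(u_*)-ct$, so this term is $\calO(e^{-ct})$ relative to $tI$; likewise $e^{-L}/(tI)=\calO\bigl(t^{-1/2}e^{-t(1-v_*)^2}\bigr)=\calO(e^{-ct})$ since $1-v_*\ge1-\sqrt{1-\delta}>0$. Hence $p(t;0,y)=e^{-t}(4\pi t)^{-d/2}\,tI\,(1+\calO(t^{-1}))$; substituting the values of $\psi(u_*)$ and $\psi''(u_*)$, using $e^{-t}e^{t-\norm{y}}=e^{-\norm{y}}$ and $(4\pi t)^{-d/2}\sqrt t\,(tv_*)^{(1-d)/2}=(4\pi)^{-d/2}\sqrt{\pi}\,2^{(d-1)/2}\norm{y}^{-(d-1)/2}$, and simplifying the numerical constant to $\tfrac12(m;m)_\infty^{-1}(2\pi)^{-(d-1)/2}$, yields the assertion; uniformity holds because every $\calO$ and $o$ above depends only on $\delta$ (and the fixed $c,d$), with $v_*$ confined to a compact subset of $(m,1)$. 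The main obstacle is the Laplace step: extracting the sharp $\calO(t^{-1})$ error uniformly in the region forces one to keep precise track of $u_*$, $\psi''(u_*)$ and of $\psi^{(3)},\psi^{(4)}$ near $u_*$, rather than relying on soft arguments.
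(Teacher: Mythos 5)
Your proposal is correct and follows essentially the same route as the paper's proof: the same representation \eqref{eq:p_gauss}, the same phase function $\psi$ with a unique critical point $u_*=\sqrt{L/t}+\calO(t^{-1})$ located via Lemma \ref{lem:psi_prim} and concavity, the same Laplace expansion with the cubic term vanishing by symmetry and $\psi^{(3)},\psi^{(4)}=\calO(t)$ controlled through Proposition \ref{prop:log_phi}, and the same exponential-in-$t$ dismissal of the $e^{-L}$ term and the $[0,m]$ integral. The only differences (shrinking window $t^{-1/2}\log t$ versus a fixed $\eta$, and evaluating $\psi(u_*)$ by a Taylor step at $v_*$ rather than computing $u_0$ to order $t^{-2}$) are cosmetic and do not change the argument.
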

\begin{proof}
	Notice that for $(t, y)$ in the region \eqref{eq:38}, $L$ is comparable to $t$. Now, let us observe that
	\[
		\psi'(1) = -\frac{d}{2} + t \bigg(\frac{L}{t} - \frac{\phi'(0)}{\phi(0)}\bigg),
		\quad\text{ and }\quad
		\frac{\phi'(0)}{\phi(0)}=\frac1{1-m^2},
	\]
	thus $\psi'(1)<0$. Using now Lemma~\ref{lem:psi_prim} we conclude that $\psi$ has the unique critical point $u_0$. Moreover,
	$u_0$ belongs to $(m, 1)$. In fact, we claim that $u_0 \in (m, 1-\delta')$ for certain $\delta' > 0$. Indeed, suppose that
	$u_0$ tends to $1$. We can assume that $t(1-u_0)$ converges to $g \in [0, \infty]$. Since $u_0$ satisfies \eqref{eq:33},
	we have
	\[
		\frac{1}{u_0^2} - 1 = \frac{d}{2 L} \frac{1}{u_0} +
		\frac{t}{L} \bigg( \frac{\phi'\big(t(1-u_0) \big)}{\phi\big(t(1-u_0) \big)}
		-
		\frac{\phi'(g)}{\phi(g)}\bigg) + \frac{t}{L}\bigg(\frac{\phi'(g)}{\phi(g)} - \frac{L}{t}\bigg).
	\]
	Hence
	\[
		\frac{t}{L}\bigg(\frac{\phi'(g)}{\phi(g)} - \frac{L}{t}\bigg) = o(1)
	\]
	which leads to a contradiction because according to \eqref{eq:11},
	\[
		1 \leq \frac{\phi'(t)}{\phi(t)} \leq \frac{1}{1-m^2}
	\]
	for all $t \geq 0$. This proves the claim.

	Next, we determine the asymptotic behavior of $u_0$. Since $u_0$ solves \eqref{eq:33}, we obtain
	\[
		u_0 = \frac{2L}{\dfrac{d}{2} +
		\sqrt{\Big(\dfrac{d}{2}\Big)^2 + 4 L t \dfrac{\phi'(t(1-u_0))}{\phi(t(1-u_0))}}}.
	\]
	Because $t(1-u_0)$ tends to infinity, by \eqref{lim:phi-1},
	\[
		\frac{\phi'\big(t(1-u_0) \big)}{\phi\big(t(1-u_0)\big)}
		=
		1 + o\big(t^{-2}\big),
	\]
	and consequently
	\begin{align}
		\nonumber
		\frac{1}{u_0}
		&=
		\frac{d}{4 L}
		+\sqrt{\Big(\frac{d}{2}\Big)^2 \frac{1}{4 L^2} + \frac{t}{L} + \frac{t}{L}
		\Big(\frac{\phi'(t(1-u_0))}{\phi(t(1-u_0))} - 1\Big)} \\
		\nonumber
		&=
		\frac{d}{4 L}
		+\sqrt{\frac{t}{L}}
		\sqrt{
		1+ \Big(\frac{d}{2}\Big)^2 \frac{1}{4L t} + \Big(\frac{\phi'(t(1-u_0))}{\phi(t(1-u_0))} - 1\Big)} \\
		\nonumber
		&=
	 	\frac{d}{4 L}+\sqrt{\frac{t}{L}}
		\Big(1 + \calO\big(t^{-2}\big)\Big) \\
		\label{eq:48}
		&= \frac{d}{4 L} + \sqrt{\frac{t}{L}} + \calO\big(t^{-2}\big).
	\end{align}
	Hence
	\begin{align*}
		u_0
		&= \frac{1}{ \frac{d}{4L } + \sqrt{\frac{t}{L}} + \calO\big(t^{-2}\big)} \\
		&= \sqrt{\frac{L}{t}} \frac{1}{1+\frac{d}{4 \sqrt{t L}} + \calO\big(t^{-2}\big)} \\
		&= \sqrt{\frac{L}{t}} \Big(1 - \frac{d}{4 \sqrt{t L}} + \calO\big(t^{-2}\big)\Big),
	\end{align*}
	and so
	\begin{equation}
		\label{eq:47}
		u_0 = \sqrt{\frac{L}{t}} - \frac{d}{4 t} + \calO\big(t^{-2}\big).
	\end{equation}
	We now establish the asymptotic behavior of $\psi(u_0)$ and $\psi''(u_0)$. By \eqref{lim:phi-2},
	\[
		\frac{\phi\big(t(1-u_0)\big)}{e^{t(1-u_0)}} = \frac1{(m;m)_\infty} + o\big(t^{-1}\big),
	\]
	and
	\[
		\log \phi\big(t(1-u_0)\big) = \log \frac1{(m;m)_\infty} + t(1-u_0) + o\big(t^{-1}\big).
	\]
	By \eqref{eq:48} and \eqref{eq:47} we obtain
	\begin{align*}
		\frac{L}{u_0} &= \frac{d}{4} + \sqrt{L t} +  \calO\big(t^{-1}\big), \\
		t u_0 &= \sqrt{L t} - \frac{d}{4} + \calO\big(t^{-1}\big), \\
		\log u_0 &=\log \sqrt{\frac{L}{t}}+\calO\big(t^{-1}\big).
	\end{align*}
	Therefore,
	\[
		\log \phi\big(t(1-u_0)\big) = \log \frac1{(m;m)_\infty}+ t - \sqrt{L t} + \frac{d}{4} + \calO\big(t^{-1}\big),
	\]
	and
	\begin{align}
		\nonumber
		\psi(u_0)
		&= -\frac{d}{2} \log u_0 - \frac{L}{u_0} + \log \phi\big(t(1-u_0)\big) \\
		\label{eq:39}
		&= \log \frac1{(m;m)_\infty} -\frac{d}{2} \log \sqrt{\frac{L}{t}} - 2 \sqrt{L t}  + t + \calO\big(t^{-1}\big).
	\end{align}
	Using \eqref{eq:48} and \eqref{lim:phi-1} supported by \eqref{eq:11} we can write
	\begin{align}
		\psi''(u_0)
		&= \frac{d}{2} \frac{1}{u_0^2} - 2 \frac{L}{u_0^3} + t^2
		\bigg\{\frac{\phi''\big(t(1-u_0)\big)}{\phi\big(t(1-u_0)\big)} -
		\bigg(\frac{\phi'\big(t(1-u_0)\big)}{\phi\big(t(1-u_0)\big)} \bigg)^2\bigg\} \nonumber  \\
		\nonumber
		&= \frac{d}{2} \frac{t}{L} - 2 t \sqrt{\frac{t}{L}} + \calO\big(t^{-1}\big) \\
		&= -2t \sqrt{\frac{t}{L}} \Big( 1 + \calO\big(t^{-1}\big)\Big).\label{eq:psi_bis_asymp}
	\end{align}
	We are now ready to study the asymptotic behavior of the transition density. By \eqref{eq:p_gauss},
	\eqref{eq:30} and \eqref{psi}, we can write
	\begin{equation}
		\label{eq:61}
		e^t (4 \pi t)^{\frac{d}{2}} p(t; 0, y)
		=
		e^{-L} + t \int_0^m u^{-\frac{d}{2}} e^{-\frac{|y|^2}{4 tu}+\log \frac{\Phi(t, u)}{t}} {\: \rm d} u
		+ t I
	\end{equation}
	where
	\[
		I = \int_m^1 e^{\psi(u)} {\: \rm d} u.
	\]
	Since
	\[
		m^2 + \delta \leq \frac{L}{t} \leq 1 - \delta,
	\]
	and $L > d/2$, by Lemma \ref{lem:1} for all $u \in (m, 1)$,
	\begin{align}
		\label{eq:53}
		\psi''(u)
		&< \frac{d}{2} \frac{1}{u^2} - \frac{L}{u^2}- L\leq - L
		\leq -m^2 t.
	\end{align}
	Our aim is to find the asymptotic behavior of $I$. Let us first focus on the integral over
	$(m, u_0 - \eta) \cup (u_0 + \eta, 1)$, for certain $\eta > 0$. By the Taylor's theorem and \eqref{eq:53}, for
	$u \in (m, 1)$ we get
	\begin{align*}
		\psi(u)
		&= \psi(u_0) + \frac{1}{2} \psi''\big(\lambda u + (1-\lambda) u_0 \big) (u - u_0)^2 \\
		&\leq \psi(u_0) - \frac{m^2}{2} t (u-u_0)^2.
	\end{align*}
	Therefore, using \eqref{eq:psi_bis_asymp},
	\begin{align}
		\nonumber
		\bigg(\int_m^{u_0-\eta } + \int_{u_0+\eta}^1 \bigg) e^{\psi(u)} {\: \rm d} u
		&\leq e^{\psi(u_0)}
		\bigg(\int_m^{u_0-\eta } + \int_{u_0+\eta}^1 \bigg) e^{-\frac{m^2}{2}t  (u-u_0)^2} {\: \rm d} u \\
		\nonumber
		& \leq
		e^{\psi(u_0)} e^{-\frac{m^2 \eta^2}{2} t}\\
		&\leq C t^{-1} e^{\psi(u_0)} (-\psi''(u_0))^{-\frac12}.
		\label{eq:51}
	\end{align}
	Next, we tread the integral over $(u_0-\eta,u_0+\eta)$. We write
	\begin{equation}
		\label{eq:45}
		\int_{\abs{u-u_0} \leq \eta} e^{\psi(u)} {\: \rm d} u
		=
		e^{\psi(u_0)} \int_{\abs{u - u_0} \leq \eta} e^{\frac{1}{2} \psi''(u_0) (u-u_0)^2} e^{\theta(u)} {\: \rm d} u
	\end{equation}
	where
	\[
		\theta(u) = \psi(u) - \psi(u_0) - \frac{1}{2} \psi''(u_0) (u-u_0)^2.
	\]
	For $k \geq 2$, we compute
	\[
		\psi^{(k)}(u) = (-1)^{k} k! \frac{d}{2} \frac{1}{u^k} + (-1)^{k+1} k! \frac{L}{u^{k+1}}
		+ (-1)^k t^k (\log \phi)^{(k)}\big(t(1-u)\big).
	\]
	Hence, by Proposition~\ref{prop:log_phi}, there is $C_k$ such that for all $u \in (m, u_0+\eta)$ and $t \geq 1$,
	\begin{equation}
		\label{eq:41}
		\big| \psi^{(k)}(u) \big| \leq C_k t,
	\end{equation}
	thus by the Taylor's theorem
	\begin{align}
		\label{eq:54}
		\abs{\theta(u)}
		\leq \frac{C_3}{3!} t \abs{u-u_0}^3.
	\end{align}
	If $\abs{u - u_0} \leq \eta$, by \eqref{eq:53}, we obtain
	\begin{equation}
		\label{eq:46}
		\abs{\theta(u)} \leq -\frac{1}{4} \psi''(u_0) \abs{u - u_0}^2
	\end{equation}
	provided that
	\[
		\eta \leq m^2 \frac{3}{2 C_3}.
	\]
	Another application of the Taylor's theorem together with \eqref{eq:41} gives
	\begin{equation}
		\label{eq:44}
		\Big|
		\theta(u) - \frac{1}{3!} \psi'''(u_0) (u-u_0)^3
		\Big|
		\leq
		\frac{C_4}{4!} t \abs{u-u_0}^4.
	\end{equation}
	Now, we write
	\begin{align*}
		e^{\theta(u)}
		&= \Big(e^{\theta(u)} - 1 - \theta(u)\Big) \\
		&\phantom{=}+
		\Big(\theta(u) - \frac{1}{3!} \psi'''(u_0)(u-u_0)^3\Big) \\
		&\phantom{=}+
		\frac{1}{3!} \psi'''(u_0)(u-u_0)^3 + 1,
	\end{align*}
	and we split the integral \eqref{eq:45} into four corresponding integrals. Observe that for $x \in \RR$,
	\begin{equation}
		\label{eq:49}
		\big| e^x - 1 - x\big| \leq \frac{1}{2} x^2 e^{|x|}.
	\end{equation}
	Hence, by \eqref{eq:54} and \eqref{eq:46}, the first integrand can be bounded as follows
	\begin{align*}
		\Big|e^{\theta(u)} - 1 - \theta(u) \Big|
		\leq
		C t^2 \abs{u-u_0}^6 e^{-\frac{1}{4} \psi''(u_0)(u-u_0)^2}.
	\end{align*}
	Therefore, by \eqref{eq:53},
	\begin{align*}
		&\bigg|
		\int_{\abs{u-u_0} \leq \eta}
		e^{\frac{1}{2}\psi''(u_0) (u-u_0)^2} \big(e^{\theta(u)} - 1 - \theta(u) \big) {\: \rm d} u
		\bigg| \\
		&\qquad\qquad\leq
		C t^2 \int_{\abs{u - u_0} \leq \eta} e^{\frac{1}{4} \psi''(u_0)(u-u_0)^2} \abs{u-u_0}^6 {\: \rm d} u \\
		&\qquad\qquad =
		C t^2 (-\psi''(u_0))^{-3-\frac{1}{2}}
		\int_{\abs{u} \leq \eta \sqrt{-\psi''(u_0)}} e^{-\frac{1}{4} \abs{u}^2} \abs{u}^6 {\: \rm d} u
		\\
		&\qquad\qquad\leq C t^{-1} (-\psi''(u_0))^{-\frac{1}{2}}.
	\end{align*}
	For the second integral, by \eqref{eq:44} and \eqref{eq:53} we obtain
	\begin{align*}
		&\bigg|
		\int_{\abs{u - u_0} \leq \eta} e^{\frac{1}{2}\psi''(u_0) (u-u_0)^2}
		\Big(\theta(u) - \frac{1}{3!} \psi'''(u_0)(u-u_0)^3 \Big) {\: \rm d} u
		\bigg|\\
		&\qquad\qquad\leq
		C t \int_{\abs{u-u_0} \leq \eta}  e^{\frac{1}{2}\psi''(u_0) (u-u_0)^2} \abs{u-u_0}^4 {\: \rm d} u\\
		&\qquad\qquad\leq
		C t (-\psi''(u_0))^{-2 - \frac{1}{2}} \int_{\abs{u} \leq \eta \sqrt{-\psi''(u_0)}}
		e^{-\frac{1}{2} \abs{u}^2} \abs{u}^4 {\: \rm d} u \\
		&\qquad\qquad\leq
		C t^{-1} (\psi''(u_0))^{-\frac{1}{2}}.
	\end{align*}
	The third integral equals zero. Lastly, by \eqref{eq:53}, we have
	\begin{align*}
		\int_{\abs{u-u_0} \leq \eta}e^{\frac{1}{2}\psi''(u_0) (u-u_0)^2} {\: \rm d}u
		&=(-\psi''(u_0))^{-\frac{1}{2}}
		\int_{\abs{u} \leq \eta \sqrt{-\psi''(u_0)}} e^{-\frac{1}{2} \abs{u}^2} {\: \rm d} u\\
		&=(-\psi''(u_0))^{-\frac{1}{2}} \left(\sqrt{2\pi} + \int_{\abs{u}
		\geq \eta \sqrt{-\psi''(u_0)}} e^{-\frac{1}{2} \abs{u}^2} {\: \rm d} u \right)\\
		&= (-\psi''(u_0))^{-\frac{1}{2}} \sqrt{2\pi} \Big(1 + \calO(t^{-1})\Big).
	\end{align*}
	Summarizing, we get
	\begin{align*}
		I = e^{\psi(u_0)} (-\psi''(u_0))^{-\frac{1}{2}} \sqrt{2\pi} \Big(1 + \calO(t^{-1})\Big).
	\end{align*}
	Hence, by \eqref{eq:39} and \eqref{eq:psi_bis_asymp},
	\begin{align*}
		I
		&=
		\frac1{(m;m)_\infty} \Big(\frac{L}{t}\Big)^{-\frac{d}{4}}
		e^{- 2 \sqrt{L t} + t} \Big( 2t \sqrt{\frac{t}{L}} \Big)^{-\frac12} \sqrt{2\pi}
		\Big(1 + \calO(t^{-1})\Big)\\
		&= \frac1{(m;m)_\infty} (2 \pi)^{-\frac{d-1}{2}} |y|^{-\frac{d-1}{2}}e^{-|y|} \Big(1 + \calO(t^{-1})\Big).
	\end{align*}
	Moreover, by \eqref{eq:61} and Lemma \ref{lem:1} we get
	\begin{align*}
		\Big|
		p(t; 0, y) - e^{-t} (4 \pi t)^{-\frac{d}{2}}e^{-L} -
		t e^{-t} (4 \pi t)^{-\frac{d}{2}} I
		\Big|
		\leq
		m t^{1-\frac{d}{2}} e^{-t+\psi(m)}.
	\end{align*}
	Therefore, to finish the proof we need to show that
	\begin{equation}
		\label{eq:62}
		t^{-\frac{d}{2}}e^{-t-L}
		=
		\abs{y}^{-\frac{d-1}{2}} e^{-\abs{y}} \calO(t^{-1}),
	\end{equation}
	and
	\begin{equation}
		\label{eq:63}
		t^{1-\frac{d}{2}}e^{-t+\psi(m)}
		=
		\abs{y}^{-\frac{d-1}{2}} e^{-\abs{y}} \calO(t^{-1}).
	\end{equation}
	Notice that $t^{-\frac{d}{2}}$ is comparable to $t^{-\frac12}|y|^{-\frac{d-1}{2}}$. Since $L \leq t(1-\delta)$,
	we have
	\[
		\Big(\sqrt{\frac{L}{t}} - 1 \Big)^2 \geq (1-\sqrt{1-\delta})^2
	\]
	which implies that
	\[
		-t - L \leq -\norm{y}- (1-\sqrt{1-\delta})^2 t
	\]
	proving \eqref{eq:62}. Since $t^{1-\frac{d}{2}}$ is comparable to $t^{\frac12}|y|^{-\frac{d-1}{2}}$, and
	$L \geq t(m^2 + \delta)$, we get
	\[
		\Big(\sqrt{\frac{L}{t}} - m \Big)^2 \geq (\sqrt{m^2+\delta}-m)^2=\delta^*
	\]
	which implies that
	\[
		- \frac{L}{m}- m t \leq -\norm{y}- \frac{(\sqrt{m^2+\delta}-m)^2}{m} t.
	\]
	In view of \eqref{psi} and \eqref{eq:10}, we have
	\[
		-t+\psi(m)\leq -\frac{L}{m} -mt  +C,
	\]
	which proves \eqref{eq:63}, and the theorem follows.
\end{proof}

\begin{remark}
	We note that the asymptotic behavior of $p(t; 0, y)$ in Theorem~\ref{thm:6} remain true if $m^2+\delta \leq L/t$
	is replaced by
	\[
		\Big(m+\sqrt{\frac{\log f(t)}{t}} \Big)^2
		\leq \frac{L}{t}
	\]
	where $f$ is a fixed positive function such that
	\[
		\liminf_{t\to\infty} t^{-\frac{3m}{2}} f(t) >0.
	\]
	Indeed, Lemma~\ref{lem:psi_prim} still applies, and the only change required in the proof is in showing that
	\[
		t^{\frac12} e^{-t+\psi(m)}=e^{-|y|}\calO(t^{-1}),
	\]
	or equivalently
	\[
		t^{\frac12} \exp\Big\{-\frac{t}{m}\Big( \sqrt{\frac{L}{t}} -m\Big)^2\Big\} =\calO(t^{-1}),
	\]
	which is guaranteed by the proposed condition.
\end{remark}

\begin{remark}
	\label{nessgeneralgaussian}
	Theorems~\ref{thm:5} and~\ref{thm:6} can be easily generalized to the case of
	$Y_t=\Upsilon B_t$ for $\Upsilon \in \GL(\mathbb{R}, d)$. In such a case, the counterpart of \eqref{eq:p_gauss} is given by
	\[
		p(t; 0, y) = e^{-t} (\det \Upsilon)^{-1} (2\pi t)^{-\frac{d}{2}}  e^{-\frac{|\Upsilon^{-1} y|^2}{4t}}
		+t e^{-t}(4\pi t)^{-\frac{d}{2}} \int_0^1 u^{-\frac{d}{2}} e^{-\frac{|\Upsilon^{-1} y|^2}{4 tu}+
		\log \frac{\Phi(t, u)}{t}} {\: \rm d} u.
	\]
	Then $\eqref{psi}$ takes the form
	\[
		\psi(u) = -\frac{d}{2} \log u -\frac{L}{u} + \log \phi\big(t(1-u)\big)
	\]
	where
	\[
		L = \frac{|\Upsilon^{-1}y|^2}{2 t},
	\]
	cf. \eqref{A}. The statements of Theorems~\ref{thm:5} and~\ref{thm:6} remain true if we replace $\frac{|y|^2}{4 t}$
	by $\frac{|\Upsilon^{-1}y|^2}{2 t}$. All the proofs are the same.
\end{remark}

\appendix

\section{Strictly stable processes}
\label{appendix:A}
We begin with a general observation on L{\'e}vy process in $\mathbb{R}^d$. By \cite[Theorem 8.1]{MR1739520} there is
a one-to-one correspondence between L{\'e}vy processes and generating triplets $(A, \nu, \gamma)$ where $A$ is nonnegative
definite real $d \times d$ matrix, $\nu$ a Borel measure on $\RR^d$ such that
\[
	\int_{\RR^d} \big(1 \land \norm{x}^2\big) \nu({\rm d} x) < \infty,
\]
and $\gamma \in \RR^d$. The triplet are helpful in identification of the generator of the process, see
\cite[Theorem 31.5]{MR1739520}, which for sufficiently smooth functions $f$ can be given by the following expression
\begin{align*}
	\mathscr{L} f(x)
	&= \sum_{j, k = 1}^d a_{jk} \partial_{x_j} \partial_{x_k} f(x) + \sprod{\nabla f(x)}{\gamma}
	+
	\int_{\RR^d} \Big(f(x+z) - f(x) - \ind{\{\abs{z} < 1\}} \sprod{\nabla f(x)}{z}\Big) \nu({\rm d} z).
\end{align*}
We are going to describe the infinitesimal generator of the semigroup associated with the process considered
on $\calC_0(\RR^d)$ as well as $L^r(\RR^d)$, $r \in [1, \infty)$. Let us recall that
\[
	\mathcal{B}_r =
	\begin{cases}
		L^r(\RR^d) &\text{if } r \in [1, \infty), \\
		\calC_0(\RR^d) &\text{if } r = \infty.
	\end{cases}
\]
\begin{proposition}
	\label{prop:9}
	Let $\mathbf{Y}$ be a L{\'e}vy process in $\mathbb{R}^d$ and $r \in [1, \infty]$. Then the family of operators defined
	on $\mathcal{B}_r$ as
	\[
		\Big(f \mapsto \EE[f(Y_t + \cdot)]: t > 0\Big)
	\]
	forms a strongly continuous contraction semigroup. Let $\mathscr{L}_r$ be its infinitesimal generator with domain
	$D(\mathscr{L}_r)$. Moreover, if $f \in \calC_0^2(\RR^d)$ is such that $\partial_x^{\mathbf{a}} f \in \mathcal{B}_r$
	for each $\mathbf{a} \in \NN_0^d$, $\abs{\mathbf{a}} \leq 2$, then $f \in D(\mathscr{L}_r)$ and
	\[
		\mathscr{L}_r f = \mathscr{L} f.
	\]
\end{proposition}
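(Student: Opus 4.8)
The plan is to first observe that $(P_t : t > 0)$ given by $P_t f(x) = \EE[f(Y_t + x)] = \int_{\RR^d} f(x+z) \, \PP(Y_t \in {\rm d}z)$ is a convolution semigroup, dispatch the contraction, semigroup and strong-continuity claims by soft arguments, and then identify the generator on the stated class through Dynkin's formula. \textbf{Contraction, semigroup, strong continuity.} Since $\PP(Y_t \in \cdot)$ is a probability measure, Minkowski's integral inequality yields $\|P_t f\|_r \leq \|f\|_r$ for $r \in [1, \infty)$, while $\|P_t f\|_\infty \leq \|f\|_\infty$ is immediate; that $P_t f \in \calC_0(\RR^d)$ whenever $f \in \calC_0(\RR^d)$ follows by splitting the integral over $\{\abs{z} \leq K\}$ and $\{\abs{z} > K\}$ and using the decay of $f$. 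The identity $P_{t+s} = P_t P_s$ is a consequence of the stationarity and independence of the increments of $\mathbf{Y}$ (condition on $Y_t$ and use that $Y_{t+s} - Y_t$ is independent of $Y_t$ with the law of $Y_s$). Strong continuity as $t \to 0^+$ reduces, for $r \in [1, \infty)$, to the bound $\|P_t f - f\|_r \leq \int_{\RR^d} \|f(\cdot + z) - f\|_r \, \PP(Y_t \in {\rm d}z)$ together with the continuity in $L^r$ of translations, the boundedness of $z \mapsto \|f(\cdot+z)-f\|_r$, and $\PP(Y_t \in \cdot) \Rightarrow \delta_0$ as $t \to 0^+$ (right-continuity of paths, $Y_0 = 0$); for $r = \infty$ the same argument works with the modulus of continuity of $f \in \calC_0(\RR^d)$.

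\textbf{$\mathscr{L}f \in \mathcal{B}_r$.} For $f \in \calC_0^2(\RR^d)$ with $\partial_x^{\mathbf{a}} f \in \mathcal{B}_r$ for $\abs{\mathbf{a}} \leq 2$, the drift and second-order terms of $\mathscr{L}f$ clearly lie in $\mathcal{B}_r$. For the jump integral, split at $\abs{z} = 1$: on $\{\abs{z} \geq 1\}$ Minkowski's integral inequality and the finiteness of $\nu$ away from the origin give $\big\| \int_{\abs{z} \geq 1} (f(\cdot + z) - f) \, \nu({\rm d}z) \big\|_r \leq 2 \|f\|_r \, \nu(\{\abs{z} \geq 1\}) < \infty$; on $\{\abs{z} < 1\}$ the second-order Taylor remainder $f(x+z) - f(x) - \sprod{\nabla f(x)}{z} = \int_0^1 (1-\theta) \sprod{\nabla^2 f(x + \theta z) z}{z} \, {\rm d}\theta$ gives an $\mathcal{B}_r$-bound of order $\abs{z}^2 \sum_{j,k} \|\partial_{x_j} \partial_{x_k} f\|_r$, integrable against $\nu$ because $\int_{\abs{z}<1} \abs{z}^2 \, \nu({\rm d}z) < \infty$. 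For $r = \infty$ one additionally checks that the resulting function vanishes at infinity, which follows by dominated convergence from the decay of $f$ and of $\nabla^2 f$ combined with the tail control of $\nu$.

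\textbf{Generator via Dynkin.} Applying It\^o's formula to $s \mapsto f(Y_s + x)$ for the bounded $\calC^2$ function $f$ and taking expectations — the stochastic-integral terms being martingales of zero mean thanks to the boundedness of $f$ and its first two derivatives — yields Dynkin's identity $P_t f(x) - f(x) = \int_0^t P_s \mathscr{L}f(x) \, {\rm d}s$ for all $x \in \RR^d$, $t > 0$ (alternatively, for $r = \infty$ one invokes \cite[Theorem 31.5]{MR1739520} and extends to $\calC_0^2$ by a truncation argument using closedness of the generator). Consequently $\tfrac1t (P_t f - f) - \mathscr{L}f = \tfrac1t \int_0^t (P_s \mathscr{L}f - \mathscr{L}f) \, {\rm d}s$, whose $\mathcal{B}_r$-norm is at most $\sup_{0 < s \leq t} \|P_s \mathscr{L}f - \mathscr{L}f\|_{\mathcal{B}_r}$, and this tends to $0$ as $t \to 0^+$ by the strong continuity established above applied to $g = \mathscr{L}f \in \mathcal{B}_r$. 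Hence $f \in D(\mathscr{L}_r)$ and $\mathscr{L}_r f = \mathscr{L}f$.

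\textbf{Main obstacle.} The only non-routine points are the uniform justification of Dynkin's formula across the three settings and the verification $\mathscr{L}f \in \mathcal{B}_r$ — in particular the $\calC_0$-membership of the compensated small-jump integral and the fact that no moment assumption on $\nu$ is available, which is exactly why one integrates the generator in time (via It\^o/Dynkin) rather than Taylor-expanding $P_t f - f$ directly.
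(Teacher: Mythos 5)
Your proposal is correct and follows essentially the same route as the paper: soft arguments (Minkowski, translation continuity, Markov property) for the contraction semigroup, the bound $\|\mathscr{L}f\|_{\mathcal{B}_r}\leq c\sum_{|\mathbf{a}|\leq 2}\|\partial^{\mathbf{a}}f\|_{\mathcal{B}_r}$ to place $\mathscr{L}f$ in $\mathcal{B}_r$, and then Dynkin's identity (the paper cites \cite[Theorem 31.5]{MR1739520}, which you also mention as the alternative) combined with strong continuity applied to $\mathscr{L}f$. You simply fill in more details than the paper's terse version; no gaps.
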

\begin{proof}
	It is well-known that the contractivity and strong continuity are consequences of Minkowski's integral inequality and
	the continuity of the shift operator, while the semigroup property follows by the Markov property of the process.
	Since for $f \in \calC_0^2(\RR^d)$,
	\[
		\big\|\mathscr{L}f \big\|_{\mathcal{B}_r} \leq c
		\sum_{|\mathbf{a}|\leq 2} \left\|\partial^{\mathbf{a}} f \right \|_{ \mathcal{B}_r}
	\]
	we have $\mathscr{L}f \in \mathcal{B}_r$, because $\partial^{\mathbf{a}} f \in \mathcal{B}_r$ for each
	$\mathbf{a} \in \NN_0^d$, $\abs{\mathbf{a}} \leq 2$. Next, by \cite[Theorem 31.5]{MR1739520}, we get
	\[
		\bigg\|
		\frac{\EE\big[f(Y_t + x)\big]-f(x)}{t} - \mathscr{L} f(x)
		\bigg\|_{\mathcal{B}_r(x)}
		\leq
		\frac1{t} \int_0^t
		\Big\|
		\EE\big[\mathscr{L}f(Y_s + x)- \mathscr{L} f(x) \big]
		\Big\|_{\mathcal{B}_r(x)} {\rm d}s.
	\]
	By the strong continuity in $\mathcal{B}_r(x)$, the right hand-side in the last inequality
	converges to zero as $t$ tends to zero.
\end{proof}

L{\'e}vy processes in $\RR^d$ are  in one-to-one correspondence with their characteristic exponents,
see e.g. \cite[Theorems 8.1 and 7.10, and Corollaries 8.3 and 11.6]{MR1739520}, that is
\[
	\EE e^{i \sprod{x}{Y_t}}=e^{-t\Psi(x)}.
\] Non-zero strictly stable processes in $\RR^d$ form a subclass of L{\'e}vy processes in $\RR^d$ indexed
by $\alpha\in(0,2]$. For every non-zero strictly $\alpha$-stable process $\mathbf{Y}$ in $\RR^d$, $\alpha \in (0, 2]$,
there are unique $A$, $\lambda$ and $\gamma$ such that the characteristic exponent $\Psi$ has the form:
\begin{enumerate}[label=\rm (A.\roman*), start=1, ref=A.\roman*]
	\item
	\label{en:4:1}
	if $\alpha=2$, then
	\[
		\Psi(x)=\sprod{x}{Ax}
	\]
	where $A$ is a non-zero $d\times d$ symmetric non-negative definite real matrix;
	\item
	\label{en:4:2}
	if $\alpha \in (1, 2)$, then
	\[
		\Psi(x)=\int_{\mathbb{S}}
		\lambda({\rm d}\xi)
		\int_0^\infty \big(1-e^{i\sprod{x}{r\xi}} +i \sprod{x}{r\xi} \big) \frac{{\rm d}r}{r^{1+\alpha}}
	\]
	where $\lambda$ is a non-zero finite measure on the unit sphere $\mathbb{S}$;
	\item
	\label{en:4:3}
	if $\alpha=1$,
	\[
		\Psi(x)=\int_{\mathbb{S}}\lambda({\rm d}\xi)
		\int_0^\infty \big(1-e^{i\sprod{x}{r\xi}} + i \sprod{x}{r\xi} \ind{(0,1]}(r)\big)
		\frac{{\rm d}r}{r^{1+\alpha}}-i\sprod{\gamma}{x}
	\]
	where either $\gamma\in\RR^d$ and $\lambda$ is a non-zero finite measure on the unit sphere $\mathbb{S}$ satisfying
	\[
		\int_{\mathbb{S}} \xi\, \lambda({\rm d}\xi)=0
	\]
	or $\gamma \neq 0$ and $\lambda\equiv 0$;
	\item
	\label{en:4:4}
	if $\alpha \in (0, 1)$,
	\[
		\Psi(x)=\int_{\mathbb{S}}\lambda({\rm d}\xi)
		\int_0^\infty \big(1-e^{i\sprod{x}{r\xi}} \big) \frac{{\rm d}r}{r^{1+\alpha}}
	\]
	where $\lambda$ is a non-zero finite measure on the unit sphere $\mathbb{S}$.
\end{enumerate}
Conversely, for every function $\Psi$ of the form as in \eqref{en:4:1}--\eqref{en:4:4} there is a unique non-zero strictly
$\alpha$-stable process $\mathbf{Y}$ in $\RR^d$ with $\alpha\in(0,2]$.

Let us observe that the only non-zero trivial L{\'e}vy process is a non-zero constant drift, that is
$\alpha=1$ with $\gamma\neq 0$ and $\lambda \equiv 0$. Thus, an alternative representation of characteristic exponents of
non-zero strictly stable processes is a consequence of \cite[Theorem 14.10 and Example 18.8]{MR1739520}. The classification
\eqref{en:4:1}--\eqref{en:4:4} is a summary of the following results: \cite[Theorem 14.2 and Example 2.10]{MR1739520} for
$\alpha=2$ and \cite[Theorem 14.7, Remarks 14.6 and 14.4]{MR1739520} for $\alpha \in (0, 2)$. For precise definitions
of a strictly stable process see \cite[Definitions 13.2 and 13.16]{MR1739520}, and a  non-zero and nontrivial
process see \cite[Definition 13.6]{MR1739520}. Let us recall that for a give non-zero strictly $\alpha$-stable
process, $\alpha \in (0, 2]$, the generating triplet is identified as follows:
\begin{enumerate}[label=\rm (A.\roman*), start=1, ref=A.\roman*]
	\item
	if $\alpha = 2$, then take $(A, 0, 0)$;
	\item
	if $\alpha \in (1, 2)$, then take $(0, \nu, \gamma)$ where
	\[
		\gamma = -\int_{\{\norm{z} > 1\}} z \nu({\rm d} z);
	\]
	\item
	if $\alpha = 1$, then we take $(0, \nu, \gamma)$;
	\item
	if $\alpha \in (0, 1)$, we take $(0, \nu, \gamma)$ where
	\[
		\gamma = \int_{\{ |z| \leq 1 \}}z \nu({\rm d}z);
	\]
\end{enumerate}
In view of \cite[Theorem 14.3]{MR1739520},
\begin{equation}
	\label{eq:nu-str_st}
	\nu(B)=\int_{\mathbb{S}}\lambda({\rm d}\xi)
	\int_0^\infty \ind{B} (r\xi) \frac{{\rm d}r}{r^{1+\alpha}}, \qquad B \in \mathcal{B}(\RR^d).
\end{equation}
In our studies, we shall assume that for each $t > 0$, the distribution of $Y_t$ is absolutely continuous with respect to
the Lebesgue measure. In fact, for strictly stable processes, it is equivalent to the absolute continuity of $Y_1$.

The proof of the following lemma is inspired by \cite[Proposition 24.20 and Example 37.19]{MR1739520}.
For the definition of non-degenerate process see \cite[Definitions 24.16 and 24.18]{MR1739520}.
\begin{lemma}
	\label{lem:density}
	Let $\mathbf{Y}$ be a strictly $\alpha$-stable process in $\RR^d$, $\alpha\in(0,2]$. The following statements are equivalent:
	\begin{enumerate}[label=\rm (\roman*), start=1, ref=\roman*]
		\item
		\label{en:1:1}
		the distribution of $Y_1$ is absolutely continuous with respect to the Lebesgue measure;
		\item
		\label{en:1:2}
		$\mathbf{Y}$ is non-degenerate;% see \cite[Def. 24.18]{MR1739520},
		\item
		\label{en:1:3}
		if $\alpha = 2$, then $\det A \neq 0$; otherwise for each $x \in \RR^d \setminus \{0\}$,
		\begin{equation}
			\label{eq:60}
			\int_{\mathbb{S}} |\sprod{x}{\xi}| \: \lambda({\rm d}\xi) \neq 0;
		\end{equation}
		\item
		\label{en:1:4}
		$\Re ( \Psi(x)) \approx |x|^{\alpha}$.
	\end{enumerate}
\end{lemma}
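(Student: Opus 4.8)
The plan is to prove the ring of implications $(i)\Rightarrow(ii)\Rightarrow(iv)\Rightarrow(i)$ together with the separate equivalence $(iii)\Leftrightarrow(iv)$, which closes the loop. The computational heart of the argument is an explicit formula for $\Re\Psi$. Using the representations \eqref{en:4:1}--\eqref{en:4:4} and discarding the purely imaginary drift corrections, one finds $\Re\Psi(x)=\sprod{x}{Ax}$ when $\alpha=2$, while for $\alpha\in(0,2)$,
\[
	\Re\Psi(x)
	= \int_{\mathbb{S}} \lambda({\rm d}\xi) \int_0^\infty \big(1-\cos\sprod{x}{r\xi}\big) \frac{{\rm d}r}{r^{1+\alpha}}
	= c_\alpha \int_{\mathbb{S}} \abs{\sprod{x}{\xi}}^\alpha \, \lambda({\rm d}\xi),
\]
where $c_\alpha=\int_0^\infty (1-\cos s)\, s^{-1-\alpha}\, {\rm d}s\in(0,\infty)$ for $\alpha\in(0,2)$, obtained by the substitution $s=\abs{\sprod{x}{\xi}}r$. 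In every case $\Re\Psi\geq 0$ (since $A\geq 0$ and $1-\cos(\cdot)\geq 0$), and $x\mapsto\Re\Psi(x)$ is continuous and homogeneous of degree $\alpha>0$.

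For $(iii)\Leftrightarrow(iv)$ I would first observe, by a compactness argument on the unit sphere, that a continuous nonnegative function homogeneous of positive degree $\alpha$ satisfies $\Re\Psi(x)\approx\abs{x}^\alpha$ if and only if it does not vanish at any $x\neq 0$. When $\alpha=2$ this says $\sprod{x}{Ax}\neq 0$ for all $x\neq 0$, i.e.\ (as $A\geq 0$) that $A$ is positive definite, equivalently $\det A\neq 0$. When $\alpha\in(0,2)$, since $\xi\mapsto\abs{\sprod{x}{\xi}}^\alpha$ is continuous and nonnegative, the integral $\int_{\mathbb{S}}\abs{\sprod{x}{\xi}}^\alpha\lambda({\rm d}\xi)$ vanishes precisely when $\sprod{x}{\xi}=0$ for $\lambda$-a.e.\ $\xi$, which is the same as $\int_{\mathbb{S}}\abs{\sprod{x}{\xi}}\lambda({\rm d}\xi)=0$; hence $\Re\Psi$ is nonvanishing off the origin exactly when \eqref{eq:60} holds for every $x\neq 0$, which is condition $(iii)$.

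Next, $(i)\Rightarrow(ii)$ is immediate: an absolutely continuous law cannot be concentrated on a proper affine subspace, which is Lebesgue-null, so $Y_1$, and hence $\mathbf{Y}$, is non-degenerate. For $(ii)\Rightarrow(iv)$ I would argue by contraposition: if $(iv)$ fails then, by the compactness remark above, there is $x_0\neq 0$ with $\Re\Psi(x_0)=0$, whence $\Re\Psi(sx_0)=0$ for all $s\in\RR$ by homogeneity. Then the characteristic function of $\sprod{x_0}{Y_1}$, namely $s\mapsto e^{-\Psi(sx_0)}$, has modulus $e^{-\Re\Psi(sx_0)}\equiv 1$; considering $\sprod{x_0}{Y_1}-\sprod{x_0}{Y_1'}$ with $Y_1'$ an independent copy, this difference has characteristic function identically $1$, hence is a.s.\ zero, so $\sprod{x_0}{Y_1}$ is a.s.\ constant and $Y_1$ is supported on a hyperplane, i.e.\ $\mathbf{Y}$ is degenerate. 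Finally, for $(iv)\Rightarrow(i)$: if $\Re\Psi(x)\geq c\abs{x}^\alpha$ then $\bigl|\EE e^{i\sprod{x}{Y_t}}\bigr|=e^{-t\Re\Psi(x)}\leq e^{-ct\abs{x}^\alpha}$ is integrable over $\RR^d$ for every $t>0$, so by Fourier inversion $Y_t$ has a bounded continuous density; in particular $Y_1$ is absolutely continuous.

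I expect the only delicate points to be organisational rather than deep: keeping the formula for $\Re\Psi$ uniform across the four structural cases \eqref{en:4:1}--\eqref{en:4:4} despite the differing imaginary drift terms, and handling the degenerate regime cleanly — in particular the strictly $1$-stable pure drift ($\lambda\equiv 0$, $\gamma\neq 0$), for which all four statements fail together and which is precisely the case where $\Re\Psi\equiv 0$.
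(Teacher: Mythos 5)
Your proof is correct, and it reorganizes the argument compared with the paper: the paper proves the single cycle (i)\,$\Rightarrow$\,(ii)\,$\Rightarrow$\,(iii)\,$\Rightarrow$\,(iv)\,$\Rightarrow$\,(i), whereas you prove (i)\,$\Rightarrow$\,(ii)\,$\Rightarrow$\,(iv)\,$\Rightarrow$\,(i) and attach (iii)\,$\Leftrightarrow$\,(iv) separately. The shared core is identical: the formula $\Re\Psi(x)=c_\alpha\int_{\mathbb{S}}\abs{\sprod{x}{\xi}}^\alpha\,\lambda({\rm d}\xi)$ for $\alpha\in(0,2)$ (the paper cites \cite[Theorem 14.10 and Example 18.8]{MR1739520} rather than computing $c_\alpha$ directly), the homogeneity-plus-compactness reduction of (iv) to non-vanishing on $\mathbb{S}$, and Fourier inversion for (iv)\,$\Rightarrow$\,(i). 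The genuine difference is in the step leaving (ii): the paper deduces from $\Re\Psi(x_0)=0$ that $\lambda_1$, hence the L\'evy measure $\nu$, is supported on $x_0^\perp$ and then invokes the support characterization of non-degeneracy from \cite[Proposition 24.17]{MR1739520}; you instead observe that $s\mapsto e^{-\Psi(sx_0)}$ has modulus one, symmetrize, and conclude that $\sprod{x_0}{Y_1}$ is a.s.\ constant, so $Y_1$ lives on a hyperplane. Your route is more self-contained (it needs only the elementary fact that a law whose characteristic function has modulus identically one is a point mass, not the structure theory of supports of infinitely divisible laws), at the cost of being slightly longer; the paper's is shorter by citation. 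Your closing remark correctly isolates the one degenerate case (the pure drift with $\alpha=1$, $\lambda\equiv0$, $\gamma\neq0$) where all four conditions fail simultaneously.
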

\begin{proof}
	Each of the conditions \eqref{en:1:1}--\eqref{en:1:4} implies that $\mathbf{Y}$ is non-zero. Clearly,
	\eqref{en:1:1} $\implies$ \eqref{en:1:2} follows from the definition of the non-degeneracy.
	Next, let us assume that \eqref{en:1:2} holds true. If $\alpha=2$, then by \cite[Proposition 24.17]{MR1739520} we must
	have $\det(A)\neq 0$. If $\alpha\in(0,2)$, by \cite[Theorem 14.10 and Example 18.8]{MR1739520}, we have that
	\begin{equation}
		\label{eq:59}
		\Re (\Psi(x)) = \int_{\mathbb{S}} |\sprod{x}{\xi}|^{\alpha} \lambda_1({\rm d}\xi)
	\end{equation}
	where $\lambda_1$ is a positive measure proportional to $\lambda$. Suppose, contrary to our claim, that there is
	$x_0 \in \RR^d \setminus\{0\}$, such that $\Re (\Psi(x_0)) = 0$. Hence, the measure $\lambda_1$ as well as the
	L{\'e}vy measure $\nu$, are supported on $x_0^\perp$. This contradicts the
	non-degeneracy of $\mathbf{Y}$, see \cite[Proposition 24.17]{MR1739520}. This proves \eqref{en:1:2} $\implies$
	\eqref{en:1:3}.
	
	Let us observe that for $x \in \RR^d \setminus \{0\}$,
	\[
		\Re \Phi(x) = \norm{x}^\alpha \Re \Psi\Big(\tfrac{x}{\norm{x}} \Big).
	\]
	Indeed, it follows immediately from \eqref{en:4:1} for $\alpha = 2$, and from \eqref{eq:59} for $\alpha \in (0, 2)$.
	By the continuity and compactness of the unit sphere, $\Re \Phi$ attains its extremes. Therefore, to show that \eqref{en:1:3}
	$\implies$ \eqref{en:1:4}, it is enough to prove that the minimum on $\mathbb{S}$ is positive. For $\alpha \in (0, 2)$, it
	is an easy consequence of \eqref{eq:60}. If $\alpha = 2$, since the matrix $A$ is symmetric and non-negative definite, the
	condition $\det A \neq 0$ implies that it is in fact positive definite, proving \eqref{en:1:4}.
	
	Finally, \eqref{en:1:4} $\implies$ \eqref{en:1:1} easily follows by the Fourier inversion formula.
\end{proof}

\begin{remark}
	\label{rem:density}
	If $d=1$, the only non-zero strictly $\alpha$-stable process which is \emph{degenerate} is a non-zero drift, that is,
	$\alpha=1$, $\gamma \neq 0$ and $\lambda \equiv 0$. Hence, by Lemma~\ref{lem:density}, for every non-zero strictly
	$\alpha$-stable process other than drift, $Y_1$ is absolutely continuous with respect to the Lebesgue measure.
\end{remark}
If any of the conditions \eqref{en:1:1}--\eqref{en:1:4} in Lemma \ref{lem:density} is satisfied, then for each $t > 0$
the random variable $Y_t$ has absolutely continuous density $p_0(t; 0, \cdot)$. Moreover, $p_0(u; 0, \cdot)$ belongs to
$\calC_0^\infty(\RR^d)$, and
\begin{align}
	\label{eq:p_0}
	p_0(t;x,y)= \bigg(\frac{1}{2\pi}\bigg)^d
	\int_{\RR^d} e^{-i\sprod{y-x}{z}} e^{-t \Psi(z)} {\: \rm d}z,
\end{align}
see \cite{Hartman, MR3010850, MR4140542} and \cite[Section 6]{MR3996792} for a broader context.
\begin{lemma}
	\label{lem:A3}
	Let $\mathbf{Y}$ be a strictly $\alpha$-stable process in $\RR^d$, $\alpha\in(0,2]$, with a density $p_0$.
	\begin{enumerate}[label=\rm (\roman*), start=1, ref=\roman*]
		\item
		\label{en:3:1}
		For all $x,y\in\RR^d$ and $t,u>0$,
		\[
			p_0(tu; x, y)=t^{-d/\alpha}p_0(u;t^{-1/\alpha}x,t^{-1/\alpha}y).
		\]
		\item
		\label{en:3:2}
		For every $\ell \in \NN_0$, $\mathbf{a} \in \NN_0^d$ there is a constant $C > 0$ such that for all $y \in \RR^d$
		and $u>0$,
		\[
			\partial_u^\ell \,\partial_y^\mathbf{a} p(u;0,\cdot) \in \calC_0^\infty\big(\RR^d\big),
		\]
		and
		\[
			\big| \partial_u^\ell \partial_y^\mathbf{a} p(u;0,y)\big|
			\leq C u^{-\ell-(d+|\mathbf{a}|)/\alpha}.
		\]
		\item
		\label{en:3:3}
		There is a constant $C > 0$ such that for all $x,y,z\in\RR^d$ and $u>0$,
		\begin{align*}
			p_0(u;0,y-x)
			\leq C u^{-d/\alpha},
		\end{align*}
		and
		\begin{align*}
			\big|p_0(u;0,y+z)-p_0(u;0,y)\big|
			\leq C |z| u^{-(d+1)/\alpha}.
		\end{align*}
	\end{enumerate}
\end{lemma}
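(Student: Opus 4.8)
The plan is to deduce every part of the lemma from the Fourier representation \eqref{eq:p_0} together with the two-sided estimate $\Re \Psi(z) \approx \norm{z}^\alpha$ supplied by Lemma~\ref{lem:density}\eqref{en:1:4}, which applies here since $Y_1$, and hence every $Y_t$, has a density. The preliminary observation is that strict $\alpha$-stability forces $\Psi$ to be homogeneous of degree $\alpha$: from $Y_t \eqdistr t^{1/\alpha} Y_1$ one gets $e^{-t \Psi(z)} = \EE e^{i \sprod{z}{Y_t}} = \EE e^{i \sprod{t^{1/\alpha} z}{Y_1}} = e^{-\Psi(t^{1/\alpha} z)}$, so $\Psi(sz) = s^\alpha \Psi(z)$ for all $s > 0$; writing $\Psi(z) = \norm{z}^\alpha \Psi(z/\norm{z})$ and using continuity of $\Psi$ on the compact unit sphere yields $\abs{\Psi(z)} \leq C \norm{z}^\alpha$. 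Combining this with Lemma~\ref{lem:density}\eqref{en:1:4} gives the key pointwise bound $\abs{\Psi(z)}^\ell e^{-u \Re \Psi(z)} \leq C \norm{z}^{\ell \alpha} e^{-c u \norm{z}^\alpha}$, valid for every $\ell \in \NN_0$ and $u > 0$.

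For \eqref{en:3:1} I would simply insert $tu$ in place of the time variable in \eqref{eq:p_0} and substitute $z = t^{-1/\alpha} w$. The homogeneity of $\Psi$ turns $tu\,\Psi(t^{-1/\alpha} w)$ into $u\, \Psi(w)$, the exponent $\sprod{y-x}{z}$ becomes $\sprod{t^{-1/\alpha}(y-x)}{w}$, and the Jacobian produces the prefactor $t^{-d/\alpha}$; the resulting integral is precisely $t^{-d/\alpha} p_0(u; t^{-1/\alpha} x, t^{-1/\alpha} y)$.

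For \eqref{en:3:2}, differentiating \eqref{eq:p_0} formally under the integral sign gives, up to a unimodular constant, the integrand $z^{\mathbf{a}} \Psi(z)^\ell e^{-i \sprod{y}{z}} e^{-u \Psi(z)}$, whose modulus is dominated by $C \norm{z}^{\abs{\mathbf{a}} + \ell \alpha} e^{-c u \norm{z}^\alpha}$, an $L^1(\RR^d)$ function, uniformly for $u$ in compact subsets of $(0, \infty)$. This justifies the differentiation, and the substitution $z = u^{-1/\alpha} w$ then yields $\abs{\partial_u^\ell \partial_y^{\mathbf{a}} p_0(u; 0, y)} \leq C u^{-\ell - (d + \abs{\mathbf{a}})/\alpha}$. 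Since for every further multi-index $\mathbf{b}$ the function $z \mapsto z^{\mathbf{b}} z^{\mathbf{a}} \Psi(z)^\ell e^{-u \Psi(z)}$ is again in $L^1(\RR^d)$, the Riemann--Lebesgue lemma shows that $\partial_u^\ell \partial_y^{\mathbf{a}} p_0(u; 0, \cdot)$ and all of its further $y$-derivatives are continuous and vanish at infinity, i.e.\ $\partial_u^\ell \partial_y^{\mathbf{a}} p_0(u; 0, \cdot) \in \calC_0^\infty(\RR^d)$. Part \eqref{en:3:3} is then immediate: the first inequality is \eqref{en:3:2} with $\ell = \abs{\mathbf{a}} = 0$ evaluated at $y - x$, and the second follows from $\abs{p_0(u; 0, y+z) - p_0(u; 0, y)} \leq \abs{z} \sup_w \abs{\nabla_y p_0(u; 0, w)} \leq C \abs{z} u^{-(d+1)/\alpha}$ via the fundamental theorem of calculus along the segment joining $y$ and $y+z$ together with \eqref{en:3:2} with $\abs{\mathbf{a}} = 1$.

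No step presents a genuine difficulty. The only place demanding a modicum of care is the uniform domination that licenses differentiation under the integral sign and the appeal to Riemann--Lebesgue for the $\calC_0$ conclusion, but both are controlled outright by the bound $\Re \Psi(z) \approx \norm{z}^\alpha$ together with the homogeneity of $\Psi$.
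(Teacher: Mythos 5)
Your proposal is correct and follows essentially the same route as the paper: Fourier inversion \eqref{eq:p_0}, the lower bound $\Re\Psi(z)\approx\norm{z}^\alpha$ from Lemma~\ref{lem:density}, differentiation under the integral sign, and the scaling of $\Psi$ to extract the powers of $u$, with \eqref{en:3:3} deduced from \eqref{en:3:2}. The only cosmetic difference is that you obtain the upper bound $\abs{\Psi(z)}\leq C\norm{z}^\alpha$ directly from homogeneity and compactness of the sphere, whereas the paper cites the general bound $\abs{\Psi(z)}\leq c(1+\norm{z}^2)$; both suffice.
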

\begin{proof}
	The proof of \eqref{en:3:1} follows from \eqref{eq:p_0} and the scaling property of $\Psi$, that is
	$t\Psi(z)=\Psi(t^{1/\alpha}z)$. By Lemma~\ref{lem:density} and \cite[Proposition~2.17]{MR3156646} we obtain that
	$\Re\Psi(z) \approx |z|^{\alpha}$ and $|\Psi(z)|\leq c (1+|z|^2)$, respectively. Thus, we can differentiate under
	the integral sign in \eqref{eq:p_0}, and the integrands are absolutely integrable. This proves the regularity in
	\eqref{en:3:2}. Now using the scaling of $\Psi$ we get
	\[
		\partial_u^{\ell} \partial_y^\mathbf{a} p_0(u;0,y)=
		u^{-\ell-(d+|\mathbf{a}|)/\alpha}\big(\partial_u^\ell \,\partial_y^\mathbf{a} \, p_0\big)\big(1;0,u^{-1/\alpha}y\big)
	\]
	which together with the above mentioned estimates on $\Psi$ leads to the upper bound in \eqref{en:3:2}.
	Finally, \eqref{en:3:3} easily follows from \eqref{en:3:2}.
\end{proof}

\begin{example}
	\label{ex:names:1}
	Let $\mathbf{Y}$ be an $\alpha$-stable subordinator with $\alpha\in (0,1)$, and let $d = 1$. If
	$\lambda({\rm d}\xi)= \frac{\alpha}{\Gamma\left(1-\alpha\right)} \delta_{1}({\rm d}\xi)$, then
	\[
		\nu({\rm d}s)=\frac{\alpha}{\Gamma\left(1-\alpha\right)} \ind{(0, \infty]}(s) \frac{{\rm d} s}{s^{1+\alpha}}.
	\]
	Moreover, the Laplace transform is $\EE e^{- u Y_t}= e^{-t u^{\alpha}}$, $u \geq 0$. See \cite[Example 24.12]{MR1739520}.
\end{example}

\begin{example}
	\label{ex:names:2}
	Let $\mathbf{Y}$ be an isotropic $\alpha$-stable process with $\alpha\in (0,2)$. If
	$\lambda({\rm d}\xi)=c \lambda_0({\rm d}\xi)$ (and $\gamma=0$ for $\alpha=1$) where $\lambda_0$ is the uniform probability
	measure on the unit sphere $\mathbb{S}$, and
	\[
		c^{-1}=c_0(d,\alpha)=2^{-\alpha}\frac{\Gamma(d/2)\Gamma((2-\alpha)/2)}{\alpha \Gamma((\alpha+d)/2)}.
	\]
	Then
	\[
		\nu({\rm d}z)= \mathscr{A}_{d,\alpha} |z|^{-d-\alpha} {\: \rm d}z,
		\quad\text{ and }\quad
		\mathscr{A}_{d,\alpha} =\frac{2^\alpha \Gamma((d+\alpha)/2)}{\pi^{d/2} |\Gamma(-\alpha/2)|},
	\]
	and $\EE e^{i\sprod{x}{Y_t}}=e^{-t|x|^\alpha}$. See  \cite[Example 18.9]{MR1739520}.
\end{example}

\begin{example}
	\label{ex:names:3}
	Let $\mathbf{Y}$ be a cylindrical $\alpha$-stable process with $\alpha\in (0,2)$. If $d \geq 2$, then
	\[
		\lambda ({\rm d\xi})=\sum_{k=1}^d \Big( \frac{c}{2} \big(\delta_{-1}({\rm d}\xi_k)
		+\delta_{1}({\rm d}\xi_k)\big) \prod_{\substack{j=1 \\ j\neq k}}^d \delta_0 ({\rm d}\xi_j)\Big)
	\]
	where $c^{-1} = c_0(1, \alpha)$. If $\alpha = 1$, we also have $\gamma = 0$. Then
	\[
		\nu({\rm d}z) = \sum_{k=1}^d \Big( \mathscr{A}_{1,\alpha} |z_k|^{-1-\alpha} {\: \rm d}z_k
		\prod_{\substack{j=1 \\ j \neq k}}^d \delta_0 ({\rm d} z_j) \Big),
	\]
	and $\EE e^{i\sprod{x}{Y_t}}=\exp\{-t (|x_1|^\alpha+\ldots +|x_d|^\alpha)\}$.
\end{example}

\begin{example}
	Let $\mathbf{Y}$ be Brownian motion. If $A=I$ (identity matrix), then
	\[
		p_0(t;x,y)=(4\pi t)^{-d/2}e^{-\frac{|y-x|^2}{4t}},
	\]
	and $\EE e^{i\sprod{x}{Y_t}} = \exp\{-t \norm{x}^2\}$.
\end{example}

There is also a formula for the transition density of $\frac{1}{2}$-stable subordinator, see e.g.
\cite[Example 2.13]{MR1739520}. Namely,
\[
	p_0(t;x,y)= (4\pi)^{-1/2} t (y-x)^{-3/2} e^{- \frac{t^2}{4(y-x)}} \ind{\{x<y\}}.
\]
Another example covered in the article with an explicit formula for the transition density
is the Cauchy process, i.e., a L{\'e}vy process in $\RR^d$ with the characteristic function
\[
	\EE e^{i\sprod{x}{Y_t}}=e^{-t(|x|- i \sprod{\gamma}{x})}.
\]
For $\gamma=0$, the formula reduces to the isotropic $1$-stable process, called the \emph{symmetric Cauchy process}. For general
$\gamma\in\RR^d$ it is a strictly $1$-stable process with the density, see e.g. \cite[Example 2.12]{MR1739520},
\[
	p_0(t;x,y)= \frac{\Gamma\big(\tfrac{d+1}{2}\big)}{\pi^{\frac{d+1}{2}}}
	\frac{t}{\big(|y-x-\gamma t|^2+t^2\big)^\frac{d+1}{2} }.
\]

\section{Further comments}
\label{appendix:B}
\subsection{Literature overview}
Resetting describes an evolutionary pattern encountered in a variety of physical phenomena (see \cite{Gupta, MR4093464} for
survey). Apart from the topics already described in Introduction, see Section \ref{sec:Intro}, it models reset algorithms
(see e.g. \cite{Avrach, MR2023017}), and appears as the fluid limit for some queuing models with binomial catastrophe
rates (see e.g. \cite{Artalejo}). Such processes can be viewed as particular examples of the so-called shot noise model.
They are used in modeling earthquakes or layers of sedimentary material accumulated in depositional environments,
but not subject to subsequent erosion (see e.g. \cite{18}). It can also model snow avalanches or neuron firings (see e.g.
\cite{MR1780992, MR2213972, MR2044928,MR1102872}). Applications of resetting procedure have also been discussed
in the context of backtrack recovery in RNA polymerase (see e.g. \cite{4, 9}), enzymatic velocity (see e.g. \cite{16}),
pollination strategies (see e.g. \cite{17}), enzymatic inhibition (see e.g. \cite{2}), stochastic thermodynamics
(see e.g. \cite{21}), and quantum mechanics (see e.g. \cite{22}). Realization of resetting has also been confirmed
in switching holographic optical tweezers (see \cite{8} for details). Partial resetting also models disasters and
catastrophes (see e.g. \cite{MR4189343}), for example, in the context of population dynamics
(see e.g. \cite{Gerber, Assaf, 33, 34}). % or in the context of financial crashes (see \cite{Mendoza,Sor}).
The risk process with partial resetting found to be an indispensable part of the so-called microinsurance policies
(see e.g. \cite{Corina}). In \cite{Rev} it has been pointed out that a process with resetting can be used to model
certain chemical reactions. Additionally, the partial resetting mechanism has also been observed in granular gas where collisions
convert the relative speed of two particles quasi-instantaneously to its fractions (see e.g. \cite{27}), and in a soil
contaminant where rainfall events (or other events) leach the contaminant to its fraction
(see e.g. \cite{28, 29,30, 18}). Other possible applications of the partial resetting procedure
include vegetation biomass with fires (see e.g. \cite{31, 32}), modeling of financial crashes (see, e.g.
\cite{35, 36, Mendoza, Sor}), or queueing with random clearing events (see e.g. \cite{38}).

Much attention has been devoted to the fact that resetting can produce non-equilibrium steady states (see e.g.
\cite{Eule, Malakar, Gupta, Evans, Evans1, MR4093464, MR3476293, Pal, White, frenchNESS}). In most of the papers, the density
of the stationary measure of the process with resetting is described either numerically or in explicit form (e.g. in the case
of Brownian motion with total resetting). Then there are arguments given to prove that  so-called balance conditions are
violated. Roughly, the conditions check if the reversed in time process with resetting seen from the point of view of
invariant law is the same as original process starting at the invariant law.
In \cite{frenchNESS}, in the context of NESS, the authors discussed the duality of the generator for symmetric exclusion process
with reservoirs.

\subsection{Fourier transforms}
Let $Y$ be the isotropic strictly stable process. Let $\mathbf{X}$ be obtained from $\mathbf{Y}$ by partial resetting
with factor $c \in (0, 1)$. Let $p$ denote a transition density of the process $\mathbf{X}$. Following \cite{jaifizycy},
we can formally compute the Laplace--Fourier transform of $p$ as well as the Fourier transform of the density
$\rho_{\mathbf{Y}}$ of the stationary measure for $\mathbf{X}$.

Indeed, applying the Laplace--Fourier transform to both sides of the second equation of Theorem \ref{thm:H+F-P} with
$\mathscr{L}^*f(x)=\mathscr{L}f(x)=\Delta^{\alpha/2}f(x)$ (with usual understanding that for $\alpha=2$ we have
$\Delta^{\alpha/2}=\Delta$) we obtain
\begin{equation*}
	\label{Fp}
	r\mathcal{F}(p)(\theta, r)-1= -\mathcal{F}(p)(\theta, r)\big(|\theta|^\alpha+1\big)+ \mathcal{F}(p)(c\theta, r),
	\quad\text{for } r\geq 0, \theta \in \RR
\end{equation*}
where
\[
	\mathcal{F}(p)(\theta, r)=\int_0^\infty e^{-rt}\mathcal{F}(p(t; 0,\cdot ))(\theta) {\: \rm d} t.
\]
Hence, it is expected that
\begin{equation*}
	\label{Fpa}
	\mathcal{F}(p)(\theta, r)=\sum_{k=0}^\infty \prod_{j=0}^k \frac{1}{1+|\theta|^\alpha m^{j}+r}.
\end{equation*}
Similarly, using the first equation in Theorem \ref{thm:H+F-P} we obtain
\begin{equation*}
	\label{Fpi}
	-\mathcal{F}(\rho_{\mathbf{Y}})(\theta)\big(|\theta|^\alpha+1\big)+ \mathcal{F}(\rho_{\mathbf{Y}})(c\theta)=0,
\end{equation*}
and thus
\begin{equation*}
	\label{Fpb}
	\mathcal{F}(\rho_{\mathbf{Y}})(\theta)=\prod_{j=0}^\infty \frac{1}{1+|\theta|^\alpha m^{j}}.
\end{equation*}
Note that from Theorem~\ref{thm:H+F-P} it follows that
$\rho_{\mathbf{Y}}$ satisfies the pantograph functional-differential equation, which has reach structure
(see e.g. \cite{iserles_1993, WISNIEWOLSKI2024600}).

\subsection{L\'evy process with partial resetting and autoregressive process of order $1$}
Let $\mathbf{Y}$ be a one-dimensional L\'evy process with a transition density $p_0$. Let $\mathbf{X}$ be obtained from
$\mathbf{Y}$ by partial resetting with factor $c \in (0, 1)$. Then the density of the stationary distribution of $\mathbf{X}$
exists and it equals the stationary distribution of the process $\mathbf{X}$ observed at Poisson epochs only.
Indeed, according to \cite[Definition 1]{MR1157423}, $\mathbf{X}$ admits an embedded recursive chain
$\mathbf{Z}=(Z_n =X_{T_n}: n \in \NN_0)$ being an autoregressive process of order $1$ defined via
\[
	\begin{cases}
		Z_0 = 0, \\
		Z_{n+1} =c Z_n + Y_{T_{n+1}-T_n}, &\quad n \in \NN.
	\end{cases}
\]
By \cite[Theorem 1(3), Theorem 8 and Theorem 2]{MR1157423}, it is enough to show that $\mathbf{Z}$ convergence in total
variation towards its stationary law. The latter follows from \cite[Theorem 2.1 and Remark B]{MR1956829} and
\cite[Theorem 2.8]{MR1894253}, since $\mathbf{Z}$ is an aperiodic, positive Harris recurrent Markov chain that has an
absolutely continuous distribution (see also \cite[Theorem 13.0.1]{MR1287609}).

Furthermore, denoting by $X_\infty$ the random variable with distribution being the invariant law of $\mathbf{X}$,
we have the following affine equation satisfied
\begin{equation}
	\label{affine}
	X_\infty \eqdistr cX_\infty+ Y_{T_1}.
\end{equation}
In particular, if $\mathbf{Y}$ is a one-dimensional strictly isotropic $\alpha$-stable process, then by \cite{Grinc} and
\cite[Lemma A.3]{MikoschSamrodnitsky}, as $y\rightarrow +\infty$, we get
\begin{enumerate}
	\item if $\alpha \in (0, 2)$, then
	\begin{align*}
		\mathbb{P} (\pm X_\infty >y)
		&\sim (1-c^{\alpha})^{-1}\mathbb{P}( \pm Y_{T_1}>y) \\
		&\sim \frac{1}{2\alpha \Gamma(-\alpha)\cos \left(\frac{(2-\alpha)\pi}{2}\right)}y^{-\alpha},
		\qquad \text{as } y \rightarrow + \infty;
	\end{align*}
	\item if $\alpha = 2$, then
	\[
		\mathbb{P} (X_\infty >y) \sim \frac{1}{(m,m)_\infty}\mathbb{P} (Y_{T_1} >y)
		\qquad\text{as } y \rightarrow +\infty.
	\]
\end{enumerate}

\subsection{Generalized Ornstein--Uhlenbeck process and exponential functional}
Observe that the one-dimensional process $\mathbf{X}$ obtained from the L\'evy process $\mathbf{Y}$
by partial resetting is a solution of the stochastic differential equation
\begin{equation}
	\label{sde1}
	\mathrm{d}X_t=(c-1)X_{t-}\;\mathrm{d}N_t+{\rm d} Y_t,
\end{equation}
hence it is a special case of the so-called Generalized Ornstein--Uhlenbeck process. Therefore,
$X_\infty$ being the solution of \eqref{affine} has the same law as the law of the following exponential functional
\[
	D=
	\int_0^\infty e^{-\log (1+c)N_{s-}}\mathrm{d} Y_s,
\]
see e.g. \cite{Behme}.

\section*{Acknowledgement}
The second author acknowledges that the research is partially supported by the Polish National Science Centre
Grant No.~2021/41/B/HS4/00599.
\section*{Data availability}
We do not analyse or generate any datasets, because our work proceeds within a theoretical and mathematical approach.
\section*{Conflict of interest}
On behalf of all authors, the corresponding author states that there is no conflict of interest. 
%\begin{bibliography}{ness-bib}
	%\bibliographystyle{amsplain}
%\end{bibliography}


\begin{thebibliography}{10}

\bibitem{Corina}
J.~Akahori, C.~Imamura~Y. Constantinescu, and H.~Pham, \emph{An application of
  risk theory to mortgage lending}, Scand. Actuar. J. \textbf{5} (2022),
  447--469.

\bibitem{MR1956829}
G.~Alsmeyer, \emph{On the {H}arris recurrence of iterated random {L}ipschitz
  functions and related convergence rate results}, J. Theoret. Probab.
  \textbf{16} (2003), no.~1, 217--247.

\bibitem{Artalejo}
J.R Artalejo, A.~Economou, and M.J. Lopez-Herrero, \emph{Evaluating growth
  measures in populations subject to binomial and geometric catastrophes},
  Math. Biosci. Eng. \textbf{4} (2006), no.~4, 573--94.

\bibitem{Assaf}
M.~Assaf, A.~Kamenev, and B.~Meerson, \emph{Population extinction risk in the
  aftermath of a catastrophic event}, Phys. Rev. E \textbf{79} (2009), 011127.

\bibitem{Avrach}
K.~Avrachenkov, A.~Piunovskiy, and Y.~Zhang, \emph{Markov processes with
  restart}, J. Appl. Probab. \textbf{50} (2013), no.~4, 960--968.

\bibitem{MR1894253}
B.~Basrak, R.A. Davis, and T.~Mikosch, \emph{Regular variation of {GARCH}
  processes}, Stochastic Process. Appl. \textbf{99} (2002), no.~1, 95--115.

\bibitem{Behme}
Anita Behme, \emph{Generalized {Ornstein--Uhlenbeck} processes and extensions},
  Ph.D. thesis, Braunschweig Universite, 2011.

\bibitem{Bel}
W.J. Bell, \emph{Searching behaviour: The behavioural ecology of finding
  resources}, Chapman and Hall, 1991.

\bibitem{Ben}
O.~B\'enichou, M.~Coppey, M.~Moreau, P-H. Suet, and R.~Voituriez, \emph{Optimal
  search strategies for hidden targets}, Phys. Rev. Lett. \textbf{94} (2005),
  198101.

\bibitem{19}
O.~B\'enichou, C.~Loverdo, Moreau M., and R.~Voituriez, \emph{Intermittent
  search strategies}, Rev. Mod. Phys. \textbf{83} (2011), 81.

\bibitem{16}
A.~Berezhkovskii, A.~Szabo, R.~Urbakh, and A.~Kolomeisky, \emph{Dependence of
  the enzymatic velocity on the substrate dissociation rate}, J. Phys. Chem. B
  \textbf{121} (2016), 3437.

\bibitem{MR1324786}
P.~Billingsley, \emph{Probability and measure}, third ed., Wiley Series in
  Probability and Mathematical Statistics, John Wiley \& Sons, Inc., New York,
  1995, A Wiley-Interscience Publication.

\bibitem{BlumenthalGetoor}
R.M. Blumenthal and R.K. Getoor, \emph{Some theorems on stable processes},
  Trans. Amer. Math.Soc. \textbf{95} (1960), 263--273.

\bibitem{MR2013738}
K.~Bogdan, A.~St\'{o}s, and P.~Sztonyk, \emph{Harnack inequality for stable
  processes on {$d$}-sets}, Studia Math. \textbf{158} (2003), no.~2, 163--198.

\bibitem{MR1157423}
A.A. Borovkov and S.G. Foss, \emph{Stochastically recursive sequences and their
  generalizations}, Siberian Adv. Math. \textbf{2} (1992), no.~1, 16--81.

\bibitem{MR1780992}
K.~Borovkov and D.~Vere-Jones, \emph{Explicit formulae for stationary
  distributions of stress release processes}, J. Appl. Probab. \textbf{37}
  (2000), no.~2, 315--321.

\bibitem{MR3156646}
B.~B\"ottcher, R.~Schilling, and J.~Wang, \emph{L\'evy matters. {III}}, Lecture
  Notes in Mathematics, vol. 2099, Springer, Cham, 2013.

\bibitem{MR2213972}
O.~Boxma, D.~Perry, W.~Stadje, and Sh. Zacks, \emph{A {M}arkovian
  growth-collapse model}, Adv. in Appl. Probab. \textbf{38} (2006), no.~1,
  221--243.

\bibitem{27}
N.V. Brilliantov and T.~Poschel, \emph{Kinetic theory of granular gases},
  Oxford University Press, 2004.

\bibitem{34}
S.~Calabrese, A.~Porporato, F.~Laio, P.D. Odorico, and L.~Ridolfi, \emph{Age
  distribution dynamics with stochastic jumps in mortality}, Proc. R. Soc. A:
  Math. Phys. Eng. Sci. \textbf{473} (2017), 20170451.

\bibitem{31}
J.S. Clark, \emph{Ecological disturbance as a renewal process: Theory and
  application to fire history}, Oikos \textbf{56} (1989), 17.

\bibitem{e21090884}
R.~Cofré, L.~Videla, and F.~Rosas, \emph{An introduction to the
  non-equilibrium steady states of maximum entropy spike trains}, Entropy
  \textbf{21} (2019), no.~9.

\bibitem{18}
M.~Dahlenburg, A.~Chechkin, R.~Schumer, and R.~Metzler, \emph{Stochastic
  resetting by a random amplitude}, Phys. Rev. E \textbf{103} (2021), 052123.

\bibitem{Derrida}
B.~Derrida, \emph{Non equilibrium steady states: fluctuations and large
  deviations of the density and of the current}, J. Stat. Mech. Theory Exp.
  \textbf{10} (2007), no.~7, 7023.

\bibitem{jaifizycy}
C.~Di~Bello, A.~Chechkin, A.~Hartmann, Z.~Palmowski, and R.~Metzler,
  \emph{Time-dependent probability density function for partial resetting
  dynamics}, New J. Phys. \textbf{25} (2023), 082002.

\bibitem{MR0344810}
G.~Doetsch, \emph{Introduction to the theory and application of the {L}aplace
  transformation}, Springer-Verlag, New York-Heidelberg, 1974.

\bibitem{MR1895332}
V.~Dumas, F.~Guillemin, and Ph. Robert, \emph{A {M}arkovian analysis of
  additive-increase multiplicative-decrease algorithms}, Adv. in Appl. Probab.
  \textbf{34} (2002), no.~1, 85--111.

\bibitem{MR2044928}
I.~Eliazar and J.~Klafter, \emph{A growth-collapse model: {L}\'{e}vy inflow,
  geometric crashes, and generalized {O}rnstein-{U}hlenbeck dynamics}, Phys. A
  \textbf{334} (2004), no.~1-2, 1--21.

\bibitem{Eule}
S.~Eule and J.~Metzger, \emph{Non-equilibrium steady states of stochastic
  processes with intermittent resetting}, New J. Phys. \textbf{18} (2016),
  033006.

\bibitem{Evans}
M.R. Evans and S.N. Majumdar, \emph{Diffusion with stochastic resetting}, Phys.
  Rev. Lett. \textbf{106} (2011), 160601.

\bibitem{MR3225982}
\bysame, \emph{Diffusion with resetting in arbitrary spatial dimension}, J.
  Phys. A \textbf{47} (2014), no.~28, 285001, 19.

\bibitem{Evans1}
M.R. Evans, S.N. Majumdar, and K.~Mallick, \emph{Optimal diffusive search:
  nonequilibrium resetting versus equilibrium dynamics}, J. Phys. A \textbf{46}
  (2013), no.~18, 185001, 13.

\bibitem{MR4093464}
M.R. Evans, S.N. Majumdar, and G.~Schehr, \emph{Stochastic resetting and
  applications}, J. Phys. A \textbf{53} (2020), no.~19, 193001, 67.

\bibitem{Floreani}
S.~Floreani and A.G. Casanova, \emph{Non-equilibrium steady state of the
  symmetric exclusion process with reservoirs}, arXiv: 2307.02481, 2023.

\bibitem{frenchNESS}
S.~Floreani and A.~Gonz\'{a}lez~Casanova, \emph{Non-equilibrium steady state of
  the symmetric exclusion process with reservoirs}, ArXiv: 2307.02481, 2023.

\bibitem{21}
J.~Fuchs, S.~Goldt, and U.~Seifert, \emph{Stochastic thermodynamics of
  resetting}, EPL-Europhys Lett. \textbf{113} (2016), no.~6, 60009.

\bibitem{Gerber}
L.R. Gerber and R.~Hilborn, \emph{Catastrophic events and recovery from low
  densities in populations of otariids: implications for risk of extinction},
  Mammal Rev. \textbf{31} (2001), no.~9, 131.

\bibitem{MR4179587}
R.~Gorenflo, A.A. Kilbas, F.~Mainardi, and S.~Rogosin, \emph{Mittag-{L}effler
  functions, related topics and applications}, Springer Monographs in
  Mathematics, Springer, Berlin, 2020.

\bibitem{8}
D.G. Grier and Y.~Roichman, \emph{Holographic optical trapping}, Appl. Opt.
  \textbf{45} (2006), no.~5, 880--887.

\bibitem{Grinc}
A.R. Grincevi\v{c}ius, \emph{Random difference equations and renewal theory for
  products of random matrices}, Lithuanian Math. J. \textbf{15} (1975),
  580--589.

\bibitem{9}
D.~Gr\"{u}nwald and R.H. Singer, \emph{In vivo imaging of labelled endogenous
  {$\beta$}-actin m{RNA} during nucleocytoplasmic transport}, Nature
  \textbf{467} (2010), no.~7315, 604.

\bibitem{MR3996792}
T.~Grzywny and K.~Szczypkowski, \emph{Heat kernels of non-symmetric
  {L}\'{e}vy-type operators}, J. Differ. Equations \textbf{267} (2019), no.~10,
  6004--6064.

\bibitem{MR4140542}
\bysame, \emph{L\'{e}vy processes: concentration function and heat kernel
  bounds}, Bernoulli \textbf{26} (2020), no.~4, 3191--3223.

\bibitem{MR2023017}
F.~Guillemin, Ph. Robert, and B.~Zwart, \emph{{AIMD} algorithms and exponential
  functionals}, Ann. Appl. Probab. \textbf{14} (2004), no.~1, 90--117.

\bibitem{Gupta}
Sh. Gupta and A.M. Jayannavar, \emph{Stochastic resetting: A (very) brief
  review}, Front. Phys. \textbf{10} (2022), 789097.

\bibitem{Hardin}
D.~Hardin, \emph{Skewed stable variables and processes}, Tech. report, North
  Carolina University, 1984.

\bibitem{Hartman}
P.~Hartman and A.~Wintner, \emph{On the infinitesimal generators of integral
  convolutions}, Am. J. Math. \textbf{64} (1942), 273--298.

\bibitem{iserles_1993}
A.~Iserles, \emph{On the generalized pantograph functional-differential
  equation}, Eur. J. Appl. Math. \textbf{4} (1993), no.~1, 1--38.

\bibitem{38}
O.~Kella, D.~Perry, and W.~Stadje, \emph{A stochastic clearing model with a
  {B}rownian and a compound {P}oisson component,}, Probab. Eng. Inf. Sci.
  \textbf{17} (2003), 1.

\bibitem{MR3010850}
V.~Knopova and R.~Schilling, \emph{A note on the existence of transition
  probability densities of {L}\'{e}vy processes}, Forum Math. \textbf{25}
  (2013), no.~1, 125--149.

\bibitem{36}
S.~Kou, \emph{A jump-diffusion model for option pricing}, Manag. Sci
  \textbf{48} (2002), 1086.

\bibitem{MR2728440}
N.~Kusolitsch, \emph{Why the theorem of {S}cheff\'{e} should be rather called a
  theorem of {R}iesz}, Period. Math. Hungar. \textbf{61} (2010), no.~1-2,
  225--229.

\bibitem{MR2840300}
A.~L\"{o}pker and W.~Stadje, \emph{Hitting times and the running maximum of
  {M}arkovian growth-collapse processes}, J. Appl. Probab. \textbf{48} (2011),
  no.~2, 295--312.

\bibitem{14}
A.~L\"{o}pker, J.S.H. van Leeuwaarden, and T.J. Ott, \emph{{TCP} and
  iso-stationary transformations}, Queueing Syst. \textbf{63} (2009), 459--475.

\bibitem{MR3476293}
S.N. Majumdar, S.~Sabhapandit, and G.~Schehr, \emph{Dynamical transition in the
  temporal relaxation of stochastic processes under resetting}, Phys. Rev. E
  \textbf{91} (2015), no.~5, 052131, 8.

\bibitem{Malakar}
K.~Malakar, V.~Jemseena, A.~Kundu, K.~Kumar, S.~Sabhapandit, S.~Majumdar,
  S.~Redner, and A.~Dhar, \emph{Steady state, relaxation and first-passage
  properties of a run-and-tumble particle in one-dimension}, J. Stat. Mech.
  \textbf{93} (2018), 043215.

\bibitem{30}
X.Y. Mau, X.~Feng, and A.~Porporato, \emph{Multiplicative jump processes and
  applications to leaching of salt and contaminants in the soil}, Phys. Rev. E
  \textbf{90} (2014), 1.

\bibitem{Mendoza}
E.G. Mendoza, \emph{Sudden stops, financial crises, and leverage}, Am. Econ.
  Rev. \textbf{100} (2010), 1941.

\bibitem{35}
R.C. Merton, \emph{Option pricing when underlying stock returns are
  discontinuous}, J. Financ. Econ. \textbf{3} (1976), 125.

\bibitem{MR1287609}
S.~P. Meyn and R.L. Tweedie, \emph{Markov chains and stochastic stability},
  Communications and Control Engineering Series, Springer-Verlag London, 1993.

\bibitem{MikoschSamrodnitsky}
T.~Mikosch and G.~Samorodnitsky, \emph{The supremum of a negative drift random
  walk with dependent heavy-tailed steps}, Ann. Appl. Probab. \textbf{10}
  (2000), no.~3, 1025--1064.

\bibitem{22}
B.~Mukherjee, K.~Sengupta, and S.N. Majumdar, \emph{Quantum dynamics with
  stochastic reset}, Phys. Rev. B \textbf{98} (2018), 104309.

\bibitem{32}
P.D. Odorico, F.~Laio, L.~Ridolfi, P.D. Odorico, F.~Laio, and L.~Ridolfi,
  \emph{A probabilistic analysis of fire-induced tree-grass coexistence in
  savannas}, Am. Nat. \textbf{167} (2006), 78.

\bibitem{MR2426601}
T.J. Ott and J.H.B. Kemperman, \emph{Transient behavior of processes in the
  {TCP} paradigm}, Probab. Engrg. Inform. Sci. \textbf{22} (2008), no.~3,
  431--471.

\bibitem{Kemperman}
T.J. Ott, J.H.B. Kemperman, and M.~Mathis, \emph{The stationary behavior of
  ideal {TCP} congestion avoidance}, {\tt
  http://www.teunisott.com/Papers/TCP\_Paradigm/TCPwindow.pdf}, 1996.

\bibitem{Pal}
A.~Pal, \emph{Diffusion in a potential landscape with stochastic resetting},
  Phys. Rev. E \textbf{91} (2015), 012113.

\bibitem{MR4189343}
C.A. Plata, D.~Gupta, and S.~Azaele, \emph{Asymmetric stochastic resetting:
  modeling catastrophic events}, Phys. Rev. E \textbf{102} (2020), no.~5,
  052116, 9.

\bibitem{Rev}
M.~Reuveni~S., Urbakh and J.~Klafter, \emph{Role of substrate unbinding in
  {M}ichaelis--{M}enten enzymatic reactions}, Proc. Natl. Acad. Sci. USA
  \textbf{111} (2014), 4391.

\bibitem{17}
T.~Robin, L.~Hadany, and M.~Urbakh, \emph{Random search with resetting as a
  strategy for optimal pollination}, Phys. Rev. E \textbf{99} (2019), 052119.

\bibitem{2}
T.~Robin, S.~Reuveni, and M.~Urbakh, \emph{Single-molecule theory of enzymatic
  inhibition}, Nat. Commun. \textbf{9} (2018), no.~1, 779.

\bibitem{4}
E.~Roldan, A.~Lisica, Sanchez-Taltavull, D., and S.W. Grill, \emph{Stochastic
  resetting in backtrack recovery by {RNA} polymerases}, Phys. Rev. E
  \textbf{93} (2016), no.~6, 062411.

\bibitem{MR1739520}
K.-I. Sato, \emph{L\'{e}vy processes and infinitely divisible distributions},
  Cambridge Studies in Advanced Mathematics, vol.~68, Cambridge University
  Press, Cambridge, 1999, Translated from the 1990 Japanese original, Revised
  by the author.

\bibitem{Shanbhag}
D.N. Shanbhag and M.~Sreehari, \emph{On certain self-decomposable
  distributions}, Z. Wahrscheinlichkeitstheorie verw Gebiete \textbf{38}
  (1977), 217--222.

\bibitem{Sor}
D.~Sornette, \emph{Why stock markets crash: critical events in complex
  financial systems}, Princeton University Press, 2017.

\bibitem{28}
S.~Suweis, A.~Porporato, A.~Rinaldo, and A.~Maritan, \emph{Prescription-induced
  jump distributions in multiplicative {P}oisson processes}, Phys. Rev. E
  \textbf{83} (2011), 1.

\bibitem{29}
S.~Suweis, A.~Rinaldo, S.~E. Van Der~Zee, E.~Daly, A.~Maritan, and
  A.~Porporato, \emph{Stochastic modeling of soil salinity}, Geophys. Res.
  Lett. \textbf{37} (2010), 1.

\bibitem{Tal}
O.~Tal-Friedman, A.~Pal, A.~Sekhon, Sh. Reuveni, and Y.~Roichman,
  \emph{Experimental realization of diffusion with stochastic resetting}, J.
  Phys. Chem. Lett. \textbf{11} (2020), no.~17, 7350--7355.

\bibitem{MR4525953}
O.~Tal-Friedman, Y.~Roichman, and Sh. Reuveni, \emph{Diffusion with partial
  resetting}, Phys. Rev. E \textbf{106} (2022), no.~5, Paper No. 054116, 13.

\bibitem{MR4546112}
R.~van~der Hofstad, S.~Kapodistria, Z.~Palmowski, and S.~Shneer, \emph{Unified
  approach for solving exit problems for additive-increase and
  multiplicative-decrease processes}, J. Appl. Probab. \textbf{60} (2023),
  no.~1, 85--105.

\bibitem{MR2576022}
J.S.H. van Leeuwaarden, A.H. L\"{o}pker, and T.J. Ott, \emph{{TCP} and
  iso-stationary transformations}, Queueing Syst. \textbf{63} (2009), no.~1-4,
  459--475.

\bibitem{White}
J.~Whitehouse, M.R. Evans, and S.N. Majumdar, \emph{Effect of partial
  absorption on diffusion with resetting}, Phys. Rev. E \textbf{87} (2013),
  022118.

\bibitem{WISNIEWOLSKI2024600}
M.~Wiśniewolski, \emph{On the probabilistic representations of solutions of
  pantograph equations and triangle coefficients}, J. Differ. Equations
  \textbf{379} (2024), 600--625.

\bibitem{33}
Wu~Y. and W.Q. Zhu, \emph{Stochastic analysis of a pulse-type prey-predator
  model}, Phys. Rev. E \textbf{77} (2008), 1.

\bibitem{MR1102872}
X.G. Zheng, \emph{Ergodic theorems for stress release processes}, Stochastic
  Process. Appl. \textbf{37} (1991), no.~2, 239--258.

\bibitem{MR854867}
V.M. Zolotarev, \emph{One-dimensional stable distributions}, Translations of
  Mathematical Monographs, vol.~65, American Mathematical Society, Providence,
  RI, 1986.

\end{thebibliography}
\end{document}